\newcommand\orcidicon[1]{\href{https://orcid.org/#1}{\includegraphics[scale=0.02]{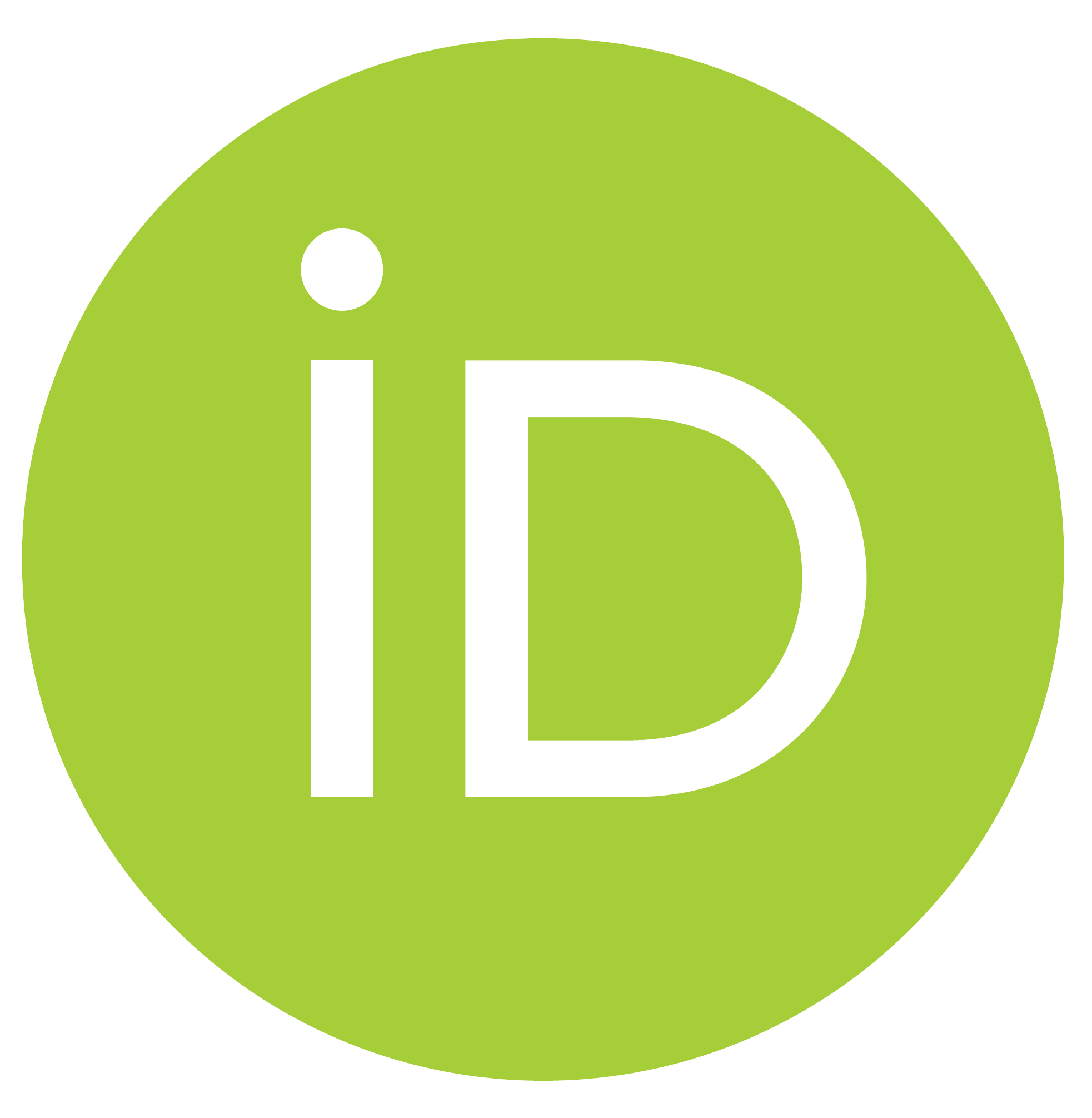}}}
\titleformat{\section}{\large\bfseries\centering}{\thesection}{1em}{}
\titleformat{\subsection}{\centering\bfseries}{\thesubsection}{0.5em}{}
\titleformat{\subsubsection}[runin]{\bfseries}{\thesubsubsection.}{0.4em}{}[.]
\numberwithin{equation}{section}
\newtheorem{lemma}{Lemma}[section]
\newtheorem{proposition}[lemma]{Proposition}
\newtheorem{theorem}{Theorem}[section]
\newtheorem{corollary}[lemma]{Corollary}
\newtheorem{definition}{Definition}[section]
\newtheorem{remark}{Remark}[section]
\newcommand{\VERT}{\vert\kern-0.3ex\vert\kern-0.3ex\vert}
\def\d{\,\mathrm{d}}
\def\p{\partial}
\def\widebar{\accentset{{\cc@style\underline{\mskip10mu}}}}
\newcommand\w[1]{\makebox[1em]{$#1$}}
\begin{document}
\title{\bf Nonlinear Stability of MHD Contact Discontinuities with Surface Tension
\let\thefootnote\relax\footnotetext{
The research of {\sc Yuri Trakhinin} was supported by the Russian Science Foundation under Grant No. 20-11-20036.
The research of {\sc Tao Wang} was supported by the Fundamental Research Funds for the Central Universities and the National Natural Science Foundation of China under Grants 11971359 and 11731008.
}
}

\author{
{\sc Yuri Trakhinin}\orcidicon{0000-0001-8827-2630}\thanks{e-mail: trakhin@math.nsc.ru}\\
{\footnotesize Sobolev Institute of Mathematics, Koptyug av.~4, 630090 Novosibirsk, Russia}\\
{\footnotesize Novosibirsk State University, Pirogova str.~1, 630090 Novosibirsk, Russia}\\[2mm]
{\sc Tao Wang}\orcidicon{0000-0003-4977-8465}\thanks{e-mail: tao.wang@whu.edu.cn}\\
{\footnotesize School of Mathematics and Statistics, Wuhan University, Wuhan 430072, China}\\ 
{\footnotesize Hubei Key Laboratory of Computational Science, Wuhan University, Wuhan 430072, China} 
}

\date{\empty}
\maketitle

\begin{abstract}

We consider the motion of two inviscid, compressible, and electrically conducting fluids separated by an interface across which there is no fluid flow in the presence of surface tension. The magnetic field is supposed to be nowhere tangential to the interface. This leads to the characteristic free boundary problem for contact discontinuities with surface tension in three-dimensional ideal compressible magnetohydrodynamics (MHD). We prove the nonlinear structural stability of MHD contact discontinuities with surface tension in Sobolev spaces by a modified Nash--Moser iteration scheme. The main ingredient of our proof is deriving the resolution and tame estimate of the linearized problem in usual Sobolev spaces of sufficiently large regularity. In particular, for solving the linearized problem, we introduce a suitable regularization that preserves the transport-type structure for the linearized entropy and divergence of the magnetic field.

\end{abstract}

 \vspace*{2mm}
{\small
\noindent{\bf Keywords}:
Ideal compressible magnetohydrodynamics,
MHD contact discontinuities with surface tension,
Characteristic free boundary,
Nonlinear stability,
Nash--Moser iteration

\vspace*{2mm}

\noindent{\bf Mathematics Subject Classification (2020)}:\quad
76W05,
35L65,
35R35,
76E17

\tableofcontents
}

\bigskip

\section{Introduction}\label{sec:intro}

We are concerned with the evolution of a smooth interface $\Sigma(t)$ between two inviscid, compressible, and electrically conducting fluids that occupy the domains $\Omega^+(t)$ and $\Omega^-(t)$ in $\mathbb{R}^3$ at time $t\geq 0$.
The fluid motion is described by the following equations of ideal compressible magnetohydrodynamics (MHD) (see {\sc Landau--Lifshitz} \cite[\S 65]{LL84MR766230}):
\begin{subequations} \label{MHD1}
\begin{alignat}{3}
&    \p_t \rho +\nabla\cdot (\rho v )=0
 && \textrm{in } \Omega^{\pm}(t),
\label{MHD1a} \\
&\p_t (\rho v )+\nabla\cdot (\rho v \otimes v -H \otimes H )+\nabla q =0
 && \textrm{in } \Omega^{\pm}(t),
  \label{MHD1b} \\
&\p_t H-\nabla\times(v\times H)=0
 && \textrm{in } \Omega^{\pm}(t),
 \label{MHD1c} \\
&\p_t(\rho E+\tfrac{1}{2}|H|^2) +\nabla\cdot ( v  (\rho E+p)+H\times(v\times H) )=0
 &\quad & \textrm{in } \Omega^{\pm}(t), \label{MHD1d}
\end{alignat}
\end{subequations}
together with the divergence constraint
\begin{align} \label{divH1}
\nabla\cdot H=0
\quad \textrm{in } \Omega^{\pm}(t).
\end{align}
Here the density $\rho$, fluid velocity $v\in\mathbb{R}^3$, magnetic field $H\in\mathbb{R}^3$, and pressure $p$ are unknown functions of the time $t$ and spatial variable $x=(x_1,x_2,x_3)$.
We denote by $q=p+\frac{1}{2}|H|^2$ the total pressure and by $E=e+\frac{1}{2}|v|^2$ the specific total energy, where $e$ is the specific internal energy.
The thermodynamic variables $\rho$, $p$, and $e$ are related to the specific entropy $S$ and the absolute temperature $\vartheta>0$ by the Gibbs relation
\begin{align} \nonumber 
\vartheta \d S=\d e+p\d \left(\frac1\rho\right).
\end{align}
The constitutive relations $\rho=\rho(p, S)$ and $e=e(p, S)$ render the system of conservation laws \eqref{MHD1} closed.
Here and below, we denote by $\p_t$ the time derivative $\frac{\p}{\p t}$, by $\nabla$ the spatial gradient $(\p_1,\p_2,\p_3)^{\top}$ with $\p_i:=\frac{\p}{\p x_i}$,
and by $u\otimes w$ the tensor product of vectors $u,w\in\mathbb{R}^3$ with $(i,j)$-entry $u_i w_j$.

In the absence of surface tension, the assumption that there is no fluid flow across the moving interface
allows one to consider two distinct types of characteristic discontinuities in compressible MHD \cite[\S 71]{LL84MR766230}:
{\it tangential discontinuities} (or called current-vortex sheets)
for which the magnetic field is parallel to the interface,
and {\it contact discontinuities} for which the magnetic field intersects the interface.

Without magnetic fields, compressible current-vortex sheets are reduced to compressible vortex sheets for the Euler equations in gas dynamics. 
{\sc Syrovatski\u{\i}} \cite{Syr54MR0065343} and {\sc Fejer--Miles} \cite{FM63MR0154509} showed by normal modes analysis that every compressible vortex sheet in three dimensions is linearly unstable.
This linear instability is the analogue of the Kelvin--Helmholtz instability for incompressible fluids; see, {\it e.g.}, {\sc Chandrasekhar} \cite[Chapter 11]{C61MR0128226}.
The linear and nonlinear stability of compressible current-vortex sheets in three-dimensional MHD was established independently by {\sc Trakhinin} \cite{T05MR2187618,T09ARMAMR2481071} and {\sc Chen--Wang} \cite{CW08MR2372810,CW12MR3289359} under some stability condition.
The results of \cite{T05MR2187618,T09ARMAMR2481071,CW08MR2372810,CW12MR3289359}
indicate that {\it non-paralleled} magnetic fields can stabilize the motion of three-dimensional compressible vortex sheets.

Regarding MHD contact discontinuities, {\sc  Morando et al.~}\cite{MTT15MR3306348,MTT18MR3766987} recently obtained the local-in-time existence of solutions for two-dimensional polytropic fluids 
provided the Rayleigh--Taylor sign condition on the jump of the normal derivative of the pressure holds at each point of the discontinuity front.
We would expect that the Rayleigh--Taylor sign condition implies the existence of MHD contact discontinuities also for the general three-dimensional case. 
However, it remains open to confirm this expectation rigorously.
Remark here that the approach in \cite{MTT15MR3306348} for deriving the basic energy estimate for the linearized problem cannot be directly applied to the three-dimensional case due to the appearance of additional boundary terms in energy integrals (see \cite[\S 6]{MTT15MR3306348} for more details).

Surface tension has been proved to suppress the instability of vortex sheets in three dimensions 
by {\sc Ambrose--Masmoudi} \cite{AM07MR2334849} for incompressible irrotational flows, 
by {\sc Cheng et al.}~\cite{CCS08MR2456184} and {\sc Shatah--Zeng} \cite{SZ11MR2763036,SZ08MR2400608} for incompressible rotational flows, 
and by {\sc Stevens} \cite{Stevens16MR3544315} for compressible flows.
Numerical and experimental studies of free-interface MHD flows with surface tension have been provided in {\sc Samulyak et al.}~\cite{SDGX07MR2356385} and the references therein.
However, to the best of our knowledge,
there is no result currently available for the nonlinear fluid--fluid interface problem with surface tension in ideal compressible MHD.
The purpose of this paper is to examine the stabilizing effect of surface tension on the dynamics of free interfaces for ideal compressible conducting fluids, 
or more precisely, to establish the nonlinear structural stability of {\it MHD contact discontinuities with surface tension} in three dimensions without assuming the fulfillment of the Rayleigh--Taylor sign condition.

For MHD contact discontinuities with surface tension,
there is no flow across the interface $\Sigma(t)$
and the magnetic field is nowhere tangent to $\Sigma(t)$.
Let $\bm{n}$ and $\mathcal{V}$ denote the unit normal vector pointing into $\Omega^+(t)$ and the normal speed of $\Sigma(t)$, respectively.
Taking into account the surface tension force on $\Sigma(t)$ gives rise to the following boundary conditions:
\begin{align} \label{BC1}
	H^{\pm}\cdot \bm{n}\neq 0,\quad
	[p]=\mathfrak{s}\mathcal{H},\quad
	[H]=0,\quad
	[v]=0, \quad
	\mathcal{V}=v^{+}\cdot \bm{n}\quad
	\textrm{on }\Sigma(t),
\end{align}
where $\mathfrak{s}>0$ is the constant coefficient of surface tension,
$\mathcal{H}$ is twice the mean curvature of $\Sigma(t)$,
given any function $g$ we denote
\[
g^{\pm}(t,x):=\lim_{\epsilon\to 0^+}g(t,x\pm \epsilon \bm{n}(t,x))
\quad \textrm{for } x\in\Sigma(t),
\]
and the bracket $[\,\cdot\,]$ stands for the jump of the enclosed quantity across the interface, 
that is,
$[g](t,x):=g^+(t,x)-g^-(t,x)$ at any point $x\in\Sigma(t)$.
A derivation of the boundary conditions \eqref{BC1} can be found in Appendix \ref{App:A}.
We remark that the second condition in \eqref{BC1} is the same as the Young--Laplace law for the pressure discontinuity across static interfaces due to the presence of surface tension 
(see {\sc Lautrup} \cite[\S 5.3]{L11zbMATH05278418}).

The problem \eqref{MHD1}--\eqref{BC1} is a nonlinear hyperbolic problem with the free boundary $\Sigma(t)$ being characteristic thanks to the last two conditions in \eqref{BC1}.
We consider here nonisentropic fluids under the physical assumption that the sound speed is positive,
so that the equations \eqref{MHD1} can become symmetric hyperbolic for smooth solutions.
It is worth mentioning that our constitutive relations are very general and include the polytropic case studied in \cite{MTT15MR3306348,MTT18MR3766987} as a special example.
Moreover, we assume that the interface $\Sigma(t)$ has the form of a graph, allowing us to reformulate the nonlinear problem \eqref{MHD1}--\eqref{BC1} to that in a fixed domain by a simple lift of the graph.

For the linearized problem around a certain basic state,
we construct the unique solution in the usual Sobolev space $H^1$ via the duality argument.
To this end, we show the $H^1$ {\it a priori} estimate for the linearized problem and introduce a suitable $\varepsilon$--regularization that admits a unique solution satisfying a uniform-in-$\varepsilon$ energy estimate in $H^1$.
More precisely,
we first deduce the $L^2$ estimates of solutions and their tangential derivatives
by making full use of the improved spatial regularity for the interface due to surface tension.
For general hyperbolic problems with characteristic boundary, energy estimates exhibit a loss of control of normal derivatives and it is natural to work in the anisotropic weighted Sobolev spaces (see {\sc Chen} \cite{C10MR2827870} and {\sc Secchi} \cite{S96MR1405665}).
Nevertheless, as in \cite{MTT15MR3306348,MTT18MR3766987}, we manage to compensate the missing normal derivatives through the transport equations for the linearized entropy and divergence of the magnetic field.
But since our basic {\it a priori} estimate is closed in $H^1$ rather than in $L^2$,
the duality argument cannot be employed directly for solving the linearized problem.
To overcome this difficulty, we introduce a carefully chosen $\varepsilon$--regularization
that preserves the transport-type structure for the linearized entropy and divergence of the magnetic field.
Given any fixed and sufficiently small parameter $\varepsilon>0$, we can close the $\varepsilon$--dependent $L^2$ {\it a priori} estimate for both the $\varepsilon$--regularization and its dual problem.
This enables us to construct solutions of the regularized problem in $L^2$ by the duality argument.
Then we build an energy estimate in $H^1$ {\it uniformly in $\varepsilon$} for the regularization in order to solve the linearized problem by passing to the limit $\varepsilon\to 0$.

For the linearized problem,
we also prove the existence and uniqueness of solutions in the Sobolev spaces $H^m$ with $m\geq3$ based on the resolution in $H^1$ and a high-order {\it a priori} energy estimate.
The high-order energy estimate, which follows by using the Moser-type calculus inequalities,  is a so-called {\it tame estimate}, since the loss of derivatives from the basic state to the solution is {\it fixed}.
Finally we establish the local-in-time existence of solutions to the nonlinear problem through an appropriate iteration scheme of Nash--Moser type developed by {\sc H\"{o}rmander} \cite{H76MR0602181} and {\sc Coulombel--Secchi} \cite{CS08MR2423311}. 
We refer to {\sc Alinhac--G{{\'e}}rard} \cite{AG07MR2304160} and {\sc Secchi} \cite{S16MR3524197} for a general description of the Nash--Moser method.

The rest of this paper is organized as follows.
In \S \ref{sec:non}, we first introduce the free boundary problem and an equivalent reformulation in a fixed domain for MHD contact discontinuities with surface tension.
Then we state the main result of this paper, namely Theorem \ref{thm:main},
and present the notation and Moser-type calculus inequalities.
In \S \ref{sec:lin1}, after linearizing the problem around a certain basic state,
we prove the existence and uniqueness of the effective linear problem in the usual Sobolev space $H^1$.
Section \ref{sec:tame} deals with the tame estimate for the effective linear problem in the usual Sobolev spaces $H^m$ with $m\geq 3$.
In \S \ref{sec:NM}, we combine the linear results in \S\S \ref{sec:lin1}--\ref{sec:tame} with a suitable modified Nash--Moser iteration scheme
to conclude the proof of the nonlinear stability of MHD contact discontinuities with surface tension.
Appendix \ref{App:A} provides the jump conditions for free-interface ideal compressible MHD with or without surface tension.

\section{Nonlinear Problems and Main Result} \label{sec:non}

In this section we first introduce the free boundary problem for MHD contact discontinuities with surface tension and an equivalent reformulation in a fixed domain.
Then we state the main result of this paper, namely Theorem \ref{thm:main}.
We also present the notation and Moser-type calculus inequalities for later use.

\subsection{Free Boundary Problem}

We assume that the interface $\Sigma(t)$ has the form of a graph:
\begin{align*}
	\Sigma(t):=\{x\in\mathbb{R}^3:x_1=\varphi(t,x')\} \quad \textrm{with \ } x'=(x_2,x_3),
\end{align*}
where the interface function $\varphi$ is to be determined.
Our main problem is to construct \textit{MHD contact discontinuities with surface tension}, that is,
smooth solutions $U^{\pm}:=(p^{\pm},v^{\pm},H^{\pm},S^{\pm})^{\top}$ 
of the equations \eqref{MHD1}--\eqref{divH1} in $\Omega^{\pm}(t):=\{x\in\mathbb{R}^3: x_1\gtrless \varphi(t,x') \}$ satisfying the boundary conditions \eqref{BC1}.
Then 
\begin{align} \label{N:def}
	\bm{n}=\frac{N}{|N|}\qquad
	\textrm{for } N:=\begin{pmatrix}
		1\\ -\mathrm{D}_{x'}\varphi
	\end{pmatrix}
	\  \textrm{ with \ }
	\mathrm{D}_{x'}:=\begin{pmatrix}
		\p_2\\ \p_3
	\end{pmatrix},
\end{align}
which implies
\begin{align}
	\label{H.cal:def}
	\mathcal{V}=\frac{\p_t\varphi}{|N|},\quad
	\mathcal{H}=\mathcal{H}(\varphi):= \mathrm{D}_{x'}\cdot\left( \frac{\mathrm{D}_{x'}\varphi }{\sqrt{1+|\mathrm{D}_{x'}\varphi|^2 }}\right).
\end{align}
Hence the boundary conditions \eqref{BC1} become
\begin{gather} \label{BC2a}
	H^{\pm}\cdot N\neq 0 \qquad \textrm{on }\Sigma(t),\\[0.5mm]
 \label{BC2}
	[p]=\mathfrak{s}\mathcal{H}(\varphi),\quad
	[H]=0,\quad[v]= 0, \quad  \p_t \varphi=v^+\cdot N\qquad \textrm{on }\Sigma(t).
\end{gather}
Clearly, there exist trivial contact-discontinuity solutions consisting of two constant states separated by a flat surface as follows:
\begin{align} \label{U.bar:def}
\widebar{U}(x):=
\left\{
\begin{aligned}
	&\widebar{U}^{+}
	:=(\bar{p},\bar{v},\widebar{H},\widebar{S}^{+})^{\top}\quad \mathrm{if}\  x_1>  0, \\
	&\widebar{U}^{-}
	:=(\bar{p},\bar{v},\widebar{H},\widebar{S}^{-})^{\top}\quad \mathrm{if}\  x_1<  0,
\end{aligned}
\right.
\end{align}
where we require that
$\bar{v}_1=0$, $\widebar{H}_1\neq 0$, and $\widebar{S}^+\neq \widebar{S}^-$ on account of
the conditions \eqref{BC2a}--\eqref{BC2}.

We consider very general, smooth constitutive relations $\rho^{\pm}=\rho^{\pm}(p,S)$ and $e^{\pm}=e^{\pm}(p,S)$ for the two fluid phases in $\Omega^{\pm}(t)$, respectively.
We suppose that the sound speeds $a_{\pm}: =  p_{\rho}^{\pm}  (\rho, S)^{1/2}$ are positive for all $\rho \in (\rho_*,\rho^*)$, 
where $\rho_*$ and $\rho^*$ are some positive constants with $\rho_*<\rho^*$.
Then the equations \eqref{MHD1} are equivalent to the symmetric hyperbolic system
\begin{align}
	\label{MHD.vec}
	A^{\pm}_0 (U^{\pm})\p_t U^{\pm}+\sum_{i=1}^{3}A^{\pm}_i (U^{\pm})\p_i U^{\pm}=0
	\quad \textrm{in }\Omega^{\pm}(t),
\end{align}
for smooth solutions $U^{\pm}$ satisfying the hyperbolicity condition
\begin{align}
	\label{hyperbolicity}
	\rho_*<\rho^{\pm}(p^{\pm},S^{\pm})<\rho^*,
\end{align}
where
\setlength{\arraycolsep}{4pt}
\begin{align}
\label{A0:def}
&A^{\pm}_0 (U):=\mathrm{diag}\Big(\frac{1}{\rho^{\pm} a_{\pm}^2},\, \rho^{\pm},\, \rho^{\pm},\, \rho^{\pm},\, 1,\, 1,\, 1,\, 1\Big),
\\[1mm]
\label{Ai:def}
&A^{\pm}_i (U):=
\begin{pmatrix}
\dfrac{v_i}{\rho^{\pm} a_{\pm}^2} & \bm{e}_i^{\top} & 0 & 0\\[2mm]
\bm{e}_i & \rho^{\pm} v_i {I}_3 & \bm{e}_i\otimes H -H_i {I}_3 & 0\\[1.5mm]
0& H\otimes\bm{e}_i-H_i {I}_3 & v_i{I}_3  & 0\\[1.5mm]
\w{0}  & \w{0}  & \w{0} & \w{v_i}
\end{pmatrix},
\end{align}
for $U:=(p,v,H,S)^{\top}$ and $i=1,2,3.$
Throughout this paper,  we denote the identity matrix of order $m$ by $I_m$ and the standard basis of $\mathbb{R}^3$  by  $\{\bm{e}_1:=(1,0,0)^{\top}$, $\bm{e}_2:=(0,1,0)^{\top}$, $\bm{e}_3:=(0,0,1)^{\top}\}$.

It follows from the last two conditions in \eqref{BC2} that
\begin{align*}
&\bigg(\p_t\varphi A^{\pm}_0 (U^{\pm})-\sum_{i=1}^3 N_{i}A^{\pm}_{i} (U^{\pm})\bigg)\bigg|_{\Sigma(t)}
\\[1mm]
&\quad
=\left.\begin{pmatrix}
0& -N^{\top} & 0 & 0\\
-N & O_3 & H^{\pm}\cdot N I_3-  N \otimes  H^{\pm} &0\\
0 & H^{\pm}\cdot N I_3- H^{\pm}\otimes N &O_3 &0\\
\w{0} &\w{0} &\w{0}  &\w{0}
\end{pmatrix}
\right|_{\Sigma(t)},
\end{align*}
where $N_i$ is the $i$--th component of the normal vector $N$ ({\it cf.}~\eqref{N:def}) and $O_m$ denotes the zero matrix of order $m$.
Taking into account the constraint \eqref{BC2a}, we calculate that the boundary matrix for our problem,
\begin{align*}
\begin{pmatrix}
\p_t\varphi A^+_0(U^{+})-\sum_{i=1}^3 N_{i}A^+_{i}(U^{+}) & 0\\
0&
-\p_t\varphi A^-_0(U^{-})+\sum_{i=1}^3 N_{i}A^-_{i}(U^{-})
\end{pmatrix},
\end{align*}
has six positive, six negative, and four zero eigenvalues on $\Sigma(t)$.
As a result, the free boundary $\Sigma(t)$ is characteristic, {\it i.e.}, the boundary matrix is singular.
Noting that one boundary condition is necessary for determining the interface function $\varphi$,
we know from the well-posedness theory for hyperbolic problems
that the correct number of the boundary conditions is seven.
Therefore, we have to take one of the boundary conditions \eqref{BC2} as an initial constraint rather than as a real boundary condition. 
It will turn out that the identity
\begin{align} \label{BC2b}
[H]\big|_{\Sigma(t)}\cdot N=0
\end{align}
can be regarded as a constraint on the initial data.
Then the boundary conditions for our problem should consist of \eqref{BC2a} and
\begin{align} \label{BC3}
[p]=\mathfrak{s}\mathcal{H}(\varphi), \quad [v]=0,\quad  [H]\cdot \tau_i=0,\quad
\p_t \varphi=v^+\cdot N\quad
\textrm{on }\Sigma(t),
\end{align}
for $i=1,2$,
where the vectors $\tau_1$, $\tau_2$ are defined by
\begin{align}\label{tau:def}
\tau_1:=(\p_2\varphi,1,0)^{\top},\quad \tau_2:=(\p_3\varphi,0,1)^{\top}.
\end{align}

\subsection{Reformulated Problem in a Fixed Domain}
Let us reformulate the free boundary problem for MHD contact discontinuities with surface tension
into an equivalent problem in a fixed domain.
For this purpose, we replace the unknowns $U^{\pm}$ by
\begin{align} \label{transform}
U^{\pm}_{\sharp}(t,x):=U^{\pm}(t,\varPhi^{\pm}(t,x), x'),
\end{align}
respectively, where
\begin{align} \label{varPhi:def}
\varPhi^{\pm}(t,x):=\pm x_1+ \chi(\pm x_1)\varphi(t,x')
\end{align}
with $\chi\in C^{\infty}_0(\mathbb{R})$ satisfying
$\chi\equiv 1$  on  $[-1,1]$  and $\|\chi'\|_{L^{\infty}(\mathbb{R})} <1.$
We will assume without loss of generality that
$\|\varphi_0\|_{L^{\infty}(\mathbb{R}^{2}) }\leq  \frac{1}{4}$, so that
the change of variables is admissible on sufficiently short time interval $[0,T]$.
Here we introduce the cut-off function $\chi$ as in \cite{M01MR1842775,T09CPAMMR2560044,T09ARMAMR2481071, TW21MR4201624,TW21b}
to avoid the assumption that the initial perturbations have compact support.

The nonlinear stability of MHD contact discontinuities with surface tension amounts to constructing smooth solutions $U^{\pm}_{\sharp}$ in the half-space $\Omega:=\{x\in\mathbb{R}^3:x_1>0\}$ of the following initial--boundary value problem:
\begin{subequations}   
	\label{MCD0}
\begin{alignat}{2}
\label{MCD0a}
&\mathbb{L}_{\pm}(U^{\pm},\varPhi^{\pm}) :=L_{\pm}(U^{\pm},\varPhi^{\pm})U^{\pm} =0
&  \qquad &\textrm{in }    \Omega,\\[0.5mm]
\label{MCD0b}
&\mathbb{B}(U^{+},U^{-},\varphi)
:=\begin{pmatrix}
[p]-\mathfrak{s}\mathcal{H}(\varphi)\\[0.5mm]
[v]\\[0.5mm]
[H]\cdot \tau_1\\[0.5mm]
[H]\cdot \tau_2\\[0.5mm]
\p_t \varphi-v^+\cdot N
\end{pmatrix}
=0
& \qquad  &\textrm{on }   \Sigma,\\[0.5mm]
\label{MCD0c}
&(U^{+},U^{-},\varphi)=(U^{+}_0,U^{-}_0,\varphi_0)
& \qquad &\textrm{if } t=0,
\end{alignat}
\end{subequations}
where we drop the subscript $``\sharp"$ for notational simplicity,
$\Sigma:=\{x\in \mathbb{R}^3:\, x_1=0\}$ denotes the boundary,
and
\begin{align}
\label{L:def}
&L_{\pm}(U,\varPhi):=A^{\pm}_0 (U)\partial_t+\widetilde{A}^{\pm}_1 (U,\varPhi)\partial_1+ A^{\pm}_2(U)\partial_2
+A^{\pm}_3(U)\partial_3
\end{align}
with
\begin{align}
\label{A1t:def}
&\widetilde{A}^{\pm}_1(U,\varPhi):=
\frac{1}{\partial_1\varPhi}\big(A^{\pm}_1(U)-\partial_t\varPhi A^{\pm}_0(U)-\partial_2\varPhi A^{\pm}_2(U)-\partial_3\varPhi A^{\pm}_3(U)\big).
\end{align}
Recall that the vectors $\tau_1,\tau_2$ and the matrices $A^{\pm}_0,\ldots,A^{\pm}_3$ are given in \eqref{tau:def} and \eqref{A0:def}--\eqref{Ai:def}, respectively.
According to \eqref{BC2a}, we assume that
\begin{align}
\label{BC3a}
&|H^{\pm}\cdot N|\geq  \kappa >0\qquad  \textrm{on }  \Sigma
\end{align}
for some positive constant $\kappa$.
In the new variables, the equation \eqref{divH1} and the jump condition \eqref{BC2b}
are reduced to
\begin{alignat}{3}
\label{inv1b}
& \nabla^{\varPhi^{\pm}}\cdot H^{\pm}=0\qquad && \textrm{in }  \Omega,\\
\label{inv2b}
&[H]\cdot N=0\qquad &&\textrm{on }  \Sigma,
\end{alignat}
where
\begin{align}
\nabla^{\varPhi}:=(\p_1^{\varPhi},\, \p_2^{\varPhi},\,\p_3^{\varPhi})^{\top}
\label{nabla.varPhi}
\end{align}
with
\begin{align} \label{differential}
\partial_t^{\varPhi}:=\partial_t-\frac{\partial_t\varPhi}{\partial_1\varPhi}\p_1,\ \
\partial_1^{\varPhi}:=\frac{1}{\partial_1\varPhi}\partial_1,\ \
\partial_i^{\varPhi}:=\partial_i-\frac{\partial_i\varPhi}{\partial_1\varPhi}\partial_1\ \
\textrm{for $i=2,3$. }
\end{align}
As in \cite[Appendix A]{T09ARMAMR2481071}, we can show that
the identities \eqref{inv1b}--\eqref{inv2b} hold for any $t>0$
provided they are satisfied at the initial time.

\subsection{Main Result}

We are now ready to state the main result of this paper.

\begin{theorem}
\label{thm:main}
Let $m\geq 12$ be an integer.
Suppose that the initial data \eqref{MCD0c}
satisfy the requirements \eqref{BC3a}--\eqref{inv2b} and the hyperbolicity condition
\begin{align} \label{thm:H1}
\rho_*<\inf_{\Omega}\rho^{\pm}({U}_0^{\pm})\leq\sup_{\Omega}\rho^{\pm}({U}_0^{\pm})<\rho^*.
\end{align}
Suppose further that
$(U_0^{\pm}-\widebar{U}^{\pm},\varphi_0)
$ belong to $ H^{m+3/2}(\Omega)\times H^{m+2}(\mathbb{R}^{2})$
for the constant states $\widebar{U}^{\pm}$ defined in \eqref{U.bar:def}
and the initial data are compatible up to order $m$ (see Definition \ref{def:1}).
Then there is a sufficiently small $T>0$, such that the problem \eqref{MCD0} has a unique solution $(U^{+}, U^{-},\varphi)$ on the time interval $[0,T]$ satisfying
\begin{align*}
U^{\pm}-\widebar{U}^{\pm}\in H^{m-6}([0,T]\times\Omega),\quad
(\varphi,\mathrm{D}_{x'}\varphi)\in H^{m-6}([0,T]\times\mathbb{R}^{2}).
\end{align*}
\end{theorem}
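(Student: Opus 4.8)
The plan is to prove Theorem \ref{thm:main} by a modified Nash--Moser iteration, following the scheme of H\"ormander \cite{H76MR0602181} and Coulombel--Secchi \cite{CS08MR2423311}, using the linear results announced in \S\S\ref{sec:lin1}--\ref{sec:tame} as the main input. First I would reduce to the case of zero initial data by constructing an approximate solution: choosing suitable functions $(U^{\pm}_a,\varphi_a)$ that absorb the initial data up to order $m$ (via the compatibility conditions and the Cauchy--Kovalevskaya-type argument of Lax, as in \cite{CS08MR2423311}), so that the error $f_a^{\pm}:=\mathbb{L}_{\pm}(U_a^{\pm},\varPhi_a^{\pm})$ and the corresponding boundary error vanish to high order at $t=0$. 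After subtracting this approximate solution, the problem becomes one of finding a correction $(V^{\pm},\psi)$ solving a nonlinear system $\mathcal{L}(V,\psi)=f$, $\mathcal{B}(V,\psi)=g$ with $(f,g)$ small in high Sobolev norm and vanishing in the past; one then extends $f,g$ by zero to negative times and solves on $(-\infty,T]$.

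The core of the argument is to set up the iteration. Introduce a family of smoothing operators $S_\theta$ acting on functions of $(t,x)$ (and separately on $\psi$ on the boundary) with the standard properties (approximation, inverse inequality, and the commutator/``decomposition'' estimate). Define $(V_n,\psi_n)$ inductively by $(V_{n+1},\psi_{n+1})=(V_n,\psi_n)+(\delta V_n,\delta\psi_n)$, where $(\delta V_n,\delta\psi_n)$ solves the \emph{effective} linearized problem from \S\ref{sec:lin1} around the smoothed state $(\widebar{U}+S_{\theta_n}V_n,\,S_{\theta_n}\psi_n)$ with a source term $(f_n,g_n)$ chosen to accumulate the quadratic, first-substitution, and smoothing errors. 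The key technical point is that the linearized problem is solved not around a solution of the nonlinear system but around a modified (``good unknown'' à la Alinhac) state, so I would follow \cite{CS08MR2423311} in incorporating the second-order and substitution errors $e_n',\tilde e_n$, etc., defined so that $\sum_{k\le n}(f_k+\text{errors})$ telescopes; one must verify the error equation $\mathbb{L}(V_{n+1},\psi_{n+1})-f = \text{(sum of quadratic/smoothing errors)}$ and similarly on the boundary.

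The inductive estimate to propagate is the Nash--Moser hypothesis: $\|(\delta V_n,\delta\psi_n)\|_{s}\lesssim \varepsilon\,\theta_n^{s-\alpha-1}\Delta_n$ for $s$ in a range up to roughly $m-$(fixed loss), together with control of the errors in a lower norm, say $\|(f_n,g_n)\|_{s}\lesssim \varepsilon^2\theta_n^{s-\alpha-1}\Delta_n$ for $s\le \tilde\alpha$. This is where the tame estimate of \S\ref{sec:tame}, of the form $\|(\delta V,\delta\psi)\|_{s}\lesssim \|(f,g)\|_{s+1}+\|(f,g)\|_{s_0}\|(\text{basic state})\|_{s+c}$ with a \emph{fixed} derivative loss $c$, is essential: combined with the Moser calculus inequalities (Proposition/inequalities in \S\ref{sec:non}), the smoothing-operator estimates, and the inequality $\theta_{n+1}\ge\theta_n$, one runs the standard dyadic bookkeeping (as in \cite{CS08MR2423311}, \cite{AG07MR2304160}) to close the induction for $\varepsilon$ small and the number of lost derivatives consistent with the hypothesis $m\ge 12$. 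Passing $n\to\infty$ gives $(V_n,\psi_n)\to(V,\psi)$ in $H^{s}$ for $s$ below the top index (hence the claimed regularity $H^{m-6}$), with $(f_n,g_n)\to 0$, so $(V,\psi)$ solves the nonlinear problem; restricting to $[0,T]$ and adding back the approximate solution yields the solution of \eqref{MCD0}. The constraints \eqref{inv1b}--\eqref{inv2b} are preserved for $t>0$ by the argument already cited from \cite[Appendix A]{T09ARMAMR2481071}, and uniqueness follows from the basic $H^1$ a priori estimate of \S\ref{sec:lin1} applied to the difference of two solutions.

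I expect the main obstacle to be precisely the interplay between the characteristic boundary and the Nash--Moser scheme: because the linear a priori estimate is closed in the \emph{usual} Sobolev spaces $H^m$ (not anisotropic ones) only thanks to recovering the missing normal derivatives from the transport equations for the linearized entropy and $\nabla^{\varPhi}\!\cdot H$, one must make sure that in the iteration these transport-type structures are respected by the smoothed states and that the error terms do not destroy them; this dictates the somewhat delicate choice of the effective linearized operator and of the decomposition of the errors, and it is the reason the iteration is ``modified'' rather than textbook. A secondary difficulty is careful tracking of the fixed derivative loss through the composition of tame estimates with Moser inequalities so that the final regularity index $m-6$ (and the threshold $m\ge12$) comes out correctly.
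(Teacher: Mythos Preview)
Your overall plan matches the paper's: reduce to zero initial data via an approximate solution, set up a Nash--Moser iteration with smoothing operators, solve the effective linearized problem at each step using the good unknown, control the quadratic/substitution/last errors, propagate an inductive hypothesis of the form $\|(\delta V_k,\delta\psi_k)\|_s\lesssim\boldsymbol{\epsilon}\theta_k^{s-\alpha-1}\Delta_k$, and pass to the limit. The index bookkeeping and the role of the tame estimate are essentially as you describe.

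However, you have misidentified the reason the scheme is ``modified,'' and this hides a genuine gap. The transport equations for the linearized entropy and for $\nabla^{\varPhi}\!\cdot H$ are used only to close the \emph{linear} $H^m$ estimate; once Theorems~\ref{thm:lin} and~\ref{thm:tame} are in hand, nothing about that transport structure needs to be ``respected by the smoothed states'' in the iteration. The real obstruction is different: those theorems require the basic state to satisfy the constraints~\eqref{bas1c} \emph{exactly}, namely $[\mathring{v}]=0$, $[\mathring{H}]=0$, and $\partial_t\mathring{\varphi}=\mathring{v}^+\cdot\mathring{N}$ on $\Sigma_T$. The naively smoothed state $(U^a+\mathcal{S}_{\theta_n}V_n,\,\varphi^a+\mathcal{S}_{\theta_n}\psi_n)$ does not satisfy these, so your proposal to linearize ``around the smoothed state $(\widebar{U}+S_{\theta_n}V_n,\,S_{\theta_n}\psi_n)$'' would fail: the hypotheses of the linear theorems are not met. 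The paper therefore constructs at each step a \emph{modified state} $(V_{n+1/2},\psi_{n+1/2})$ that is close to the smoothed state but satisfies~\eqref{bas1c} exactly (Proposition~\ref{pro:modified}); this is the substance of the ``modification'' and it generates an additional second-substitution error $e_n''',\tilde{e}_n'''$ (Lemma~\ref{lem:2nd}) that must be estimated separately. The construction of $H_{n+1/2}$ in particular requires showing $\|[H_n]\|_{H^s(\Sigma_T)}\lesssim\boldsymbol{\epsilon}\theta_n^{s-\alpha}$, which is not automatic and uses the propagation of the constraint $[H]\cdot N=0$ along the iteration. You should replace your diagnosis of the ``main obstacle'' with this point.
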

\begin{remark}
Since the relations $ \p_1\varPhi^{+}\geq \frac{1}{4}$ and $ \p_1\varPhi^{-}\leq - \frac{1}{4}$ hold in $[0,T]\times \Omega$ for $T>0$ sufficiently small,
we can obtain from Theorem \ref{thm:main}
a corresponding result for MHD contact discontinuities with surface tension in the original variables.	
\end{remark}
\begin{remark}
The proof of Theorem \ref{thm:main} is based on the tame energy estimate \eqref{tame:es} 
that exhibits a loss of two derivatives from the basic state to the solution.
It will be interesting to see whether the loss of regularity in Theorem \ref{thm:main} can be reduced through a direct nonlinear energy method, which has been employed by {\sc Stevens}  {\rm\cite{Stevens16MR3544315}} on compressible vortex sheets with surface tension.
\end{remark}

\subsection{Notation and Moser-type Calculus Inequalities}

Throughout this paper we adopt the following notation:
\begin{itemize}
 \item[(i)] We write the letter $C$ for some universal positive constant,
 and $C(\cdot)$ for some generic positive constant depending on the quantities listed in the parenthesis.
 The symbol $A\lesssim B$ means that $A \leq C B$.
 Given some parameters $a_1,\ldots,a_m$,
 we use $A\lesssim_{a_1,\ldots,a_m} B$ to denote the statement that $A \leq C(a_1,\ldots,a_m)B$.
 The notation $A\sim B$ means that $A\lesssim B\lesssim A$.

 \item[(ii)] The symbol $\Omega$ stands for the half-space $\{x\in\mathbb{R}^3: x_1>0 \}$.
 The boundary $\Sigma:=\{x\in\mathbb{R}^3: x_1=0 \}$ can be identified to $\mathbb{R}^{2}$.
We introduce $\Omega_t:=(-\infty,t)\times \Omega$ and $\Sigma_t:=(-\infty,t)\times \Sigma.$
Let us denote by $\p_t$ (or $\p_0$) the time derivative $\frac{\p}{\p t}$
and by $\p_i$ the space derivative $\frac{\p}{\p x_i}$.
We define $\nabla:=(\p_1,\p_2,\p_3)^{ \top}$ and  $x':=(x_2,x_3)$.

\item[(iii)]
For any $\alpha=(\alpha_1,\ldots,\alpha_n)\in\mathbb{N}^n$ and $u=(u_1,\ldots,u_n)\in\mathbb{R}^n$,
we introduce
\begin{align*}
&\alpha!:=\alpha_1!\cdots \alpha_n!,
\quad |\alpha|:=\alpha_1+\cdots+\alpha_n,
\quad u^{\alpha}:=u_1^{\alpha_1}\cdots u_n^{\alpha_n},
\\
&\mathrm{D}_{u}:=\left(\frac{\p}{\p u_1},\ldots, \frac{\p}{\p u_n} \right)^{\top},
\quad
\mathrm{D}_{u}^{\alpha}:=\left(\frac{\p}{\p u_1}\right)^{\alpha_1}\cdots
\left(\frac{\p}{\p u_n} \right)^{\alpha_n}.
\end{align*}
In particular,
$\mathrm{D}_{x'}:=(\p_2,\p_3)^{ \top}$
and $\mathrm{D}_{x'}^{\alpha}:=\p_2^{\alpha_2} \p_3^{\alpha_3}$
for $\alpha:=(\alpha_{2},\alpha_{3})\in\mathbb{N}^{2}$.
If $m\geq 2$ is an integer,  then we denote by
\begin{align}
\nonumber
 \mathrm{D}_{x'}^{m}:=(\p_2^m,\p_2^{m-1}\p_3,\ldots,\p_2 \p_3^{m-1}, \p_3^m)^{\top}
\end{align}
the vector of all partial derivatives in $x'$ of order $m$.

 \item[(iv)]
To simplify the notation, we write
\begin{alignat}{3}
 \nonumber
 &\mathrm{D}_{\rm tan}:=\mathrm{D}_{(t,x')}
 =(\p_t,\p_2,\p_3)^{\top},
 \quad &&
 \mathrm{D}_{\rm tan}^{\beta}:=\mathrm{D}_{(t,x')}^{\beta}
 =\p_t^{{\beta}_0}\p_2^{{\beta}_2}\p_3^{{\beta}_3},\\
 \nonumber
&\mathrm{D}:=\mathrm{D}_{(t,x)}=(\p_t,\p_1,\p_2,\p_3)^{ \top},
\quad &&\mathrm{D}^{\alpha}:=\mathrm{D}_{(t,x)}^{\alpha}
=\p_t^{\alpha_0} \p_1^{\alpha_1}\p_2^{\alpha_2} \p_3^{\alpha_3},
\end{alignat}
where  ${\beta}=({\beta}_0,{\beta}_{2},{\beta}_{3})\in\mathbb{N}^{3}$
and $\alpha=(\alpha_0,\alpha_1,\alpha_2,\alpha_{3})\in\mathbb{N}^{4}$.
Given any  integer $m\geq 0$, we define
 \begin{align}
  \label{VERT.tan}
  \VERT u \VERT_{{\rm tan},\,m}^2
  := \sum_{|{\beta}|\leq m}\| \mathrm{D}_{\rm tan}^{\beta} u  \|_{L^2(\Omega)}^2,
 \quad
\VERT u  \VERT_{m}^2 :=\sum_{|\alpha|\leq m} \|\mathrm{D}^{\alpha} u  \|_{L^{2} (\Omega)}^2.
 \end{align}

 \item[(v)]
 For any integer $m\geq 0$,
 a generic and smooth matrix-valued function of
 $\{(\mathrm{D}^{\alpha} \mathring{V},\mathrm{D}^{\alpha}\mathring{\varPsi},
 \mathrm{D}^{\alpha}\mathrm{D}_{x'}\mathring{\varPsi}): |\alpha|\leq m\}$,
 is denoted by $\mathring{\rm c}_m$,
 and by $\underline{\mathring{\rm c}}_m$ if it vanishes at the origin.
 The exact forms of $\mathring{\rm c}_m$ and $\underline{\mathring{\rm c}}_m$
 may change at each occurrence.

\end{itemize}

The following Moser-type calculus inequalities will be frequently employed in our calculations.
We refer the reader to \cite[Lemma 4.3]{CSW20MR4110436} and the references therein for the detailed proof.

\begin{lemma}
Let $n,d,m\in \mathbb{N}_+$.
Suppose that
$u= u(y)\in \mathbb{R}^n$ and $ w=w(y)\in \mathbb{R} $ are defined on $\mathcal{O}$,
where $\mathcal{O}\subset \mathbb{R}^d$ is any open set with Lipschitz boundary.
Let $h\in C^{\infty}(\mathbb{R}^n)$ and
$\alpha$, $\beta$, $\gamma\in\mathbb{N}^d$ with $|\alpha+\beta+\gamma|\leq m$.
\begin{itemize}
\item If $h(0)=0$ and $u \in L^{\infty}(\mathcal{O})\cap H^m(\mathcal{O})$, then
  \begin{alignat}{3}
	\label{Moser1}
	&\|h(u)\|_{H^{m}(\mathcal{O})}\lesssim_{C_0}\|u\|_{H^{m}(\mathcal{O})}.
\end{alignat}
\item If $ u,w\in L^{\infty}(\mathcal{O})\cap H^m(\mathcal{O})$, then
  \begin{align}
	\nonumber
	&\|\mathrm{D}_y^{\alpha} u\mathrm{D}_y^{\beta} w \|_{L^{2}(\mathcal{O})}+
	\| uw \|_{H^m(\mathcal{O})}
	\\
	&\qquad\qquad\qquad   \lesssim	
	\|u \|_{H^{m}(\mathcal{O})} \|w \|_{L^{\infty}(\mathcal{O})}
	+  \|u \|_{L^{\infty}(\mathcal{O})}\|w \|_{H^{m}(\mathcal{O})},
	\label{Moser2}
	\\[1mm]
	\nonumber
	&   \|\mathrm{D}_y^{\alpha} h(u)\mathrm{D}_y^{\beta} w \|_{L^{2}(\mathcal{O})}
	+     \| h(u)w \|_{H^m(\mathcal{O})}
	+  \|\mathrm{D}_y^{\alpha}[\mathrm{D}_y^{\beta},h(u)]\mathrm{D}_y^{\gamma}w \|_{L^{2}(\mathcal{O})}\\
	&\qquad\qquad\qquad\qquad \quad\, \lesssim_{C_0}
	\|u \|_{H^{m}(\mathcal{O})} \|w \|_{L^{\infty}(\mathcal{O})}
	+ \|w \|_{H^{m}(\mathcal{O})}.
	\label{Moser3}
\end{align}
\item  If $ w \in L^{\infty}(\mathcal{O})\cap H^{m-1}(\mathcal{O})$ and $u\in W^{1,\infty}(\mathcal{O})\cap H^{m}(\mathcal{O})$, then
  \begin{align}
	\|\mathrm{D}_y^{\alpha}[\mathrm{D}_y^{\beta},h(u)]\mathrm{D}_y^{\gamma}w \|_{L^{2}(\mathcal{O})}
	&\lesssim_{C_1}
	\|u  \|_{H^{m}(\mathcal{O})}  \|w  \|_{L^{\infty}(\mathcal{O})}
	+ \|w \|_{H^{m-1}(\mathcal{O})}.
	\label{Moser4}
\end{align}
\end{itemize}
Here $C_0 \geq \|u\|_{L^{\infty}(\mathcal{O})}$ and $C_1 \geq \|u\|_{W^{1,\infty}(\mathcal{O})}$ are some constants,
and $[a,b]c:=a(bc)-b(ac)$ denotes the commutator.
\end{lemma}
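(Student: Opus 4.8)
The plan is to reduce all four estimates to a single Gagliardo--Nirenberg-type product inequality: for any multi-indices $\mu,\nu\in\mathbb{N}^d$ with $|\mu|+|\nu|=\ell\leq m$ and any $f,g\in L^\infty(\mathcal{O})\cap H^\ell(\mathcal{O})$,
\begin{equation}\label{GNprod}
\|\mathrm{D}_y^\mu f\,\mathrm{D}_y^\nu g\|_{L^2(\mathcal{O})}\lesssim \|f\|_{L^\infty(\mathcal{O})}\|g\|_{H^\ell(\mathcal{O})}+\|f\|_{H^\ell(\mathcal{O})}\|g\|_{L^\infty(\mathcal{O})}.
\end{equation}
To prove \eqref{GNprod} I would first extend $f$ and $g$ to $\mathbb{R}^d$ by a Sobolev extension operator bounded simultaneously on $L^\infty$ and on $H^m$ (Stein's or Jones's universal extension for a Lipschitz domain; an explicit higher-order reflection suffices in the half-space case relevant here), which reduces matters to $\mathcal{O}=\mathbb{R}^d$. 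On $\mathbb{R}^d$ one has the scaling-invariant Gagliardo--Nirenberg inequality $\|\mathrm{D}^\mu f\|_{L^{2\ell/|\mu|}}\lesssim\|f\|_{L^\infty}^{1-|\mu|/\ell}\|\mathrm{D}^\ell f\|_{L^2}^{|\mu|/\ell}$ for $0<|\mu|<\ell$ (and likewise for $g$); combining these with H\"{o}lder's inequality for the conjugate exponents $2\ell/|\mu|$, $2\ell/|\nu|$ and then the weighted Young inequality $a^{|\mu|/\ell}b^{|\nu|/\ell}\leq\tfrac{|\mu|}{\ell}a+\tfrac{|\nu|}{\ell}b$ yields \eqref{GNprod}, the degenerate cases $|\mu|=0$ or $|\nu|=0$ being immediate from H\"{o}lder. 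The same argument with $r$ factors shows that whenever $\sum_{i=1}^r|\sigma_i|=\ell\leq m$ with each $|\sigma_i|\geq 1$, then $\|\prod_{i=1}^r\mathrm{D}_y^{\sigma_i}u\|_{L^2(\mathcal{O})}\lesssim\|u\|_{L^\infty(\mathcal{O})}^{r-1}\|u\|_{H^\ell(\mathcal{O})}$.

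Granting \eqref{GNprod}, the estimate \eqref{Moser2} is immediate: its first term is \eqref{GNprod} with $\ell=|\alpha+\beta|$, and the bound for $\|uw\|_{H^m}$ follows from the Leibniz rule $\mathrm{D}^\sigma(uw)=\sum_{\tau\leq\sigma}\binom{\sigma}{\tau}\mathrm{D}^\tau u\,\mathrm{D}^{\sigma-\tau}w$ applied termwise. For \eqref{Moser1}, the Fa\`{a} di Bruno formula expresses $\mathrm{D}_y^\alpha h(u)$, for $1\leq|\alpha|\leq m$, as a finite sum of terms $h^{(r)}(u)\prod_{i=1}^r\mathrm{D}_y^{\sigma_i}u$ with $\sum_i|\sigma_i|=|\alpha|$ and each $|\sigma_i|\geq 1$; since $\|u\|_{L^\infty}\leq C_0$ we have $\|h^{(r)}(u)\|_{L^\infty}\lesssim_{C_0}1$, so the multi-factor product bound above gives $\|h^{(r)}(u)\prod_i\mathrm{D}_y^{\sigma_i}u\|_{L^2}\lesssim_{C_0}\|u\|_{H^m}$. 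For the $L^2$-norm of $h(u)$ itself one uses $h(0)=0$ to obtain the pointwise bound $|h(u)|\lesssim_{C_0}|u|$; summing the finitely many contributions proves \eqref{Moser1}.

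Estimate \eqref{Moser3} then follows by combining \eqref{Moser1} and \eqref{Moser2}: applying \eqref{Moser2} with the function $h(u)$ in place of $u$, together with $\|h(u)\|_{L^\infty}\lesssim_{C_0}1$ and $\|h(u)\|_{H^m}\lesssim_{C_0}\|u\|_{H^m}$, controls the first two quantities, while the commutator $\mathrm{D}_y^\alpha[\mathrm{D}_y^\beta,h(u)]\mathrm{D}_y^\gamma w$ expands by the Leibniz rule into a finite sum of terms $\mathrm{D}_y^{a}h(u)\,\mathrm{D}_y^{b}w$ with $|a|+|b|\leq m$, each estimated by \eqref{Moser2}. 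Finally, for \eqref{Moser4} the key point is that in the Leibniz expansion of $[\mathrm{D}_y^\beta,h(u)]\mathrm{D}_y^\gamma w$ the factor $h(u)$ always carries at least one derivative; writing $\mathrm{D}_y^{\hat\delta}h(u)=\mathrm{D}_y^{\hat\delta-e}\big(h'(u)\,\mathrm{D}_y^{e}u\big)$ with $|e|=1$ and $e\leq\hat\delta$ rewrites $\mathrm{D}_y^\alpha[\mathrm{D}_y^\beta,h(u)]\mathrm{D}_y^\gamma w$ as a finite sum of terms $\mathrm{D}_y^{\tilde a}\big(h'(u)\mathrm{D}_y^{e}u\big)\,\mathrm{D}_y^{\tilde b}w$ with $|\tilde a|+|\tilde b|\leq m-1$. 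Applying \eqref{Moser2} at order $m-1$ with $f=h'(u)\mathrm{D}_y^{e}u$ and $g=w$, and then using $\|h'(u)\mathrm{D}_y^{e}u\|_{L^\infty}\lesssim_{C_1}1$ (this is exactly where $u\in W^{1,\infty}$ enters) and $\|h'(u)\mathrm{D}_y^{e}u\|_{H^{m-1}}\lesssim_{C_1}\|u\|_{H^m}$ (from \eqref{Moser1}--\eqref{Moser2}, splitting off the constant $h'(0)$ if necessary), yields \eqref{Moser4}.

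The main technical obstacle is \eqref{GNprod} on a general Lipschitz domain, which hinges on the availability of a Sobolev extension operator bounded on $L^\infty$ as well as on $H^m$; once \eqref{GNprod} is transplanted to $\mathbb{R}^d$, the rest is a routine combination of Gagliardo--Nirenberg, H\"{o}lder, Young, Leibniz and Fa\`{a} di Bruno, together with the bookkeeping ensuring that only $\|w\|_{H^{m-1}}$ and $\|u\|_{W^{1,\infty}}$ (and not $\|w\|_{H^m}$ or higher $L^\infty$ norms of $u$) are needed in \eqref{Moser4}. For the complete details we refer to \cite[Lemma 4.3]{CSW20MR4110436}.
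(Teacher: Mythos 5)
Your proposal is correct and follows the standard Gagliardo--Nirenberg route (extension to $\mathbb{R}^d$, interpolation plus H\"{o}lder and Young for the basic product bound, then Leibniz and Fa\`{a} di Bruno, with the commutator structure shifting one derivative onto $u$ to yield \eqref{Moser4}); this is the argument the paper defers to in \cite[Lemma 4.3]{CSW20MR4110436} rather than reproving, so your route coincides with the one the paper relies on. The only point worth noting explicitly is the splitting $h(u)=h(0)+(h(u)-h(0))$ in \eqref{Moser3} for unbounded $\mathcal{O}$ (which you do invoke for $h'$ in your \eqref{Moser4} step), so that the constant part lands in the bare $\|w\|_{H^m}$ term rather than in an unbounded $\|h(0)\|_{H^m(\mathcal{O})}$.
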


\section{Unique Solvability of the Linearized Problem} \label{sec:lin1}

In this section, we perform the linearization of \eqref{MCD0} and prove the well-posedness in the Sobolev space $H^1$ for the linearized problem.

\subsection{Linearization}

Let the basic state $(\mathring{U}(t,x),\mathring{\varphi}(t,x'))$
be a given and sufficiently smooth vector-valued function with $\mathring{U}:=(\mathring{U}^+,\mathring{U}^-)^{\top}$ and $\mathring{U}^{\pm}:=(\mathring{p}^{\pm},\mathring{v}^{\pm},\mathring{H}^{\pm},\mathring{S}^{\pm})^{\top}$.
Suppose that the basic state satisfies the hyperbolicity condition
\begin{align}
\label{bas1a}
\rho_*<\inf_{\Omega_T}\rho^{\pm}(\mathring{U}^{\pm})\leq \sup_{\Omega_T}\rho^{\pm}(\mathring{U}^{\pm})<\rho^*
\quad \textrm{for }\Omega_T:=(-\infty,T)\times \Omega,
\end{align}
the ``relaxed'' requirement of \eqref{BC3a},
\begin{gather}
\label{bas1b}
|\mathring{H}^{\pm}\cdot\mathring{N}|\geq \frac{\kappa}{2}>0
\quad \textrm{on } \Sigma_T:=(-\infty,T)\times\Sigma,
\end{gather}
and the last six conditions in \eqref{MCD0b} together with the constraint \eqref{inv2b},
\begin{align}
\label{bas1c}
[\mathring{v}]=0,\quad
[\mathring{H}]=0,\quad
\p_t\mathring{\varphi}=\mathring{v} ^+ \cdot\mathring{N}
\qquad \textrm{on } \Sigma_T,
\end{align}
where $\mathring{N} :=(1,-\p_2\mathring{\varphi} ,-\p_3\mathring{\varphi})^{\top}.$
Moreover, we suppose that
\begin{align}
\label{bas1d}
\|\mathring{V}\|_{H^5(\Omega_T)}
+\|(\mathring{\varphi},\mathrm{D}_{x'}\mathring{\varphi})\|_{H^5(\Sigma_T)}\leq K
\quad
\textrm{for } \mathring{V}:=(\mathring{V}^+,\mathring{V}^-)^{\top},
\end{align}
where  $K>0$ is some constant and
$\mathring{V}^{\pm}:=\mathring{U}^{\pm}-\widebar{U}^{\pm}$ denote the perturbations from the constant states $\widebar{U}^{\pm}$  ({\it cf.}~\eqref{U.bar:def}).
It follows from the embedding theorem and the assumption \eqref{bas1d} that
$ \|\mathring{V}\|_{W^{2,\infty}(\Omega_T)} +\|(\mathring{\varphi},\mathrm{D}_{x'}\mathring{\varphi})\|_{W^{3,\infty}(\Sigma_T)}\lesssim K.
$
Let us define
\begin{align*}
 \mathring{\varPhi}^{\pm}(t,x):=\pm x_1+\mathring{\varPsi}^{\pm}(t,x),\quad
 \mathring{\varPsi}^{\pm}(t,x):=\chi(\pm x_1)\mathring{\varphi}(t,x').
\end{align*}
Without loss of generality we assume that
$\|\mathring{\varphi}\|_{L^{\infty}(\Sigma_T)}\leq {\frac{1}{2}},$
leading to {$\p_1\mathring{\varPhi}^{+} \geq  \frac{1}{2}$ and $\p_1\mathring{\varPhi}^{-} \leq  -\frac{1}{2}$}  in $ \Omega_T.$
Use the properties of the cut-off function $\chi$ to find
\begin{gather*}
\|(\mathring{\varPsi},\mathrm{D}_{x'}\mathring{\varPsi})\|_{H^m(\Omega_T)}\sim \|(\mathring{\varphi},\mathrm{D}_{x'}\mathring{\varphi})\|_{H^m(\Sigma_T)},\\
\|(\mathring{\varPsi},\mathrm{D}_{x'}\mathring{\varPsi})\|_{W^{m,\infty}(\Omega_T)}\sim \|(\mathring{\varphi},\mathrm{D}_{x'}\mathring{\varphi})\|_{W^{m,\infty}(\Sigma_T)},
\end{gather*}
for $m\in\mathbb{N}$ and $\mathring\varPsi:=(\mathring\varPsi^+,\mathring\varPsi^-)^{\top}$.
As a result, we obtain
\begin{align}
\label{bas1d3}
\|(\mathring{V},\mathring{\varPsi},\mathrm{D}_{x'}\mathring{\varPsi})\|_{H^5(\Omega_T)}
+ \|\mathring{V}\|_{W^{2,\infty}(\Omega_T)}
+\|(\mathring{\varPsi},\mathrm{D}_{x'}\mathring{\varPsi})\|_{W^{3,\infty}(\Omega_T)}\lesssim K.
\end{align}

The linearized operators for \eqref{MCD0a}--\eqref{MCD0b} around the basic state $(\mathring{U},\mathring{\varphi})$ are defined by
\begin{align}
\label{L'bb:def}
\left\{
\begin{aligned}
&\mathbb{L}_{\pm}'\big(\mathring{U}^{\pm},\mathring{\varPhi}^{\pm}\big)(V^{\pm},\varPsi^{\pm})
:=\left.\frac{\mathrm{d}}{\mathrm{d}\theta}
\mathbb{L}_{\pm}\big(\mathring{U}^{\pm}+{\theta}V^{\pm}, \mathring{\varPhi}^{\pm}+{\theta}\varPsi^{\pm}\big)\right|_{\theta=0},\\[0.5mm]
&\mathbb{B}'\big(\mathring{U},\mathring{\varphi}\big)(V,\psi)
:=\left.\frac{\mathrm{d}}{\mathrm{d}\theta}
\mathbb{B}(\mathring{U}^{+}+{\theta}V^+,\mathring{U}^{-}+{\theta}V^-,
\mathring{\varphi}+{\theta}\psi)\right|_{\theta=0},
\end{aligned}
\right.
\end{align}
where $V:=(V^+,V^-)^{\top}$.
Applying the ``good unknown'' of {\sc Alinhac} \cite{A89MR976971},
\begin{align} \label{good}
\dot{V}:=\begin{pmatrix}
\dot{V}^+ \\ \dot{V}^-
\end{pmatrix}
\quad
\textrm{with \ }
\dot{V}^{\pm}:=V^{\pm}-\frac{\varPsi^{\pm}}{\partial_1 \mathring{\varPhi}^{\pm}}\partial_1\mathring{U}^{\pm},
\end{align}
we can simplify the linearized interior operators as
\begin{align}
 \mathbb{L}_{\pm}'\big(\mathring{U}^{\pm},\mathring{\varPhi}^{\pm}\big)(V^{\pm},\varPsi^{\pm})
&  = \mathbb{L}'_{e\pm}\big(\mathring{U}^{\pm},\mathring{\varPhi}^{\pm}\big)V^{\pm}
-L_{\pm}(\mathring{U}^{\pm}, \mathring{\varPhi}^{\pm})\varPsi^{\pm}\frac{\p_1\mathring{U}^{\pm}}{\p_1 \mathring{\varPhi}^{\pm}}
\label{L.prime}\\
&  =\mathbb{L}'_{e\pm}\big(\mathring{U}^{\pm},\mathring{\varPhi}^{\pm}\big)\dot{V}^{\pm}
+\frac{\varPsi^{\pm}}{\partial_1\mathring{\varPhi}^{\pm}}
\partial_1\mathbb{L}_{\pm}(\mathring{U}^{\pm},\mathring{\varPhi}^{\pm} )
\label{Alinhac}
\end{align}
with
\begin{align}
\label{L'e:def}
\mathbb{L}'_{e\pm}\big(U,\varPhi\big)V:=L_{\pm}\big(U,\varPhi\big)V+\mathcal{C}_{\pm}( U,\varPhi)V,
\end{align}
where $L_{\pm}(U,\varPhi)$ are the differential operators given in \eqref{L:def}
and $\mathcal{C}_{\pm}(U,\varPhi)$ are the zero-th order operators defined by
\begin{align} \nonumber 
\mathcal{C}_{\pm}({U},{\varPhi})V:=
\sum_{k=1}^{8}V_k\bigg(\frac{\p A^{\pm}_0}{\p {U_k}}({U}) \partial_t {U}
+ \frac{\p \widetilde{A}^{\pm}_1}{\p {U_k}}({U},{\varPhi}) \partial_1 {U}
+\sum_{i=2,3}\frac{\p A^{\pm}_i}{\p {U_k}}({U}) \partial_i {U}
\bigg).
\end{align}
It is worth pointing out that $\mathcal{C}_{\pm}({U},{\varPhi})$ are smooth matrix-valued functions of $(U,\mathrm{D}U,\mathrm{D}\mathrm{\varPhi})$ with $\mathrm{D}:=(\p_t,\p_1,\p_2,\p_3)^{\top}$.
The good unknown \eqref{good} is introduced to overcome the potential difficulty arising from the presence of the first-order terms in $\varPsi^{\pm}$; {\it cf.}~\eqref{L.prime}--\eqref{Alinhac}.

Using the constraint $[\mathring{H}_1]=0$, we compute ({\it cf.}~\cite[Section 2.1]{TW21b})
\begin{align}
\mathbb{B}'\big(\mathring{U},\mathring{\varphi}\big)(V,\psi)
=\begin{pmatrix}
 [p]-\mathfrak{s} \mathrm{D}_{x'}\cdot
 \bigg(\dfrac{\mathrm{D}_{x'}\psi}{|\mathring{N}|}-
 \dfrac{\mathrm{D}_{x'}\mathring{\varphi}\cdot\mathrm{D}_{x'}\psi}{|\mathring{N}|^3}\mathrm{D}_{x'}\mathring{\varphi}\bigg)\\[4mm]
 [v]\\[1mm]
 [H]\cdot\mathring{\tau}_1\\[1mm]
 [H]\cdot\mathring{\tau}_2\\[1mm]
 (\p_t + \mathring{v}_2^+ \p_2 + \mathring{v}_3^+ \p_3) \psi-v^+\cdot\mathring{N}
\end{pmatrix},
\label{B'.bb:def}
\end{align}
where  $\mathring{\tau}_1:=(\p_2\mathring{\varphi},1,0)^{\top}$
and
$\mathring{\tau}_2:=(\p_3\mathring{\varphi},0,1)^{\top}.$
Plug \eqref{good} into \eqref{B'.bb:def} to get
\begin{align}
\label{B'.bb:iden}
\mathbb{B}'(\mathring{U} ,\mathring{\varphi})(V,\psi)
=\mathbb{B}'_e(\mathring{U}, \mathring{\varphi}) (\dot{V},\psi),
\end{align}
where
\begin{align}
\mathbb{B}'_e(\mathring{U}, \mathring{\varphi}) (\dot{V},\psi) :=
\begin{pmatrix}
 [\dot{p}]-\mathring{a}_1\psi-\mathfrak{s} \mathrm{D}_{x'}\cdot
 \bigg(\dfrac{\mathrm{D}_{x'}\psi}{|\mathring{N}|}- \dfrac{\mathrm{D}_{x'}\mathring{\varphi}\cdot\mathrm{D}_{x'}\psi}{|\mathring{N}|^3}\mathrm{D}_{x'}\mathring{\varphi}\bigg)\\[4mm]
[{\dot{v}}] +\psi(\p_1\mathring{v}^+ +\p_1\mathring{v}^-) \\[1mm]
[\dot{H}]\cdot\mathring{\tau}_1-\mathring{a}_5\psi\\[1mm]
[\dot{H}]\cdot\mathring{\tau}_2-\mathring{a}_6\psi\\[1mm]
 (\p_t  +\mathring{v}_2^+ \p_2+\mathring{v}_3^+ \p_3)\psi
-\dot{v}^+\cdot\mathring{N}+\mathring{a}_7\psi
\end{pmatrix}
\label{Be.bb:def}
\end{align}
with
\begin{align}
\label{a.ring:def}
\left\{\begin{aligned}
&\mathring{a}_1:=-\p_1\mathring{p}^+-\p_1\mathring{p}^-,
&&\mathring{a}_5:=-\mathring{\tau}_1\cdot(\p_1\mathring{H}^+ +\p_1\mathring{H}^-),\\
&\mathring{a}_7:=-\p_1 \mathring{v}^+\cdot \mathring{N},\ &&
\mathring{a}_6:=-\mathring{\tau}_2\cdot(\p_1\mathring{H}^+ +\p_1\mathring{H}^-).
\end{aligned}
\right.
\end{align}

In light of the nonlinear analysis in \cite{CS08MR2423311,CSW19MR3925528,T09CPAMMR2560044,T09ARMAMR2481071,TW21MR4201624,TW21b},
we neglect the last terms in \eqref{Alinhac} to consider the {\it effective linear problem}
\begin{subequations} \label{ELP1}
\begin{alignat}{3}
\label{ELP1a}
&\mathbb{L}'_{e\pm}\big(\mathring{U}^{\pm},\mathring{\varPhi}^{\pm}\big)\dot{V}^{\pm}=f^{\pm}
&\qquad &\textrm{in } \Omega,\\
\label{ELP1b}
&\mathbb{B}'_e(\mathring{U}, \mathring{\varphi}) (\dot{V},\psi)=g
&&\textrm{on } \Sigma,\\
\label{ELP1c}
&(\dot{V},\psi)=0   &&\textrm{if } t<0,
\end{alignat}
\end{subequations}
where the operators $\mathbb{L}'_{e\pm}$ are defined by \eqref{L'e:def}.
The well-posedness result in $H^1$ for the effective linear problem \eqref{ELP1} is stated in the following theorem.
\begin{theorem}
\label{thm:lin}
Let the basic state $(\mathring{U},\mathring{\varphi})$ satisfy  \eqref{bas1a}--\eqref{bas1d}.
Then for all $f^{\pm}\in H^{1}(\Omega_T)$ and $g\in H^{3/2}(\Sigma_T)$ that vanish in the past, the problem \eqref{ELP1} admits a unique solution $(\dot{V},\psi)\in H^{1}(\Omega_T)\times H^{1}(\Sigma_T)$, such that
\begin{multline}
 \|\dot{V}  \|_{H^1(\Omega_T)}
 +  \|(\psi, \mathrm{D}_{x'}\psi )\|_{H^1(\Sigma_T)}\\
\leq C(K,\kappa,T)
\left( \|(f^+,f^-)\|_{H^1(\Omega_T)}+\|g\|_{H^{3/2}(\Sigma_T)}\right)
\label{H1:es}
\end{multline}
for some positive constant $C(K,\kappa,T)$ independent of $f^{\pm}$ and $g.$
\end{theorem}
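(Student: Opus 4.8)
The plan is to prove existence, uniqueness, and the estimate \eqref{H1:es} in three stages. First I would establish the \emph{a priori} estimate \eqref{H1:es} directly for sufficiently smooth solutions of \eqref{ELP1}. The key point, as emphasized in the introduction, is that the estimate is closed in $H^1$ rather than $L^2$: surface tension provides extra spatial regularity for $\psi$ through the term $\mathfrak{s}\mathrm{D}_{x'}\cdot(\cdots)$ in the first component of $\mathbb{B}'_e$, so that controlling $[\dot p]$ on $\Sigma$ gives control of $\mathrm{D}_{x'}^2\psi$ in $L^2(\Sigma_T)$. I would begin with the standard $L^2$ energy identity: multiply \eqref{ELP1a} by $\dot V^\pm$, integrate over $\Omega_t$, and use the symmetry of $A_0^\pm, \widetilde A_1^\pm, A_2^\pm, A_3^\pm$; the resulting boundary integral on $\Sigma_t$ is a quadratic form in the trace of $\dot V$ and $\psi$, which one rewrites using the boundary conditions \eqref{ELP1b}. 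Because the boundary is characteristic, this controls only the \emph{noncharacteristic} part of the trace (essentially $\dot v\cdot\mathring N$ and $[\dot p]$), and the surface-tension term contributes a positive definite piece $\mathfrak{s}\|\mathrm{D}_{x'}\psi\|^2$ after integration by parts on $\Sigma$. Then I differentiate \eqref{ELP1} tangentially (apply $\mathrm{D}_{\rm tan}$), commute with $\mathbb{L}'_{e\pm}$ and $\mathbb{B}'_e$ using the Moser-type inequalities of Lemma~1.4 (the commutators being controlled by the $H^5$-norm \eqref{bas1d3} of the basic state), and run the same energy argument to control $\VERT\dot V\VERT_{{\rm tan},1}$ and the tangential derivatives of $(\psi,\mathrm{D}_{x'}\psi)$ on $\Sigma_T$.

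The remaining — and most delicate — step of the \emph{a priori} estimate is recovering the missing \emph{normal} derivative $\p_1\dot V$. For characteristic problems this is generally impossible in the full $H^1$ sense, and here one exploits the special transport-type structure: the linearized equations for the entropy component $\dot S^\pm$ and for the linearized divergence $\nabla^{\mathring\varPhi^\pm}\cdot\dot H^\pm$ (together with an equation for $\mathring N\cdot\dot H^\pm$ obtained from a suitable combination of the $\dot H$-components, as in \cite{MTT15MR3306348,MTT18MR3766987,T09ARMAMR2481071}) are \emph{transport equations} with respect to $\p_t^{\mathring\varPhi}+\mathring v\cdot\nabla^{\mathring\varPhi}$. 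Since $\mathring v_1=0$ on $\Sigma$ (from \eqref{bas1c} with $\bar v_1=0$), this transport field is tangential at the boundary, so these quantities can be estimated in $H^1(\Omega_T)$ without any boundary term, \emph{in terms of the tangential norms already controlled}. Finally, the algebraic structure of the MHD system \eqref{A0:def}--\eqref{Ai:def} — specifically, that $\widetilde A_1^\pm$ restricted to the noncharacteristic components is invertible once $\dot S^\pm$ and the divergence/normal-magnetic-field combinations are known — lets one solve for the remaining $\p_1$-derivatives algebraically from \eqref{ELP1a}, closing the estimate \eqref{H1:es}.

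For existence, I would follow the $\varepsilon$-regularization strategy announced in the introduction. The obstacle is that the natural device for existence — a duality/Riesz-representation argument — requires an $L^2$ \emph{a priori} bound on both \eqref{ELP1} and its adjoint, which we do not have since our estimate lives in $H^1$. So I would introduce an $\varepsilon$-dependent regularized problem, adding dissipative terms (e.g.\ $-\varepsilon\Delta$-type or $\varepsilon\p_t$-type corrections on the boundary conditions and a parabolic regularization inside) chosen so as to \emph{preserve the transport structure} of $\dot S^\pm$ and $\nabla^{\mathring\varPhi^\pm}\cdot\dot H^\pm$ — this compatibility is what makes the scheme work and is the technically subtle design choice. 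For each fixed small $\varepsilon>0$ one closes an $\varepsilon$-dependent $L^2$ estimate for the regularized problem and its dual, whence the duality argument yields a solution $(\dot V_\varepsilon,\psi_\varepsilon)\in L^2$; elliptic/parabolic regularity then upgrades this to the regularity needed. Next, one re-runs the \emph{a priori} argument of the first two paragraphs on the regularized problem to obtain a bound in $H^1$ that is \emph{uniform in $\varepsilon$}. Passing to the limit $\varepsilon\to 0$ (weak compactness in $H^1$, strong convergence of lower-order terms) produces a solution of \eqref{ELP1} satisfying \eqref{H1:es}. Uniqueness is immediate from the \emph{a priori} estimate applied to the difference of two solutions, since \eqref{ELP1} is linear. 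I expect the construction and analysis of the $\varepsilon$-regularization — in particular verifying that it admits the dual $L^2$ estimate while keeping the entropy/divergence transport equations intact so that the uniform $H^1$ bound survives — to be the main obstacle.
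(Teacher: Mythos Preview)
Your proposal is correct and follows essentially the same strategy as the paper: reduce to homogeneous boundary data, derive the $H^1$ \emph{a priori} estimate by combining tangential energy estimates (exploiting the surface-tension term for extra $\mathrm{D}_{x'}\psi$ regularity) with algebraic recovery of $\p_1$ on the noncharacteristic part and transport-equation estimates for the entropy and linearized magnetic divergence, then construct solutions via an $\varepsilon$-regularization admitting a dual $L^2$ estimate and a uniform-in-$\varepsilon$ $H^1$ bound. One small correction: it is not $\mathring v_1=0$ on $\Sigma$ but rather the normal transport coefficient $\mathring w_1^\pm:=(\mathring v^\pm\!\cdot\mathring N^\pm-\p_t\mathring\varPhi^\pm)/\p_1\mathring\varPhi^\pm$ that vanishes on $\Sigma$ (by the last condition in \eqref{bas1c}); and the paper's actual regularization is a first-order term $-\varepsilon\bm J_\pm\p_1W^\pm$ in the interior together with $+\varepsilon(\p_2^4+\p_3^4)\psi$ on the boundary, with the matrices $\bm J_\pm$ chosen precisely so that the transport equations for $S^\pm$ and $\xi^\pm$ survive.
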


The rest of this section is dedicated to the proof of Theorem \ref{thm:lin}.

\subsection{Reductions}

It is more convenient to reduce the linearized problem \eqref{ELP1} into the case with homogeneous boundary conditions.
More precisely, if the source term $g=(g_1,\ldots,g_7)^{\top}$ vanishes in the past and belongs to $ H^{m+1/2}(\Sigma_T)$ for some $m\in\mathbb{N}$,
then we can define
${V}_{\natural}^{\pm}:=(p_{\natural}^{\pm},v_{\natural}^{\pm},H_{\natural}^{\pm},0)^{\top}\in H^{m+1}(\Omega_T)$ by
\begin{align}
\nonumber 
\left\{
\begin{aligned}
&p_{\natural}^{+}:=\mathfrak{R}_Tg_1,\
&& v_{\natural}^+:=-\mathfrak{R}_T(g_7,\,0,\,0)^{\top},\
&&H_{\natural}^+:=\mathfrak{R}_T (0,\, g_5,\, g_6)^{\top},\\
&p_{\natural}^-:= 0,\
&&v_{\natural}^-:=-\mathfrak{R}_T(g_2+ g_7,\, g_3,\, g_4)^{\top},
\
&&H_{\natural}^-:= (0,\,0,\,0)^{\top}
\end{aligned} \right.
\end{align}
where $\mathfrak{R}_T$ denotes the extension operator that is continuous from $H^{k+1/2}(\Sigma_T) $ to $H^{k+1}(\Omega_T)$ and satisfies
\begin{align} \label{R.frak}
(\mathfrak{R}_T w)|_{\Sigma_T}=w,\quad
\|\mathfrak{R}_T w\|_{H^{k+1}(\Omega_T)}\lesssim \|w\|_{H^{k+1/2}(\Sigma_T)},
\end{align}
for all $k=0,\ldots,m$.
Then the $H^{m+1}(\Omega_T)$--function
${V}_{\natural}:=({V}_{\natural}^+,{V}_{\natural}^-)^{\top}$ vanishes in the past and satisfies
\begin{align}
 \label{V.natural:es}
\left\{\begin{aligned}
&\mathbb{B}'_e(\mathring{U}, \mathring{\varphi}) (V_{\natural},0)
=g
&\quad& \textrm{on } \Sigma_T,\\[0.5mm]
&\|{V}_{\natural} \|_{H^{k+1}(\Omega_T)}\lesssim \|g\|_{H^{k+1/2}(\Sigma_T)}
&& \textrm{for } k=0,\ldots, m.
\end{aligned}
\right.
\end{align}
Consequently,
the new unknowns $V_{\flat}^{\pm}:=\dot{V}^{\pm}-{V}_{\natural}^{\pm}$
solve the following problem with zero boundary source term:
\begin{subequations} \label{ELP2}
\begin{alignat}{3}
\label{ELP2a}
&\mathbb{L}'_{e\pm}(\mathring{U}^{\pm}, \mathring{\varPhi}^{\pm}) {V}^{\pm}=f^{\pm}-\mathbb{L}_{e\pm}'(\mathring{U}^{\pm},\mathring{\varPhi}^{\pm})V_{\natural}^{\pm}
&\qquad &\textrm{in } \Omega,\\
\label{ELP2b}
&\mathbb{B}'_e(\mathring{U}, \mathring{\varphi}) ({V},\psi)=0
&&\textrm{on } \Sigma,\\[1mm]
\label{ELP2c}
&({V},\psi)=0   &&\textrm{if } t<0,
\end{alignat}
\end{subequations}
where the subscript ``$\flat$'' has been dropped to simplify the notation.

Moreover, to distinguish the noncharacteristic variables from others for the problem \eqref{ELP2}, we shall introduce the vectors
\begin{align} \nonumber
W^{\pm}:=
\big(p^{\pm},\, v^{\pm}\cdot \mathring{N}^{\pm}, \,v_2^{\pm}, \, v_3^{\pm},\, H^{\pm}\cdot\mathring{\tau}_1^{\pm},\, H^{\pm}\cdot\mathring{\tau}_2^{\pm},\,
H^{\pm}\cdot\mathring{N}^{\pm},\, S^{\pm}\big)^{\top},
\end{align}
where
\begin{align}
\nonumber 
\mathring{N}^{\pm}:=(1,-\p_2\mathring{\varPhi}^{\pm}, -\p_3\mathring{\varPhi}^{\pm})^{\top},
\
\mathring{\tau}_1^{\pm}:=(\p_2\mathring{\varPhi}^{\pm}, 1, 0)^{\top},
\
\mathring{\tau}_2^{\pm}:=(\p_3\mathring{\varPhi}^{\pm},  0,1)^{\top}.
\end{align}
Equivalently, we set
\begin{align}
W^{\pm}:=\mathring{J}_{\pm}^{-1}V^{\pm}\quad
\textrm{with \ }
\label{J.ring}
\mathring{J}_{\pm}:=\mathrm{diag}\,(1,\,\mathring{J}^{v}_{\pm},\,\mathring{J}^{H}_{\pm},\,1),
\end{align}
where
\setlength{\arraycolsep}{8pt}
\begin{gather}
\nonumber 
\mathring{J}^{v}_{\pm}:=
\begin{pmatrix}
1 & \partial_2\mathring{\varPhi}^{\pm}  & \partial_3\mathring{\varPhi}^{\pm}  \\[1mm]
0& 1 &0 \\[1mm]
\w{0} &\w{0}& \w{1}
\end{pmatrix},
\\[1mm]
\label{JH.ring:def}
\mathring{J}^{H}_{\pm}:=
\frac{1}{{|\mathring{N}^{\pm}|^2}}\begin{pmatrix}
 \p_2\mathring{\varPhi}^{\pm} &  \p_3\mathring{\varPhi}^{\pm}   & 1 \\[1mm]
  1+(\p_3\mathring{\varPhi}^{\pm})^2  & - \p_2\mathring{\varPhi}^{\pm} \p_3\mathring{\varPhi}^{\pm}   & - \p_2\mathring{\varPhi}^{\pm}   \\[1mm]
 - \p_2\mathring{\varPhi}^{\pm} \p_3\mathring{\varPhi}^{\pm}  &  1+(\p_2\mathring{\varPhi}^{\pm})^2   & - \p_3\mathring{\varPhi}^{\pm}
\end{pmatrix}.
\end{gather}
\setlength{\arraycolsep}{4pt}%
%
We remark that the matrices $\mathring{J}_{\pm}^v$, $\mathring{J}_{\pm}^H$, and $\mathring{J}_{\pm}$ are all invertible and smooth in $\mathrm{D}_{x'}\mathring{\varPsi}^{\pm}$.
Hence the problem \eqref{ELP2} can be rewritten equivalently as
\begin{subequations}
\label{ELP3}
\begin{alignat}{3}
\label{ELP3a}
&{{\bf L}}^{\pm}W^{\pm}:=\sum_{i=0}^3{\bm{A}}_i^{\pm} \p_i W^{\pm} +{\bm{A}}_4^{\pm}W^{\pm}=\bm{f}^{\pm}
&\quad &\textnormal{in }\Omega_T,\\
\label{ELP3b}
&[W_1]=\mathfrak{s}\mathrm{D}_{x'}\cdot
\bigg(\dfrac{\mathrm{D}_{x'}\psi}{|\mathring{N}|}-
\dfrac{\mathrm{D}_{x'}\mathring{\varphi}\cdot\mathrm{D}_{x'}\psi}{|\mathring{N}|^3}\mathrm{D}_{x'}\mathring{\varphi}
\bigg)+\mathring{a}_1\psi
&\quad &\textnormal{on }\Sigma_T,\\[0.5mm]
\label{ELP3c}
&[W_i]=\mathring{a}_i\psi \qquad \textrm{for }i=2,\ldots,6,
&\quad &\textnormal{on }\Sigma_T,\\[2mm]
\label{ELP3d}
&W_2^+=\mathring{\rm B}\psi
:=(\p_t+\mathring{v}_2^+\p_2+\mathring{v}_3^+\p_3)\psi+\mathring{a}_7 \psi
&\quad &\textnormal{on }\Sigma_T,\\[2mm]
\label{ELP3e}
&(W,\psi)=0 &\quad &\textnormal{if }t<0,
\end{alignat}
\end{subequations}
where $\p_0:=\frac{\p}{\p t}$ denotes the time derivative, $W:=(W^+,W^-)^{\top}$,
the terms $\mathring{a}_1$,  $\mathring{a}_5$,  $\mathring{a}_6$, $\mathring{a}_7$ are defined by \eqref{a.ring:def}, and
\begin{align}
\label{A.bm:def}
\left\{
\begin{aligned}
&{\bm{A}}_1^{\pm}:=\mathring{J}_{\pm}^{\top}\widetilde{A}^{\pm}_1(\mathring{U}^{\pm},\mathring{\varPhi}^{\pm})\mathring{J}_{\pm},\quad
&&{\bm{A}}_{i}^{\pm}:=\mathring{J}_{\pm}^{\top}A^{\pm}_{i}(\mathring{U}^{\pm})\mathring{J}_{\pm}\quad
\textrm{for }i=0,2,3, \\
&
{\bm{A}}_4^{\pm}:= \mathring{J}_{\pm}^{\top}\mathbb{L}_{e\pm}'(\mathring{U}^{\pm},\mathring{\varPhi}^{\pm})\mathring{J}_{\pm},\quad
&&\bm{f}^{\pm}:=\mathring{J}_{\pm}^{\top}\big(f^{\pm}-\mathbb{L}_{e\pm}'(\mathring{U}^{\pm},\mathring{\varPhi}^{\pm})V_{\natural}^{\pm}\big),\\
& \mathring{a}_2:= -\mathring{N}\cdot(\p_1\mathring{v}^+ +\p_1\mathring{v}^-), \quad
&&\mathring{a}_{k+1}:=-\p_1\mathring{v}_k^+ -\p_1\mathring{v}_k^-
\quad \textrm{for } k=2,3.
\end{aligned}
\right.
\end{align}
It is worth pointing out that
the scalars $\mathring{a}_1,\ldots,\mathring{a}_7$ are smooth functions of the traces $(\mathrm{D}\mathring{V},\mathrm{D}_{x'}\mathring{\varPsi})|_{\Sigma_{{T}}}$
and the matrices $\bm{A}_{0}^{\pm},\ldots,\bm{A}_{4}^{\pm}$ are smooth functions of
$(\mathring{V},\mathrm{D}\mathring{V},\mathrm{D}\mathring{\varPsi},\mathrm{D}\mathrm{D}_{x'}\mathring{\varPsi})$.
It should be emphasized that the system \eqref{ELP3a} is still symmetric hyperbolic.

It follows from the identities
\eqref{bas1c} and $\p_1\mathring{\varPhi}^{\pm}|_{\Sigma_T}=\pm 1$ that
\begin{align*}
\widetilde{A}^{\pm}_1(\mathring{U}^{\pm},\mathring{\varPhi}^{\pm})
=\pm \begin{pmatrix}
0& \mathring{N}^{\top} & 0 & 0\\[1mm]
\mathring{N} & O_3 &  \mathring{N} \otimes  \mathring{H}^{\pm} -\mathring{H}_N^{\pm}  I_3&0\\[1mm]
0 & \mathring{H}^{\pm}\otimes \mathring{N}-\mathring{H}_N^{\pm} I_3 &O_3 &0\\[1mm]
\w{0} &\w{0} &\w{0}  &\w{0}
\end{pmatrix}
\quad \textrm{on }\Sigma_T,
\end{align*}
where $\mathring{H}_N^{\pm}:=\mathring{H}^{\pm}\cdot \mathring{N}^{\pm}$.
Then we obtain the decomposition
\begin{align}
\label{decom}
{\bm{A}}_1^{\pm}={\bm{A}}_{(0)}^{\pm}+{\bm{A}}_{(1)}^{\pm},
\quad
{\bm{A}}_{(0)}^{\pm}\big|_{\Sigma_T}=0,
\end{align}
where
\begin{align}
\label{A(1):def}
{\bm{A}}_{(1)}^{\pm}:=
\pm \begin{pmatrix}
0 & 1 &0&0&0&0&0&0 \\[1mm]
1 & 0 & 0&0 &\mathring{H}_2^{\pm}  &\mathring{H}_3^{\pm}&0&0\\[1mm]
0 & 0 & 0 &0&-\mathring{H}_{N}^{\pm} &0&0&0\\[1mm]
0 & 0 & 0 &0 &0&-\mathring{H}_{N}^{\pm} &0&0\\[1mm]
0 & \mathring{H}_2^{\pm}&-\mathring{H}_{N}^{\pm} &0 & 0 &0 &0&0\\[1mm]
0 & \mathring{H}_3^{\pm}&0&-\mathring{H}_{N}^{\pm} & 0 &0 &0&0\\[1mm]
0 & 0&0&0&0&0&0&0 \\
\w{0} & \w{0} &\w{0} & \w{0}&\w{0}&\w{0}&\w{0}&\w{0}
\end{pmatrix}.
\end{align}
According to the kernels of the matrices ${\bm{A}}_1^{\pm}|_{\Sigma_T}$,
we use
\begin{align}
W_{\rm nc}^{\pm}:=(W_1^{\pm},\ldots,W_6^{\pm})^{\top}
\quad \textrm{and} \quad
 W_{\rm c}^{\pm}:=(W_7^{\pm},W_8^{\pm})^{\top}
\label{W.nc:def}
\end{align}
to denote the noncharacteristic and characteristic variables, respectively.
The boundary matrix for the hyperbolic problem \eqref{ELP3},
${\rm diag}\,(-{\bm{A}}_1^+,\,-{\bm{A}}_1^-)$,
has six negative eigenvalues (``incoming characteristics'') on the boundary $\Sigma_T$.
As discussed before,
the correct number of boundary conditions is seven, just the case in \eqref{ELP3b}--\eqref{ELP3d}.
Moreover, for our hyperbolic problem \eqref{ELP3},
the boundary is {\it characteristic of constant multiplicity}
and {\it the maximality condition} is satisfied  in the sense of {\sc Rauch} \cite[Definition 2 and condition (11)]{R85MR0797053}.

\subsection{$H^1$ \textit{a priori} Estimate}
In this subsection, we shall deduce the \textit{a priori} estimate in $H^1$ for solutions $W$ of the reduced problem \eqref{ELP3}.

\subsubsection{$L^2$ estimate of $W$}
Let us first make the $L^2$ estimate of $W$.
Since the matrices $\bm{A}_{0}^{\pm},\ldots,\bm{A}_{3}^{\pm}$  are all symmetric,
we take the scalar product of \eqref{ELP3a} with $W^{\pm}$ respectively and use \eqref{bas1d3} to get
\begin{align} \nonumber
&\sum_{\pm}\int_{\Omega} {\bm{A}}_0^{\pm}W^{\pm}\cdot W^{\pm} \d x
+ \int_{\Sigma_t}  \mathcal{T}_{\rm b}(W)\\
\nonumber &\quad =2\sum_{\pm}\int_{\Omega_t}W^{\pm}\cdot
\big(\bm{f}^{\pm}-\bm{A}_4^{\pm} W^{\pm}\big)+
\sum_{\pm}\sum_{i=0}^{3}\int_{\Omega_t}W^{\pm}\cdot\p_i\bm{A}_i^{\pm}W^{\pm}  \\[1mm]
&\quad \lesssim_K \|(\bm{f},W)\|_{L^2(\Omega_t)}^2,
\label{es1a}
\end{align}
where $K>0$ denotes the upper bound in \eqref{bas1d},
$\bm{f}:=(\bm{f}^+,\bm{f}^-)^{\top}$, and
\begin{align}
\label{T.cal:def}
\mathcal{T}_{\rm b}(U):=
-\sum_{\pm}{\bm{A}}_1^{\pm}U^{\pm}\cdot U^{\pm}
\quad \textrm{for any }U=(U^+,U^-)^{\top}\in\mathbb{R}^{16}.
\end{align}

Utilize the decomposition \eqref{decom}--\eqref{A(1):def},
the identity $[\mathring{H}]|_{\Sigma_{{T}}}=0$ ({\it cf.}~\eqref{bas1c}),
and the boundary conditions \eqref{ELP3c}--\eqref{ELP3d} to obtain
\begin{align}
\mathcal{T}_{\rm b}(W)
&=-2\big[W_2\big(W_1+\mathring{H}_2 W_5 +\mathring{H}_3 W_6\big) -\mathring{H}_N\big(W_3W_5+W_4W_6\big)\big]
\nonumber\\
&  =-2[W_1]W_2^+
+[(W_2,\ldots,W_6)] \mathring{\rm c}_0  \mathcal{U}\nonumber \\
&=-2[W_1]\mathring{\rm B}\psi + \mathring{\rm c}_1\psi  \mathcal{U}
\quad \textrm{ on }\Sigma_T,
 \label{id1a}
\end{align}
where $\mathring{\rm B}$ is the operator defined in \eqref{ELP3d} and
\begin{align}
\mathcal{U}:=
\big(W_1^-,W_2^-,W_3^-,W_4^-,W_2^+,W_5^+,W_6^+\big)^{\top}
\in\mathbb{R}^7.
 \label{U.cal:def}
\end{align}
In all that follows, for any $m\in\mathbb{N}$,
we employ $\mathring{\rm c}_m$ to denote a generic and smooth matrix-valued function of
$\{(\mathrm{D}^{\alpha} \mathring{V},\mathrm{D}^{\alpha}\mathring{\varPsi},
\mathrm{D}^{\alpha}\mathrm{D}_{x'}\mathring{\varPsi}):|\alpha|\leq m\}$.
It follows from \eqref{ELP3b} that
\begin{align}
  -2[W_1]\mathring{\rm B}\psi
   =\;&
 \p_t\bigg\{
 \mathfrak{s}\bigg(\dfrac{|\mathrm{D}_{x'}\psi|^2}{|\mathring{N}|}- \dfrac{|\mathrm{D}_{x'}\mathring{\varphi}\cdot\mathrm{D}_{x'}\psi|^2}{|\mathring{N}|^3}\bigg)
 - \mathring{a}_1    \psi^2   \bigg\}
 +\mathfrak{s}\mathring{\rm c}_2
 \mathrm{D}_{x'}\psi
 \cdot \begin{pmatrix}
  \psi\\ \mathrm{D}_{x'}\psi
 \end{pmatrix}
 \nonumber \\
 \nonumber
 &
  +\mathring{\rm c}_2  \psi^2
 +\sum_{k=2,3}\p_k\bigg\{
 \mathfrak{s} \mathring{v}_k^+ \bigg(\dfrac{|\mathrm{D}_{x'}\psi|^2}{|\mathring{N}|}-
 \dfrac{|\mathrm{D}_{x'}\mathring{\varphi}\cdot\mathrm{D}_{x'}\psi|^2}{|\mathring{N}|^3}
 \bigg)
 - \mathring{a}_1 \mathring{v}_k ^+  \psi^2 \bigg\}
 \\
 & -2\mathfrak{s} \mathrm{D}_{x'}\cdot
 \bigg\{\mathring{\rm B}\psi
 \bigg(\dfrac{\mathrm{D}_{x'}\psi}{|\mathring{N}|}-
 \dfrac{\mathrm{D}_{x'}\mathring{\varphi}\cdot\mathrm{D}_{x'}\psi}{|\mathring{N}|^3}\mathrm{D}_{x'}\mathring{\varphi}
 \bigg)
 \bigg\}
 \quad  \textrm{on } \Sigma_T.
 \label{id1b}
\end{align}

Plugging \eqref{id1a} into \eqref{es1a}, we use \eqref{id1b} and $|\mathring{N}|^2=1+|\mathrm{D}_{x'}\mathring{\varphi}|^2$ to infer
\begin{multline}
 \sum_{\pm}\int_{\Omega} {\bm{A}}_0^{\pm}W^{\pm}\cdot W^{\pm} \d x
+\int_{\Sigma} \bigg( \mathfrak{s}\dfrac{|\mathrm{D}_{x'}\psi|^2}{|\mathring{N}|^3}
-\mathring{a}_1 \psi^2\bigg)\d x'
  \\
 \lesssim_K \|(\bm{f},W)\|_{L^2(\Omega_t)}^2
 +\|\psi\mathcal{U}\|_{L^1(\Sigma_t)}
+\|(\psi, \mathrm{D}_{x'}\psi)\|_{L^2(\Sigma_t)}^2.
\label{es1b}
\end{multline}
Note from integration by parts and the condition \eqref{ELP3e} that
\begin{align} \label{es1c}
 \|\mathrm{D}_{x'}^{\alpha}\psi(t)\|_{L^2(\Sigma)}^2
 =2\int_{\Sigma_t}\mathrm{D}_{x'}^{\alpha}\psi\mathrm{D}_{x'}^{\alpha}\p_t\psi
 \lesssim  \|(\mathrm{D}_{x'}^{\alpha}\psi,\mathrm{D}_{x'}^{\alpha}\p_t\psi)\|_{L^2(\Sigma_t)}^2
\end{align}
for any $\alpha\in \mathbb{N}^2$,
where $\mathrm{D}_{x'}^{\alpha}:=\p_{2}^{\alpha_{2}}\p_{3}^{\alpha_{3}}$ for $\alpha=(\alpha_{2},\alpha_{3})$.
Then we discover
\begin{multline}
\|W(t)\|_{L^2(\Omega)}^2+\| (\psi,\mathrm{D}_{x'}\psi)(t)\|_{L^2(\Sigma)}^2  \\
\lesssim_K \|(\bm{f},W)\|_{L^2(\Omega_t)}^2
 +\|(\psi,\mathrm{D}_{x'}\psi,\p_t\psi, \mathcal{U})\|_{L^2(\Sigma_t)}^2.
\nonumber
\end{multline}
Applying Gr\"{o}nwall's inequality to the last estimate implies
\begin{align}
\|W(t)\|_{L^2(\Omega)}^2+\| (\psi,\mathrm{D}_{x'}\psi)(t)\|_{L^2(\Sigma)}^2
\lesssim_K \|\bm{f}\|_{L^2(\Omega_t)}^2
+\|( \p_t\psi, \mathcal{U})\|_{L^2(\Sigma_t)}^2.
 \label{es1d}
\end{align}
We emphasize that neither the estimate \eqref{es1b} nor \eqref{es1d} for $W$ is closed.

\subsubsection{$L^2$ estimate of $\mathrm{D}_{x'}W$}
We shall close the {\it a priori} estimate in $H^1$.
Let ${\ell}=0,2,3$.
Apply the differential operator $\p_{\ell}$ to \eqref{ELP3a}
and take the scalar product of the resulting equations with $\p_{\ell} W^{\pm}$ respectively to deduce
\begin{align}
 \sum_{\pm}\int_{\Omega} {\bm{A}}_0^{\pm}\p_{\ell}W^{\pm}\cdot \p_{\ell} W^{\pm}\d x
+\int_{\Sigma_t} \mathcal{T}_{\rm b}(\p_{\ell}W)
\lesssim_K \|(\bm{f},W)\|_{H^1(\Omega_t)}^2,
 \label{es2a}
\end{align}
where the operator $\mathcal{T}_{\rm b}$ is defined by \eqref{T.cal:def}.
Similar to \eqref{id1a}, taking advantage of the boundary conditions \eqref{ELP3b}--\eqref{ELP3d}, we obtain
\begin{align}
 \int_{\Sigma_{{t}}} \mathcal{T}_{\rm b}(\p_{\ell}W)
 &= -2\int_{\Sigma_{{t}}}\p_{\ell} [W_1]\p_{\ell} W_2^+
 +\int_{\Sigma_{{t}}}\p_{\ell} ( \mathring{\rm c}_1 \psi)\mathring{\rm c}_0  \p_{\ell} \mathcal{U}
 \nonumber\\
&=\mathcal{J}_{\ell}
-2\int_{\Sigma_{{t}}} \p_{\ell}(\mathring{a}_1\psi) \p_{\ell}W_2^+
 +\int_{\Sigma_{{t}}}\p_{\ell} ( \mathring{\rm c}_1 \psi)\mathring{\rm c}_0  \p_{\ell} \mathcal{U}\nonumber\\
&=\mathcal{J}_{\ell}
+\int_{\Sigma_{{t}}}\p_{\ell} ( \mathring{\rm c}_1 \psi)\mathring{\rm c}_0  \p_{\ell} \mathcal{U}
 \quad \textrm{on }\Sigma_T,
\label{id2a}
\end{align}
where $\mathcal{U}$ is the vector given by \eqref{U.cal:def} and
\begin{align}
\mathcal{J}_{\ell}
:=2\mathfrak{s} \int_{\Sigma_{{t}}}
\p_{\ell}\bigg(\dfrac{\mathrm{D}_{x'}\psi}{|\mathring{N}|}- \dfrac{\mathrm{D}_{x'}\mathring{\varphi}\cdot\mathrm{D}_{x'}\psi}{|\mathring{N}|^3}\mathrm{D}_{x'}\mathring{\varphi} \bigg)\cdot
\mathrm{D}_{x'}\p_{\ell} \mathring{\rm B}\psi .
\label{J.cal:def}
\end{align}

For ${\ell}=0,2,3$,  a lengthy but straightforward computation leads to
\begin{align}
 \mathcal{J}_{\ell}=
\mathcal{J}_{\ell}^a
+\mathcal{J}_{\ell}^b
+ \sum_{|\alpha|\leq 2}\int_{\Sigma_t}
\mathring{\rm c}_2
\begin{pmatrix}
 \mathrm{D}_{x'}^{\alpha}\psi \\ \mathrm{D}_{x'}\p_t \psi
\end{pmatrix}\cdot
\begin{pmatrix}
 \mathrm{D}_{x'}\psi \\   \mathrm{D}_{x'} \p_{\ell}\psi
\end{pmatrix},
\label{J.cal:id}
\end{align}
with
\begin{align}
\nonumber
&\mathcal{J}_{\ell}^a:=\mathfrak{s}\int_{\Sigma}\bigg(
\dfrac{|\mathrm{D}_{x'}\p_{\ell}\psi|^2}{|\mathring{N}|}- \dfrac{|\mathrm{D}_{x'}\mathring{\varphi}\cdot\mathrm{D}_{x'}\p_{\ell}\psi|^2}{|\mathring{N}|^3}
+\mathring{\rm c}_1 \mathrm{D}_{x'}\psi\cdot  \mathrm{D}_{x'}\p_{\ell}\psi
\bigg)\mathrm{d}x'
\\
& \mathcal{J}_{\ell}^b:=
2\mathfrak{s}\int_{\Sigma_t}
 \p_{\ell}\bigg(\dfrac{\mathrm{D}_{x'}\psi}{|\mathring{N}|}- \dfrac{\mathrm{D}_{x'}\mathring{\varphi}\cdot\mathrm{D}_{x'}\psi}{|\mathring{N}|^3}\mathrm{D}_{x'}\mathring{\varphi} \bigg)\cdot
 \mathrm{D}_{x'} \p_{\ell}(\mathring{a}_7\psi).
\nonumber
\end{align}
Utilizing Cauchy's inequality and \eqref{es1c} with $|\alpha|=1$ yields
\begin{align}
\mathcal{J}_{\ell}^a\geq \;&
\mathfrak{s} \int_{\Sigma}   \dfrac{|\mathrm{D}_{x'}\p_{\ell}\psi|^2}{|\mathring{N}|^3}\mathrm{d}x'
-C(K)\int_{\Sigma} | \mathrm{D}_{x'}\p_{\ell} \psi|  |\mathrm{D}_{x'}\psi| \d x'  \nonumber\\
\geq \;&
\frac{\mathfrak{s}}{2} \int_{\Sigma}   \dfrac{|\mathrm{D}_{x'}\p_{\ell}\psi|^2}{|\mathring{N}|^3}\mathrm{d}x'
-C(K)\|(\mathrm{D}_{x'} \psi,\mathrm{D}_{x'}\p_t\psi )\|_{L^2(\Sigma_t)}^2.
\label{Ja.cal:es}
\end{align}
To control the term $\mathcal{J}_{\ell}^b$, we shall use the following classical product estimate.
\begin{lemma} \label{lem:pro}
Let the nonnegative real numbers $s$, $s_1$, and $s_2$ satisfy $s_1,s_2\geq s$ and $s_1+s_2>s+1$.
Then the product mapping $(u,v)\mapsto uv$ is continuous from $H^{s_1}(\mathbb{R}^2)\times H^{s_2}(\mathbb{R}^2)$ to $H^{s}(\mathbb{R}^2)$ and satisfies
\begin{align}
 \label{pro:es}
 \|uv\|_{H^{s}(\mathbb{R}^2)}\lesssim \|u\|_{H^{s_1}(\mathbb{R}^2)}\|v\|_{H^{s_2}(\mathbb{R}^2)}.
\end{align}
\end{lemma}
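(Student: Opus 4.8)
The plan is to pass to the Fourier side and run the classical frequency-decomposition argument. Recall $\|w\|_{H^s(\mathbb{R}^2)}^2\sim\int_{\mathbb{R}^2}\langle\xi\rangle^{2s}|\widehat w(\xi)|^2\,\d\xi$ with $\langle\xi\rangle:=(1+|\xi|^2)^{1/2}$. Put $F:=\langle\cdot\rangle^{s_1}|\widehat u|$ and $G:=\langle\cdot\rangle^{s_2}|\widehat v|$, so that $F,G\in L^2(\mathbb{R}^2)$ with $\|F\|_{L^2}\sim\|u\|_{H^{s_1}(\mathbb{R}^2)}$ and $\|G\|_{L^2}\sim\|v\|_{H^{s_2}(\mathbb{R}^2)}$. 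From $\widehat{uv}=c\,\widehat u*\widehat v$ one gets the pointwise bound $\langle\xi\rangle^s|\widehat{uv}(\xi)|\lesssim\int_{\mathbb{R}^2}K(\xi,\eta)F(\xi-\eta)G(\eta)\,\d\eta$ with kernel $K(\xi,\eta):=\langle\xi\rangle^s\langle\xi-\eta\rangle^{-s_1}\langle\eta\rangle^{-s_2}$, and then \eqref{pro:es} is equivalent to bounding the $L^2_\xi$-norm of this integral by $\|F\|_{L^2}\|G\|_{L^2}$.

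Next I would split $\mathbb{R}^2_\eta$ into $A:=\{|\xi-\eta|\ge|\eta|\}$ and its complement; by the symmetry $(u,s_1)\leftrightarrow(v,s_2)$ it suffices to handle the contribution of $A$. On $A$ one has $|\xi|\le2|\xi-\eta|$, hence $\langle\xi\rangle\le2\langle\xi-\eta\rangle$ and, since $s\ge0$, $\langle\xi\rangle^s\lesssim\langle\xi-\eta\rangle^s$; moreover $\langle\eta\rangle\le\langle\xi-\eta\rangle$, and since $s-s_1\le0$ (here the assumption $s_1\ge s$ enters) the map $t\mapsto t^{\,s-s_1}$ is nonincreasing, so $\langle\xi-\eta\rangle^{s-s_1}\le\langle\eta\rangle^{s-s_1}$. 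Collecting these facts, $K(\xi,\eta)\lesssim\langle\eta\rangle^{\,s-s_1-s_2}$ on $A$, whence the $A$-part of the integral is at most $(F*h)(\xi)$ with $h:=\langle\cdot\rangle^{\,s-s_1-s_2}\,G\ge0$.

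Then I would finish with Young's convolution inequality, $\|F*h\|_{L^2}\le\|F\|_{L^2}\|h\|_{L^1}$, together with Cauchy--Schwarz, $\|h\|_{L^1}\le\|\langle\cdot\rangle^{\,s-s_1-s_2}\|_{L^2(\mathbb{R}^2)}\,\|G\|_{L^2}$. The weight $\langle\cdot\rangle^{\,s-s_1-s_2}$ belongs to $L^2(\mathbb{R}^2)$ precisely when $2(s_1+s_2-s)>2$, i.e.\ when $s_1+s_2>s+1$; this is exactly where the last hypothesis — and the value $d=2$ of the dimension — is used. Adding the symmetric estimate on the complementary region $\{|\xi-\eta|\le|\eta|\}$ (using $s_2\ge s$ in place of $s_1\ge s$) yields $\|uv\|_{H^s(\mathbb{R}^2)}\lesssim\|u\|_{H^{s_1}(\mathbb{R}^2)}\|v\|_{H^{s_2}(\mathbb{R}^2)}$, which is \eqref{pro:es}; continuity of the product map follows by bilinearity.

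There is no genuine analytic obstacle here: the whole argument is a bookkeeping of elementary inequalities for the bracket $\langle\cdot\rangle$, Young's inequality, and the integrability of a power weight on $\mathbb{R}^2$. The only thing to watch is that each of the three assumptions $s\ge0$, $s_1,s_2\ge s$, and $s_1+s_2>s+1$ is invoked at the right spot; in particular the borderline (strict) nature of the last one is what makes the power weight genuinely $L^2$. Alternatively, this may simply be cited as a standard fact from the theory of Sobolev multiplication.
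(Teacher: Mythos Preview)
The paper does not actually prove this lemma: it introduces it as ``the following classical product estimate'' and immediately applies \eqref{pro:es} without further justification. Your Fourier-analytic argument via the frequency decomposition $\{|\xi-\eta|\ge|\eta|\}\cup\{|\xi-\eta|\le|\eta|\}$, the kernel bound $K(\xi,\eta)\lesssim\langle\eta\rangle^{s-s_1-s_2}$, and Young plus Cauchy--Schwarz is correct and is precisely the standard proof of this classical fact; each hypothesis is used exactly where you indicate.
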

\noindent
By virtue of \eqref{pro:es} with $s=s_1=1$ and $s_2=\frac{3}{2}$, we deduce
\begin{align}
|\mathcal{J}_{\ell}^b|
&\lesssim
\|\p_{\ell}(\mathring{\rm c}_0 \mathrm{D}_{x'} \psi )\|_{L^2(\Sigma_t)}^2
+\int_{0}^t\| \p_{\ell}(\mathring{a}_7\psi) \|_{H^1(\Sigma)}^2\mathrm{d} \tau
\nonumber \\
&\lesssim_K
\|(\mathrm{D}_{x'} \psi,\mathrm{D}_{x'}\p_{\ell}\psi )\|_{L^2(\Sigma_t)}^2\nonumber
+\int_{0}^t\| ( \psi,\p_{\ell}\psi)\|_{H^1(\Sigma)}^2\| (\mathring{a}_7, \p_{\ell}\mathring{a}_7 ) \|_{H^{3/2}(\Sigma)}^2\mathrm{d}\tau
\\[2mm]
&\lesssim_K
\|(\psi,\p_{\ell}\psi,\mathrm{D}_{x'} \psi,\mathrm{D}_{x'}\p_{\ell}\psi )\|_{L^2(\Sigma_t)}^2,
\label{Jb.cal:es}
\end{align}
where we have used
\begin{multline}
 \| (\mathring{a}_7, \p_{\ell}\mathring{a}_7 ) (t)\|_{H^{3/2}(\Sigma)}
  \lesssim  \|\underline{\mathring{\rm c}}_2(t)\|_{H^{3/2}(\Sigma)}\\
\lesssim  \|\underline{\mathring{\rm c}}_2(t)\|_{H^{2}(\Omega)}
 \lesssim \| \underline{\mathring{\rm c}}_2  \|_{H^{3}(\Omega_T)}
\leq C(K)
\quad  \textrm{for  } 0\leq t \leq T,
\label{bas:es2}
\end{multline}
following from the trace theorem, the Moser-type calculus inequality \eqref{Moser1},
and the relation \eqref{bas1d3}.
Here and below, for any $m\in\mathbb{N}$,
we denote by $\underline{\mathring{\rm c}}_m$ a generic and smooth matrix-valued function of
$\{(\mathrm{D}^{\alpha} \mathring{V},\mathrm{D}^{\alpha}\mathring{\varPsi},
\mathrm{D}^{\alpha}\mathrm{D}_{x'}\mathring{\varPsi}): |\alpha|\leq m\}$
vanishing at the origin.
Substitute \eqref{Ja.cal:es} and \eqref{Jb.cal:es} into \eqref{J.cal:id} to obtain
\begin{align}
 \mathcal{J}_{\ell}
\geq \frac{\mathfrak{s}}{2} \int_{\Sigma}   \dfrac{|\mathrm{D}_{x'}\p_{\ell}\psi|^2}{|\mathring{N}|^3}\mathrm{d}x'
-C(K)\sum_{|\alpha|\leq 2} \|(\mathrm{D}_{x'}^{\alpha}\psi, \p_{\ell}\psi,\mathrm{D}_{x'}\p_t\psi )\|_{L^2(\Sigma_t)}^2
\label{J.cal:es}
\end{align}
for $\ell=0,2,3.$

Regarding the last term in \eqref{id2a} for $\ell=2,3$, we make use of the estimates \eqref{pro:es} and \eqref{bas:es2} to derive
\begin{align}
\left|\int_{\Sigma_{{t}}} \p_{\ell} ( \mathring{\rm c}_1 \psi)\mathring{\rm c}_0  \p_{\ell} \mathcal{U}\right|
&\lesssim \int_{0}^t \|\p_{\ell} ( \mathring{\rm c}_1 \psi)\mathring{\rm c}_0 \|_{H^{1}(\Sigma)}
\|\p_{\ell} \mathcal{U}\|_{H^{-1}(\Sigma)}\d \tau
\nonumber\\
&\lesssim  \int_{0}^t \|\mathring{\rm c}_2( \psi,\p_{\ell}\psi)\|_{H^{1}(\Sigma)}
\| \mathcal{U}\|_{L^{2}(\Sigma)}\d \tau
\nonumber \\
&\lesssim  \int_{0}^t \left(1+\| \underline{\mathring{\rm c}}_2  \|_{H^{3/2}(\Sigma)}\right)\| ( \psi,\p_{\ell}\psi)\|_{H^{1}(\Sigma)}
\| \mathcal{U}\|_{H^{1}(\Omega)}\d \tau
\nonumber \\[1.5mm]
&\lesssim_K  \sum_{|\alpha|\leq 2} \|\mathrm{D}_{x'}^{\alpha}\psi\|_{L^2(\Sigma_t)}^2
+\|W\|_{H^1(\Omega_{{t}})}^2
\quad \textrm{for }\ell=2,3.
\label{es2b}
\end{align}

Plugging \eqref{id2a} into \eqref{es2a} for $\ell=2,3$ and utilizing \eqref{J.cal:es}--\eqref{es2b} imply
\begin{multline}
\|\mathrm{D}_{x'}W(t)\|_{L^2(\Omega)}^2
+\|\mathrm{D}^2_{x'}\psi(t)\|_{L^2(\Sigma)}^2
\\
 \lesssim_K
  \|(\bm{f},W)\|_{H^1(\Omega_t)}^2   +  \sum_{|\alpha|\leq 2} \|(\mathrm{D}_{x'}^{\alpha}\psi, \mathrm{D}_{x'}\p_t\psi)\|_{L^2(\Sigma_t)}^2,
 \label{es2c}
\end{multline}
where $ \mathrm{D}_{x'}^{m}:=(\p_2^m,\p_2^{m-1}\p_3,\ldots,\p_2 \p_3^{m-1}, \p_3^m)^{\top}$ for any integer $m\geq 2$.

\subsubsection{$L^2$ estimate of $\p_t W$}
It follows from \eqref{id2a} that
 \begin{align}
 \int_{\Sigma_t}\mathcal{T}_{\rm b}(\p_t W)
 = \mathcal{J}_0
+\underbrace{\int_{\Sigma_t}   \mathring{\rm c}_1 \p_t\psi\p_t\mathcal{U}}_{\mathcal{I}_1}
+\underbrace{\int_{\Sigma_t}  \mathring{\rm c}_2 \psi\p_t \mathcal{U}}_{\mathcal{I}_2},
 \label{es3a}
\end{align}
where $\mathcal{J}_0$ and $\mathcal{U}$ are defined by \eqref{J.cal:def} and \eqref{U.cal:def}, respectively.

For the integral term $\mathcal{I}_1$, we use \eqref{ELP3d} to deduce
\begin{align}
 \mathcal{I}_1=
 \underbrace{\int_{\Sigma_t}  \mathring{\rm c}_1  \p_t\mathcal{U} W_2^+ }_{\mathcal{I}_{1}^a}
 \underbrace{-\int_{\Sigma_t}  \mathring{\rm c}_1  \p_t\mathcal{U}(\mathring{v}_2^+\p_2\psi+\mathring{v}_3^+\p_3\psi+\mathring{a}_7  \psi) }_{\mathcal{I}_{1}^b}.
 \label{I1:es}
\end{align}
Passing to the volume integral and applying integration by parts imply
\begin{align}
 \mathcal{I}_{1}^a
&= -\int_{\Omega_t}\p_1( \mathring{\rm c}_1 \p_t\mathcal{U} W_2^+  ) \nonumber \\
&  =-\int_{\Omega}  \mathring{\rm c}_1  \p_1 \mathcal{U}W_2^+\,\mathrm{d} x
+\int_{\Omega_t} \mathring{\rm c}_2
\begin{pmatrix}
\mathcal{U}\\ \p_1 \mathcal{U}
\end{pmatrix}\cdot
\begin{pmatrix}
 \mathcal{U} \\ \p_t \mathcal{U}
\end{pmatrix}
 \nonumber\\[2mm]
&  \geq -\boldsymbol{\epsilon} \|\p_1 \mathcal{U}(t)\|_{L^2(\Omega)}^2
-C(\boldsymbol{\epsilon} )C( K)\|\mathcal{U}\|_{H^1(\Omega_t)}^2
\quad \textrm{for all } \boldsymbol{\epsilon}>0,
\label{I1a:es}
\end{align}
and
\begin{align}
\mathcal{I}_{1}^b+\mathcal{I}_{2}
 & =\int_{\Sigma_t}(\psi,\mathrm{D}_{x'}\psi) \mathring{\rm c}_2 \p_t \mathcal{U} \nonumber\\
 &   =\int_{\Sigma}(\psi,\mathrm{D}_{x'}\psi) \mathring{\rm c}_2  \mathcal{U}\d x'
 -\int_{\Sigma_t}\p_t\big((\psi,\mathrm{D}_{x'}\psi) \mathring{\rm c}_2\big)  \mathcal{U}
 \nonumber \\[1mm]
 &   \geq
 -\|\mathcal{U}(t)\|_{L^2(\Sigma)}^2
 -\|\mathcal{U}\|_{L^2(\Sigma_t)}^2
 -C\sum_{ i=0,1}\big\|\p_t^i\big((\psi,\mathrm{D}_{x'}\psi) \mathring{\rm c}_2\big)\big\|_{L^2(\Sigma_t)}^2.
\label{I1b:es1}
\end{align}
In view of the product estimate \eqref{pro:es} with $s=0$, $s_1=1$, and $s_2=\frac{1}{2}$,
for the last term in \eqref{I1b:es1} we obtain
\begin{align*}
& \big\|\p_t\big((\psi,\mathrm{D}_{x'}\psi) \mathring{\rm c}_2\big)\big\|_{L^2(\Sigma_t)}^2\\
&\quad \lesssim   \big\|( \p_t \psi,\mathrm{D}_{x'}\p_t\psi)\mathring{\rm c}_2\big\|_{L^2(\Sigma_t)}^2
+\int_{0}^t \big\|(\psi, \mathrm{D}_{x'} \psi )\p_t\mathring{\rm c}_2\big\|_{L^2(\Sigma)}^2\d \tau\\
&\quad \lesssim_K  \big\|( \p_t \psi,\mathrm{D}_{x'}\p_t\psi)\big\|_{L^2(\Sigma_t)}^2
+\int_{0}^t \big\|(\psi, \mathrm{D}_{x'} \psi ) \big\|_{H^1(\Sigma)}^2
 \big\|\p_t\mathring{\rm c}_2\big\|_{H^{1/2}(\Sigma)}^2 \d \tau\\[1.5mm]
&\quad \lesssim_K
\big\|\big(\p_t \psi,\mathrm{D}_{x'}\p_t\psi\big)\big\|_{L^2(\Sigma_t)}^2
+\sum_{|\alpha|\leq 2}
\big\| \mathrm{D}_{x'}^{\alpha}\psi \big\|_{L^2(\Sigma_t)}^2.
\end{align*}
Substitute the last estimate into \eqref{I1b:es1} to infer
\begin{align}
\left|\mathcal{I}_{1}^b+\mathcal{I}_{2}\right|
\lesssim_K
\sum_{|\alpha|\leq 2}
\|(\mathcal{U}, \mathrm{D}_{x'}^{\alpha}\psi, \p_t \psi,\mathrm{D}_{x'}\p_t\psi)\|_{L^2(\Sigma_t)}^2
+\|W(t)\|_{L^2(\Sigma)}^2.
\label{I1b:es}
\end{align}
Let us pass the last term in \eqref{I1b:es} to the volume integral and get
\begin{align}
 \|W(t)\|_{L^2(\Sigma)}^2
\nonumber & \lesssim \boldsymbol{\epsilon} \|\p_1W(t)\|_{L^2(\Omega)}^2+ \boldsymbol{\epsilon}^{-1} \|W(t)\|_{L^2(\Omega)}^2\\
& \lesssim \boldsymbol{\epsilon} \|\p_1W(t)\|_{L^2(\Omega)}^2+ C(\boldsymbol{\epsilon}) \|W\|_{H^1(\Omega_t)}^2
\quad
\textrm{for all }\boldsymbol{\epsilon}>0.
 \label{es3b}
\end{align}

Plugging \eqref{es3a} into \eqref{es2a} with $\ell=0$, we utilize \eqref{J.cal:es}, \eqref{I1:es}--\eqref{I1a:es}, and \eqref{I1b:es}--\eqref{es3b} to derive
\begin{multline}
 \|\p_tW(t)\|_{L^2(\Omega)}^2+\| (\mathrm{D}_{x'}\p_t\psi,W)(t)\|_{L^2(\Sigma)}^2
 \lesssim_K
C(\boldsymbol{\epsilon})\|(\bm{f},W)\|_{H^1(\Omega_t)}^2 \\[1mm]
 + \boldsymbol{\epsilon}\|\p_1W(t)\|_{L^2(\Omega)}^2  + \sum_{|\alpha|\leq 2} \|(  \mathrm{D}_{x'}^{\alpha}\psi, \p_t\psi,\mathrm{D}_{x'}\p_t\psi,W)\|_{L^2(\Sigma_t)}^2
 \label{es3c}
\end{multline}
for all $\boldsymbol{\epsilon}>0.$

\subsubsection{$L^2$ estimate of $\p_1 W_{\rm nc}$}
Let us estimate the normal derivatives of the noncharacteristic variables $W_{\rm nc}:=(W_{\rm nc}^+,W_{\rm nc}^-)^{\top}$ with $W_{\rm nc}^{\pm}$ given in \eqref{W.nc:def}.
In light of the assumption \eqref{bas1b} and the continuity of the basic state $(\mathring{U},\mathring{\varphi})$,
we can find a small constant $0<\delta<1$, which depends on $\kappa$ and $K$, such that
\begin{align}
 |\mathring{H}_N^{\pm}|\geq  \frac{\kappa}{4}>0
\quad \textrm{in } \Omega_T^{\delta}:=(-\infty,T)\times \Omega^{\delta},
\label{bas2a}
\end{align}
where $\Omega^{\delta}:=\{x\in\mathbb{R}^3: 0<x_1<\delta\}$ denotes the $\delta$-neighbourhood of the boundary $\Sigma$.
Then we can define the matrices
\begin{align}
{\bm{B}}^{\pm}:=\pm \frac{1}{\mathring{H}_N^{\pm}}
\begin{pmatrix}
0 & \mathring{H}_N^{\pm} & \mathring{H}_2^{\pm} &\mathring{H}_3^{\pm} & 0 & 0& 0 & 0\\[0.5mm]
\mathring{H}_N^{\pm} & 0 & 0& 0 &0 & 0& 0 &0\\[0.5mm]
\mathring{H}_2^{\pm} & 0 & 0& 0 &-1 & 0& 0 &0\\[0.5mm]
\mathring{H}_3^{\pm} & 0 & 0& 0 &0 & -1& 0 &0\\[0.5mm]
0 & 0&  -1& 0 &0 &0 &0 & 0\\[0.5mm]
0 & 0 &0 &  -1& 0 &0 &0  & 0\\[0.5mm]
0 & 0 &0 &  0& 0 &0 &0  & 0\\[0.5mm]
\w{0} & \w{0} &\w{0} & \w{0} & \w{0} & \w{0} &\w{0} & \w{0}
\end{pmatrix}
\quad \textrm{in } \Omega_T^{\delta}.
\label{B.bm:def}
\end{align}
Thanks to the equations \eqref{ELP3a} and the decomposition \eqref{decom}--\eqref{A(1):def}, we get
\begin{multline} \label{id4a}
 \big(  \p_1 W_{\rm nc}^{\pm},\, 0,\, 0\big)^{\top}\\
 ={\bm{B}}^{\pm}
 \bigg(\bm{f}^{\pm}-{\bm{A}}_4^{\pm} W^{\pm}-\sum_{\ell=0,2,3}{\bm{A}}_{\ell}^{\pm}\p_{\ell} W^{\pm}-{\bm{A}}_{(0)}^{\pm} \p_1W^{\pm}\bigg)
\quad \textrm{in } \Omega_T^{\delta}.
\end{multline}
It follows from \eqref{bas1d3} and the second identity in \eqref{decom} that
\begin{align} \label{bas2b}
 \sup_{(t,x')\in (-\infty,T)\times\mathbb{R}^{2}}\big| {\bm{A}}_{(0)}^{\pm}(t,x_1,x')\big|
 \lesssim_K \sigma(x_1)
 \ \ \textrm{for all } x_1\geq 0,
\end{align}
where $\sigma=\sigma(x_1)$ is an increasing $C^{\infty}(\mathbb{R})$--function satisfying
\begin{align}
\label{sigma:def}
\sigma(x_1)=
\left\{\begin{aligned}
&x_1\quad &&\textrm{if } 0\leq x_1\leq 1\\
&2 \quad  &&\textrm{if } x_1\geq 4.
\end{aligned}\right.
\end{align}
By virtue of \eqref{id4a}--\eqref{bas2b}, we obtain
\begin{align}
 \|\p_1 W_{\rm nc}(t)\|_{L^2(\Omega^{\delta})}
 &\lesssim_K \|(W,\mathrm{D}_{\rm tan}W, \sigma\p_1 W,\bm{f})(t)\|_{L^2(\Omega)},
 \label{es4a}
\end{align}
where $\mathrm{D}_{\rm tan}:=(\p_t,\p_2,\p_3)^{\top}$.
Since the weight $\sigma$ vanishes on the boundary $\Sigma_T$,
we apply the operator $\sigma\p_1$ to \eqref{ELP3a} and multiply the resulting equations with $\sigma \p_1 W^{\pm}$ to derive
\begin{align}
 \|\sigma \p_1W(t)\|_{L^2(\Omega)}\lesssim_K \|(\bm{f},W )\|_{H^1(\Omega_t)}.
\label{es4b}
\end{align}
Moreover, the weight $\sigma$ is away from zero outside the boundary; more precisely, $\sigma(x_1)\geq \delta>0$ for all $x_1\geq \delta$.
Hence from the estimate \eqref{es4b} we infer
\begin{align}
 \|  \p_1W(t)\|_{L^2(\Omega\setminus \Omega^{\delta})}
 \lesssim_K
 \|(\bm{f},W )\|_{H^1(\Omega_t)},
 \label{es4c}
\end{align}
which combined \eqref{es4a}--\eqref{es4b} gives
\begin{align}
 \|\p_1 W_{\rm nc}(t)\|_{L^2(\Omega)}^2
 &\lesssim_K\|(W,\mathrm{D}_{\rm tan}W)(t)\|_{L^2(\Omega)}^2
 +\|(\bm{f},W)\|_{H^1(\Omega_t)} ^2.
 \label{es4d}
\end{align}

\subsubsection{$L^2$ estimate of $\p_1 W_{\rm c}$}\label{subsec:Wc}
It remains to control the normal derivatives of
the characteristic variables $W_{\rm c}:=(W_{\rm c}^{+},W_{\rm c}^{-})^{\top}$ with
$W_{\rm c}^{\pm}$ given in \eqref{W.nc:def}.

Since the matrices $\mathcal{C}_{\pm}(\mathring{U}^{\pm},\mathring{\varPhi}^{\pm})$
are smooth functions of $(\mathring{V},\mathrm{D}\mathring{V}, \mathrm{D}\mathring{\varPsi})$,
the equations for $W_8^{\pm}=S^{\pm}$ in \eqref{ELP2a} read as ({\it cf.}~\eqref{J.ring} and \eqref{A.bm:def})
\begin{align}
 \big(\p_t  +\mathring{w}^{\pm}_{1}\p_{1} +\mathring{v}_2^{\pm}\p_{2}+  \mathring{v}_3^{\pm}\p_{3}\big)W_8^{\pm}
 = \bm{f}_8^{\pm}+   \mathring{\rm c}_1 W \quad \textrm{in } \Omega_T,
\label{S:equ}
\end{align}
where
$
\mathring{w}^{\pm}_1
:=(\mathring{v}^{\pm}\cdot \mathring{N}^{\pm}-\p_t\mathring{\varPhi}^{\pm})/\p_1\mathring{\varPhi}^{\pm}
$
satisfy
\begin{align}
\mathring{w}^{\pm}_1 =0 \quad \textrm{on }\Sigma_{T},
 \label{w1.ring}
\end{align}
resulting from the assumption \eqref{bas1c}.
Differentiate \eqref{S:equ} with respect  to $x_1$ and use the identity \eqref{w1.ring} to deduce
\begin{align}
\|\p_1 W_8^{\pm}(t)\|_{L^2(\Omega)}^2\lesssim_K \|(\bm{f},W)\|_{H^1(\Omega_t)}^2.
\label{es5a}
\end{align}

In order to estimate the normal derivative of $W_7^{\pm}=H^{\pm}\cdot\mathring{N}^{\pm}$, we introduce the linearized divergences of the magnetic fields, that is,
\begin{align}
\xi^{\pm}:=\nabla^{\mathring{\varPhi}^{\pm}}\cdot H^{\pm},
\label{xi:def}
\end{align}
where the operators $\nabla^{\mathring{\varPhi}^{+}}$ and $\nabla^{\mathring{\varPhi}^{-}}$
are defined by \eqref{nabla.varPhi}--\eqref{differential}.
A direct computation shows
\begin{align}
\xi^{\pm}=\frac{1}{\p_1 \mathring{\varPhi}^{\pm}} \p_1 W_7^{\pm}
+\sum_{k=2,3} \bigg(
\frac{\p_1\p_k \mathring{\varPhi}^{\pm}}{ \p_1 \mathring{\varPhi}^{\pm}} H_k^{\pm}
+\p_k H_k^{\pm}
\bigg)
.
\label{xi:iden}
\end{align}
Hence it is sufficient to obtain the $L^2$ estimate of $\xi^{\pm}$.
For this purpose, we write down the equations for $H_j^{\pm}$ in \eqref{ELP2a} as follows:
\begin{align}
\p_t^{\mathring{\varPhi}^{\pm}}H_j^{\pm} + \mathring{v}^{\pm}\cdot\nabla^{\mathring{\varPhi}^{\pm}}H_j^{\pm}
- \mathring{H}^{\pm}\cdot\nabla^{\mathring{\varPhi}^{\pm}} v_j^{\pm}
+\mathring{H}_j^{\pm} \nabla^{\mathring{\varPhi}^{\pm}} \cdot v^{\pm}
= \mathring{\rm c}_1 \bm{f}+\mathring{\rm c}_1  W.
\nonumber
\end{align}
Applying the operators $\p_j^{\mathring{\varPhi}^{\pm}}$ to the last equations respectively,
we find
\begin{align}
 \big(\p_t  +\mathring{w}^{\pm}_{1}\p_{1} +\mathring{v}_2^{\pm}\p_{2}+  \mathring{v}_3^{\pm}\p_{3}\big)\xi^{\pm}
=\mathring{\rm c}_1 \mathrm{D}\bm{f}+ \mathring{\rm c}_2 \bm{f}+\mathring{\rm c}_1 \mathrm{D}W + \mathring{\rm c}_2 W,
\label{xi:equ}
\end{align}
which together with \eqref{w1.ring} leads to
\begin{align}
 \|\xi^{\pm}(t)\|_{L^2(\Omega)}\lesssim_K \|(\bm{f},W)\|_{H^1(\Omega_t)}.
\nonumber
\end{align}
Combine the last estimate with \eqref{xi:iden} to deduce
\begin{align}
 \|\p_1 W_7^{\pm}(t)\|_{L^2(\Omega)}^2\lesssim_K\|(W,\mathrm{D}_{\rm tan}W)(t)\|_{L^2(\Omega)}^2
 +\|(\bm{f},W)\|_{H^1(\Omega_t)}^2 .
\label{es5b}
\end{align}

\subsubsection{Conclusion}
Taking a suitable linear combination of \eqref{es1d}, \eqref{es2c}, \eqref{es3c}, \eqref{es4d}, \eqref{es5a}, and \eqref{es5b}, we choose $\boldsymbol{\epsilon}>0$ sufficiently small to derive
\begin{multline}
\|(W,\mathrm{D}W)(t)\|_{L^2(\Omega)}^2+
\sum_{|\alpha|\leq 2}\|(\mathrm{D}_{x'}^{\alpha}\psi, \mathrm{D}_{x'}\p_t\psi,W)(t)\|_{L^2(\Sigma)}^2
\\
 \qquad \lesssim_K
 \|(\bm{f},W)\|_{H^1(\Omega_t)}^2+
\sum_{|\alpha|\leq 2}\|(\mathrm{D}_{x'}^{\alpha}\psi, \p_t \psi,\mathrm{D}_{x'}\p_t\psi,W)\|_{L^2(\Sigma_t)}^2.
\label{es6a}
\end{multline}
Note from the boundary condition \eqref{ELP3d} that
\begin{align}
 \|\p_t\psi\|_{L^2(\Sigma_t)}
 \lesssim_K\|(W_2^+,\psi,\mathrm{D}_{x'}\psi)\|_{L^2(\Sigma_t)} .
 \label{es6b}
\end{align}
Substitute \eqref{es6b} into \eqref{es6a} and utilize Gr\"{o}nwall's inequality to obtain
\begin{align}
\|(W,\mathrm{D}W)(t)\|_{L^2(\Omega)}^2+
\sum_{|\alpha|\leq 2}\|(\mathrm{D}_{x'}^{\alpha}\psi, \mathrm{D}_{x'}\p_t\psi,W)(t)\|_{L^2(\Sigma)}^2
\lesssim_K
\|\bm{f}\|_{H^1(\Omega_t)}^2,
\nonumber
\end{align}
which combined with \eqref{es6b} implies the desired $H^1$ estimate
\begin{align}
\|W\|_{H^1(\Omega_t)}
 +\|W\|_{L^2(\Sigma_t)}  + \|(\psi, \mathrm{D}_{x'}\psi)\|_{H^1(\Sigma_t)}
\lesssim_K
 \|\bm{f}\|_{H^1(\Omega_t)}
\label{es6c}
\end{align}
for all $0\leq t\leq T$.

\subsection{Well-posedness of the $\varepsilon$--Regularization}

For the linearized problem \eqref{ELP3}, we introduce the $\varepsilon$--regularization
\begin{subequations} \label{Reg}
 \begin{alignat}{3}
  \label{Reg.a}
  &{{\bf L}}_{\varepsilon}^{\pm}W^{\pm}:={{\bf L}} ^{\pm}W^{\pm} -\varepsilon\bm{J}_{\pm}\p_1 W^{\pm} =\bm{f}^{\pm}
  &\quad &\textnormal{in }\Omega_T,\\
  \label{Reg.b}
  &
  [W_1]=\mathfrak{s}\mathrm{D}_{x'}\cdot
  \bigg(\dfrac{\mathrm{D}_{x'}\psi}{|\mathring{N}|}-
  \dfrac{\mathrm{D}_{x'}\mathring{\varphi}\cdot\mathrm{D}_{x'}\psi}{|\mathring{N}|^3}\mathrm{D}_{x'}\mathring{\varphi}
  \bigg)+ \mathring{a}_1 \psi
  &\quad &\textnormal{on }\Sigma_T,\\
  \label{Reg.c}
&[W_i]=\mathring{a}_i\psi \qquad \textrm{for }i=2,\ldots,6,
&\qquad &\textnormal{on }\Sigma_T,\\[2mm]
\label{Reg.d}
&W_2^+=(\p_t+\mathring{v}_2^+\p_2+\mathring{v}_3^+\p_3)\psi+\mathring{a}_7 \psi
+\varepsilon(\p_2^4+\p_3^4)\psi
&\qquad &\textnormal{on }\Sigma_T,\\[2mm]
  \label{Reg.e}
&(W,\psi)=0 &\quad &\textnormal{if }t<0,
 \end{alignat}
\end{subequations}
where $\varepsilon>0$ denotes the small parameter,
$W:=(W^+,W^-)^{\top}$,
the operators ${{\bf L}} ^{\pm}$ and
the scalars $\mathring{a}_1,\ldots,\mathring{a}_7$ are given in \eqref{ELP3a}, \eqref{a.ring:def}, and \eqref{A.bm:def}.
To derive a uniform--in--$\varepsilon$ estimate in $H^1$ for  solutions $W$ of the problem \eqref{Reg}, we design the following symmetric matrices:
\begin{alignat}{3}  \label{J.bm:def}
 \bm{J}_{+}:=
 \mathrm{diag}\,\big(0,\,1,\,0,\,0,\,\bm{J}_{+}^H,\,0\big),
 \quad
 \bm{J}_{-}:=\mathrm{diag}\,\big(1,\,1,\,1,\,1,\,0,\,0,\,0,\,0\big),
\end{alignat}
where the matrix $\bm{J}_{+}^H$ is related with $\mathring{J}_{+}^H$ ({\it cf.}~\eqref{JH.ring:def}) through
\setlength{\arraycolsep}{8pt}
\begin{align}
 \bm{J}_{+}^H:=(\mathring{J}_{+}^H)^{\top} \mathring{J}_{+}^H=
 \frac{1}{{|\mathring{N}^{+}|^2}}
 \begin{pmatrix}
  1+(\p_3\mathring{\varPhi}^{+})^2
  &  -\p_2\mathring{\varPhi}^{+} \p_3\mathring{\varPhi}^{+}   & 0 \\[1mm]
  - \p_2\mathring{\varPhi}^{+} \p_3\mathring{\varPhi}^{+}
  &1+(\p_2\mathring{\varPhi}^{+})^2  & 0   \\[1mm]
  0&  0   & 1
 \end{pmatrix}.
\label{J.bm.H:def}
\end{align}
\setlength{\arraycolsep}{4pt}

It is important to point out that there is some $\varepsilon_0>0$ depending on $K$, such that if $0<\varepsilon\leq \varepsilon_0$,
then the boundary matrix for the problem \eqref{Reg}, {\it i.e.},
$$\mathrm{diag}\,\left(\varepsilon\bm{J}_{+} -\bm{A}_1^+,\,
\varepsilon\bm{J}_{-}-\bm{A}_1^-\right),$$
has six negative eigenvalues on the boundary $\Sigma_{T}$.
As analyzed for \eqref{ELP3}, the hyperbolic problem \eqref{Reg} has a correct number of boundary conditions provided $0<\varepsilon\leq \varepsilon_0$.

In this subsection, we are going to deduce the $\varepsilon$-dependent $L^2$ {\it a priori} estimates for the regularized problem \eqref{Reg} and its dual problem (see \S \ref{sec.dual1}) for any fixed parameter $\varepsilon\in(0,\varepsilon_1)$ with $\varepsilon_1\leq \varepsilon_0$ small enough,
which allows us to solve the problem \eqref{Reg} in $L^2$ by the duality argument.

\subsubsection{$L^2$ \textit{a priori} estimate}
Take the scalar product of \eqref{Reg.a} with $W^{\pm}$ to obtain
\begin{align} \label{Res1a}
\sum_{\pm}\int_{\Omega} {\bm{A}}_0^{\pm}W^{\pm}\cdot W^{\pm} \d x
+\int_{\Sigma_t} \mathcal{T}_{\rm b}^{\varepsilon} (W)
 \lesssim_K \|(\bm{f},W)\|_{L^2(\Omega_t)}^2,
\end{align}
where we denote
\begin{align}
\label{Tb.bf}
\mathcal{T}_{\rm b}^{\varepsilon} (U):=\sum_{\pm}(\varepsilon\bm{J}_{\pm} -{\bm{A}}_1^{\pm})U^{\pm}\cdot U^{\pm}
\ \ \textrm{for any } U:=(U^+,U^-)^{\top}\in\mathbb{R}^{16}.
\end{align}
As for \eqref{id1a}, we get from \eqref{J.bm:def}--\eqref{J.bm.H:def} and \eqref{Reg.c}--\eqref{Reg.d} that
\begin{align}
\mathcal{T}_{\rm b}^{\varepsilon} (W)
&=\varepsilon\sum_{\pm} \bm{J}_{\pm}W^{\pm}\cdot W^{\pm}
-2[W_1]W_2^+
+[(W_2,\ldots,W_6)]\mathring{\rm c}_0\mathcal{U}
\nonumber\\
&=\varepsilon |\mathcal{U}_{\rm reg}|^2
-2[W_1]\mathring{\rm B}\psi -2\varepsilon\sum_{k=2,3}[W_1]\p_k^4\psi
+\mathring{\rm c}_1 \psi \mathcal{U}
\quad \textrm{on }\Sigma_T,
\label{Rid1a}
\end{align}
with $\mathring{\rm B}$ and $\mathcal{U}$ given in \eqref{ELP3d} and \eqref{U.cal:def},
where
\begin{align}
 &
 \mathcal{U}_{\rm reg}:=
 \mathrm{diag}\,\big(1,\,1,\,1,\,1,\,1,\,\mathring{J}_{+}^H\big)
\begin{pmatrix}
\mathcal{U} \\ W_7^+
\end{pmatrix} \in\mathbb{R}^8.
 \label{U.cal.r:def}
\end{align}
Since the matrix $\mathring{J}^H_+$ is invertible and smooth in $\mathrm{D}_{x'}\mathring{\varPsi}^{+}$ ({\it cf.}~\eqref{JH.ring:def}), we have
\begin{align}
|\mathcal{U}|+|W_7^+|\lesssim_K |\mathcal{U}_{\rm reg}|\lesssim_K |\mathcal{U}|+|W_7^+|.
\label{U.Ur:id}
\end{align}
For ${k}=2,3$, it follows from \eqref{Reg.b} that
\begin{align}
 -\int_{\Sigma_t} [W_1] \p_{k}^4\psi
 &=\mathfrak{s}\int_{\Sigma_t}\p_{k}^2
 \bigg(\dfrac{\mathrm{D}_{x'}\psi}{|\mathring{N}|}-
 \dfrac{\mathrm{D}_{x'}\mathring{\varphi}\cdot\mathrm{D}_{x'}\psi}{|\mathring{N}|^3}\mathrm{D}_{x'}\mathring{\varphi}
 \bigg)
 \cdot \p_{k}^2\mathrm{D}_{x'}\psi
- \int_{\Sigma_t}  \mathring{a}_1 \psi \p_{k}^4\psi
 \nonumber
 \\
 &  \geq
 \mathfrak{s}\int_{\Sigma_t} \dfrac{|\p_{k}^2\mathrm{D}_{x'}\psi|^2}{|\mathring{N}|^3}
 -\int_{\Sigma_t}  \left|[\p_{k}^2,\mathring{\rm c}_0]\mathrm{D}_{x'}\psi \cdot \p_{k}^2\mathrm{D}_{x'}\psi
 +\p_{k}(\mathring{a}_1 \psi )\p_{k}^3\psi\right|
 \nonumber \\
 &  \geq
 \frac{\mathfrak{s}}{2}\int_{\Sigma_t} \dfrac{|\p_{k}^2\mathrm{D}_{x'}\psi|^2}{|\mathring{N}|^3}
 -C(K)\sum_{|\alpha|\leq 2}\|\mathrm{D}_{x'}^{\alpha}\psi\|_{L^2(\Sigma_t)}^2.
 \label{Res1b}
\end{align}
Substituting \eqref{Rid1a} into \eqref{Res1a},
we utilize \eqref{id1b} and \eqref{U.Ur:id}--\eqref{Res1b} to infer
\begin{multline}
 \|W(t)\|_{L^2(\Omega)}^2+\|\mathrm{D}_{x'}\psi(t)\|_{L^2(\Sigma)}^2
 +\varepsilon\sum_{k=2,3}\|(\mathcal{U},W_7^+,\p_k^2\mathrm{D}_{x'}\psi)\|_{L^2(\Sigma_t)}^2  \\
 \lesssim_K
 \|(\bm{f},W)\|_{L^2(\Omega_t)}^2+\|\psi(t)\|_{L^2(\Sigma)}^2
+\big\|\psi \mathcal{U}\big\|_{L^1(\Sigma_t)}   \\[2.5mm]
+\|(\psi, \mathrm{D}_{x'}\psi)\|_{L^2(\Sigma_t)}^2
 +\varepsilon\|\mathrm{D}^2_{x'} \psi\|_{L^2(\Sigma_t)}^2 ,
 \label{Res1c}
\end{multline}
where $\mathcal{U}$ is the vector defined by \eqref{U.cal:def}.

From the boundary condition \eqref{Reg.d}, we employ the standard argument of the energy method to deduce
\begin{align}
\nonumber \|\psi(t)\|_{L^2(\Sigma)}^2
 + \varepsilon \sum_{k=2,3}\|\p_k^2\psi\|_{L^2(\Sigma_t)}^2
&\lesssim_K \|\psi\|_{L^2(\Sigma_t)}^2+\|\psi W_2^+\|_{L^1(\Sigma_t)}\\
 &\lesssim_K  \boldsymbol{\epsilon} \varepsilon \|W_2^+\|_{L^2(\Sigma_t)}^2
 +C(\boldsymbol{\epsilon} \varepsilon)\|\psi\|_{L^2(\Sigma_t)}^2
 \label{Res1d}
\end{align}
for all $\boldsymbol{\epsilon}>0$, where $C(\boldsymbol{\epsilon}\varepsilon)\to +\infty$ as $\boldsymbol{\epsilon}\varepsilon\to 0$.
Use integration by parts to get
\begin{align}
 \|\mathrm{D}^2_{x'}\mathrm{D}_{x'}^{\alpha}\psi\|_{L^2(\Sigma)}^2
 \lesssim  \sum_{k=2,3}\|\p_k^2\mathrm{D}_{x'}^{\alpha}\psi\|_{L^2(\Sigma)}^2
\quad \textrm{for any }\alpha\in\mathbb{N}^2.
 \label{Res1e}
\end{align}
Then it follows from  \eqref{Res1d} and \eqref{Res1e} with $\alpha=0$ that
\begin{align}
\|\psi(t)\|_{L^2(\Sigma)}^2
 + \varepsilon \|\mathrm{D}^2_{x'} \psi\|_{L^2(\Sigma_t)}^2
   \lesssim_K  \boldsymbol{\epsilon} \varepsilon \|W_2^+\|_{L^2(\Sigma_t)}^2
 +C(\boldsymbol{\epsilon} \varepsilon)\|\psi\|_{L^2(\Sigma_t)}^2.
 \label{Res1d2}
\end{align}
Plugging \eqref{Res1d2} into \eqref{Res1c}, taking $\boldsymbol{\epsilon}>0$ small enough, and making use of \eqref{Res1e} with $|\alpha|=1$ imply
\begin{multline}
 \|W(t)\|_{L^2(\Omega)}^2+\|(\psi,\mathrm{D}_{x'}\psi)(t)\|_{L^2(\Sigma)}^2
 +\varepsilon \|(\mathcal{U}, W_7^+,\mathrm{D}^2_{x'} \psi,\mathrm{D}^3_{x'} \psi)\|_{L^2(\Sigma_t)}^2
 \nonumber\\
 \lesssim_K
 \|(\bm{f},W)\|_{L^2(\Omega_t)}^2
 +C(\varepsilon)\|(\psi, \mathrm{D}_{x'}\psi)\|_{L^2(\Sigma_t)}^2.
\end{multline}
Apply Gr\"{o}nwall's inequality and take into account the boundary conditions \eqref{Reg.b}--\eqref{Reg.c} to derive
\begin{multline}
 \|W(t)\|_{L^2(\Omega)}^2
 +\|(\psi,\mathrm{D}_{x'}\psi)(t)\|_{L^2(\Sigma)}^2
 \\
  +
  \|(W_{\rm nc},W_7^+,\mathrm{D}^2_{x'}\psi,\mathrm{D}^3_{x'} \psi)\|_{L^2(\Sigma_t)}^2 \leq
 C(K,\varepsilon)   \|\bm{f}\|_{L^2(\Omega_t)}^2,
 \label{Res1f}
\end{multline}
where $W_{\rm nc}:=(W_1^{+},\ldots,W_6^{+},W_1^{-},\ldots,W_6^{-})^{\top}$ and
$C(K,\varepsilon)\to +\infty$ as $\varepsilon\to 0$.

Furthermore, we apply the operator $\p_{i}^2$, for ${i}=2,3$, to the boundary condition \eqref{Reg.d},
multiply the resulting equation with $\p_{i}^2\psi$, and employ Cauchy's inequality to infer
\begin{align}
 \|\p_{i}^2\psi(t)\|_{L^2(\Sigma)}^2+  \sum_{k=2,3}\|\p_k^2\p_{i}^2\psi\|_{L^2(\Sigma_t)}^2   \leq C(K,\varepsilon)  \sum_{|\alpha|\leq 2}\|(W_2^+,\mathrm{D}_{x'}^{\alpha}\psi)\|_{L^2(\Sigma_t)}^2.
 \nonumber
\end{align}
Taking a suitable combination of the last estimate with \eqref{Res1e} and \eqref{Res1f} yields
\begin{multline}
 \|W(t)\|_{L^2(\Omega)}^2+ \|(\psi,\mathrm{D}_{x'}\psi\mathrm{D}^2_{x'}\psi)(t)\|_{L^2(\Sigma)}^2
 \\
 + \|(W_{\rm nc},W_7^+,\mathrm{D}^2_{x'}\psi,\mathrm{D}^3_{x'}\psi,\mathrm{D}^4_{x'}\psi)\|_{L^2(\Sigma_t)}^2
 \leq
 C(K,\varepsilon)   \|\bm{f}\|_{L^2(\Omega_t)}^2.
\label{Res1h}
\end{multline}
Thanks to the condition \eqref{Reg.d} and the inequality \eqref{Res1h}, we obtain the following $\varepsilon$-dependent $L^2$ {\it a priori} estimate for the regularized problem \eqref{Reg}:
\begin{align}
 \|W\|_{L^2(\Omega_{t})}+\sum_{ |\alpha|\leq 4}\|(W_{\rm nc},W_7^+,\mathrm{D}_{x'}^{\alpha} \psi,\p_t\psi)\|_{L^2(\Sigma_{t})}
 \leq
 C(K,\varepsilon)   \|\bm{f}\|_{L^2(\Omega_{t})}
\label{Res1h2}
\end{align}
for all $0\leq t\leq T$,
where $C(K,\varepsilon)\to +\infty$ as $\varepsilon\to 0$.

\subsubsection{Existence of solutions} \label{sec.dual1}
We shall construct the solutions of the regularized problem \eqref{Reg} by means of the duality argument.
To this end, it suffices to show a suitable $L^2$ {\it a priori} estimate for solutions $\widetilde{W}:=(\widetilde{W}^+,\widetilde{W}^-)^{\top}$ of the following dual problem of \eqref{Reg}:
\begin{subequations} \label{dual1}
 \begin{alignat}{3}
\label{dual1a}
 &\widetilde{{\bf L}}_{\varepsilon}^{\pm}\widetilde{W}^{\pm}=\widetilde{\bm{f}}^{\pm}
 &\quad &\textnormal{in }\Omega_T,\\
\label{dual1b}
&\varepsilon \bm{J}_{+}^H\begin{pmatrix}
\widetilde{W}_5^+\\[1mm]
\widetilde{W}_6^+\\[1mm]
\widetilde{W}_7^+
\end{pmatrix}
=\begin{pmatrix}
\mathring{H}_2^+\big[\widetilde{W}_2\big]
-\mathring{H}_N^+\big[\widetilde{W}_3\big]\\[1mm]
\mathring{H}_3^+\big[\widetilde{W}_2\big]
-\mathring{H}_N^+\big[\widetilde{W}_4\big]\\[1mm]
0
\end{pmatrix}&\quad &\textnormal{on }\Sigma_T,\\
  \label{dual1c} &\varepsilon\widetilde{W}_1^-=\big[\widetilde{W}_2\big],\quad \varepsilon\widetilde{W}_3^-=-\mathring{H}_N^+\big[\widetilde{W}_5\big],\quad
  \varepsilon\widetilde{W}_4^-=-\mathring{H}_N^+\big[\widetilde{W}_6\big]
  &\quad &\textnormal{on }\Sigma_T,\\
 &\p_tw+\p_2(\mathring{v}_2^+w)+\p_3(\mathring{v}_3^+w)-\varepsilon(\p_2^4+\p_3^4)w
 -\mathring{a}_7 w&\quad &
 \nonumber\\
 \label{dual1d}
 &\ \ -\mathfrak{s}\mathrm{D}_{x'}\cdot
 \bigg(\dfrac{\mathrm{D}_{x'}\widetilde{W}_2^+}{|\mathring{N}|}-
 \dfrac{\mathrm{D}_{x'}\mathring{\varphi}\cdot\mathrm{D}_{x'}\widetilde{W}_2^+}{|\mathring{N}|^3}\mathrm{D}_{x'}\mathring{\varphi}\bigg) +\mathcal{T}_{\rm dual}=0
 &\quad &\textnormal{on }\Sigma_T, \\
 \label{dual1e}
 &\widetilde{W} =0&\quad &\textnormal{if }t>T,
 \end{alignat}
\end{subequations}
where $\widetilde{{\bf L}}_{\varepsilon}^{\pm}$ are the formal adjoint operators of ${{\bf L}}_{\varepsilon}^{\pm}$ ({\it cf.}~\eqref{Reg.a}), {\it i.e.},
\begin{align}
 \widetilde{{\bf L}}_{\varepsilon}^{\pm}:=-\sum_{i=0}^{3}\bm{A}_i^{\pm}\p_i
 +\varepsilon\bm{J}_{\pm}\p_1-\sum_{i=0}^{3}\p_i\bm{A}_i^{\pm}
 +\varepsilon\p_1\bm{J}_{\pm}
 +(\bm{A}_4^{\pm})^{\top},
 \nonumber
\end{align}
the matrix $\bm{J}_{+}^H$ is defined by \eqref{J.bm.H:def}, and
\begin{align}
\label{w:def}
 w:=\,&\big[\widetilde{W}_1 \big]+\mathring{H}_2^+\big[\widetilde{W}_5 \big]
 +\mathring{H}_3^+\big[\widetilde{W}_6 \big]-\varepsilon \widetilde{W}_2^+ -\varepsilon \widetilde{W}_2^- ,
\\
\mathcal{T}_{\rm dual}:=\,&
- \big(\mathring{H}_2^+ \mathring{a}_5 +\mathring{H}_3^+ \mathring{a}_6 \big)\widetilde{W}_2^-
- \mathring{a}_2 \big(\widetilde{W}_1^- +\varepsilon \widetilde{W}_2^-
+\mathring{H}_2^+\widetilde{W}_5^- +\mathring{H}_3^+\widetilde{W}_6^-\big)
\nonumber\\
&  - \mathring{a}_1 \widetilde{W}_2^+
+ \mathring{H}_N^+\big( \mathring{a}_3\widetilde{W}_5^+  +\mathring{a}_4\widetilde{W}_6^+
+\mathring{a}_5\widetilde{W}_3^- +\mathring{a}_6\widetilde{W}_4^-\big).
\label{T.cal.dual}
\end{align}
The boundary conditions \eqref{dual1b}--\eqref{dual1d} are imposed to ensure that
\begin{align*}
&\sum_{\pm}\int_{\Omega_{T}}\left({{\bf L}}_{\varepsilon}^{\pm}W^{\pm}\cdot \widetilde{W}^{\pm} -W\cdot \widetilde{{\bf L}}_{\varepsilon}^{\pm} \widetilde{W}^{\pm}\right)
 =\sum_{\pm}\int_{\Sigma_{T}}\left(\varepsilon\bm{J}_{\pm}-\bm{A}_1^{\pm} \right)W^{\pm}\cdot \widetilde{W}^{\pm}=0,
\end{align*}
where we have used the conditions \eqref{Reg.b}--\eqref{Reg.e} and \eqref{dual1e}.

Let us define
$\widetilde{W}_{\flat}^{\pm}(t,x):=\widetilde{W}^{\pm}(T-t,x)$ and
$\widetilde{\bm{f}}_{\flat}^{\pm}(t,x):= \widetilde{\bm{f}}^{\pm}(T-t,x).
$
Then the dual problem \eqref{dual1} is reduced to
\begin{subequations} \label{dual2}
	\begin{alignat}{3}
		\nonumber
		&\bm{A}_0^{\pm}\p_t\widetilde{W}^{\pm}-\sum_{i=1}^{3}\bm{A}_i^{\pm}\p_i \widetilde{W}^{\pm}
		+\p_t\bm{A}_0^{\pm}\widetilde{W}^{\pm}
		-\sum_{i=1}^{3}\p_i\bm{A}_i^{\pm}\widetilde{W}^{\pm}
		&\quad & \\
		& \label{dual2a} \qquad\qquad\quad +\varepsilon\bm{J}_{\pm}\p_1\widetilde{W}^{\pm}
		+\varepsilon\p_1\bm{J}_{\pm}\widetilde{W}^{\pm}
		+(\bm{A}_4^{\pm})^{\top}\widetilde{W}^{\pm}=\widetilde{\bm{f}}^{\pm}
		&\quad &\textnormal{in }\Omega_T,\\[0.5mm]
		\label{dual2b}&\varepsilon \bm{J}_{+}^H\begin{pmatrix}
			\widetilde{W}_5^+\\[1mm]
			\widetilde{W}_6^+\\[1mm]
			\widetilde{W}_7^+
		\end{pmatrix}
		=\begin{pmatrix}
			\mathring{H}_2^+\big[\widetilde{W}_2\big]
			-\mathring{H}_N^+\big[\widetilde{W}_3\big]\\[1mm]
			\mathring{H}_3^+\big[\widetilde{W}_2\big]
			-\mathring{H}_N^+\big[\widetilde{W}_4\big]\\[1mm]
			0
		\end{pmatrix}&\quad &\textnormal{on }\Sigma_T,\\[0.5mm]
		\label{dual2c} &\varepsilon\widetilde{W}_1^-=\big[\widetilde{W}_2\big],\quad \varepsilon\widetilde{W}_3^-=-\mathring{H}_N^+\big[\widetilde{W}_5\big],\quad
		\varepsilon\widetilde{W}_4^-=-\mathring{H}_N^+\big[\widetilde{W}_6\big]
		&\quad &\textnormal{on }\Sigma_T,\\[0.5mm]
		&\p_tw-\p_2(\mathring{v}_2^+w)-\p_3(\mathring{v}_3^+w)+\varepsilon(\p_2^4+\p_3^4)w
		+\mathring{a}_7 w&\quad &
		\nonumber\\
		&\ \ +\mathfrak{s}\mathrm{D}_{x'}\cdot
		\bigg(\dfrac{\mathrm{D}_{x'}\widetilde{W}_2^+}{|\mathring{N}|}-
		\dfrac{\mathrm{D}_{x'}\mathring{\varphi}\cdot\mathrm{D}_{x'}\widetilde{W}_2^+}{|\mathring{N}|^3}\mathrm{D}_{x'}\mathring{\varphi}\bigg) -\mathcal{T}_{\rm dual}=0
		&\quad &\textnormal{on }\Sigma_T, \label{dual2d}\\[0.5mm]
		\label{dual2e}
		&\widetilde{W} =0&\quad &\textnormal{if }t<0,
	\end{alignat}
\end{subequations}
where the subscript ``$\flat$'' has been dropped for notational simplicity, and the terms $w$, $\mathcal{T}_{\rm dual}$ are defined by \eqref{w:def}--\eqref{T.cal.dual}.

Take the scalar product of \eqref{dual2a} with $\widetilde{W}^{\pm}$ to obtain
\begin{align}
\sum_{\pm}\int_{\Omega} {\bm{A}}_0^{\pm}\widetilde{W}^{\pm}\cdot \widetilde{W}^{\pm} \d x
 -\int_{\Sigma_{t}}
\mathcal{T}_{\rm b}^{\varepsilon}(\widetilde{W})
 \lesssim_K \big\|\big(\widetilde{\bm{f}},\widetilde{W}\big)\big\|_{L^2(\Omega_{t})}^2,
\label{Res2a}
\end{align}
where $\mathcal{T}_{\rm b}^{\varepsilon}$ is defined by \eqref{Tb.bf}.
Using \eqref{decom}--\eqref{A(1):def} and \eqref{J.bm:def}--\eqref{J.bm.H:def} yields
\begin{align}
\nonumber
\mathcal{T}_{\rm b}^{\varepsilon}(\widetilde{W})=\,&
 \varepsilon \big|\widetilde{\mathcal{U}}_{\rm reg}\big|^2-2\big[\widetilde{W}_2 \big(\widetilde{W}_1+\mathring{H}_2\widetilde{W}_5+\mathring{H}_3\widetilde{W}_6 \big)\big]\\
& +2\mathring{H}_N^+\big[\widetilde{W}_3\widetilde{W}_5+\widetilde{W}_4\widetilde{W}_6\big]
\qquad \textrm{ on }\Sigma_T,
 \nonumber
\end{align}
where
\begin{align}
 \widetilde{\mathcal{U}}_{\rm reg}:=\big(\widetilde{W}_2^+,\big(\widetilde{W}_5^+,\widetilde{W}_6^+,\widetilde{W}_7^+\big)\mathring{J}^H_+,\widetilde{W}_1^-,\widetilde{W}_2^-,\widetilde{W}_3^-,\widetilde{W}_4^-\big)^{\top}\in\mathbb{R}^8.\label{U.cal.t:def}
\end{align}
Thanks to \eqref{w:def} and \eqref{dual2b}--\eqref{dual2c}, we compute
\begin{align}
-\mathcal{T}_{\rm b}^{\varepsilon}(\widetilde{W})
=\,&\varepsilon \big|\widetilde{\mathcal{U}}_{\rm reg}\big|^2
+2 \widetilde{W}_2^-  w-2 \varepsilon \big[\widetilde{W}_2\big]\widetilde{W}_2^+
+2\big[\widetilde{W}_2\big]\big[\widetilde{W}_1\big]
\nonumber\\
=\,&
\varepsilon \big|\widetilde{\mathcal{U}}_{\rm reg}\big|^2
+2 \widetilde{W}_2^+  w
+2 \varepsilon^2  \frac{\widetilde{W}_1^-}{\mathring{H}_N^+}
\big(\mathring{H}_2^+ \widetilde{W}_3^- +  \mathring{H}_3^+ \widetilde{W}_4^-
+\mathring{H}_N^+ \widetilde{W}_2^-  \big)
\
\textrm{on }\Sigma_T.
\nonumber
\end{align}
Plug the last identity into \eqref{Res2a} and choose $\varepsilon>0$ sufficiently small to obtain
\begin{align}
\big\|\widetilde{W}(t)\big\|_{L^2(\Omega)}^2
+\varepsilon \big\|\widetilde{\mathcal{U}}_{\rm reg}\big\|_{L^2(\Sigma_t)}^2
\lesssim_K \big\|\big(\widetilde{\bm{f}},\widetilde{W}\big)\big\|_{L^2(\Omega_{t})}^2
+\varepsilon^{-1}\|w\|_{L^2(\Sigma_t)}^2.
\label{Res2b}
\end{align}
Noting from \eqref{T.cal.dual} and \eqref{dual2c} that
$\mathcal{T}_{\rm dual}=\mathring{\rm c}_1 \widetilde{\mathcal{U}}^{r}$ on $\Sigma_{{T}}$,
we multiply the boundary condition \eqref{dual2d} with $w$ to get
\begin{align}
 &\| w(t)\|_{L^2(\Sigma)}^2+\varepsilon\sum_{k=2,3}\| \p_k^2 w\|_{L^2(\Sigma_{t})}^2
 \nonumber \\
 &\quad
 \lesssim_K \big\|\big( w,\widetilde{\mathcal{U}}_{\rm reg}\big)\big\|_{L^2(\Sigma_{t})}^2
 +\left| \int_{\Sigma_{t}} \widetilde{W}^+_2 \mathrm{D}_{x'}\cdot
 \bigg(\dfrac{\mathrm{D}_{x'} w}{|\mathring{N}|}-
 \dfrac{\mathrm{D}_{x'}\mathring{\varphi}\cdot\mathrm{D}_{x'} w}{|\mathring{N}|^3}\mathrm{D}_{x'}\mathring{\varphi}
 \bigg) \right|
 \nonumber\\[1.5mm]
 &\quad
\lesssim_K  \boldsymbol{\epsilon}\varepsilon\sum_{ |\alpha|\leq 2}\big\|\mathrm{D}_{x'}^{\alpha} w\big\|_{L^2(\Sigma_{t})}^2
 +C(\boldsymbol{\epsilon}\varepsilon) \big\|\big( w,\widetilde{\mathcal{U}}_{\rm reg}\big)\big\|_{L^2(\Sigma_{t})}^2
 \ \ \textrm{for all }\boldsymbol{\epsilon}>0.
 \label{Res2c}
\end{align}
Substitute the estimates
\begin{align}
 \|\mathrm{D}^2_{x'}w\|_{L^2(\Sigma_{t})}^2
 \lesssim  \|(\p_2^2w,\p_3^2w)\|_{L^2(\Sigma_{t})}^2,
 \quad
 \|\mathrm{D}_{x'}w\|_{L^2(\Sigma_{t})}^2
 \lesssim
 \|(w,\mathrm{D}^2_{x'}w)\|_{L^2(\Sigma_{t})}^2
 \nonumber
\end{align}
into \eqref{Res2c} and choose $\boldsymbol{\epsilon}>0$ sufficiently small to derive
\begin{align}
 \|w(t)\|_{L^2(\Sigma)}^2+ \sum_{   |\alpha|\leq 2}\|\mathrm{D}_{x'}^{\alpha}w\|_{L^2(\Sigma_{t})}^2
 \leq  C(K,\varepsilon) \big\|\big( w,\widetilde{\mathcal{U}}_{\rm reg}\big)\big\|_{L^2(\Sigma_{t})}^2.
\nonumber
\end{align}
Then it follows by combining the last estimate with \eqref{Res2b} and applying Gr\"{o}nwall's inequality that
\begin{align}
 \|\widetilde{W}(t)\|_{L^2(\Omega)}^2
 +\|w(t)\|_{L^2(\Sigma)}^2
 +\sum_{  |\alpha|\leq 2}\|(\widetilde{\mathcal{U}}^{r},\mathrm{D}_{x'}^{\alpha}w)\|_{L^2(\Sigma_{t})}^2
 \nonumber
 \leq
 C(K,\varepsilon)
 \big\|\widetilde{\bm{f}}\big\|_{L^2(\Omega_{t})}^2,
\end{align}
for some positive constant $C(K,\varepsilon)\to +\infty$ as $\varepsilon\to 0$,
where $\widetilde{\mathcal{U}}^{r}$ is the vector given by \eqref{U.cal.t:def}.
In view of \eqref{dual1b}--\eqref{dual1c} and \eqref{w:def},
we deduce the following $L^2$ estimate for the dual problem \eqref{dual1}:
\begin{align}
 \|\widetilde{W}\|_{L^2(\Omega_{{t}})}
 +\sum_{  |\alpha|\leq 2}\|(\widetilde{W}_{\rm nc},\widetilde{W}_7^+,\mathrm{D}_{x'}^{\alpha}{w})\|_{L^2(\Sigma_{{t}})}
 \leq
 C(K,\varepsilon)
 \big\|\widetilde{\bm{f}}\big\|_{L^2(\Omega_{{t}})},
 \label{Res2d}
\end{align}
for all $0\leq t\leq T$,
where
$\widetilde{W}_{\rm nc} :=(\widetilde{W}_1^{+},\ldots,\widetilde{W}_6^{+},\widetilde{W}_1^{-},\ldots,\widetilde{W}_6^{-})^{\top}$.

Having the $L^2$ estimates \eqref{Res1h2} and \eqref{Res2d} in hand, we can prove the existence and uniqueness of a weak solution $W^{\varepsilon}\in L^2(\Omega_T)$ to the problem \eqref{Reg} with  $W_{\rm nc}^{\varepsilon}|_{x_1=0}\in L^2(\Sigma_T)$ for any {\it fixed} and sufficiently small parameter $\varepsilon>0$ by the classical duality argument in {\sc Chazarain--Piriou} \cite{CP82MR0678605}. 

Then we shall consider \eqref{Reg.d} as a fourth-order parabolic equation for $\psi $ with given source term $W_2^{+\varepsilon}|_{x_1=0}\in L^2(\Sigma_T)$ and zero initial data.
Referring to \cite[Theorem 5.2]{CP82MR0678605},  we can conclude that the Cauchy problem for this parabolic equation has a unique solution
$\psi^{\varepsilon}\in C ([0,T],H^4(\mathbb{R}^2))\bigcap C^1 ([0,T],L^2(\mathbb{R}^2)),$
implying $\psi^{\varepsilon}\in L^2((-\infty ,T],H^4(\mathbb{R}^2))$ and $\partial_t\psi^{\varepsilon}\in L^2(\Sigma_T)$.
As a matter of fact, we have already derived the {\it a priori} estimate for solutions $\psi^{\varepsilon}$ of this Cauchy problem in \eqref{Res1h2}.

Therefore, we have constructed the unique solution $(W^{\varepsilon},\psi^{\varepsilon} )\in L^2(\Omega_T)\times L^2((-\infty ,T],H^4(\mathbb{R}^2))$ to the regularized problem \eqref{Reg} for any fixed and sufficiently small parameter $\varepsilon>0$ with $W_{\rm nc}^{\varepsilon}|_{x_1=0}\in L^2(\Sigma_T)$ and $\partial_t\psi^{\varepsilon}\in L^2(\Sigma_T)$.
Moreover, applying tangential differentiation leads to the existence and uniqueness of solutions  $(W^{\varepsilon},\psi^{\varepsilon})\in H^1(\Omega_T)\times H^1((-\infty ,T],H^4(\mathbb{R}^2))$, again for any fixed and sufficiently small parameter $\varepsilon>0$.

\subsection{Uniform-in-$\varepsilon$ Estimate and Passing to the Limit}

This subsection is devoted to showing the uniform-in-$\varepsilon$ estimate in $H^1$ for solutions $W^{\varepsilon}$ of the regularized problem \eqref{Reg},
from which we can show the existence of solutions to the linearized problem \eqref{ELP3} by passing to the limit $\varepsilon\to 0$.
In the following calculations, to simplify the notation, we drop the superscript ``$\varepsilon$'' in $W^{\varepsilon}$, $\psi^{\varepsilon}$, etc.

\subsubsection{$L^2$ estimate of $W$}
We plug \eqref{es1c} with $\alpha=0$ into \eqref{Res1c} and use \eqref{Res1e} with $|\alpha|=1$ to deduce
\begin{multline}
 \|W(t)\|_{L^2(\Omega)}^2+\|(\psi,\mathrm{D}_{x'}\psi)(t)\|_{L^2(\Sigma)}^2
 +\varepsilon \|(\mathcal{U}, W_7^+,  \mathrm{D}^3_{x'}\psi)\|_{L^2(\Sigma_t)}^2
 \\
 \lesssim_K
 \|(\bm{f},W)\|_{L^2(\Omega_t)}^2
 +  \|(\psi, \mathrm{D}_{\rm tan}\psi,\mathcal{U})\|_{L^2(\Sigma_t)}^2
 + {\varepsilon}\|\mathrm{D}^2_{x'}\psi\|_{L^2(\Sigma_t)}^2,
 \label{uni1a}
\end{multline}
where $\mathcal{U}$ is given in \eqref{U.cal:def}.

\subsubsection{$L^2$ estimate of $\mathrm{D}_{x'} W$}
For $\ell=0,2,3,$ applying the differential operator $\p_{\ell}$ to the interior equations \eqref{Reg.a}, we have
\begin{align}
\sum_{\pm}\int_{\Omega} {\bm{A}}_0^{\pm}\p_{\ell}W^{\pm}\cdot \p_{\ell}W^{\pm} \d x
 +\int_{\Sigma_t} \mathcal{T}_{\rm b}^{\varepsilon}(\p_{\ell}W)
 \lesssim_K \|(\bm{f},W)\|_{H^1(\Omega_t)}^2,
 \label{uni2a}
\end{align}
where $\mathcal{T}_{\rm b}^{\varepsilon}$ is defined by \eqref{Tb.bf}.

Similar to \eqref{Rid1a}, we use the boundary conditions \eqref{Reg.b}--\eqref{Reg.d} to get
\begin{align}
\mathcal{T}_{\rm b}^{\varepsilon}(\p_{\ell}W)
=\;&
-2\p_{\ell}[W_1]\p_{\ell}W_2^+
+\p_{\ell}[(W_2,\ldots,W_6)] \mathring{\rm c}_0  \p_{\ell}\mathcal{U}
+\varepsilon |\p_{\ell}\mathcal{U}_{\rm reg}|^2
\nonumber \\[1mm]
=\;&
-2\mathfrak{s} \p_{\ell}\mathrm{D}_{x'}\cdot\left(\dfrac{\mathrm{D}_{x'}\psi}{|\mathring{N}|}-
\dfrac{\mathrm{D}_{x'}\mathring{\varphi}\cdot\mathrm{D}_{x'}\psi}{|\mathring{N}|^3}\mathrm{D}_{x'}\mathring{\varphi}
\right) \p_{\ell}\bigg(\mathring{\rm B}\psi+\varepsilon\sum_{k=2,3}\p_k^4\psi\bigg)\nonumber\\
&-2\p_{\ell}(\mathring{a}_1\psi)\p_{\ell} W_2^+
+\p_{\ell}(\mathring{\rm c}_1 \psi) \mathring{\rm c}_0  \p_{\ell}\mathcal{U}
+\varepsilon |\p_{\ell}\mathcal{U}_{\rm reg}|^2
\quad \textrm{on }\Sigma_T.
\nonumber 
\end{align}
Then we obtain
\begin{align}
\int_{\Sigma_t} \mathcal{T}_{\rm b}^{\varepsilon}(\p_{\ell}W)
=\varepsilon \int_{\Sigma_t} \big|\p_{\ell} \mathcal{U}_{\rm reg}\big|^2
+ \mathcal{J}_{\ell}+ \varepsilon\mathcal{I}_3^{(\ell)}
+\int_{\Sigma_t} \p_{\ell}(\mathring{\rm c}_1 \psi) \mathring{\rm c}_0  \p_{\ell}\mathcal{U},
\label{uni2c}
\end{align}
where $\mathcal{J}_{\ell}$ is defined by \eqref{J.cal:def}, and
\begin{align}
 &\mathcal{I}_3^{(\ell)}:=2  \mathfrak{s} \sum_{k=2,3}\int_{\Sigma_t}
 \p_{\ell}\p_k^2
 \left(\dfrac{\mathrm{D}_{x'}\psi}{|\mathring{N}|}-
 \dfrac{\mathrm{D}_{x'}\mathring{\varphi}\cdot\mathrm{D}_{x'}\psi}{|\mathring{N}|^3}\mathrm{D}_{x'}\mathring{\varphi}
 \right)\cdot\p_{\ell}\p_k^2\mathrm{D}_{x'}\psi.
 \label{I3.cal:def}
\end{align}

Employ Cauchy's inequality to infer
\begin{align*}
\mathcal{I}_3^{(\ell)}
 \geq \,&   \mathfrak{s} \sum_{k=2,3}
 \int_{\Sigma_t}\dfrac{|\p_{\ell}\p_k^2\mathrm{D}_{x'}\psi|^2}{|\mathring{N}|^3}
- C(K)  \sum_{k=2,3}
\left\|\left[\p_{\ell}\p_k^2, \mathring{\rm c}_0\right] \mathrm{D}_{x'}\psi\right\|_{L^2(\Sigma_t)}^2 .
\end{align*}
In view of the decomposition
\begin{align*}
\left[\p_{\ell}\p_k^2, \mathring{\rm c}_0\right] \mathrm{D}_{x'}\psi
=\p_k^2 ( \mathring{\rm c}_1\mathrm{D}_{x'} \psi)
+[\p_k^2, \mathring{\rm c}_0] \mathrm{D}_{x'}\p_{\ell}\psi,
\end{align*}
we utilize the Moser-type calculus inequalities \eqref{Moser3}--\eqref{Moser4}, the embedding theorem, and the estimate \eqref{bas1d3} to derive
\begin{align*}
\big\|\big[\p_{\ell}\p_k^2, \mathring{\rm c}_0\big] \mathrm{D}_{x'}\psi\big\|_{L^2(\Sigma)}^2
&\lesssim  \big\|\mathring{\rm c}_1 \mathrm{D}_{x'}\psi\big\|_{H^2(\Sigma)}^2
+ \big\|\big[\p_k^2, \mathring{\rm c}_0\big] \mathrm{D}_{x'}(\p_{\ell}\psi)\big\|_{L^2(\Sigma)}^2 \\[1mm]
&\lesssim_K
\|(  \mathrm{D}_{x'}\psi, \p_{\ell}\psi)\|_{H^2(\Sigma)}^2.
\end{align*}
Hence we discover
\begin{align}
 \mathcal{I}_3^{(\ell)}
 \geq \,&   \mathfrak{s} \sum_{k=2,3}\int_{\Sigma_t}\dfrac{|\p_{\ell}\p_k^2\mathrm{D}_{x'}\psi|^2}{|\mathring{N}|^3}
 -  C(K) \sum_{|\alpha|\leq 2}
 \|( \mathrm{D}_{x'}^{\alpha}\mathrm{D}_{x'}\psi, \mathrm{D}_{x'}^{\alpha}\p_{\ell}\psi)\|_{L^2(\Sigma_t)}^2.
 \label{I3.cal:es}
\end{align}

Substituting \eqref{uni2c} into \eqref{uni2a} with $\ell=2,3$,
we make use of \eqref{J.cal:es}--\eqref{es2b},  \eqref{I3.cal:es}, and \eqref{Res1e} with $|\alpha|=2$  to deduce
 \begin{multline}
 \|\mathrm{D}_{x'}W(t)\|_{L^2(\Omega)}^2
 + \|\mathrm{D}^2_{x'}\psi(t)\|_{L^2(\Sigma)}^2
 +\varepsilon \|(\mathrm{D}_{x'}\mathcal{U}_{\rm reg},\mathrm{D}^4_{x'}\psi)\|_{L^2(\Sigma_t)}^2
 \\[1mm]
 \lesssim_K\|(\bm{f},W)\|_{H^1(\Omega_t)}^2
 + \sum_{|\alpha|\leq 2}
 \|(\mathrm{D}_{x'}^{\alpha}\psi,  \mathrm{D}_{x'}\p_t\psi,\sqrt{\varepsilon}\mathrm{D}_{x'}^{\alpha}\mathrm{D}_{x'}\psi)\|_{L^2(\Sigma_t)}^2.
 \label{uni2f}
\end{multline}

\subsubsection{$L^2$ estimate of $\p_t W$}
In view of \eqref{uni2c}, we find
\begin{align}
 \int_{\Sigma_t} \mathcal{T}_{\rm b}^{\varepsilon}(\p_{t}W)
 =\varepsilon \int_{\Sigma_t} |\p_t \mathcal{U}_{\rm reg}|^2
 +  \mathcal{J}_0+ \varepsilon\mathcal{I}_3^{(0)}
  +{\mathcal{I}_1}+{\mathcal{I}_2},
 \label{uni3a}
\end{align}
where the terms $\mathcal{J}_0$, $\mathcal{I}_3^{(0)}$, $\mathcal{I}_1$, and $\mathcal{I}_2$ are defined in \eqref{J.cal:def}, \eqref{I3.cal:def}, and \eqref{es3a}.

Thanks to the boundary condition \eqref{Reg.d}, we get
\begin{align}
 \mathcal{I}_1=\int_{\Sigma_t} \mathring{\rm c}_1\p_t\psi\p_t\mathcal{U}
 =\mathcal{I}_{1}^a+\mathcal{I}_{1}^b
 -\underbrace{\varepsilon\sum_{k=2,3} \int_{\Sigma_t} \mathring{\rm c}_1\p_k^4\psi\p_t\mathcal{U}}_{\mathcal{I}_{1c}}
 \label{uni3b}
\end{align}
with $\mathcal{I}_{1}^a$ and $\mathcal{I}_{1}^b$ given in \eqref{I1:es}.
It follows from the definition \eqref{U.cal.r:def} that
\begin{align}
 |\mathcal{I}_{1c}|\leq \frac{\varepsilon}{2}\int_{\Sigma_t}|\p_t \mathcal{U}_{\rm reg}|^2
 +\varepsilon C(K) \|(\mathrm{D}^4_{x'}\psi,\mathcal{U}_{\rm reg})\|_{L^2(\Sigma_t)}^2.
 \label{uni3c}
\end{align}
Plugging \eqref{uni3a}--\eqref{uni3b} into \eqref{uni2a} for $\ell=0$,
and utilizing \eqref{J.cal:es}, \eqref{I3.cal:es},
\eqref{I1a:es}, \eqref{I1b:es}--\eqref{es3b}, and  \eqref{uni3c} imply
\begin{multline}
 \|\p_tW(t)\|_{L^2(\Omega)}^2 +\|(\mathrm{D}_{x'}\p_t\psi, W)(t)\|_{L^2(\Sigma)}^2
 +\varepsilon \|(\p_t\mathcal{U}_{\rm reg},\mathrm{D}^3_{x'}\p_t\psi)\|_{L^2(\Sigma_t)}^2
 \\[2.5mm]
   \lesssim_K
  \sum_{|\alpha|\leq 2} \big\|\big(
 \mathrm{D}_{x'}^{\alpha}\psi, \p_t \psi,\mathrm{D}_{x'}\p_t\psi,W,
 \sqrt{\varepsilon}\mathrm{D}_{x'}^{\alpha}\mathrm{D}_{\rm tan}\psi,
 \sqrt{\varepsilon}\mathrm{D}^4_{x'}\psi\big)\big\|_{L^2(\Sigma_t)}^2
 \\[0.5mm]
 +C(\boldsymbol{\epsilon})\|(\bm{f},W)\|_{H^1(\Omega_{{t}})}^2
+\boldsymbol{\epsilon} \|\p_1W(t)\|_{L^2(\Omega)}^2
\quad
\textrm{for all } \boldsymbol{\epsilon}>0.
 \label{uni3d}
\end{multline}

\subsubsection{$L^2$ estimate of $\p_1 W_{\rm nc}$}
Multiply the equations \eqref{Reg.a} with ${\bm{B}}^{\pm}$ respectively and use the decomposition \eqref{decom}--\eqref{A(1):def} to deduce
\begin{multline} \label{uni4a}
\big(\p_1 W_{\rm nc}^{\pm}-\varepsilon \mathring{\rm c}_0\p_1 W_{\rm nc}^{\pm}, \,0, \,0\big)^{\top}
\\
 ={\bm{B}}^{\pm}
 \bigg(\bm{f}^{\pm}-{\bm{A}}_4^{\pm} W^{\pm}-\sum_{\ell=0,2,3}{\bm{A}}_{\ell}^{\pm}\p_{\ell} W^{\pm}-{\bm{A}}_{(0)}^{\pm} \p_1W^{\pm}\bigg)
 \ \ \textrm{in } \Omega_T^{\delta},
\end{multline}
where ${\bm{B}}^{\pm}$ are defined by \eqref{B.bm:def}.
Hence for suitably small $\varepsilon>0$, we get
\begin{align}
 \|\p_1 W_{\rm nc}(t)\|_{L^2(\Omega^{\delta})}
 &\lesssim_K \|(W,\mathrm{D}_{\rm tan}W, \sigma\p_1 W,\bm{f})(t)\|_{L^2(\Omega)},
 \label{uni4b}
\end{align}
Similar to the derivation of \eqref{es4b}--\eqref{es4c}, for solutions $W$ of the regularized problem \eqref{Reg}, we find
\begin{align}
 \|\sigma \p_1W(t)\|_{L^2(\Omega)}
 + \|  \p_1W(t)\|_{L^2(\Omega\setminus \Omega^{\delta})}
 \lesssim_K\|(\bm{f},W )\|_{H^1(\Omega_t)},
\nonumber 
\end{align}
which together with \eqref{uni4b} leads to
\begin{align}
 \|\p_1 W_{\rm nc}(t)\|_{L^2(\Omega)}^2
 &\lesssim_K\|(W,\mathrm{D}_{\rm tan}W)(t)\|_{L^2(\Omega)}^2
 +\|(\bm{f},W)\|_{H^1(\Omega_t)}^2,
 \label{uni4d}
\end{align}
provided $\varepsilon>0$ is sufficiently small.

\subsubsection{$L^2$ estimate of $\p_1 W_{\rm c}$}
It follows from \eqref{Reg.a} and \eqref{J.bm:def} that the characteristic variables $W_8^{\pm}=S^{\pm}$
also satisfy the equations \eqref{S:equ}, which allows us to deduce
\begin{align}
 \|\p_1 W_8^{\pm}(t)\|_{L^2(\Omega)}^2\lesssim_K \|(\bm{f},W)\|_{H^1(\Omega_t)}^2.
 \label{uni5a}
\end{align}

Let us estimate the normal derivative of the remaining characteristic variables  $W_7^{\pm}=H^{\pm}\cdot\mathring{N}^{\pm}$ ({\it cf.}~the transformation \eqref{J.ring}).
For this purpose, we introduce the linearized divergences $\xi^{\pm}$
that are defined by \eqref{xi:def} and satisfy the identities \eqref{xi:iden}.
Remark that the equations of $H_j^{\pm}$ for the regularized problem \eqref{Reg} are different from those for the linearized problem \eqref{ELP3}, due to the presence of the regularized terms $-\varepsilon \bm{J}_{\pm}\p_1 W^{\pm}$ in \eqref{Reg.a}.
Nevertheless, for the matrices $\bm{J}_{\pm}$ defined by \eqref{J.bm:def}--\eqref{J.bm.H:def},
we can still show the energy estimate of $\xi^{\pm}$ for the regularized problem \eqref{Reg}.
More precisely,
taking advantage of the explicit form \eqref{J.bm:def}--\eqref{J.bm.H:def} and the identity $$(H_1^{\pm},H_2^{\pm},H_3^{\pm})^{\top}=\mathring{J}_{\pm}^H (W_5^{\pm},W_6^{\pm},W_7^{\pm})^{\top},$$
we calculate from \eqref{Reg.a} that
\begin{align}
\nonumber &  \p_t^{\mathring{\varPhi}^{+}}H_j^{+} + \mathring{v}^{+}\cdot\nabla^{\mathring{\varPhi}^{+}}H_j^{+}
-\varepsilon\p_1 H_j^+
- \mathring{H}^{+}\cdot\nabla^{\mathring{\varPhi}^{+}} v_j^{+}
+\mathring{H}_j^{+} \nabla^{\mathring{\varPhi}^{+}} \cdot v^{+}
 = \mathring{\rm c}_1 \bm{f}+\mathring{\rm c}_1  W,\\
\nonumber &  \p_t^{\mathring{\varPhi}^{-}}H_j^{-} + \mathring{v}^{-}\cdot\nabla^{\mathring{\varPhi}^{-}} H_j^{-}
- \mathring{H}^{-}\cdot\nabla^{\mathring{\varPhi}^{-}} v_j^{-}
+\mathring{H}_j^{-} \nabla^{\mathring{\varPhi}^{-}} \cdot v^{-}
= \mathring{\rm c}_1 \bm{f}+\mathring{\rm c}_1  W.
\end{align}
Applying the operators $\p_j^{\mathring{\varPhi}^{\pm}}$ respectively to the last equations yields
\begin{align}
& \big(\p_t  +\mathring{w}^{+}_{1}\p_{1} +\mathring{v}_2^{+}\p_{2}+  \mathring{v}_3^{+}\p_{3}-\varepsilon\p_1 \big)\xi^{+} =\mathring{\rm c}_1 \mathrm{D}\bm{f}+ \mathring{\rm c}_2 \bm{f}+\mathring{\rm c}_1 \mathrm{D}W + \mathring{\rm c}_2 W, \label{xi+:eq:R}\\
& \big(\p_t  +\mathring{w}^{-}_{1}\p_{1} +\mathring{v}_2^{-}\p_{2}+  \mathring{v}_3^{-}\p_{3}\big)\xi^{-} =\mathring{\rm c}_1 \mathrm{D}\bm{f}+ \mathring{\rm c}_2 \bm{f}+\mathring{\rm c}_1 \mathrm{D}W + \mathring{\rm c}_2 W. \label{xi-:eq:R}
\end{align}
Use the equations \eqref{xi+:eq:R}--\eqref{xi-:eq:R} and the identity \eqref{w1.ring} to infer
\begin{align}
 \|\xi^{\pm}(t)\|_{L^2(\Omega)}+\varepsilon\|\xi^{+}\|_{L^2(\Sigma_t)} \lesssim_K \|(\bm{f},W)\|_{H^1(\Omega_t)},
 \nonumber
\end{align}
which together with \eqref{xi:iden} implies
\begin{align}
 \|\p_1 W_7^{\pm}(t)\|_{L^2(\Omega)}^2\lesssim_K\|(W,\mathrm{D}_{\rm tan}W)(t)\|_{L^2(\Omega)}^2
 +\|(\bm{f},W)\|_{H^1(\Omega_t)}^2 .
 \label{uni5b}
\end{align}

\subsubsection{Proof of Theorem \ref{thm:lin}}
It follows from \eqref{Reg.d} that
\begin{align}
 \| \p_t \psi \|_{L^2(\Sigma_t)}^2
 \lesssim_K \|(W_2^+,\psi,\mathrm{D}_{x'} \psi,\varepsilon  \mathrm{D}^4_{x'} \psi)\|_{L^2(\Sigma_t)}^2.
 \label{uni2e}
\end{align}
Using the basic estimate
\begin{align*}
 \| \mathrm{D}^2_{x'}\mathrm{D}_{\rm tan}\psi\|_{L^2(\Sigma_t)}^2
 \leq \boldsymbol{\epsilon}  \|  \mathrm{D}^3_{x'}\mathrm{D}_{\rm tan}\psi\|_{L^2(\Sigma_t)}^2
 +C(\boldsymbol{\epsilon} )\| \mathrm{D}_{x'}\mathrm{D}_{\rm tan}\psi\|_{L^2(\Sigma_t)}^2,
\end{align*}
and taking a suitable linear combination of \eqref{uni1a}, \eqref{uni2f}, \eqref{uni3d}, \eqref{uni4d}--\eqref{uni5a}, and \eqref{uni5b}, we choose $\boldsymbol{\epsilon}>0$ suitably small to discover
 \begin{multline}
  \|(W,\mathrm{D}W)(t)\|_{L^2(\Omega)}^2+
  \sum_{|\alpha|\leq 2}\|(\mathrm{D}_{x'}^{\alpha}\psi, \mathrm{D}_{x'}\p_t\psi,W)(t)\|_{L^2(\Sigma)}^2\\
  +\|\p_t \psi\|_{L^2(\Sigma_t)}^2
  +\varepsilon \sum_{ 2\leq |\alpha|\leq 3}
  \|( \mathrm{D}_{\rm tan}\mathcal{U}_{\rm reg}, \mathrm{D}_{x'}^{\alpha}\mathrm{D}_{\rm tan}\psi)\|_{L^2(\Sigma_{{t}})}^2
  \\
  \qquad \lesssim_K
  \|(\bm{f},W)\|_{H^1(\Omega_t)}^2+
  \sum_{|\alpha|\leq 2}\|(\mathrm{D}_{x'}^{\alpha}\psi, \mathrm{D}_{x'}\p_t\psi,W)\|_{L^2(\Sigma_t)}^2.
\nonumber
 \end{multline}
We apply Gr\"{o}nwall's inequality to the last estimate and compute
 \begin{multline}
  \|(W,\mathrm{D}W)(t)\|_{L^2(\Omega)}^2+
  \sum_{|\alpha|\leq 2}\|(\mathrm{D}_{x'}^{\alpha}\psi, \mathrm{D}_{x'}\p_t\psi,W)(t)\|_{L^2(\Sigma)}^2 \\
    +\|\p_t \psi\|_{L^2(\Sigma_t)}^2
  +\varepsilon \sum_{ 2\leq |\alpha|\leq 3}
  \|(\mathrm{D}_{\rm tan}\mathcal{U}_{\rm reg}, \mathrm{D}_{x'}^{\alpha}\mathrm{D}_{\rm tan}\psi)\|_{L^2(\Sigma_{{t}})}^2
  \lesssim_K
  \|\bm{f}\|_{H^1(\Omega_t)}^2.
\nonumber 
 \end{multline}
Consequently, we derive
 \begin{multline}
  \|W\|_{H^1(\Omega_t)}
  +\|W\|_{L^2(\Sigma_t)}  + \|(\psi, \mathrm{D}_{x'}\psi)\|_{H^1(\Sigma_t)}\\
  +\sqrt{\varepsilon} \sum_{ 2\leq |\alpha|\leq 3}
 \|( \mathrm{D}_{\rm tan}\mathcal{U}_{\rm reg}, \mathrm{D}_{x'}^{\alpha}\mathrm{D}_{\rm tan}\psi)\|_{L^2(\Sigma_{{t}})}
  \lesssim_K
  \|\bm{f}\|_{H^1(\Omega_t)}
  \label{uni6b}
 \end{multline}
 for all $0\leq t\leq T$,
 where $\mathcal{U}_{\rm reg}$ is defined by \eqref{U.cal.r:def}.

The uniform-in-$\varepsilon$ estimate \eqref{uni6b} allows us to construct the unique solution of the linearized problem \eqref{ELP3} by passing to the limit $\varepsilon\to 0$.
As a matter of fact, in view of \eqref{uni6b}, we can extract a subsequence weakly convergent to
$(W,\psi )\in H^1(\Omega_T)\times H^1((-\infty,T],H^2(\mathbb{R}^2))$
with $\partial_1W\in L^2(\Omega_T)$ and $W|_{x_1=0}\in L^2(\Sigma_T)$.
Since $\partial_1W$ and $\sqrt{\varepsilon} (\p_2^4+\p_3^4)\psi$ are uniformly bounded in $L^2(\Omega_T)$ and $L^2(\Sigma_T)$ respectively ({\it cf.}~\eqref{uni6b}),
the passage to the limit $\varepsilon\to 0$ in \eqref{Reg} verifies that
$(W,\psi )$ solves the linearized problem \eqref{ELP3}.
Moreover, the uniqueness of solutions follows from the {\it a priori} estimate \eqref{es6c}.

Thanks to the existence and uniqueness of solutions $(W,\psi)$ in $H^1(\Omega_{T})\times H^1(\Sigma_{{T}})$ of the reduced problem \eqref{ELP3}, we can show that there exists a unique solution $(\dot{V},\psi)\in H^1(\Omega_{T})\times H^1(\Sigma_{{T}})$ to the effective linear problem \eqref{ELP1}. Moreover, the $H^1$ estimate \eqref{H1:es} follows by combining the estimate \eqref{es6c} with \eqref{V.natural:es} and \eqref{J.ring}.

\section{Tame Estimate} \label{sec:tame}
This section is dedicated to showing the following theorem, that is,
the tame \textit{a priori} estimate in the usual Sobolev spaces $H^m$ for the effective linear problem \eqref{ELP1} with $m\in\mathbb{N}$ large enough.
\begin{theorem}
\label{thm:tame}
Let $K>0$ and $m\in\mathbb{N}$ with $m\geq 3$ be fixed.
Then there exist constants $T_0>0$ and $C(K)>0$ such that if
the basic state $(\mathring{U}(t,x),\mathring{\varphi}(t,x'))$ satisfies \eqref{bas1a}--\eqref{bas1d} and $(\mathring{V}^{\pm},\mathring{\varphi})\in H^{{m+2}}(\Omega_T)\times  H^{{m+2}}(\Sigma_T)$
for $\mathring{V}^{\pm}:=\mathring{U}^{\pm}-\widebar{U}^{\pm}$,
and the source terms $f^{\pm}\in H^{m}(\Omega_T)$, $g\in H^{m+1/2}(\Sigma_T)$ vanish in the past, for some $0<T\leq T_0$,
then the problem \eqref{ELP1} admits a unique solution $(\dot{V}^{\pm},\psi)\in H^{m}(\Omega_T)\times H^{m}(\Sigma_T)$ satisfying the tame estimate
\begin{align}
 &{\|}(\dot{V},\varPsi,\mathrm{D}_{x'}\varPsi){\|}_{H^{m}(\Omega_T)}+\|(\psi,\mathrm{D}_{x'}\psi)\|_{H^{m}(\Sigma_T)}
 \nonumber \\
 &
 \quad \leq   C(K)\Big\{  {\|}f^{\pm}{\|}_{H^{m}(\Omega_T)} +\|g\|_{H^{m+1/2}(\Sigma_T)}
 \nonumber \\
 &\qquad \qquad\quad \, \  +{\|}(\mathring{V},\mathring{\varPsi},\mathrm{D}_{x'}\mathring{\varPsi}){\|}_{H^{m+2}(\Omega_T)}\left({\|}f^{\pm}{\|}_{H^{3}(\Omega_T)}+\|g\|_{H^{7/2}(\Sigma_T)}\right) \Big\}.
 \label{tame:es}
\end{align}
\end{theorem}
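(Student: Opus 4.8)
As in \S\ref{sec:lin1}, the plan is first to reduce \eqref{ELP1} to a problem with homogeneous boundary conditions by subtracting the lift $V_{\natural}$ of $g$ (which obeys the linear bounds $\|V_{\natural}\|_{H^{k+1}(\Omega_T)}\lesssim\|g\|_{H^{k+1/2}(\Sigma_T)}$ for $k\le m$, {\it cf.}~\eqref{V.natural:es}), and then to pass to the noncharacteristic variables $W=\mathring{J}^{-1}V$ so as to work with the symmetric hyperbolic problem \eqref{ELP3}, whose boundary is characteristic of constant multiplicity and satisfies the maximality condition. I would establish the a priori tame estimate \eqref{tame:es} for sufficiently smooth solutions of \eqref{ELP3} by (i) estimating the tangential derivatives through an energy argument paralleling \S\ref{sec:lin1} but keeping careful track of the commutators, and (ii) recovering the normal derivatives from the equations and from the transport structure of the characteristic components. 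The $H^m$ solution itself is then obtained by revisiting the $\varepsilon$–regularization of \S\ref{sec:lin1}: the same estimates hold uniformly in $\varepsilon$ (the fourth–order tangential term in \eqref{Reg.d} only produces non-negative boundary contributions that may be discarded), so one passes to the limit $\varepsilon\to 0$, and the limit coincides with the $H^1$ solution of Theorem \ref{thm:lin} by uniqueness. Throughout, $T_0$ is chosen small so that the change of variables is admissible and the Gr\"onwall constants are under control.

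\textbf{Tangential estimates.} For a multi-index $\alpha$ with $|\alpha|\le m$ I would apply $\mathrm{D}_{\rm tan}^{\alpha}=\p_t^{\alpha_0}\p_2^{\alpha_2}\p_3^{\alpha_3}$ to \eqref{ELP3a}, take the scalar product with $\mathrm{D}_{\rm tan}^{\alpha}W^{\pm}$, and integrate by parts exactly as in the derivation of \eqref{es1a}--\eqref{es6c}. The boundary integral is handled as in \S\ref{sec:lin1}: using \eqref{ELP3b}--\eqref{ELP3d} and the identity $[\mathring{H}]|_{\Sigma_T}=0$ from \eqref{bas1c}, one extracts from the surface-tension condition the positive contribution $\mathfrak{s}\int_{\Sigma}|\mathring{N}|^{-3}|\mathrm{D}_{x'}\mathrm{D}_{\rm tan}^{\alpha}\psi|^{2}$, the remaining boundary terms being absorbed by lower tangential orders and the product estimate of Lemma \ref{lem:pro}. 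The new feature compared with \S\ref{sec:lin1} is the appearance of the commutators $[\mathrm{D}_{\rm tan}^{\alpha},\bm{A}_i^{\pm}]\p_i W^{\pm}$, $[\mathrm{D}_{\rm tan}^{\alpha},\bm{A}_4^{\pm}]W^{\pm}$ in the interior and of $\mathrm{D}_{\rm tan}^{\alpha}(\mathring{a}_i\psi)$, $\mathrm{D}_{\rm tan}^{\alpha}(\mathring{\rm c}_0\mathrm{D}_{x'}\psi)$ on the boundary; these are estimated by the Moser-type inequalities \eqref{Moser1}--\eqref{Moser4}, which is exactly what yields the tame structure, since in every product one factor is taken with at most three derivatives (controlled via \eqref{bas1d3}) and the other in $H^m$, or else the basic-state factor carries all the derivatives and the solution/source factor only a bounded number. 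Summing over $|\alpha|\le m$ and invoking Gr\"onwall's inequality controls $\VERT W\VERT_{{\rm tan},\,m}$ together with $\|(\psi,\mathrm{D}_{x'}\psi)\|_{H^m(\Sigma_T)}$ by the right-hand side of \eqref{tame:es} and by $\|\p_1 W\|$ at lower tangential order.

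\textbf{Normal derivatives.} To upgrade tangential control to full $H^m$ control I would follow \S\ref{sec:lin1}. For the noncharacteristic components $W_{\rm nc}$, multiplying \eqref{ELP3a} by the matrices $\bm{B}^{\pm}$ of \eqref{B.bm:def} expresses $\p_1 W_{\rm nc}^{\pm}$ through $W^{\pm}$, its tangential derivatives and $\bm{f}^{\pm}$; differentiating this relation and iterating, together with the weighted estimate for $\sigma\p_1 W$ as in \eqref{es4b}--\eqref{es4c}, recovers $\mathrm{D}^{\alpha}W_{\rm nc}$ for all $|\alpha|\le m$ with tame bounds. For the characteristic components, $W_8^{\pm}=S^{\pm}$ obeys the transport equation \eqref{S:equ} whose normal transport speed $\mathring{w}_1^{\pm}$ vanishes on $\Sigma_T$ by \eqref{w1.ring}, so repeated $\p_1$–differentiation gives all normal derivatives of $S^{\pm}$; likewise the linearized divergence $\xi^{\pm}=\nabla^{\mathring{\varPhi}^{\pm}}\!\cdot H^{\pm}$ satisfies the transport equation \eqref{xi:equ} with the same vanishing normal speed, controlling $\p_1 W_7^{\pm}$ via \eqref{xi:iden}, and iterating controls the higher normal derivatives of $W_7^{\pm}$. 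Collecting these with the tangential estimate and closing by Gr\"onwall yields \eqref{tame:es} for $W$; undoing the reductions ($W\mapsto\dot{V}$, adding back $V_{\natural}$) and using \eqref{V.natural:es} together with the smoothness of $\mathring{J}_{\pm}$ in $\mathrm{D}_{x'}\mathring{\varPsi}^{\pm}$ converts it into the stated estimate for $(\dot{V},\varPsi,\mathrm{D}_{x'}\varPsi)$ and $(\psi,\mathrm{D}_{x'}\psi)$.

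\textbf{Main obstacle.} The delicate point is the bookkeeping of the tame dependence, in particular pinning down the loss of exactly two derivatives from the basic state. This loss originates on the boundary: the coefficients $\mathring{a}_1,\ldots,\mathring{a}_7$ involve the traces $\p_1\mathring{V}|_{\Sigma_T}$, so a term such as $\mathrm{D}_{\rm tan}^{\alpha}(\mathring{a}_1\psi)$ paired with $\mathrm{D}_{x'}\mathrm{D}_{\rm tan}^{\alpha}\psi$ in the boundary energy forces, in the worst case where all derivatives fall on $\mathring{a}_1$, the quantity $\|\p_1\mathring{V}\|_{H^{m+1/2}(\Sigma_T)}\lesssim\|\mathring{V}\|_{H^{m+2}(\Omega_T)}$ by the trace theorem, which is precisely the $H^{m+2}$ norm appearing in \eqref{tame:es}. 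One then has to check that no worse loss occurs elsewhere, in particular in the surface-tension commutators $[\mathrm{D}_{\rm tan}^{\alpha},\mathring{\rm c}_0]\mathrm{D}_{x'}\psi$ and in the coefficients $\bm{A}_i^{\pm}$, $\bm{A}_4^{\pm}$, $\bm{B}^{\pm}$, which are smooth functions of $(\mathring{V},\mathrm{D}\mathring{V},\mathrm{D}\mathring{\varPsi},\mathrm{D}\mathrm{D}_{x'}\mathring{\varPsi})$; all of these are absorbed by \eqref{Moser3}--\eqref{Moser4} within the same $H^{m+2}$ budget, and with this verified the a priori estimate closes, after which existence and uniqueness in $H^m$ follow as indicated above.
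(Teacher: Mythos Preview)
Your proposal is correct and follows essentially the same route as the paper: reduce to \eqref{ELP3}, split the $H^m$ estimate into a tangential part (Proposition~\ref{pro:tan}) and a normal part (Proposition~\ref{pro:normal}), recover $\p_1 W_{\rm nc}$ via $\bm{B}^{\pm}$ and $\p_1 W_{\rm c}$ via the transport equations \eqref{S:equ} and \eqref{xi:equ}, and close with Gr\"onwall after choosing $\boldsymbol{\epsilon}$ small. Two minor remarks: (i) the paper handles the pure time-derivative case $\beta=(m,0,0)$ of the boundary term $\int_{\Sigma_t}\mathrm{D}_{\rm tan}^{\beta}(\mathring{\rm c}_1\psi)\mathring{\rm c}_0\mathrm{D}_{\rm tan}^{\beta}\mathcal{U}$ separately (see \eqref{tan:id3}--\eqref{I8:es2}), integrating by parts in $t$ and passing one factor to the volume, which is where the $\boldsymbol{\epsilon}\VERT W(t)\VERT_m^2$ coupling you allude to actually arises; and (ii) for existence in $H^m$ the paper does not rerun the $\varepsilon$--regularization but bootstraps from the $H^1$ solution of Theorem~\ref{thm:lin} via the a~priori estimate \eqref{fin:es3} and standard hyperbolic theory \cite[Chapter~7]{CP82MR0678605}, which is slightly more economical than your proposed route though yours would also work.
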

To derive the tame estimate \eqref{tame:es}, it is sufficient to obtain an analogous tame estimate for solutions $(W,\psi)$ of the reduced problem \eqref{ELP3}.
We shall first make the estimate of the normal derivatives of $W$ through its tangential ones.
Then we will control the tangential derivatives by using the spatial regularity enhanced by the surface tension.

\subsection{Estimate of the Normal Derivatives}
The normal derivatives of solutions $W$ to the problem \eqref{ELP3} can be estimated through the tangential ones as follows.
\begin{proposition}   \label{pro:normal}
 If the assumptions in Theorem \ref{thm:tame} are satisfied, then
 \begin{align}
  \VERT  W (t)\VERT_{m}^2
  \lesssim_K \VERT W(t)\VERT_{{\rm tan},\,m}^2
  + \mathcal{M}_1(t),
  \label{normal.est}
 \end{align}
where $ \VERT \cdot \VERT_{{\rm tan},m}$ and $\VERT  \cdot \VERT_{m}$ are defined by \eqref{VERT.tan}, and
\begin{align}
\mathcal{M}_1(t):=\,&\|    (   \bm{f},W)\|_{H^m(\Omega_t)}^2 +
\|    ( \bm{f},W)\|_{L^{\infty}(\Omega_t)}^2
\nonumber\\
&+\| (\mathring{V},  \mathring{\varPsi}, \mathrm{D}_{x'}\mathring{\varPsi})\|_{H^{m+2}(\Omega_T)}^2
\|    ( \bm{f},W)\|_{L^{\infty}(\Omega_t)}^2.
\label{M1.cal:def}
\end{align}
\end{proposition}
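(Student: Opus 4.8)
The plan is to mimic, at arbitrary order $m$, the $L^2$ estimates of $\partial_1 W_{\rm nc}$ and of the characteristic variables obtained in the subsection on the $L^2$ estimate of $\partial_1 W_{\rm nc}$ and in \S\ref{subsec:Wc}, keeping careful track of every commutator and product term through the Moser-type calculus inequalities \eqref{Moser2}--\eqref{Moser4} so that the loss of regularity from the basic state stays \emph{fixed}. Concretely, I would prove \eqref{normal.est} by a finite induction on the number $j\in\{0,\ldots,m\}$ of normal derivatives, the inductive claim being that
\[
\sum_{|\alpha|\leq m,\ \alpha_1\leq j}\|\mathrm{D}^{\alpha}W(t)\|_{L^2(\Omega)}^2\lesssim_K \VERT W(t)\VERT_{{\rm tan},m}^2+\mathcal{M}_1(t),
\]
with $j=0$ being precisely the definition \eqref{VERT.tan} of $\VERT W(t)\VERT_{{\rm tan},m}$ and $j=m$ being \eqref{normal.est}. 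For the inductive step one splits $W=(W_{\rm nc},W_{\rm c})$ with $W_{\rm c}=(W_7,W_8)$ and treats the two blocks by different mechanisms.

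For the noncharacteristic block I would use the identity \eqref{id4a}: in the strip $\Omega_T^{\delta}$, where $\bm{B}^{\pm}$ (see \eqref{B.bm:def}) is smooth thanks to \eqref{bas2a}, the vector $(\partial_1W_{\rm nc}^{\pm},0,0)^{\top}$ equals $\bm{B}^{\pm}$ applied to $\bm{f}^{\pm}-\bm{A}_4^{\pm}W^{\pm}-\sum_{\ell=0,2,3}\bm{A}_{\ell}^{\pm}\partial_{\ell}W^{\pm}-\bm{A}_{(0)}^{\pm}\partial_1W^{\pm}$, while outside $\Omega_T^{\delta}$ the normal derivative is controlled by $\sigma\partial_1W$. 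Applying $\mathrm{D}^{\beta}$ with $|\beta|\leq m-1$, $\beta_1\leq j-1$, and expanding by Leibniz, each term on the right either carries at most $j-1$ normal derivatives on $W$ — the tangential operators $\partial_{\ell}$, $\ell\in\{0,2,3\}$, do not raise $\alpha_1$ — so is covered by the inductive hypothesis, or is a product to which \eqref{Moser2}--\eqref{Moser4} applies and produces exactly the tame terms of \eqref{M1.cal:def}, or is the critical term $\bm{A}_{(0)}^{\pm}\mathrm{D}^{\beta}\partial_1W$, which is absorbed by the weighted energy estimate for $\sigma\partial_1\mathrm{D}^{\beta}W$ derived exactly as \eqref{es4b}, using $\bm{A}_{(0)}^{\pm}|_{\Sigma_T}=0$ (see \eqref{decom}, \eqref{bas2b}, \eqref{sigma:def}) together with the complementary bound $\|\partial_1\mathrm{D}^{\beta}W\|_{L^2(\Omega\setminus\Omega^{\delta})}\lesssim\|\sigma\partial_1\mathrm{D}^{\beta}W\|_{L^2(\Omega)}$.

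For the characteristic block I would rely on the transport structure. The component $W_8=S$ satisfies the transport equation \eqref{S:equ} whose normal-derivative coefficient $\mathring{w}_1^{\pm}$ vanishes on $\Sigma_T$ by \eqref{w1.ring} (so it reads $\sigma\,\mathring{\rm c}_0$ near the boundary); differentiating \eqref{S:equ} by $\mathrm{D}^{\beta}$ and pairing with $\mathrm{D}^{\beta}\partial_1W_8^{\pm}$, every extra normal derivative is either multiplied by a factor vanishing on $\Sigma_T$ or traded for lower-order terms, giving the bound with one fewer normal derivative, just as in \eqref{es5a}. Likewise, $W_7=H\cdot\mathring{N}$ is recovered from the linearized divergence $\xi^{\pm}=\nabla^{\mathring{\varPhi}^{\pm}}\cdot H^{\pm}$ (see \eqref{xi:def}) via \eqref{xi:iden}, and $\xi^{\pm}$ obeys the transport equation \eqref{xi:equ} with the same structure, but now with a right-hand side containing $\mathrm{D}\bm{f}$ and $\mathrm{D}W$; applying $\mathrm{D}^{\beta}$ and invoking \eqref{Moser3}--\eqref{Moser4} gives the $H^m$ control of $\xi^{\pm}$ in terms of $\|(\bm{f},W)\|_{H^m(\Omega_t)}$ plus the tame terms of \eqref{M1.cal:def}, and then \eqref{xi:iden} yields $\partial_1W_7^{\pm}$, exactly as in \eqref{es5b}. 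Combining the two blocks closes the induction at step $j$.

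The main obstacle will be the bookkeeping that keeps the derivative loss on the basic state fixed at two. One must systematically split each product and commutator through \eqref{Moser3}--\eqref{Moser4} into ``all high derivatives on a $\mathring{\rm c}_m$-coefficient, $L^{\infty}$ on the $W$- or $\bm{f}$-factor'' versus ``all high derivatives on $W$ or $\bm{f}$, low Sobolev norm on the coefficient'', bounding the low-norm factors by the embedding theorem and \eqref{bas1d3}, and verifying that the extremal contribution is precisely $\|(\mathring{V},\mathring{\varPsi},\mathrm{D}_{x'}\mathring{\varPsi})\|_{H^{m+2}(\Omega_T)}^2\|(\bm{f},W)\|_{L^{\infty}(\Omega_t)}^2$ in \eqref{M1.cal:def}. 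The shift from $m$ to $m+2$ is forced by the coefficients $\bm{A}_i^{\pm}$, $\bm{A}_4^{\pm}$ depending on $\mathrm{D}\mathrm{D}_{x'}\mathring{\varPsi}$ (already one order beyond $W$'s level), together with the additional derivative picked up both in the $\sigma\partial_1$-energy step and in the entropy/divergence transport arguments, whose source terms carry $\mathrm{D}\bm{f}$ and $\mathrm{D}W$. A secondary technical point is the uniform treatment of the weight $\sigma$ at every differentiation order, which is a routine application of the calculus inequalities to the symmetric-hyperbolic energy identity for $\sigma\partial_1\mathrm{D}^{\beta}W$.
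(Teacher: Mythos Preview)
Your proposal is correct and follows essentially the same route as the paper: induction on the number of normal derivatives, with the noncharacteristic block recovered from \eqref{id4a} together with the weighted $\sigma\partial_1$-energy identity (exploiting $\bm{A}_{(0)}^{\pm}|_{\Sigma_T}=0$), and the characteristic block handled via the transport equations \eqref{S:equ} and \eqref{xi:equ}, all commutators being controlled by \eqref{Moser3}--\eqref{Moser4} to produce exactly the tame term in \eqref{M1.cal:def}. The only minor deviation is cosmetic: the paper organizes the induction as $\VERT\partial_1^k W(t)\VERT_{{\rm tan},\,m-k}^2\lesssim_K\VERT\partial_1^{k-1}W(t)\VERT_{{\rm tan},\,m-k+1}^2+\mathcal{M}_1(t)$, and for $W_8^{\pm}$ it estimates $\VERT W_8^{\pm}(t)\VERT_m^2$ in one shot by pairing $\mathrm{D}^{\alpha}$\eqref{S:equ} with $\mathrm{D}^{\alpha}W_8^{\pm}$ (not with $\mathrm{D}^{\beta}\partial_1W_8^{\pm}$), the boundary term vanishing thanks to \eqref{w1.ring}.
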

\begin{proof}
We divide the proof into two steps.

\vspace*{2mm}
\noindent {\bf 1. Estimate of the noncharacteristic variables.}
Let the multi-index ${\beta}=({\beta}_0,{\beta}_2,{\beta}_3)\in\mathbb{N}^3$ and the integer $k\geq 1$ satisfy $|{\beta}|+k\leq m$.
Applying the differential operator $\p_1^{k-1}\mathrm{D}_{\rm tan}^{\beta}:=\p_1^{k-1}\p_t^{{\beta}_0}\p_2^{{\beta}_2}\p_3^{{\beta}_3}$ to the identity \eqref{id4a} implies
\begin{align}
 \|\p_1^{k}\mathrm{D}_{\rm tan}^{\beta}W_{\rm nc} (t)\|_{L^2(\Omega^{\delta})}^2
 \lesssim \mathcal{I}_{4}^{a}+\mathcal{I}_{4}^{b}+\mathcal{I}_{4}^{c},
 \label{nor1a}
\end{align}
where
\begin{align} \nonumber
 \left\{
 \begin{aligned}
  &\mathcal{I}_{4}^{a}:=\big\|\p_1^{k-1}\mathrm{D}_{\rm tan}^{\beta}\big( \bm{B}^{\pm}\bm{f}^{\pm}-\bm{B}^{\pm}\bm{A}_4^{\pm}W^{\pm} \big) (t)\big\|_{L^2(\Omega^{\delta})}^2,
  \\
  &\mathcal{I}_{4}^{b}:=\big\|\p_1^{k-1}\mathrm{D}_{\rm tan}^{\beta}\big(\mathring{\rm c}_1 \mathrm{D}_{\rm tan} W \big) (t)\big\|_{L^2(\Omega)}^2,
  \\
  &\mathcal{I}_{4}^{c}:=\big\|\p_1^{k-1}\mathrm{D}_{\rm tan}^{\beta}\big( \bm{B}^{\pm}\bm{A}_{(0)}^{\pm}\p_1W^{\pm} \big)(t) \big\|_{L^2(\Omega^{\delta})}^2.
 \end{aligned}
 \right.
\end{align}

Since  $\bm{A}_4^{\pm}$ are $C^{\infty}$--functions of $(\mathring{V},\mathrm{D}\mathring{\varPsi},\mathrm{D}\mathring{V},
\mathrm{D}\mathrm{D}_{x'}\mathring{\varPsi})$
and $ \bm{B}^{\pm}$ are $C^{\infty}$--functions of $(\mathring{V},\mathrm{D}\mathring{\varPsi})$,
we use the Moser-type calculus inequality \eqref{Moser3} to obtain
\begin{align}
 \mathcal{I}_{4}^{a}
 &\lesssim
 \big\|\p_1^{k-1}\mathrm{D}_{\rm tan}^{\beta}
 \big(  \mathring{\rm c}_1\bm{f} +\mathring{\rm c}_1W \big) \big\|_{H^1(\Omega_t)}^2
 \lesssim
 \big\| \mathring{\rm c}_1\bm{f} +\mathring{\rm c}_1W  \big\|_{H^m(\Omega_t)}^2
 \lesssim_K \mathcal{M}_1(t),
 \label{I4a:es}
\end{align}
where $\mathcal{M}_1(t)$ is defined by \eqref{M1.cal:def}.

By virtue of the Moser-type calculus inequality \eqref{Moser4}, we get
\begin{align}
 \mathcal{I}_{4}^{b} &\lesssim
 \big\|
 \mathring{\rm c}_1 \p_1^{k-1}\mathrm{D}_{\rm tan}^{\beta} \mathrm{D}_{\rm tan} W (t)
 \big\|_{L^2(\Omega)}^2
 +\big\|
 \big[\p_1^{k-1}\mathrm{D}_{\rm tan}^{\beta},\,\mathring{\rm c}_1\big]\mathrm{D}_{\rm tan} W (t)
 \big\|_{L^2(\Omega)}^2
 \nonumber\\
 &\lesssim_K
 \VERT \p_1^{k-1}W(t)\VERT_{{\rm tan},\,m-k+1}^2
 + \big\|
 \big[\p_1^{k-1}\mathrm{D}_{\rm tan}^{\beta},\,\mathring{\rm c}_1\big]\mathrm{D}_{\rm tan} W
 \big\|_{H^1(\Omega_t)}^2 \nonumber\\
 &\lesssim_K
 \VERT \p_1^{k-1}W(t)\VERT_{{\rm tan},\,m-k+1}^2
 +\mathcal{M}_1(t).
 \label{I4b:es}
\end{align}

It follows from \eqref{bas2b} and \eqref{Moser4} that
\begin{align}
 \mathcal{I}_{4}^{c}&\lesssim
 \big\|\bm{B}^{\pm}\bm{A}_{(0)}^{\pm} \p_1^{k}\mathrm{D}_{\rm tan}^{\beta} W^{\pm}(t)\big\|_{L^2(\Omega)}^2
 +\big\|
 \big[\p_1^{k-1}\mathrm{D}_{\rm tan}^{\beta},\,\mathring{\rm c}_1\big]\p_1 W (t)
 \big\|_{L^2(\Omega)}^2
 \nonumber\\
 &\lesssim_K
 \big\|\sigma \p_1^{k}\mathrm{D}_{\rm tan}^{\beta} W (t)\big\|_{L^2(\Omega)}^2
 +\big\|
 \big[\p_1^{k-1}\mathrm{D}_{\rm tan}^{\beta},\,\mathring{\rm c}_1\big]\p_1 W
 \big\|_{H^1(\Omega_t)}^2  \nonumber\\
 &\lesssim_K
 \big\|\sigma \p_1^{k}\mathrm{D}_{\rm tan}^{\beta} W (t)\big\|_{L^2(\Omega)}^2
 +\mathcal{M}_1(t),
 \label{I4c:es}
\end{align}
where the $C^{\infty}$--function $\sigma=\sigma(x_1)$ satisfies \eqref{sigma:def}.
In particular, $\sigma(0)=0$.

Regarding the first term on the right-hand side of \eqref{I4c:es}, we apply the operator $\sigma\p_1^{k}\mathrm{D}_{\rm tan}^{\beta}$ to the equations \eqref{ELP3a} and employ the standard argument of the energy method to derive
\begin{align}
 \|\sigma \p_1^{k}\mathrm{D}_{\rm tan}^{\beta} W (t)\|_{L^2(\Omega)}^2
 \lesssim_K \mathcal{M}_1(t).
 \label{nor1b}
\end{align}
Since the weight $\sigma$ is away from zero outside the boundary $\Sigma_{{T}}$, we have
\begin{align}
 \| \p_1^{k}\mathrm{D}_{\rm tan}^{\beta} W (t)\|_{L^2(\Omega\setminus \Omega^{\delta})}^2
 \lesssim_K \mathcal{M}_1(t).
 \label{nor1c}
\end{align}

Plug \eqref{I4a:es}--\eqref{nor1b} into \eqref{nor1a} and combine the resulting estimate with \eqref{nor1c} to infer
\begin{align}
 \| \p_1^{k}\mathrm{D}_{\rm tan}^{\beta} W_{\rm nc} (t)\|_{L^2(\Omega)}^2
 \lesssim_K \VERT \p_1^{k-1}W(t)\VERT_{{\rm tan},\,m-k+1}^2+\mathcal{M}_1(t).
 \nonumber
\end{align}
Since the last estimate holds for all ${\beta}\in\mathbb{N}^3$ with $|{\beta}|\leq m-k$, we derive
\begin{align}
 \VERT \p_1^{k}  W_{\rm nc} (t)\VERT_{{\rm tan},\, m-k }^2
 \lesssim_K \VERT \p_1^{k-1}W(t)\VERT_{{\rm tan},\,m-k+1}^2+\mathcal{M}_1(t)
 \label{nor1e}
\end{align}
for $1\leq k\leq m$.

\vspace*{2mm}
\noindent {\bf 2. Estimate of the characteristic variables.}
We first consider the characteristic variables $W_8^{\pm}=S^{\pm}$ (entropies).
Let $\alpha:=(\alpha_0,\alpha_1,\alpha_2,\alpha_3)\in\mathbb{N}^{4}$ be any multi-index
with $|\alpha|\leq  m$.
Apply the operator $\mathrm{D}^{\alpha}:=\p_t^{\alpha_0}\p_1^{\alpha_1}\p_2^{\alpha_2}\p_3^{\alpha_3}$ to the equations \eqref{S:equ}
and multiply the resulting identities by $\mathrm{D}^{\alpha} W_8^{\pm}$ respectively to find
\begin{align*}
 &\p_t \Big(\big|\mathrm{D}^{\alpha}W_8^{\pm}\big|^2\Big)
 +\p_{1} \Big(\mathring{w}^{\pm}_{1} \big|\mathrm{D}^{\alpha}W_8^{\pm}\big|^2\Big)\\[1.5mm]
 &\qquad +\sum_{k=2,3}\p_k \Big(\mathring{v}^{\pm}_{k} \big|\mathrm{D}^{\alpha}W_8^{\pm}\big|^2\Big)
 -\Big(\p_{1} \mathring{w}^{\pm}_{1}+\p_{2} \mathring{v}^{\pm}_{2} +\p_{3} \mathring{v}^{\pm}_{3}  \Big)\big|\mathrm{D}^{\alpha}W_8^{\pm}\big|^2\\
 &\quad =2\mathrm{D}^{\alpha} W_8^{\pm}\Big(
 \mathrm{D}^{\alpha} \bm{f}^{\pm}_8
 +  \mathrm{D}^{\alpha}(\mathring{\rm c}_1 W)
 -\big[\mathrm{D}^{\alpha},\, \mathring{w}^{\pm}_{1}\big]\p_{1} W_8^{\pm}
 -\sum_{k=2,3}[\mathrm{D}^{\alpha},\, \mathring{v}^{\pm}_{k}]\p_{k} W_8^{\pm}
 \Big).
\end{align*}
Integrating the last identities over $\Omega_t$ and employing \eqref{Moser3}--\eqref{Moser4} yield
\begin{align}
 \VERT W_8^{\pm}(t)\VERT_{s}^2\lesssim_K \mathcal{M}_1(t).
 \label{S:es}
\end{align}

Next we recover the normal derivatives of the characteristic variables $W_7^{\pm}$ from the estimate of the linearized divergences $\xi^{\pm}$ defined by \eqref{xi:def}.
More precisely,
we apply the differential operator $\mathrm{D}^{\alpha}$ with $|\alpha|\leq m-1$ to the equations \eqref{xi:equ} and
multiply the resulting identities by $\mathrm{D}^{\alpha} \xi^{\pm}$ respectively to deduce
\begin{align}
 \nonumber
 \|\mathrm{D}^{\alpha} \xi^{\pm}(t)\|_{L^2 (\Omega)}^2
 \lesssim_K \, &
 \sum_{k=2,3}\big\|\big(\mathrm{D}^{\alpha}\xi^{\pm},\big[\mathrm{D}^{\alpha},\mathring{w}_{1}^{\pm} \big]\p_{1} \xi^{\pm} ,\,\big[\mathrm{D}^{\alpha},\mathring{v}_{k}^{\pm} \big]\p_{k} \xi^{\pm}\big) \big\|_{L^{2}(\Omega_t)}^2\\
 \label{xi:es1}&
 +\big\|\mathrm{D}^{\alpha}\big(
 \mathring{\rm c}_1 \mathrm{D}\bm{f}+ \mathring{\rm c}_2 \bm{f}+\mathring{\rm c}_1 \mathrm{D}W + \mathring{\rm c}_2 W\big) \big\|_{L^{2}(\Omega_t)}^2.
\end{align}
It follows from \eqref{xi:iden} that
\begin{align}
 \xi^{\pm}=\mathring{\rm c}_1 W+\mathring{\rm c}_1  \mathrm{D}W.
 \label{xi:id2}
\end{align}
Then we utilize the Moser-type calculus inequalities \eqref{Moser3}--\eqref{Moser4} to infer
\begin{align}
 \nonumber&\sum_{k=2,3}\big\|\big(\big[\mathrm{D}^{\alpha},\mathring{w}_{1}^{\pm} \big]\p_{1} \xi^{\pm} ,\,\big[\mathrm{D}^{\alpha},\mathring{v}_{k}^{\pm} \big]\p_{k} \xi^{\pm}\big) \big\|_{L^{2}(\Omega_t)}^2
 +\big\|\mathrm{D}^{\alpha}\big(\mathring{\rm c}_1 \mathrm{D}W + \mathring{\rm c}_2 W\big) \big\|_{L^{2}(\Omega_t)}^2
 \\
 &  \lesssim_K
\sum_{|\gamma|=2} \big\|\!\big(
 \big[\mathrm{D}^{\alpha},\, \mathring{\rm c}_2 \big] W,\,
 \big[\mathrm{D}^{\alpha},\, \mathring{\rm c}_2 \big] \mathrm{D}W,\,
 \big[\mathrm{D}^{\alpha},\, \mathring{\rm c}_1 \big] \mathrm{D}^{\gamma} W,\,
 \mathring{\rm c}_1 \mathrm{D}^{\alpha}\mathrm{D}W,\,
 \mathring{\rm c}_2 \mathrm{D}^{\alpha}W \big)\!\big\|_{L^{2}(\Omega_t)}^2
 \nonumber\\
 &  \lesssim_K
 \|    W\|_{H^m(\Omega_t)}^2 +\Big(1+\| (\mathring{V},  \mathring{\varPsi}, \mathrm{D}_{x'}\mathring{\varPsi})\|_{H^{m+2}(\Omega_T)}^2\Big)
 \|    W\|_{L^{\infty}(\Omega_t)}^2  .
 \label{xi:es2}
\end{align}
Similarly, we have
\begin{align}
 \nonumber
 &\big\|\mathrm{D}^{\alpha}\big(
 \mathring{\rm c}_1 \mathrm{D}\bm{f}+ \mathring{\rm c}_2 \bm{f}\big) \big\|_{L^{2}(\Omega_t)}^2\\
 \nonumber&\quad \lesssim_K
 \big\|\big(
 \mathring{\rm c}_1 \mathrm{D}^{\alpha}\mathrm{D}\bm{f},\,
 \mathring{\rm c}_2\mathrm{D}^{\alpha} \bm{f},\,
 [\mathrm{D}^{\alpha},\mathring{\rm c}_1]\mathrm{D}\bm{f},\,
 [\mathrm{D}^{\alpha},\mathring{\rm c}_2]\bm{f}\big) \big\|_{L^{2}(\Omega_t)}^2\\
 &\quad \lesssim_K
 \|    \bm{f}\|_{H^m(\Omega_t)}^2 +\Big(1+\| (\mathring{V},  \mathring{\varPsi}, \mathrm{D}_{x'}\mathring{\varPsi})\|_{H^{m+2}(\Omega_T)}^2\Big)
 \|    \bm{f}\|_{L^{\infty}(\Omega_t)}^2  .
 \label{xi:es3}
\end{align}
Plugging \eqref{xi:es2}--\eqref{xi:es3} into \eqref{xi:es1} and using Gr\"{o}nwall's inequality imply
\begin{align}
 \VERT \xi^{\pm}(t) \VERT_{m-1}^2
 =\sum_{|\alpha|\leq m-1}\|\mathrm{D}^{\alpha} \xi^{\pm}(t)\|_{L^2 (\Omega)}^2\lesssim_K \mathcal{M}_1(t).
 \label{xi:es}
\end{align}
Moreover, it follows from \eqref{xi:iden} that
\begin{align}
 \p_1 W_7^{\pm}=\mathring{\rm c}_1 \xi^{\pm}+\mathring{\rm c}_1 \mathrm{D}_{\rm tan} W+\mathring{\rm c}_1  W.
 \nonumber
\end{align}
Then for any multi-index $\beta\in\mathbb{N}^3$ and integer $k\geq 1$ with $|\beta|+k\leq m$, we take advantage of the identity \eqref{xi:id2}, the estimate \eqref{xi:es}, and the inequalities \eqref{Moser3}--\eqref{Moser4} to get
\begin{align}
 \nonumber
 \| \p_1^{k}\mathrm{D}_{\rm tan}^{\beta} W_{7}^{\pm} (t)\|_{L^2(\Omega)}^2
 \lesssim_K\,& \VERT \xi^{\pm}(t) \VERT_{m-1}^2
 +\VERT \p_1^{k-1}W(t) \VERT_{{\rm tan},\,m-k+1}^2\\
 \nonumber
 &+\big\|\big(\big[\p_1^{k-1}\mathrm{D}_{\rm tan}^{\beta}, \mathring{\rm c}_1\big]W,\,
 \big[\p_1^{k-1}\mathrm{D}_{\rm tan}^{\beta}, \mathring{\rm c}_1\big]\mathrm{D}W\big)\big\|_{H^1(\Omega_t)}^2\\
 \lesssim_K\,&  \VERT \p_1^{k-1}W(t) \VERT_{{\rm tan},\,m-k+1}^2
 +\mathcal{M}_1(t).
 \label{W7:es}
\end{align}
Combining \eqref{nor1e}--\eqref{S:es} and \eqref{W7:es} gives
\begin{align}
 \nonumber
 \VERT \p_1^k W(t)\VERT_{{\rm tan},\, m-k}^2
 \lesssim_K
 \VERT \p_1^{k-1} W(t)\VERT_{{\rm tan},\, m-k+1}^2
 +\mathcal{M}_1(t)
 \ \ \  \textrm{for }1\leq k\leq m. \nonumber
\end{align}
Since
$
 \VERT u \VERT_{m}^2=\sum_{k=0}^m \VERT \p_1^k u \VERT_{{\rm tan},\,m-k}^2,
$
we can derive \eqref{normal.est} by induction.
 This completes the proof.
\end{proof}

\subsection{Estimate of the Tangential Derivatives}
The following proposition concerns the estimate of the tangential derivatives.
\begin{proposition}   \label{pro:tan}
 If the assumptions in Theorem \ref{thm:tame} are satisfied, then
 \begin{multline}
  \VERT  W (t)\VERT_{{\rm tan},\,m}^2
  +\sum_{|\beta|\leq m}\|(\mathrm{D}_{\rm tan}^{\beta}\psi,
  \mathrm{D}_{\rm tan}^{\beta}\mathrm{D}_{x'}\psi)(t)\|_{L^2(\Sigma)}^2
  \\
  \lesssim_K \boldsymbol{\epsilon}\VERT W(t)\VERT_{m}^2
  +C(\boldsymbol{\epsilon}) \mathcal{M}_1(t)
  +C(\boldsymbol{\epsilon}) \mathcal{M}_2(t)
  \label{tan:est}
 \end{multline}
for all $\boldsymbol{\epsilon}>0$,
where $ \VERT \cdot \VERT_{{\rm tan},m}$, $\VERT  \cdot \VERT_{m}$,
and $\mathcal{M}_1(t)$ are defined in \eqref{VERT.tan} and \eqref{M1.cal:def}, and
\begin{align}
 \mathcal{M}_2(t):=
\,& \|( \psi,\mathrm{D}_{x'}\psi)\|_{H^m(\Sigma_t)}^2 +
  \|( \psi,\mathrm{D}_{x'}\psi)\|_{L^{\infty}(\Sigma_t)}^2\nonumber\\
& +\| (\mathring{V},  \mathring{\varPsi}, \mathrm{D}_{x'}\mathring{\varPsi})\|_{H^{m+2}(\Omega_T)}^2
 \|( \psi,\mathrm{D}_{x'}\psi)\|_{L^{\infty}(\Sigma_t)}^2.
 \label{M2.cal}
\end{align}
\end{proposition}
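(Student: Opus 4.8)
The plan is to run the energy method of \S\ref{sec:lin1} at the tangential order $m$, treating each operator $\mathrm{D}_{\rm tan}^{\beta}$ with $|\beta|\le m$ as a single differentiation and carefully tracking the loss of derivatives on the basic state. First I would apply $\mathrm{D}_{\rm tan}^{\beta}$ to the interior system \eqref{ELP3a} and take the $L^2(\Omega)$ scalar product of the result with $\mathrm{D}_{\rm tan}^{\beta}W^{\pm}$. Since $\bm{A}_0^{\pm},\ldots,\bm{A}_3^{\pm}$ are symmetric, integration by parts produces the energy identity
\[
\sum_{\pm}\int_{\Omega}\bm{A}_0^{\pm}\mathrm{D}_{\rm tan}^{\beta}W^{\pm}\cdot\mathrm{D}_{\rm tan}^{\beta}W^{\pm}\d x
+\int_{\Sigma_t}\mathcal{T}_{\rm b}(\mathrm{D}_{\rm tan}^{\beta}W)\lesssim_K \mathcal{M}_1(t),
\]
where the right-hand side absorbs all interior commutator and lower-order terms by the Moser-type inequalities \eqref{Moser3}--\eqref{Moser4}, exactly as in the proof of Proposition \ref{pro:normal}; here the contribution $\|(\mathring{V},\mathring{\varPsi},\mathrm{D}_{x'}\mathring{\varPsi})\|_{H^{m+2}(\Omega_T)}^2\|(\bm{f},W)\|_{L^\infty(\Omega_t)}^2$ inside $\mathcal{M}_1(t)$ is precisely what makes the estimate tame.

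The heart of the argument is the boundary integral $\int_{\Sigma_t}\mathcal{T}_{\rm b}(\mathrm{D}_{\rm tan}^{\beta}W)$. Using the decomposition \eqref{decom}--\eqref{A(1):def}, the identity $[\mathring{H}]|_{\Sigma_T}=0$ ({\it cf.}~\eqref{bas1c}), and the boundary conditions \eqref{ELP3c}--\eqref{ELP3d}, I would follow \eqref{id1a}--\eqref{id2a} to reduce it to
\begin{multline*}
\int_{\Sigma_t}\mathcal{T}_{\rm b}(\mathrm{D}_{\rm tan}^{\beta}W)
=-2\int_{\Sigma_t}\mathrm{D}_{\rm tan}^{\beta}[W_1]\,\mathrm{D}_{\rm tan}^{\beta}\mathring{\rm B}\psi
\\
+\int_{\Sigma_t}\mathrm{D}_{\rm tan}^{\beta}(\mathring{\rm c}_1\psi)\,\mathring{\rm c}_0\,\mathrm{D}_{\rm tan}^{\beta}\mathcal{U}
+(\text{commutators}),
\end{multline*}
with $\mathcal{U}$ the vector of noncharacteristic traces from \eqref{U.cal:def}. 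For the first integral I would invoke the surface-tension boundary condition \eqref{ELP3b}: commuting $\mathrm{D}_{\rm tan}^{\beta}$ through $\mathrm{D}_{x'}\!\cdot\!(\mathring{\rm c}_2\mathrm{D}_{x'}\,\cdot\,)$ and through the transport operator $\mathring{\rm B}$ as in \eqref{id1b} and \eqref{J.cal:id}--\eqref{J.cal:es} produces: (i) the coercive term $\tfrac{\mathfrak{s}}{2}\int_{\Sigma}|\mathrm{D}_{x'}\mathrm{D}_{\rm tan}^{\beta}\psi|^2/|\mathring{N}|^3\,\d x'$, which controls the $\mathrm{D}_{x'}\mathrm{D}_{\rm tan}^{\beta}\psi$ part of the left-hand side of \eqref{tan:est}; (ii) a total $(t,x')$-derivative whose time-integration contributes boundary terms $\|(\mathrm{D}_{\rm tan}^{\beta}\psi,\mathrm{D}_{x'}\mathrm{D}_{\rm tan}^{\beta}\psi)(t)\|_{L^2(\Sigma)}^2$, absorbed on the left after using the kinematic condition \eqref{ELP3d} and the identity \eqref{es1c} to bound $\mathrm{D}_{\rm tan}^{\beta}\psi$ by its tangential derivatives and $W_2^{+}$; and (iii) the lower-order transport remainders ($\mathring{a}_7\psi$ and commutators of the form $[\mathrm{D}_{\rm tan}^{\beta},\mathring{v}_k^{+}\p_k]\psi$), handled by the product estimate of Lemma \ref{lem:pro}, with the $H^{m+1/2}(\Sigma)$-norm of $\mathring{a}_7$ bounded by $\|(\mathring{V},\mathring{\varPsi},\mathrm{D}_{x'}\mathring{\varPsi})\|_{H^{m+2}(\Omega_T)}$ via the trace theorem — this is the source of the $\mathcal{M}_2(t)$ contribution. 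The second integral is handled as in \eqref{es2b}: integrating by parts once in a tangential variable and pairing a factor in $H^{\pm1/2}(\Sigma)$ with one in $H^{\mp1/2}(\Sigma)$ through \eqref{pro:es}; since $\mathcal{U}$ consists only of noncharacteristic components, its trace at tangential order $\le m-1$ is controlled by $\VERT W\VERT_{{\rm tan},\,m}$ and, via the trace inequality $\|u\|_{L^2(\Sigma)}^2\lesssim\boldsymbol{\epsilon}\|\p_1 u\|_{L^2(\Omega)}^2+\boldsymbol{\epsilon}^{-1}\|u\|_{L^2(\Omega)}^2$, by a term $\boldsymbol{\epsilon}\VERT W(t)\VERT_m^2$, which accounts for the first summand on the right of \eqref{tan:est}.

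Summing over all $\beta$ with $|\beta|\le m$, collecting the coercive surface-tension terms on the left, and choosing the Cauchy constants appropriately, I would arrive at \eqref{tan:est}; no Gr\"{o}nwall inequality is invoked at this stage, since the time-integral quantities $\|(\bm{f},W)\|_{H^m(\Omega_t)}^2$ inside $\mathcal{M}_1(t)$ and $\|(\psi,\mathrm{D}_{x'}\psi)\|_{H^m(\Sigma_t)}^2$ inside $\mathcal{M}_2(t)$, together with $\boldsymbol{\epsilon}\VERT W(t)\VERT_m^2$, are only disposed of later when \eqref{tan:est} is combined with the normal-derivative estimate \eqref{normal.est}. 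The main obstacle I anticipate is the boundary bookkeeping: one must commute $\mathrm{D}_{\rm tan}^{\beta}$ past both the second-order surface-tension operator and the first-order operator $\mathring{\rm B}$ while simultaneously (a) producing the coercive term with the correct sign, (b) keeping every remainder either inside $\mathcal{M}_1(t)+\mathcal{M}_2(t)$ or absorbable by $\boldsymbol{\epsilon}\VERT W(t)\VERT_m^2$, and (c) never letting a remainder carry more than $m+2$ derivatives on the basic state or more than $m$ free derivatives on $(W,\psi)$, so that the loss of derivatives stays fixed and the estimate is genuinely tame. The surface-tension coercivity is indispensable here: it is the only mechanism controlling $\mathrm{D}_{x'}\mathrm{D}_{\rm tan}^{\beta}\psi$, and without it the boundary term $\int_{\Sigma_t}\mathrm{D}_{\rm tan}^{\beta}[W_1]\,\mathrm{D}_{\rm tan}^{\beta}\mathring{\rm B}\psi$ cannot be closed.
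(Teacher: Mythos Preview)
Your overall strategy matches the paper's proof: apply $\mathrm{D}_{\rm tan}^{\beta}$ to \eqref{ELP3a}, bound the interior commutators by $\mathcal{M}_1(t)$ via \eqref{Moser3}--\eqref{Moser4}, and split the boundary integral into the surface-tension piece (yielding the coercive $\|\mathrm{D}_{x'}\mathrm{D}_{\rm tan}^{\beta}\psi\|_{L^2(\Sigma)}^2$ term) and the $\mathcal{U}$-piece. The paper organizes the surface-tension piece as four terms $\mathcal{I}_6^{a},\ldots,\mathcal{I}_6^{d}$, but your description covers the same content.

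There is, however, a genuine gap in your handling of the $\mathcal{U}$-integral $\int_{\Sigma_t}\mathrm{D}_{\rm tan}^{\beta}(\mathring{\rm c}_1\psi)\,\mathring{\rm c}_0\,\mathrm{D}_{\rm tan}^{\beta}\mathcal{U}$ when $\beta=(m,0,0)$ (pure time derivatives). Your plan ``integrating by parts once in a tangential variable, as in \eqref{es2b}'' only works when $\beta_2+\beta_3\ge1$; the estimate \eqref{es2b} itself is written for $\ell=2,3$, not $\ell=0$. When $\beta=(m,0,0)$ you must integrate by parts in \emph{time}, producing a boundary-in-time term at $t$ (which the paper calls $\mathcal{I}_7$) and a term with $\p_t^{m+1}\psi$ paired with $\p_t^{m-1}\mathcal{U}$. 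The latter exceeds the allotted $m$ derivatives on $\psi$; the paper repairs it by invoking \eqref{ELP3d} to convert $\p_t\psi$ into $W_2^+$ plus lower order, obtaining $\int_{\Sigma_t}\mathring{\rm c}_1\,\p_t^m W_2^+\,\p_t^{m-1}\mathcal{U}$ (their $\mathcal{I}_8^a$), which is then passed to the volume via $-\int_{\Omega_t}\p_1(\cdots)$ and estimated by $\boldsymbol{\epsilon}\VERT W(t)\VERT_m^2+C(\boldsymbol{\epsilon})\|W\|_{H^m(\Omega_t)}^2$. The same maneuver (boundary condition \eqref{ELP3d} plus trace inequality) is needed for $\|\p_t^m\psi(t)\|_{L^2(\Sigma)}^2$ arising from $\mathcal{I}_6^{c}$. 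You allude to both the kinematic condition and the trace inequality, but not to the fact that they must be combined in this specific order for $\beta=(m,0,0)$; without this step the estimate does not close, since neither $\p_t^m\mathcal{U}$ on $\Sigma$ nor $\p_t^{m+1}\psi$ is directly controlled by the right-hand side of \eqref{tan:est}.
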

\begin{proof}
Let $\beta=(\beta_0,\beta_2,\beta_3)\in\mathbb{N}^3$ satisfy $|\beta|\leq m$.
Applying the differential operator $\mathrm{D}_{\rm tan}^{\beta}$ to the equations \eqref{ELP3a} implies
\begin{align}
\sum_{\pm}\int_{\Omega} {\bm{A}}_0^{\pm}\mathrm{D}_{\rm tan}^{\beta}W^{\pm}\cdot \mathrm{D}_{\rm tan}^{\beta}W^{\pm}\d x
+\int_{\Sigma_{{t}}} \mathcal{T}_{\rm b}(\mathrm{D}_{\rm tan}^{\beta} W)
 =\mathcal{I}_{5},
\label{tan:es1}
\end{align}
where the operator $\mathcal{T}_{\rm b}$ is defined by \eqref{T.cal:def}, and
\begin{align}
\nonumber \mathcal{I}_{5}:=\,&
2\sum_{\pm}\int_{\Omega_t}\mathrm{D}_{\rm tan}^{\beta}W^{\pm}\cdot
\mathrm{D}_{\rm tan}^{\beta} \left( \bm{f}^{\pm}
-\bm{A}_4^{\pm}W^{\pm}\right)\\
&\!-\sum_{\pm}\sum_{i=0}^3\int_{\Omega_t}\mathrm{D}_{\rm tan}^{\beta}W^{\pm}\cdot
\left(2[\mathrm{D}_{\rm tan}^{\beta},  \bm{A}_i^{\pm}]\p_i W^{\pm}
-\p_i{\bm{A}}_i^{\pm} \mathrm{D}_{\rm tan}^{\beta} W^{\pm} \right).
\nonumber
\end{align}
A standard calculation with an application of \eqref{Moser3}--\eqref{Moser4} leads to
\begin{align}
\mathcal{I}_{5}\lesssim_K \mathcal{M}_1(t).
\label{I5.cal:es}
\end{align}

Similar to \eqref{id1a}, we can derive from the boundary conditions \eqref{ELP3b}--\eqref{ELP3d} that
\begin{align}
\int_{\Sigma_{{t}}} \mathcal{T}_{\rm b}(\mathrm{D}_{\rm tan}^{\beta} W)
=\,& -2 \int_{\Sigma_{{t}}}  \mathrm{D}_{\rm tan}^{\beta} [W_1] \mathrm{D}_{\rm tan}^{\beta} W_2^+
+\int_{\Sigma_{{t}}} \mathrm{D}_{\rm tan}^{\beta} [(W_2,\ldots,W_6)]\mathring{\rm c}_0 \mathrm{D}_{\rm tan}^{\beta}\mathcal{U} \nonumber\\
=\,&
\mathcal{I}_{6}^{a}+\mathcal{I}_{6}^{b}+\mathcal{I}_{6}^{c}+\mathcal{I}_{6}^{d}
+\int_{\Sigma_{{t}}} \mathrm{D}_{\rm tan}^{\beta} (\mathring{\rm c}_1 \psi )\mathring{\rm c}_0 \mathrm{D}_{\rm tan}^{\beta}\mathcal{U},
\label{tan:id1}
\end{align}
where
\begin{align}
\nonumber &\mathcal{I}_{6}^{a}:= 2\mathfrak{s}\int_{\Sigma_t} \mathrm{D}_{\rm tan}^{{\beta}}\bigg(\dfrac{\mathrm{D}_{x'}\psi}{|\mathring{N}|}- \dfrac{\mathrm{D}_{x'}\mathring{\varphi}\cdot\mathrm{D}_{x'}\psi}{|\mathring{N}|^3}\mathrm{D}_{x'}\mathring{\varphi}\bigg) \cdot (\p_t+\mathring{v}_2^+\p_2+\mathring{v}_3^+\p_3)\mathrm{D}_{\rm tan}^{{\beta}} \mathrm{D}_{x'}\psi ,\\
\nonumber &\mathcal{I}_{6}^{b}:= 2\mathfrak{s}\int_{\Sigma_t}
\mathrm{D}_{\rm tan}^{{\beta}}\bigg(\dfrac{\mathrm{D}_{x'}\psi}{|\mathring{N}|}- \dfrac{\mathrm{D}_{x'}\mathring{\varphi}\cdot\mathrm{D}_{x'}\psi}{|\mathring{N}|^3}\mathrm{D}_{x'}\mathring{\varphi}\bigg)
\\
\nonumber &\qquad \qquad \ \; \quad    \cdot \Big(\big[\mathrm{D}_{\rm tan}^{{\beta}}\mathrm{D}_{x'}, \mathring{v}_2^+\p_2+\mathring{v}_3^+\p_3\big]\psi+\mathrm{D}_{\rm tan}^{{\beta}}\mathrm{D}_{x'}\big(\mathring{a}_7 \psi\big) \Big),\\
\nonumber &\mathcal{I}_{6}^{c}:=-2 \int_{\Sigma_{{t}}}
\mathrm{D}_{\rm tan}^{{\beta}}\big(\mathring{a}_1\psi \big) (\p_t+\mathring{v}_2^+\p_2+\mathring{v}_3^+\p_3)\mathrm{D}_{\rm tan}^{{\beta}}\psi,\\
\nonumber &\mathcal{I}_{6}^{d}:= -2\int_{\Sigma_{{t}}}  \mathrm{D}_{\rm tan}^{{\beta}}\big(\mathring{a}_1\psi \big)
\Big( \big[\mathrm{D}_{\rm tan}^{{\beta}} , \mathring{v}_2^+\p_2+\mathring{v}_3^+\p_3\big]\psi+\mathrm{D}_{\rm tan}^{{\beta}} \big(\mathring{a}_7 \psi\big)\Big)
.
\end{align}
By a direct computation, we obtain
\begin{align*}
\mathcal{I}_{6}^{a}
= \,&\,
\mathfrak{s}\int_{\Sigma}
\bigg(
\frac{|\mathrm{D}_{\rm tan}^{{\beta}}\mathrm{D}_{x'}\psi|^2}{|\mathring{N}|}-
\frac{|\mathrm{D}_{x'}\mathring{\varphi}\cdot\mathrm{D}_{\rm tan}^{{\beta}}\mathrm{D}_{x'}\psi|^2}{|\mathring{N}|^3}\bigg)\mathrm{d}x'\\
&+\int_{\Sigma}
[\mathrm{D}_{\rm tan}^{{\beta}},\mathring{\rm c}_0]\mathrm{D}_{x'}\psi\cdot\mathrm{D}_{\rm tan}^{{\beta}}\mathrm{D}_{x'}\psi \mathrm{d}x'
+\int_{\Sigma_t}
\mathring{\rm c}_2\mathrm{D}_{\rm tan}^{{\beta}}\mathrm{D}_{x'}\psi \cdot\mathrm{D}_{\rm tan}^{{\beta}}\mathrm{D}_{x'}\psi\\
&+\int_{\Sigma_t}
\mathrm{D}_{\rm tan}^{{\beta}}\mathrm{D}_{x'}\psi \cdot
\bigg(\p_t[\mathrm{D}_{\rm tan}^{{\beta}},\mathring{\rm c}_0]\mathrm{D}_{x'}\psi+\sum_{k=2,3} \p_k\big(\mathring{v}_k^+[\mathrm{D}_{\rm tan}^{{\beta}},\mathring{\rm c}_0]\mathrm{D}_{x'}\psi \big) \bigg).
\end{align*}
Then using Cauchy's inequality, integration by parts, and the Moser-type calculus inequalities \eqref{Moser3}--\eqref{Moser4}, we discover
\begin{align}
&-\mathcal{I}_{6}^{a}
+\frac{\mathfrak{s}}{2}\int_{\Sigma}
\frac{|\mathrm{D}_{\rm tan}^{{\beta}}\mathrm{D}_{x'}\psi|^2}{|\mathring{N}|^3}\d x'
\nonumber \\
&\qquad \lesssim_K
\|\mathrm{D}_{\rm tan}^{{\beta}}\mathrm{D}_{x'}\psi\|_{L^2(\Sigma_t)}^2
+\|[\mathrm{D}_{\rm tan}^{{\beta}},\mathring{\rm c}_0] \mathrm{D}_{x'}\psi \|_{H^1(\Sigma_t)}^2
\nonumber\\[1mm]
&\qquad \lesssim_K
\|\mathrm{D}_{x'}\psi\|_{H^m(\Sigma_t)}^2+\Big(1+\| (\mathring{V},  \mathring{\varPsi}, \mathrm{D}_{x'}\mathring{\varPsi})\|_{H^{m+2}(\Omega_T)}^2\Big) \|\mathrm{D}_{x'}\psi\|_{L^{\infty}(\Sigma_t)}^2.
\label{I6.a:es}
\end{align}
In view of \eqref{Moser3}--\eqref{Moser4}, we estimate the integral term $\mathcal{I}_{6}^{b}$ as
\begin{align}
|\mathcal{I}_{6}^{b}|
&\lesssim
\sum_{k=2,3}\big\|  \big(\mathrm{D}_{\rm tan}^{{\beta}}(\mathring{\rm c}_1\mathrm{D}_{x'}\psi),
\mathrm{D}_{\rm tan}^{{\beta}}(\mathring{\rm c}_2\psi),
\big[\mathrm{D}_{\rm tan}^{{\beta}}\mathrm{D}_{x'}, \mathring{v}_k^+\big]\p_k\psi\big)\big\|_{L^{2}(\Sigma_t)}^2 \nonumber \\
&\lesssim
\big\| \big(\mathring{\rm c}_1\mathrm{D}_{x'}\psi,\mathring{\rm c}_2\psi\big)\big\|_{H^{m}(\Sigma_t)}^2
+\big\|\big[\mathrm{D}_{\rm tan}^{{\beta}}\mathrm{D}_{x'}, \mathring{\rm c}_0\big]\mathrm{D}_{x'}\psi\big\|_{L^{2}(\Sigma_t)}^2
\lesssim_K \mathcal{M}_2(t),
\label{I6.b:es}
\end{align}
where $\mathcal{M}_2(t)$ is defined by \eqref{M2.cal}.
Regarding the term $\mathcal{I}_{6}^{c}$, we have
\begin{align}
\nonumber
|\mathcal{I}_{6}^{c}|
\lesssim_K\;&
\int_{\Sigma}   \big|\mathring{a}_1\big|\big |\mathrm{D}_{\rm tan}^{\beta} \psi\big|^2\mathrm{d}x'
+2\int_{\Sigma}\big|[\mathrm{D}_{\rm tan}^{\beta}, \mathring{a}_1]\psi \mathrm{D}_{\rm tan}^{\beta} \psi\big|\mathrm{d}x' \\[1mm]
& +\sum_{k=2,3}\big\| \big(\mathrm{D}_{\rm tan}^{\beta} \psi,\,
\p_t [\mathrm{D}_{\rm tan}^{\beta}, \mathring{a}_1] \psi,\,
\p_k (\mathring{v}_k^+ [\mathrm{D}_{\rm tan}^{\beta}, \mathring{a}_1] \psi
 )\big)\big \|_{L^2(\Sigma_t)}^2\nonumber\\
\lesssim_K\;&
\big\|\mathrm{D}_{\rm tan}^{\beta} \psi(t)\big\|^2_{L^2(\Sigma)}
+\mathcal{M}_2(t).
\label{I6.c:es}
\end{align}
Applying the Moser-type calculus inequalities \eqref{Moser3}--\eqref{Moser4} yields
\begin{align}
 |\mathcal{I}_{6}^{d}|
 \lesssim_K
\|\psi\|_{H^m(\Sigma_t)}^2+\Big(1+\| (\mathring{V},  \mathring{\varPsi}, \mathrm{D}_{x'}\mathring{\varPsi})\|_{H^{m+2}(\Omega_T)}^2\Big)\|\psi\|_{L^{\infty}(\Sigma_t)}^2 .
 \label{I6.d:es}
\end{align}

Now let us estimate the first term on the right-hand side of \eqref{I6.c:es}.
If $|\beta|\leq m-1$ or $\beta_2+\beta_3\geq 1$, then
\begin{align}
 \|\mathrm{D}_{\rm tan}^{\beta}\psi(t)\|_{L^2(\Sigma)}^2
 \lesssim \int_{\Sigma_t} |\mathrm{D}_{\rm tan}^{\beta}\psi||\p_t\mathrm{D}_{\rm tan}^{\beta}\psi|
 \lesssim \|( \psi,\mathrm{D}_{x'}\psi)\|_{H^m(\Sigma_t)}^2.
 \label{tan:es2}
\end{align}
Otherwise, $\beta_2=\beta_3=0$ and $\beta_0=m$.
For this case, it follows from the boundary condition \eqref{ELP3d} and integration by parts that
\begin{align}
 \nonumber
 \|\p_t^{m}\psi(t)\|_{L^2(\Sigma)}^2 & \nonumber
 \lesssim \|\p_t^{m-1}W_2^+(t)\|_{L^2(\Sigma)}^2
 +\|\mathring{v}_2^+\p_2\psi+\mathring{v}_3^+\p_3\psi+\mathring{a}_7\psi\|_{H^m(\Sigma_t)}^2\\
 & \lesssim   \boldsymbol{\epsilon} \| \p_t^{m-1}\p_1 W(t)\|_{L^2(\Omega)}^2
+\boldsymbol{\epsilon}^{-1}\| \p_t^{m-1} W(t)\|_{L^2(\Omega)}^2
+\mathcal{M}_2(t)\nonumber\\
 & \lesssim   \boldsymbol{\epsilon} \VERT  W(t)\VERT_{m}^2
+\boldsymbol{\epsilon}^{-1}\| W\|_{H^m(\Omega_t)}^2
+\mathcal{M}_2(t)
 \label{tan:es3}
\end{align}
for all $\boldsymbol{\epsilon}>0$.

It remains to make the estimate of  the last term in \eqref{tan:id1}.

If $|\beta|\leq m-1$, then using the trace theorem implies
\begin{align}
\left|\int_{\Sigma_{{t}}} \mathrm{D}_{\rm tan}^{\beta} (\mathring{\rm c}_1 \psi )\mathring{\rm c}_0 \mathrm{D}_{\rm tan}^{\beta}\mathcal{U}\right|
&\lesssim_K
\|\mathring{\rm c}_1 \psi\|_{H^{m-1}(\Sigma_{{t}})}
\|\mathcal{U}\|_{H^{m}(\Omega_{{t}})} \nonumber\\
&\lesssim_K \mathcal{M}_1(t)+\mathcal{M}_2(t).
\label{tan:es4}
\end{align}

If $\beta=(\beta_0,\beta_2,\beta_3)$ with $\beta_2\geq 1$ or $\beta_3\geq 1$,
then it follows from integration by parts and Moser-type calculus inequalities that
\begin{align}
\left|\int_{\Sigma_{{t}}} \mathrm{D}_{\rm tan}^{\beta} (\mathring{\rm c}_1 \psi )\mathring{\rm c}_0 \mathrm{D}_{\rm tan}^{\beta}\mathcal{U}\right|
&\lesssim
 \int_{\Sigma_{{t}}}
 \big|\p_k\big(\mathrm{D}_{\rm tan}^{\beta}  (\mathring{\rm c}_1 \psi)\mathring{\rm c}_0\big)\big|
 \big|\mathrm{D}_{\rm tan}^{\beta-\bm{e}_k}\mathcal{U}\big| \nonumber \\
&\lesssim_K   \mathcal{M}_2(t)+\|W\|_{H^m(\Omega_{{t}})}^2,
\label{tan:es5}
\end{align}
where $\bm{e}_2:=(0,1,0)^{\top}$ and $\bm{e}_3:=(0,0,1)^{\top}$.

If $\beta=(m,0,0)$, then
\begin{align}
\int_{\Sigma_{{t}}} \mathrm{D}_{\rm tan}^{\beta} (\mathring{\rm c}_1 \psi )\mathring{\rm c}_0 \mathrm{D}_{\rm tan}^{\beta}\mathcal{U}
=\int_{\Sigma_{{t}}} \p_t^{m} (\mathring{\rm c}_1 \psi )\mathring{\rm c}_0 \p_t^{m}\mathcal{U}
=\mathcal{I}_{7}+\mathcal{I}_{8}+\mathcal{I}_{9},
\label{tan:id3}
\end{align}
with
\begin{align}
\nonumber
&\mathcal{I}_{7}:=\int_{\Sigma}\p_t^{m}(\mathring{\rm c}_1 \psi)\mathring{\rm c}_0 \p_t^{m-1}\mathcal{U}\mathrm{d}x' ,\quad
\mathcal{I}_{8}:=\int_{\Sigma_t} \mathring{\rm c}_1\p_t^{m+1} \psi  \p_t^{m-1}\mathcal{U}, \\ \nonumber
&\mathcal{I}_{9}=\int_{\Sigma_t}
\Big(\big[\p_t^{m+1}, \mathring{\rm c}_1\big] \psi \mathring{\rm c}_0 \p_t^{m-1}\mathcal{U}
+\p_t^{m}(\mathring{\rm c}_1 \psi)\mathring{\rm c}_1 \p_t^{m-1}\mathcal{U}\Big).
\end{align}
For the integral term $\mathcal{I}_{7}$, we utilize the estimate \eqref{tan:es3} and the calculus inequality \eqref{Moser4} to infer
\begin{align}
|\mathcal{I}_{7}| \lesssim \,&
\|\p_t^m(\mathring{\rm c}_1 \psi)(t)\|_{L^2(\Sigma)}^2
+\|\p_t^{m-1} \mathcal{U}(t)\|_{L^2(\Sigma)}^2\nonumber\\
\lesssim \,&
\|\p_t^m \psi(t)\|_{L^2(\Sigma)}^2
+\|[\p_t^m, \mathring{\rm c}_1 ]\psi\|_{H^1(\Sigma_{{t}})}^2
+\|\p_t^{m-1} W(t)\|_{L^2(\Sigma)}^2\nonumber\\
 \lesssim  \,&
\mathcal{M}_2(t)
+\boldsymbol{\epsilon} \VERT  W(t)\VERT_{m}^2
+\boldsymbol{\epsilon}^{-1}\| W\|_{H^m(\Omega_t)}^2
\quad \textrm{for all }\boldsymbol{\epsilon}>0.
\label{I7:es}
\end{align}
Thanks to the boundary condition \eqref{ELP3d}, we get
\begin{align}
\mathcal{I}_{8}=
\underbrace{\int_{\Sigma_t} \mathring{\rm c}_1\p_t^{m} W_2^+  \p_t^{m-1}\mathcal{U}}_{\mathcal{I}_{8}^{a}}
+\underbrace{\int_{\Sigma_t} \mathring{\rm c}_1\p_t^{m}(\mathring{\rm c}_0 \mathrm{D}_{x'}\psi +\mathring{\rm c}_1 \psi)  \p_t^{m-1}\mathcal{U}}_{\mathcal{I}_{8}^{b}}.
\label{I8:id}
\end{align}
Passing the boundary integral $\mathcal{I}_{8}^a$ to the volume one yields
\begin{align}
\mathcal{I}_{8}^{a}=\,&-\int_{\Omega_t} \p_1\left(\mathring{\rm c}_1\p_t^{m} W_2^+  \p_t^{m-1}\mathcal{U}\right)\nonumber\\
=\,& -\int_{\Omega} \mathring{\rm c}_1\p_1\p_t^{m-1} W_2^+  \p_t^{m-1}\mathcal{U}\mathrm{d}x
+\int_{\Omega_t}
\mathring{\rm c}_2
\begin{pmatrix}
 \p_t^{m-1} \mathcal{U} \\ \p_t^{m} \mathcal{U} \\  \p_1\p_t^{m-1} \mathcal{U}
\end{pmatrix}\cdot
\begin{pmatrix}
 \p_t^m W_2^+\\  \p_1\p_t^{m-1} W_2^+
\end{pmatrix}
\nonumber\\
\geq \,&
-\boldsymbol{\epsilon} \VERT W(t)\VERT_{m}^2
-C(\boldsymbol{\epsilon}) C(K)\|W\|_{H^m(\Omega_t)}^2.
\label{I8:es1}
\end{align}
Apply the trace theorem and
the Moser-type calculus inequalities \eqref{Moser3}--\eqref{Moser4} to obtain
\begin{align}
\left|\mathcal{I}_{8}^{b}+\mathcal{I}_{9}\right|\lesssim \mathcal{M}_2(t)+\|W\|_{H^m(\Omega_{{t}})}^2.
\label{I8:es2}
\end{align}

We conclude the estimate \eqref{tan:est} by plugging \eqref{I5.cal:es}--\eqref{tan:id1} into \eqref{tan:es1} and using \eqref{I6.a:es}--\eqref{I8:es2}.
The proof is thus complete.
\end{proof}

\subsection{Proof of Theorem \ref{thm:tame}}
Combining the estimate \eqref{normal.est} with \eqref{tan:est},
we choose $\boldsymbol{\epsilon}>0$ small enough to derive
 \begin{align}
 \VERT  W (t)\VERT_{m}^2
 +\sum_{|\beta|\leq m}\|(\mathrm{D}_{\rm tan}^{\beta}\psi,
 \mathrm{D}_{\rm tan}^{\beta}\mathrm{D}_{x'}\psi)(t)\|_{L^2(\Sigma)}^2
 \lesssim_K   \mathcal{M}_1(t)  +\mathcal{M}_2(t),
 \label{fin:es1}
\end{align}
where $\mathcal{M}_1(t)$ and $\mathcal{M}_2(t)$ are defined by \eqref{M1.cal:def} and \eqref{M2.cal}, respectively.
By virtue of the Gr\"{o}nwall's inequality, from \eqref{fin:es1} we obtain
\begin{multline}
  \VERT  W (t)\VERT_{m}^2
 +\sum_{|\beta|\leq m}\|(\mathrm{D}_{\rm tan}^{\beta}\psi,
 \mathrm{D}_{\rm tan}^{\beta}\mathrm{D}_{x'}\psi)(t)\|_{L^2(\Sigma)}^2
\lesssim_K
\| \bm{f} \|_{H^m(\Omega_t)}^2 \\
  +\Big(1+\| (\mathring{V},  \mathring{\varPsi}, \mathrm{D}_{x'}\mathring{\varPsi})\|_{H^{m+2}(\Omega_T)}^2\Big)
\left(\|    ( \bm{f},W)\|_{L^{\infty}(\Omega_t)}^2+\|( \psi,\mathrm{D}_{x'}\psi)\|_{L^{\infty}(\Sigma_t)}^2\right).
\nonumber %
\end{multline}
Integrating the last estimate over $[0,T]$,
we use the embedding $H^3(\Omega_{T}) \hookrightarrow  L^{\infty}(\Omega_{T})$, $H^2(\Sigma_{T}) \hookrightarrow  L^{\infty}(\Sigma_{T})$
and take $T>0$ sufficiently small to infer
\begin{multline}
\|    W\|_{H^m(\Omega_T)}^2+\|( \psi,\mathrm{D}_{x'}\psi)\|_{H^{m}(\Sigma_T)}^2\\
 \lesssim_K
 T
\Big\{
\| \bm{f} \|_{H^m(\Omega_T)}^2 +\| (\mathring{V},  \mathring{\varPsi},\mathrm{D}_{x'}\mathring{\varPsi})\|_{H^{m+2}(\Omega_T)}^2{\mbox{\qquad\qquad \qquad} }
 \\
\times\left(\|    \bm{f}\|_{H^3(\Omega_T)}^2+\| W\|_{H^3(\Omega_T)}^2
+\|( \psi,\mathrm{D}_{x'}\psi)\|_{H^{2}(\Sigma_T)}^2\right)
\Big\}
 \label{fin:es2}
\end{multline}
for $m\geq 3$.
In view of \eqref{fin:es2} with $m=3$, we can find a sufficiently small constant $T_0>0$, depending on $K$ ({\it cf.}~\eqref{bas1d3}), such that if $0<T\leq T_0$, then
\begin{align}
 \|    W\|_{H^3(\Omega_T)}^2+\|( \psi,\mathrm{D}_{x'}\psi)\|_{H^{3}(\Sigma_T)}^2   \lesssim_{K}
 \| \bm{f} \|_{H^3(\Omega_T)}^2.
 \nonumber
\end{align}
Plugging the above estimate into \eqref{fin:es2} implies
\begin{align}
 &\nonumber\|    W\|_{H^m(\Omega_T)}^2+\|( \psi,\mathrm{D}_{x'}\psi)\|_{H^{m}(\Sigma_T)}^2\\
 &\quad  \lesssim_{K}
 \| \bm{f} \|_{H^m(\Omega_T)}^2 +\| (\mathring{V},  \mathring{\varPsi},\mathrm{D}_{x'}\mathring{\varPsi})\|_{H^{m+2}(\Omega_T)}^2\|  \bm{f} \|_{H^3(\Omega_T)}^2
 \quad\textrm{for } m\geq 3.\label{fin:es3}
\end{align}

In Section \ref{sec:lin1}, we have proved that for $(f^{\pm},g)\in H^1(\Omega_T)\times H^{3/2}(\Sigma_T)$ vanishing in the past, there exists a unique solution $(W,\psi)\in H^1(\Omega_T)\times H^1(\Sigma_T)$ to the reduced problem \eqref{ELP3}.
Using the arguments in \cite[Chapter 7]{CP82MR0678605} and the energy estimate \eqref{fin:es3},
one can establish the existence and uniqueness of solutions $(W,\psi)$ of {the} problem \eqref{ELP3} in $H^m(\Omega_T)\times H^{m}(\Sigma_T)$ for any $m\geq 3$.
As a consequence,  the problem \eqref{ELP1} admits a unique solution $(\dot{V}^{\pm},\psi)$ in $H^m(\Omega_T)\times H^{m}(\Sigma_T)$.
The tame estimate \eqref{tame:es} for {the} problem \eqref{ELP1}
follows by combining \eqref{fin:es3} with \eqref{V.natural:es}.
The proof of Theorem \ref{thm:tame} is finished.

\section{Nash--Moser Iteration} \label{sec:NM}
This section is devoted to showing the nonlinear stability of MHD contact discontinuities with surface tension, or equivalently, solving the nonlinear problem \eqref{MCD0}.
Our analysis is based on a modified Nash--Moser iteration scheme.

\subsection{Reducing to Zero Initial Data}
To apply Theorems \ref{thm:lin} and \ref{thm:tame}, we will reduce the nonlinear problem \eqref{MCD0} to that with zero initial data via the approximate solutions.
For this purpose, we need to impose suitable compatibility conditions on the initial data.

Take $m\in\mathbb{N}$ with $m\geq 3$.
Assume that the initial data $(U_0^+, U_0^-,\varphi_0)$ satisfy
$\widetilde{U}_0^{\pm}:=U_0^{\pm}-\widebar{U}^{\pm} \in H^{m+3/2}(\Omega)$
and $\varphi_0\in H^{m+2}(\mathbb{R}^{2})$,
where $\widebar{U}^{\pm}$ are the constant states defined by \eqref{U.bar:def}.
We assume without loss of generality that
$\|\varphi_0\|_{L^{\infty}(\mathbb{R}^{2})}\leq \frac{1}{4}$,
and hence
\begin{align}\label{CA1}
 \pm \p_1 \varPhi_0^{\pm}(x) \geq  \frac{3}{4}>0\quad  \textrm{in } \Omega,
\quad
\end{align}
where $\varPhi_0^{\pm}(x):={\pm}x_1+\widetilde{\varPhi}_0^{\pm}(x)$
with $\widetilde{\varPhi}_0^{\pm}(x) :=\chi(\pm x_1)\varphi_0(x')$.
Let us define the perturbations
$(\widetilde{U}^{\pm},\widetilde{\varPhi}^{\pm}) : = (U^{\pm} -\widebar{U}^{\pm},
\varPhi^{\pm} {\mp} x_1)$, and
\begin{align*}
\widetilde{U}_{(\ell)}^{\pm}:=\p_t^{\ell}\widetilde{U}^{\pm}\big|_{t=0},\ \
\varphi_{(\ell)}:=\p_t^{\ell}\varphi\big|_{t=0},\ \
\widetilde{\varPhi}_{(\ell)}^{\pm}:= \p_t^{\ell}\widetilde{\varPhi}^{\pm}\big|_{t=0}
\quad\textrm{for } \ell \in\mathbb{N}.
\end{align*}
It follows from \eqref{varPhi:def} that
\begin{align*}
\widetilde{\varPhi}_{(\ell)}^{\pm}(x)=\chi(\pm x_1)\varphi_{(\ell)}(x'),
\quad
\big(\widetilde{U}_{(0)}^{\pm},\,\varphi_{(0)}^{\pm},\,\widetilde{\varPhi}_{(0)}^{\pm}\big)
=\big(\widetilde{U}_0^{\pm},\,\varphi_0^{\pm},\,\widetilde{\varPhi}_{0}^{\pm}\big).
\end{align*}
Applying Leibniz's rule to the last condition in \eqref{MCD0b} yields
\begin{align}
 \varphi_{(\ell+1)} = v_{1(\ell)}^{+}|_{x_1=0}-\sum_{i=0}^{\ell} \sum_{j=2,3}
 \begin{pmatrix}
   \ell \\ i
 \end{pmatrix}  \p_j\varphi_{(\ell-i)}v_{j(i)}^+|_{x_1=0},
 \label{trace.id1}
\end{align}
where $\left(\begin{smallmatrix}
 \ell \\[0.3mm] i
\end{smallmatrix}\right):=\frac{\ell !}{i!(\ell-i)!}$ is the binomial coefficient.
Under the hyperbolicity condition \eqref{hyperbolicity},
we can rewrite the equations \eqref{MCD0} as
\begin{align}\nonumber 
 \p_t \widetilde{U}^{\pm}=\bm{G}\big(\mathcal{W}^{\pm}\big)
\qquad \textrm{for }
\mathcal{W}^{\pm}:=(\widetilde{U}^{\pm},\nabla \widetilde{U}^{\pm},\mathrm{D} \widetilde{\varPhi}^{\pm})^{\top}\in\mathbb{R}^{36},
\end{align}
where $\bm{G}$ is a certain $C^{\infty}$--function vanishing at the origin.
Employ the generalized Fa\`a di Bruno's formula (see \cite[Theorem 2.1]{M00MR1781515})
to find
\begin{align} \label{trace.id2}
&\widetilde{U}_{(\ell+1)}^{\pm}
 =\sum_{\substack{\alpha_{i}\in\mathbb{N}^{36}\\ |\alpha_1|+\cdots+\ell |\alpha_{\ell}|=\ell}}
 \mathrm{D}_{\mathcal{W}}^{\alpha_1+\cdots+\alpha_\ell}\bm{G}\big(\mathcal{W}_{(0)}^{\pm}\big)
 \ell!\prod_{i=1}^\ell\frac{1}{\alpha_{i}!}
 \left(\frac{\mathcal{W}_{(i)}^{\pm} }{i!}\right)^{\alpha_{i}},
\end{align}
where $\mathcal{W}_{(i)}^{\pm}:=(\widetilde{U}_{(i)}^{\pm},\,\nabla \widetilde{U}_{(i)}^{\pm},\,\mathrm{D}\widetilde{\varPhi}_{(i)}^{\pm})^{\top}$.

By virtue of the relations \eqref{trace.id1}--\eqref{trace.id2}, we can determine the traces $\widetilde{U}_{(\ell)}^{\pm}$ and ${\varphi}_{(\ell)}$ inductively in the following lemma (see \cite[Lemma 4.2.1]{M01MR1842775} for the proof).

\begin{lemma}\label{lem:CA1}
Let $m\geq 3$ be an integer.
Assume that the initial data $(U_0^+, U_0^-,\varphi_0)$ satisfy
the hyperbolicity condition \eqref{thm:H1},
$\|\varphi_0\|_{L^{\infty}(\mathbb{R}^{2})}\leq \frac{1}{4}$,
and
$(\widetilde{U}_0^{\pm} ,\varphi_0 ) \in H^{m+3/2}(\Omega)\times H^{m+2}(\mathbb{R}^{2})$
for $\widetilde{U}_0^{\pm}:=U_0^{\pm}-\widebar{U}^{\pm}$.
Then equations \eqref{trace.id1}--\eqref{trace.id2} determine
$\widetilde{U}_{(\ell)}^{\pm} \in H^{m+3/2-\ell}(\Omega)$
and $\varphi_{(\ell)} \in H^{m+2-\ell}(\mathbb{R}^{2})$ for $\ell=1,\ldots,m$.
Moreover,
\begin{align}
 \sum_{\ell=0}^{m}\sum_{\pm}\Big(\|\widetilde{U}_{(\ell)}^{\pm} \|_{H^{m+3/2-\ell}(\Omega)} +\|{\varphi}_{(\ell)}\|_{H^{m+2-\ell}(\mathbb{R}^{2})}\Big)
 \nonumber   \leq C M_0
\end{align}
for
\begin{align}
 \label{M0}
 M_0:=\| (\widetilde{U}_0^+,\widetilde{U}_0^- ) \|_{H^{m+3/2}(\Omega)}
 +\|\varphi_0\|_{H^{m+2}(\mathbb{R}^{2})},
\end{align}
where $C>0$ depends only on $m$, $\|\widetilde{U}_{0}^{\pm}\|_{W^{1,\infty}(\Omega)}$, and $\|{\varphi}_{0}\|_{W^{1,\infty}(\mathbb{R}^{2})}$.
\end{lemma}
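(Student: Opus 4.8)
The plan is to prove Lemma~\ref{lem:CA1} by finite induction on $\ell$, from the base case $\ell=0$ up to $\ell=m$, in which equations \eqref{trace.id1}--\eqref{trace.id2} are used to \emph{define} $\varphi_{(\ell+1)}$ and then $\widetilde{U}_{(\ell+1)}^\pm$ from the already constructed traces of order $\le\ell$, the Sobolev regularity being propagated by the trace theorem on $\Sigma=\mathbb{R}^2$ together with the Moser-type calculus inequalities \eqref{Moser1} and \eqref{Moser3} and the product estimate \eqref{pro:es} (and its counterpart on the half-space $\Omega\subset\mathbb{R}^3$). Here one uses that, under the hyperbolicity condition \eqref{thm:H1} and $\|\varphi_0\|_{L^\infty(\mathbb{R}^2)}\le\tfrac14$, the system \eqref{MCD0} has been rewritten as $\partial_t\widetilde{U}^\pm=\bm{G}(\mathcal{W}^\pm)$ with $\bm{G}$ a smooth function vanishing at the origin, which is what makes the Fa\`a di Bruno identity \eqref{trace.id2} meaningful. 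The base case is the hypothesis itself: $\widetilde{U}_{(0)}^\pm=\widetilde{U}_0^\pm\in H^{m+3/2}(\Omega)$, $\varphi_{(0)}=\varphi_0\in H^{m+2}(\mathbb{R}^2)$, while $\widetilde{\varPhi}_{(0)}^\pm=\chi(\pm x_1)\varphi_0(x')\in H^{m+2}(\Omega)$ by the properties of the cut-off $\chi$ (compactly supported in $x_1$).

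Assume the conclusion for all orders $\le\ell$ with $\ell\le m-1$. First I would produce $\varphi_{(\ell+1)}$ from \eqref{trace.id1}: the trace theorem gives $v_{j(i)}^+|_{x_1=0}\in H^{m+1-i}(\mathbb{R}^2)$, and by the inductive hypothesis $\partial_j\varphi_{(\ell-i)}\in H^{m+1-\ell+i}(\mathbb{R}^2)$, so, since $0\le i\le\ell$, the product estimate \eqref{pro:es} applies with target exponent $s=m+1-\ell$ (the hypotheses $s_1,s_2\ge s$ and $s_1+s_2>s+1$ are immediate from $m\ge 3$), yielding $\varphi_{(\ell+1)}\in H^{m+1-\ell}(\mathbb{R}^2)=H^{m+2-(\ell+1)}(\mathbb{R}^2)$ with the expected bound, and hence $\widetilde{\varPhi}_{(\ell+1)}^\pm=\chi(\pm x_1)\varphi_{(\ell+1)}(x')\in H^{m+1-\ell}(\Omega)$. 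Next I would produce $\widetilde{U}_{(\ell+1)}^\pm$ from \eqref{trace.id2}, where each summand is a product of $\mathrm{D}_{\mathcal{W}}^{\alpha_1+\cdots+\alpha_\ell}\bm{G}(\mathcal{W}_{(0)}^\pm)$ with factors $\mathcal{W}_{(i)}^\pm$ ($1\le i\le\ell$) subject to the derivative budget $\sum_i i|\alpha_i|=\ell$. By the inductive hypothesis the lowest-regularity component of $\mathcal{W}_{(i)}^\pm$ is $\nabla\widetilde{U}_{(i)}^\pm\in H^{m+1/2-i}(\Omega)$ (the $\mathrm{D}\widetilde{\varPhi}_{(i)}^\pm$-components, assembled from $\varphi_{(i+1)}$ and $\nabla\varphi_{(i)}$, lie in the better space $H^{m+1-i}(\Omega)$), while $\mathcal{W}_{(0)}^\pm\in H^{m+1/2}(\Omega)\cap L^\infty$, so \eqref{Moser1} applied to $\mathrm{D}^\alpha\bm{G}-\mathrm{D}^\alpha\bm{G}(0)$ gives $\mathrm{D}_{\mathcal{W}}^{\alpha}\bm{G}(\mathcal{W}_{(0)}^\pm)\in H^{m+1/2}(\Omega)$. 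Multiplying the factors one at a time with the $\Omega$-analogue of \eqref{pro:es}, incorporating a factor $\mathcal{W}_{(i)}^\pm$ lowers the running Sobolev order by exactly $i$, so the final product lands in $H^{m+1/2-\ell}(\Omega)=H^{m+3/2-(\ell+1)}(\Omega)$; at every step the running order stays $\ge m+1/2-\ell\ge\tfrac32$, the conditions $s_1,s_2\ge s\ge0$ hold, and $s_1+s_2>s+\tfrac32$ reduces to $m>1$. This closes the induction, with each norm bounded by a polynomial in lower-order norms.

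The aggregate bound $\le CM_0$ with $M_0$ as in \eqref{M0} then follows by tracking constants: every estimate above is linear in the top-order norm of the factor carrying the derivatives, with multiplicative constants depending only on $m$ and on the $L^\infty$-type quantities $\|\widetilde{U}_0^\pm\|_{W^{1,\infty}(\Omega)}$ and $\|\varphi_0\|_{W^{1,\infty}(\mathbb{R}^2)}$ that enter the Moser inequalities. The main---indeed essentially the only---delicate point is precisely this index bookkeeping in \eqref{trace.id2}: one must check that the budget constraint $\sum_i i|\alpha_i|=\ell$ together with the fractional trace/lift shift (the $\tfrac32$ in $H^{m+3/2}$ versus the integer shift in $H^{m+2}$) conspire so that every partial product appearing there stays in a Sobolev space on which \eqref{pro:es} and \eqref{Moser1} are legitimately applicable, and so that the resulting order is exactly $m+3/2-(\ell+1)$ with no extra loss---which is exactly the reason the data is assumed in the fractional space $H^{m+3/2}(\Omega)$, matching the half-derivative loss of the trace theorem on $\Sigma$.
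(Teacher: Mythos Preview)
Your inductive scheme is correct and is precisely the standard argument: the paper does not give its own proof here but simply cites M\'etivier \cite[Lemma 4.2.1]{M01MR1842775}, where exactly this finite induction on $\ell$ via \eqref{trace.id1}--\eqref{trace.id2}, trace theorem, and Moser-type product estimates is carried out. Your index bookkeeping for \eqref{trace.id2} (running order $m+\tfrac12-\sum_{j\le k} i_j$, the constraint $\sum_i i|\alpha_i|=\ell$ forcing the final order to be $m+\tfrac12-\ell$, and the check $s_1+s_2>s+\tfrac32$ reducing to $m>1$) is right.

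One small point worth tightening: for the final claim that the bound is \emph{linear} in $M_0$ with $C$ depending only on the $W^{1,\infty}$ norms of the data, you should rely on the tame Moser inequalities \eqref{Moser2}--\eqref{Moser3} rather than the bilinear product estimate \eqref{pro:es}, since the latter produces products of Sobolev norms rather than the tame structure $\|\cdot\|_{H^s}\|\cdot\|_{L^\infty}+\|\cdot\|_{L^\infty}\|\cdot\|_{H^s}$. The $L^\infty$ control needed for intermediate traces $\mathcal{W}_{(i)}^\pm$ then comes from Sobolev embedding (using $m\ge 3$), and one checks inductively that those $L^\infty$ bounds depend only on the initial $W^{1,\infty}$ data. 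This is routine and is how M\'etivier organizes it.
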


In light of the definition of $\mathcal{H}(\varphi)$ in \eqref{H.cal:def}, we set
\begin{align*}
\zeta:=\mathrm{D}_{x'}\varphi\in\mathbb{R}^2 \quad
\textrm{and} \quad
\mathfrak{f}(\zeta):=\frac{\zeta}{\sqrt{1+|\zeta|^2}}.
\end{align*}
Then it follows from the first condition in \eqref{MCD0b} that for $\zeta_{(i)}:=\mathrm{D}_{x'}\varphi_{(i)}$,
\begin{align}
\label{compat1}
 \big[p_{(\ell)}\big]=
 \sum_{\substack{\alpha_{i}\in\mathbb{N}^{2}\\ |\alpha_1|+\cdots+\ell |\alpha_{\ell}|=\ell}} \mathfrak{s}\mathrm{D}_{x'}\cdot
 \left(\mathrm{D}_{\zeta}^{\alpha_1+\cdots+\alpha_\ell}\mathfrak{f}\big(\zeta_{(0)}\big)\ell! \prod_{i=1}^\ell\frac{1}{\alpha_{i}!}
 \left(\frac{\zeta_{(i)}}{i!}\right)^{\alpha_{i}}\right).
\end{align}
Setting $ H_{\tau_1}^{\pm}:=H_1^{\pm}\p_2\varPhi^{\pm}+H_2^{\pm}$
and $H_{\tau_2}^{\pm}:=H_1^{\pm}\p_3\varPhi^{\pm}+H_3^{\pm}$,
we have
\begin{align}
\nonumber &
H_{\tau_1(\ell)}^{\pm}:=\p_t^{\ell}H_{\tau_1}^{\pm}\big|_{t=0}
=\sum_{i=0}^{\ell} \begin{pmatrix}
 \ell \\ i
\end{pmatrix} H_{1(i)}^{\pm}\p_2\varPhi_{(\ell-i)}^{\pm}+H_{2(\ell)}^{\pm},\\
\nonumber &
H_{\tau_2(\ell)}^{\pm}:=\p_t^{\ell}H_{\tau_2}^{\pm}\big|_{t=0}
=\sum_{i=0}^{\ell} \begin{pmatrix}
 \ell \\ i
\end{pmatrix} H_{1(i)}^{\pm}\p_3\varPhi_{(\ell-i)}^{\pm}+H_{3(\ell)}^{\pm}.
\end{align}

According to the boundary conditions \eqref{MCD0b}, we introduce the following terminology.
\begin{definition}\label{def:1}
Assume that all the conditions of Lemma \ref{lem:CA1} are satisfied.
Then the initial data $(U_0^+,U_0^-,\varphi_0)$ are said to be compatible up to order $m$
if for $\ell=0,\ldots,m$,
the traces $\widetilde{U}_{(\ell)}^{\pm}$ and $\varphi_{(\ell)}$ determined by \eqref{trace.id1}--\eqref{trace.id2} satisfy the boundary conditions \eqref{compat1} and
\begin{align}
\label{compat2}
\big[v_{(\ell)} \big]=0,\quad
\big[H_{\tau_1(\ell)}\big]=0,\quad
\big[H_{\tau_2(\ell)}\big]=0
\quad \textrm{on }\Sigma.
\end{align}
\end{definition}

We are now ready to construct the approximate solutions.

\begin{lemma}\label{lem:app}
Suppose that all the conditions of Lemma \ref{lem:CA1} are satisfied.
Suppose further that the initial data $(U_0^+, U_0^-,\varphi_0)$ are compatible up to order $m$
and satisfy the constraints \eqref{BC3a} and \eqref{inv2b}.
Then there are positive constants $T_1(M_0)$ and $C(M_0)$ depending on $M_0$ ({\it cf.}~\eqref{M0}), such that if $0<T\leq T_1(M_0)$, then there exist functions $U^{a\pm}$ and $\varphi^a$ satisfying
\begin{subequations}
\label{app1}
\begin{alignat}{3}
\label{app1a}
&\mathbb{B}(U^{a+},U^{a-},\varphi^a)=0,
\quad [H^{a}]=0 \quad
\textrm{on }  \Sigma_T,\\
\label{app1b}
&U^{a\pm}\big|_{t=0}=U_0^{\pm}\ \   \textrm{in } \Omega,
\quad \varphi^a\big|_{t=0}=\varphi_0 \ \   \textrm{on } \Sigma.
\end{alignat}
\end{subequations}
Moreover,
\begin{subequations}
\label{app2}
 \begin{alignat}{3}
 \label{app2a}
&\p_t^{\ell}\mathbb{L}_{\pm}(U^{a\pm},\varPhi^{a\pm})\big|_{t=0}=0\quad
\textrm{in } \Omega
\quad \textrm{for } \ell=0,\ldots,m-1,\\[1.5mm]
&\|(\widetilde{U}^{a+},\widetilde{U}^{a-} )\|_{H^{m+1}(\Omega_T)}
+\|\varphi^a\|_{H^{m+5/2}(\Sigma_T)}  \leq C (M_0) ,
\label{app2b}\\[0.5mm]
\label{app2c}
&\rho_*<\inf_{\Omega_T} \rho^{\pm}(U^{a\pm}) \leq \sup_{\Omega_T} \rho^{\pm}(U^{a\pm})<\rho^*,
\quad \big|\p_1\varPhi^{a\pm}\big|\geq  \frac{5}{8}\ \ \textrm{in }  \Omega_T,\\
\label{app2d}
& \left|H_1^{a\pm}-H_2^{a\pm}\p_2\varphi^a-H_3^{a\pm}\p_3\varphi^a \right|\geq  \frac{3}{4}\kappa>0\quad  \textrm{on }  \Sigma_T,
 \end{alignat}
\end{subequations}
where $\widetilde{U}^{a\pm}:=U^{a\pm}-\widebar{U}^{\pm}$ and
$\varPhi^{a\pm} :=\pm x_1 +\varPsi^{a\pm} $ with
$\varPsi^{a\pm} :=\chi(\pm x_1)\varphi^{a}$.
\end{lemma}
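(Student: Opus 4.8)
The idea is to manufacture $(U^{a\pm},\varphi^a)$ by lifting in time the jets furnished by Lemma~\ref{lem:CA1}, arranging the traces on $\Sigma_T$ so that the boundary conditions hold \emph{identically} (not merely at $t=0$), and then invoking a short–time continuity argument for the pointwise inequalities \eqref{app2c}--\eqref{app2d}. First, Lemma~\ref{lem:CA1} gives $\widetilde U^\pm_{(\ell)}\in H^{m+3/2-\ell}(\Omega)$ and $\varphi_{(\ell)}\in H^{m+2-\ell}(\mathbb{R}^2)$ for $\ell=0,\dots,m$, bounded by $CM_0$ ({\it cf.}~\eqref{M0}). Choose $\varphi^a\in H^{m+5/2}(\Sigma_T)$ with $\p_t^\ell\varphi^a|_{t=0}=\varphi_{(\ell)}$ (possible since $m<(m+5/2)-1/2$), and put $\varPsi^{a\pm}:=\chi(\pm x_1)\varphi^a$, $\varPhi^{a\pm}:=\pm x_1+\varPsi^{a\pm}$, $N^a:=(1,-\p_2\varphi^a,-\p_3\varphi^a)^\top$.

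Next I would build $U^{a-}$ as a lift in $H^{m+1}(\Omega_T)$ of the interior data $\widetilde U^-_{(\ell)}$, except that on $\Sigma_T$ I prescribe the first velocity component by $v^{a-}_1:=\p_t\varphi^a+v^{a-}_2\p_2\varphi^a+v^{a-}_3\p_3\varphi^a$, i.e.\ $v^{a-}\cdot N^a=\p_t\varphi^a$ on $\Sigma_T$; this boundary trace is compatible with the interior jet at the edge $\{t=0\}\cap\Sigma$ by virtue of the last relation in \eqref{trace.id1} together with $[v_{(\ell)}]=0$, so a corner–compatible lift exists. Then define the trace of $U^{a+}$ on $\Sigma_T$ by $v^{a+}:=v^{a-}$, $H^{a+}:=H^{a-}$, $p^{a+}:=p^{a-}+\mathfrak s\,\mathcal H(\varphi^a)$, and $S^{a+}$ a Taylor lift of $S^+_{(\ell)}|_\Sigma$. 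With these choices all conditions in \eqref{MCD0b} hold on $\Sigma_T$ — the evolution condition because $v^{a+}\cdot N^a=v^{a-}\cdot N^a=\p_t\varphi^a$ — as well as $[H^a]=0$; moreover the time–Taylor coefficients of this $\Sigma_T$–trace agree with $\widetilde U^+_{(\ell)}|_\Sigma$ thanks exactly to the compatibility conditions \eqref{compat1}--\eqref{compat2} of Definition~\ref{def:1} (for the $H$–components one uses in addition that the constraint \eqref{inv2b} propagates to the traces, hence $[H^+_{(\ell)}]=0$ on $\Sigma$). A corner–compatible lift then produces $U^{a+}\in H^{m+1}(\Omega_T)$ with the prescribed trace on $\Sigma_T$ and $\p_t^\ell U^{a+}|_{t=0}=U^+_{(\ell)}$ in $\Omega$; this gives \eqref{app1a}--\eqref{app1b}.

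It remains to verify \eqref{app2}. Identity \eqref{app2a} is automatic: since $U^\pm_{(\ell)}$ were constructed from $\p_t\widetilde U^\pm=\bm G(\mathcal W^\pm)$ through the Fa\`a di Bruno formula \eqref{trace.id2}, one has $\p_t^\ell\mathbb L_\pm(U^{a\pm},\varPhi^{a\pm})|_{t=0}=0$ for $\ell=0,\dots,m-1$. The bound \eqref{app2b} follows from the continuity of the lifting operators, the estimate of Lemma~\ref{lem:CA1}, and the Moser-type calculus inequalities \eqref{Moser1}--\eqref{Moser4} applied to $\mathcal H(\varphi^a)$ and to $\bm G$, producing a constant $C(M_0)$. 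Finally, at $t=0$ the strict inequalities \eqref{app2c}--\eqref{app2d} reduce to \eqref{thm:H1}, to \eqref{CA1} (so $\pm\p_1\varPhi^{a\pm}|_{t=0}=\pm\p_1\varPhi_0^{\pm}\ge3/4$), and to \eqref{BC3a}; since $\widetilde U^{a\pm}$ and $\varphi^a$ are bounded in $H^{m+1}(\Omega_T)$ and $H^{m+5/2}(\Sigma_T)$ with $m\ge3$, they are continuous in $t$ with values in $C^1$, so choosing $T\le T_1(M_0)$ small enough preserves these inequalities on $\Omega_T$ and $\Sigma_T$.

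\textbf{Main obstacle.} The delicate step is the construction of the lifts of $U^{a\pm}$ that simultaneously match the interior jets at $\{t=0\}$ and the prescribed boundary values on $\Sigma_T$: this requires checking the compatibility of the two data sets along the edge $\{t=0\}\cap\Sigma$, which is precisely where the compatibility conditions of Definition~\ref{def:1} and the propagation of the constraint \eqref{inv2b} enter, and it requires careful bookkeeping of the Sobolev exponents so that the claimed regularities $H^{m+1}(\Omega_T)$ and $H^{m+5/2}(\Sigma_T)$ come out.
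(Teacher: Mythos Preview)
Your proposal is correct and follows essentially the same approach as the paper: lift $\varphi^a$ first, then construct $U^{a\pm}$ by Sobolev extensions that simultaneously match the time jets $\widetilde U^\pm_{(\ell)}$ at $t=0$ and the boundary relations \eqref{MCD0b} on $\Sigma_T$, with the corner compatibility supplied exactly by Definition~\ref{def:1} together with the propagation $[H_{(\ell)}]=0$ (which the paper deduces from \eqref{inv2b} and \eqref{compat2} citing \cite[Lemma~3]{MTT18MR3766987}). The only organizational difference is that the paper performs a symmetric lift prescribing the jumps $[\tilde p^a]=\mathfrak s\mathcal H(\varphi^a)$, $[\tilde v_2^a]=[\tilde v_3^a]=0$, $[\widetilde H^a]=0$ directly (via \cite[Theorem~2.3]{LM72MR0350178}), whereas you build $U^{a-}$ first and then copy its trace to $U^{a+}$; both yield the same object.
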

\begin{proof}
Since $\|\varphi_0\|_{L^{\infty}(\mathbb{R}^2)}\leq \frac{1}{4}$, we can take $\varphi^a\in H^{m+5/2}(\mathbb{R}^3)$ to satisfy
\begin{align*}
 \|\varphi^a\|_{L^{\infty}(\mathbb{R}^3)}\leq \frac{3}{8},\quad
 \p_t^{\ell}\varphi^a\big|_{t=0}=\varphi_{(\ell)}\in  H^{m+2-\ell}(\mathbb{R}^2)
 \textrm{ \ for \ }\ell=0,\ldots,m.
\end{align*}
Define $\varPhi^{a\pm}:=\pm x_1+\chi(\pm x_1)\varphi^a(t,x')$, so that
$|\p_1\varPhi^{a\pm} |\geq \frac{5}{8}$ in $ \mathbb{R}\times \Omega$.

Using the compatibility conditions \eqref{compat2} and the initial constraint $[H_0]=0$, we can prove as in \cite[Lemma 3]{MTT18MR3766987} that
\begin{align*}
\big[H_{(\ell)}\big]=0\quad\textrm{on }\Sigma \quad \textrm{for }\ell=0,\ldots,m.
\end{align*}
Then we apply the lifting result in \cite[Theorem 2.3]{LM72MR0350178} to find
$\tilde{p}^{a\pm}\in H^{m+1}(\mathbb{R}\times \Omega)$ and
$(\tilde{v}_2^{a\pm},\tilde{v}_3^{a\pm},\widetilde{H}^{a\pm},\widetilde{S}^{a\pm})\in H^{m+2}(\mathbb{R}\times \Omega)$ such that
\begin{align*}
&[\tilde{p}^a]=\mathfrak{s}\mathcal{H}(\varphi^a)
, \quad  [\tilde{v}_2^a]= [\tilde{v}_3^a]= 0,\quad \big[\widetilde{H}^a\big]= 0
\quad \textrm{on }\Sigma,\\[1mm]
&\p_t^{\ell}(\tilde{p}^{a\pm},\tilde{v}_2^{a\pm},\tilde{v}_3^{a\pm},\widetilde{H}^{a\pm},\widetilde{S}^{a\pm})\big|_{t=0}\\
&\quad =(\tilde{p}_{(\ell)} ^{\pm},\tilde{v}_{2(\ell)} ^{\pm},\tilde{v}_{3(\ell)} ^{\pm},\widetilde{H}_{(\ell)} ^{\pm},\widetilde{S}_{(\ell)} ^{\pm})
\quad \textrm{in }\Omega \quad
\textrm{for }\ell=0,\ldots,m.
\end{align*}
Set $(p^{a\pm},v_2^{a\pm},v_3^{a\pm}, {H}^{a\pm}, {S}^{a\pm}):=
(\tilde{p}^{a\pm},\tilde{v}_2^{a\pm},\tilde{v}_3^{a\pm},\widetilde{H}^{a\pm},\widetilde{S}^{a\pm})
+(\bar{p},\bar{v}_2,\bar{v}_3,\widebar{H},\widebar{S}^{\pm}).$

By virtue of the trace theorem,
the first condition in \eqref{compat2}, and the relation \eqref{trace.id1}, we can choose
${v}_1^{a\pm}=\tilde{v}_1^{a\pm}\in H^{m+2}(\mathbb{R}\times \Omega)$ to satisfy
\begin{align*}
&{v}_1^{a\pm}|_{x_1=0}=\p_t\varphi^a+ \p_2\varphi^a v_2^{a+}|_{x_1=0}
 +\p_3\varphi^a v_3^{a+}|_{x_1=0}\in H^{m+3/2}(\mathbb{R}^3),\\
&\p_t^{\ell} \tilde{v}_{1(\ell)}^{a\pm}\big|_{t=0}=\tilde{v}_{1(\ell)}^{\pm}
\quad \textrm{in }\Omega
\qquad \textrm{for }\ell=0,\ldots,m.
\end{align*}
We have already obtained \eqref{app1} and the second relation in \eqref{app2c}.

The equations \eqref{app2a} follow directly from \eqref{trace.id2}.
Use Lemma \ref{lem:CA1} and the continuity of the lifting operators to derive the inequality \eqref{app2b}.
The inequality \eqref{app2d} and the first relation in \eqref{app2c} follow from \eqref{app2b} by taking $T>0$ sufficiently small. The proof of the lemma is complete.
\end{proof}

We call the vector-valued function $(U^{a+},U^{a-},\varphi^a)$ constructed in Lemma~\ref{lem:app}
the approximate solution to the problem \eqref{MCD0}.
Define
\begin{align}\label{f^a}
 f^{a\pm}:=\left\{\begin{aligned}
  & -\mathbb{L}_{\pm}(U^{a\pm},\varPhi^{a\pm}) \quad &\textrm{if \;}t>0,\\
  & 0 \quad &\textrm{if \;}t<0.
 \end{aligned}\right.
\end{align}
Utilize the Moser-type calculus and embedding inequalities to deduce
\begin{align}\label{f^a:est}
f^{a\pm}\in H^{m}(\Omega_T),
\quad
 \|f^{a\pm}\|_{ H^{m}(\Omega_T)}\leq
 \delta_0\left(T\right),
\end{align}
where $\delta_0(T)$ tends to zero as $T\to 0$.
In view of \eqref{app1}--\eqref{f^a},
we infer that  $(U^+,U^-,\varphi)$ is a solution of the nonlinear problem \eqref{MCD0} on $[0,T]\times \Omega$,
if $(V^+,V^-,\psi)=(U^+,U^-,\varphi)-(U^{a+},U^{a-},\varphi^a)$ solves
\begin{align} \label{P.new}
 \left\{
 \begin{aligned}
  &\mathcal{L}(V,\varPsi )=f^{a }:= (f^{a+},\,f^{a-} )^{\top}
  \ &&\textrm{in }\Omega_T,\\
  &\mathcal{B}(V,\psi):=\mathbb{B}(U^{a+}+V^+,U^{a-}+V^-,\varphi^a+\psi)=0
  \ &&\textrm{on }\Sigma_T,\\
  &(V,\psi)=0,\ &&\textrm{if }t< 0,
 \end{aligned}\right.
\end{align}
where $V:=(V^+,V^-)^{\top}$,
$\varPsi:=(\varPsi^+,\varPsi^-)^{\top}$ with $\varPsi^{\pm}:=\chi(\pm x_1)\psi$, and
\begin{align*}
\mathcal{L}(V,\varPsi ):=\begin{pmatrix}
\mathbb{L}_{+}(U^{a+}+V^{+},\varPhi^{a+}+\varPsi^{+})-\mathbb{L}_{+}(U^{a+},\varPhi^{a+})\\
\mathbb{L}_{-}(U^{a-}+V^{-},\varPhi^{a-}+\varPsi^{-})-\mathbb{L}_{-}(U^{a-},\varPhi^{a-})
\end{pmatrix}.
\end{align*}
It follows from \eqref{app1a} that $(V,\psi)\equiv 0$ satisfies \eqref{P.new} for $t<0$.
Therefore, the original problem on $[0,T]\times \Omega$ is reformulated as a problem in $\Omega_T$ whose solutions vanish in the past.

\subsection{Iteration Scheme and Inductive Hypothesis}

We first list the important properties of smooth operators \cite{A89MR976971,CS08MR2423311,T09CPAMMR2560044}.
\begin{proposition}\ \label{pro:smooth}
Let $T>0$ and $m\in \mathbb{N}$ with $m\geq 4$.
Denote by $\mathscr{F}^s(\Omega_T)$ the class of $H^s(\Omega_T)$--functions vanishing in the past.
Then there exists a family $\{\mathcal{S}_{\theta}\}_{\theta\geq 1}$ of smoothing operators
$u=(u^+,u^-)  \mapsto \mathcal{S}_{\theta}u= ((\mathcal{S}_{\theta}u)^+,(\mathcal{S}_{\theta}u)^-)$
from $ \mathscr{F}^3(\Omega_T)\times\mathscr{F}^3(\Omega_T)$ to
$\bigcap_{s\geq 3}\mathscr{F}^s(\Omega_T)\times\mathscr{F}^s(\Omega_T)$,
such that
 \begin{subequations}\label{smooth.p1}
  \begin{alignat}{2}
   \label{smooth.p1a}&\|\mathcal{S}_{\theta} u\|_{H^{\ell}(\Omega_T)}\lesssim_m \theta^{(\ell-j)_+}\|u\|_{H^{j}(\Omega_T)}
   &&\ \textrm{ for }j,\ell\in[1,m],\\[1.5mm]
   \label{smooth.p1b}&\|\mathcal{S}_{\theta} u-u\|_{H^{\ell}(\Omega_T)}\lesssim_m \theta^{\ell-j}\|u\|_{H^{j}(\Omega_T)}
   &&\  \textrm{ for }1\leq \ell\leq j\leq m,\\
   \label{smooth.p1c}&\left\|\frac{\d}{\d \theta}\mathcal{S}_{\theta} u\right\|_{H^{\ell}(\Omega_T)}
   \lesssim_m \theta^{\ell-j-1}\|u\|_{H^{j}(\Omega_T)}&&\   \textrm{ for }j,\ell\in[1,m],
  \end{alignat}
 \end{subequations}
where $j,\ell \in \mathbb{N}$ and $(\ell-j)_+:=\max\{0,\ell-j\}$. Moreover,
\begin{align}
\label{smooth.p2}
\left\| \left[\mathcal{S}_{\theta} u\right]\right\|_{H^{\ell}(\Sigma_T)}
\lesssim_m \theta^{(\ell+1-j)_+}\|[u]\|_{H^{j}(\Sigma_T)} \   \textrm{ for }j,\ell=1,\ldots,m,
\end{align}
where $\left[\mathcal{S}_{\theta} u\right]:=(\mathcal{S}_{\theta} u)^+-(\mathcal{S}_{\theta} u)^-$
and $[u]:=u^+-u^-$ on $\Sigma_{{T}}$.

There is another family of smoothing operators {(}still denoted by $\mathcal{S}_{\theta}${)} acting on functions that are defined on the boundary $\Sigma_T$ and satisfy the properties \eqref{smooth.p1} with norms $\|\cdot\|_{H^{j}(\Sigma_T)}$.
\end{proposition}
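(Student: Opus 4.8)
The plan is to adapt the classical construction of families of smoothing operators used in the Nash--Moser scheme, carried out in \cite{A89MR976971,CS08MR2423311,T09CPAMMR2560044} and described in \cite{AG07MR2304160,S16MR3524197}, to the half-space $\Omega_T=(-\infty,T)\times\Omega$ with the causality constraint that functions vanish in the past. First I would treat the whole space $\mathbb{R}^{4}_{(t,x)}$: fixing $\chi\in C_0^{\infty}(\mathbb{R}^{4})$ with $0\le\chi\le1$ and $\chi\equiv1$ near the origin, the Fourier multipliers $S_{\theta}$ defined by $\widehat{S_{\theta}u}(\zeta)=\chi(\zeta/\theta)\widehat{u}(\zeta)$ satisfy, for $\theta\ge1$ and $j,\ell$ in the admissible range, the inequalities $\|S_{\theta}u\|_{H^{\ell}}\lesssim\theta^{(\ell-j)_+}\|u\|_{H^{j}}$, $\|S_{\theta}u-u\|_{H^{\ell}}\lesssim\theta^{\ell-j}\|u\|_{H^{j}}$ for $\ell\le j$, and $\|\tfrac{\d}{\d\theta}S_{\theta}u\|_{H^{\ell}}\lesssim\theta^{\ell-j-1}\|u\|_{H^{j}}$, all by a direct computation on the Fourier side. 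The analogous statements hold on $\mathbb{R}^{3}_{(t,x')}$, which will supply the boundary family of smoothing operators.

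Next I would transfer these operators to $\Omega_T$. Because every element of $\mathscr{F}^{s}(\Omega_T)$ and all of its derivatives vanish for $t<0$, extension by zero across $t=0$ is bounded on each $H^{s}$; composing with a Sobolev extension across $t=T$ and a Stein-type universal extension $E$ across $x_1=0$ --- both bounded on the whole scale $H^{j}$, $1\le j\le m$, simultaneously --- reduces matters to $\mathbb{R}^{4}$. To keep the smoothed function vanishing in the past, the $\zeta_0$-component of the truncation must be realized by a mollification in $t$ whose kernel is supported in $\{s\ge0\}$, so that $(\mathcal{S}_{\theta}u)(t,\cdot)$ uses only the values $u(s,\cdot)$ with $s\le t$; then $\mathcal{S}_{\theta}u$ again vanishes for $t<0$, while the Fourier truncation makes it $C^{\infty}$, hence $\mathcal{S}_{\theta}$ maps into $\bigcap_{s\ge3}\mathscr{F}^{s}(\Omega_T)$. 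Setting $\mathcal{S}_{\theta}:=R\circ S_{\theta}\circ E$ with $R$ the restriction, and using boundedness of the extensions, one recovers \eqref{smooth.p1a}--\eqref{smooth.p1c}; the boundary family on $\Sigma_T$ is obtained identically, now without any spatial boundary to extend across.

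The remaining point is the jump estimate \eqref{smooth.p2}. Here one arranges the construction of $\mathcal{S}_{\theta}$ near $x_1=0$ so that the trace $(\mathcal{S}_{\theta}u)^{\pm}|_{x_1=0}$, and hence the jump $[\mathcal{S}_{\theta}u]$, is governed by the boundary smoothing operator applied to $[u]|_{\Sigma_T}$ up to a commutator-type remainder that, by the trace inequality $\|w|_{x_1=0}\|_{H^{\ell}(\Sigma_T)}\lesssim\|w\|_{H^{\ell+1/2}(\Omega_T)}$ together with \eqref{smooth.p1a}, costs one extra power of $\theta$; this is precisely the source of the exponent $(\ell+1-j)_+$. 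I expect this last step to be the main obstacle: one must simultaneously guarantee that the range of $\mathcal{S}_{\theta}$ lies in $\bigcap_{s}\mathscr{F}^{s}(\Omega_T)$, that causality is preserved, and that the jump across $\Sigma_T$ after smoothing is genuinely controlled by the jump before smoothing with the stated exponent, and it is the interaction of the normal ($x_1$-) extension with the smoothing near $\Sigma_T$ that makes the last requirement delicate. Since each of these ingredients is spelled out in \cite{A89MR976971,CS08MR2423311,T09CPAMMR2560044}, I would follow those constructions, adjusting the bookkeeping of exponents to match the statement of Proposition~\ref{pro:smooth}.
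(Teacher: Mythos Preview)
The paper does not give its own proof of this proposition; it simply states the result and attributes it to \cite{A89MR976971,CS08MR2423311,T09CPAMMR2560044}. Your proposal follows exactly the standard construction from those references---Fourier truncation on the whole space, extension/restriction to $\Omega_T$, causal mollification in $t$, and the trace argument for the jump estimate---so it is consistent with what the paper invokes.
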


\vspace*{1mm}
Let us follow \cite{CS08MR2423311,T09CPAMMR2560044,CSW19MR3925528} to describe the iteration scheme for the reformulated problem \eqref{P.new}.

\vspace*{2mm}
\noindent{\bf Assumption\;(A-1)}: {\it
Take $ (V_0^{\pm}, \psi_0)=0$.
Let $(V_k^{\pm},\psi_k)$ be given and vanish in the past for $k=0,\ldots,{n}$.
Set $\varPsi_k^{\pm}:=\chi(\pm x_1)\psi_k$ for $k=0,\ldots,{n}$.
}

\vspace*{2mm}
We consider
\begin{align}\label{NM0}
 V_{{n}+1}^{\pm}=V_{{n}}^{\pm}+\delta V_{{n}}^{\pm},
 \quad \psi_{{n}+1}=\psi_{{n}}+\delta \psi_{{n}},
 \quad  \delta\varPsi_{{n}}^{\pm}:=\chi({\pm} x_1)\delta \psi_{{n}}.
\end{align}
The above differences $\delta V_{{n}}^{\pm}$ and $\delta \psi_{{n}}$ will solve the effective linear problem
\begin{align} \label{effective.NM}
 \left\{\begin{aligned}
  &\mathbb{L}_{e\pm}'(U^{a\pm}+V_{{n}+1/2}^{\pm},\varPhi^{a\pm}+\varPsi_{{n}+1/2}^{\pm})\delta \dot{V}_{{n}}^{\pm}=f_{{n}}^{\pm}
  \ \  &&\textrm{in }\Omega_T,\\
  & \mathbb{B}_e'(U^a+V_{{n}+1/2},\varphi^a+\psi_{{n}+1/2})(\delta \dot{V}_{{n}},\delta\psi_{{n}})=g_{{n}}
  \ \ &&\textrm{on }\Sigma_T,\\
  & (\delta \dot{V}_{{n}},\delta\psi_{{n}})=0\ \ &&\textrm{for }t<0,
 \end{aligned}\right.
\end{align}
where $(V_{{n}+1/2},\psi_{{n}+1/2})$ is a smooth modified state
such that $(U^a+V_{{n}+1/2},\varphi^a+\psi_{{n}+1/2})$ satisfies
the constraints \eqref{bas1a}--\eqref{bas1d},
$\varPsi_{{n}+1/2}^{\pm}:=\chi(\pm x_1)\psi_{n+1/2}$,
and
$\delta \dot{V}_{{n}}:=(
 \delta \dot{V}_{{n}}^+ ,\, \delta \dot{V}_{{n}}^-)^{\top}$ is the good unknown ({\it cf.}~\eqref{good}) with
\begin{align} \label{good.NM}
 \delta \dot{V}_{{n}}^{\pm}:=\delta V_{{n}}^{\pm}-\frac{\delta\varPsi_{{n}}^{\pm} }{\p_1 (\varPhi^{a\pm}+\varPsi_{{n}+1/2}^{\pm})}\p_1 (U^{a\pm}+V_{{n}+1/2}^{\pm}).
\end{align}
See Proposition \ref{pro:modified} for the construction and estimate of $(V_{{n}+1/2},\psi_{{n}+1/2})$.
The source terms $f_{{n}}:=(f_n^+,f_n^-)^{\top}$  and $g_n$ will be specified through the accumulated error terms at Step ${n}$.

\vspace*{2mm}
\noindent{\bf Assumption (A-2)}: {\it Set $(e_0,\tilde{e}_0,g_0):=0$ and $f_0:=\mathcal{S}_{\theta_0}f^{a}$ for $\theta_0\geq 1$ sufficiently large.
Let $(f_k,g_k,e_k,\tilde{e}_k)$ be given and vanish in the past for $k=1,\ldots,{n}-1$.}

\vspace*{2mm}

Under Assumptions {\bf (A-1)}--\textbf{(A-2)},
we calculate the accumulated error terms at Step $n$ by
\begin{align}  \label{E.E.tilde}
 E_{{n}}:=\sum_{k=0}^{{n}-1}e_{k},\quad \widetilde{E}_{{n}}:=\sum_{k=0}^{{n}-1}\tilde{e}_{k}.
\end{align}
Then we compute $f_{{n}}$ and $g_{{n}}$ from
\begin{align} \label{source}
 \sum_{k=0}^{{n}} f_k+\mathcal{S}_{\theta_{{n}}}E_{{n}}=\mathcal{S}_{\theta_{{n}}}f^a,
 \quad
 \sum_{k=0}^{{n}}g_k+\mathcal{S}_{\theta_{{n}}}\widetilde{E}_{{n}}=0,
\end{align}
where $\mathcal{S}_{\theta_{{n}}}$ are the smoothing operators defined in Proposition \ref{pro:smooth} with $\theta_{{n}}:=(\theta^2_0+{n})^{1/2}$.
Once $f_n$ and $g_n$ are specified, we can employ Theorem \ref{thm:lin} to take $(\delta \dot{V}_{{n}},\delta \psi_{{n}})$ as the unique solutions of the problem \eqref{effective.NM}.
Then we get $\delta V_{{n}}^{\pm}$ and $(V_{{n}+1}^{\pm},\psi_{{n}+1})$ from \eqref{good.NM} and \eqref{NM0} respectively.

Let us determine the error terms $e_n$ and $\tilde{e}_n$ via the decompositions
\begin{align}
 \nonumber&\mathcal{L}(V_{{n}+1},\varPsi_{{n}+1})-\mathcal{L}(V_{{n}},\varPsi_{{n}}) \\
\nonumber &\quad = \mathbb{L}'(U^a+V_{{n}},\varPhi^a+\varPsi_{{n}})(\delta V_{{n}},\delta\varPsi_{{n}})+e_{{n}}'\\
 \nonumber&\quad  = \mathbb{L}'(U^a+\mathcal{S}_{\theta_{{n}}}V_{{n}},\varPhi^a+\mathcal{S}_{\theta_{{n}}}\varPsi_{{n}})(\delta V_{{n}},\delta\varPsi_{{n}})+e_{{n}}'+e_{{n}}''\\
 \nonumber&\quad = \mathbb{L}'(U^a+V_{{n}+1/2},\varPhi^a+\varPsi_{{n}+1/2})(\delta V_{{n}},\delta\varPsi_{{n}})+e_{{n}}'+e_{{n}}''+e_{{n}}'''\\
 \label{decom1}&\quad = \mathbb{L}_e'(U^a+V_{{n}+1/2},\varPhi^a+\varPsi_{{n}+1/2})\delta \dot{V}_{{n}}+e_{{n}}'+e_{{n}}''+e_{{n}}'''+e_{{n}}^{(4)}
\end{align}
and
\begin{align}
 \nonumber&\mathcal{B}(V_{{n}+1},\psi_{{n}+1})-\mathcal{B}(V_{{n}},\psi_{{n}})\\
 \nonumber&\quad = \mathbb{B}'(U^a+V_{{n}},\varphi^a+\psi_{{n}})(\delta V_{{n}},\delta\psi_{{n}})+\tilde{e}_{{n}}'\\
 \nonumber&\quad = \mathbb{B}'(U^a+\mathcal{S}_{\theta_{{n}}}V_{{n}},\varphi^a+\mathcal{S}_{\theta_{{n}}}\psi_{{n}})(\delta V_{{n}},\delta\psi_{{n}})+\tilde{e}_{{n}}'+\tilde{e}_{{n}}''\\
 \label{decom2}&\quad  =\mathbb{B}_e'(U^a+V_{{n}+1/2},
 \varphi^a+\psi_{{n}+1/2})(\delta \dot{V}_{{n}} ,\delta\psi_{{n}})+\tilde{e}_{{n}}'+\tilde{e}_{{n}}''+\tilde{e}_{{n}}''',
\end{align}
where we write %
{\small
\begin{align*}
\mathbb{L}'\big(\mathring{U} ,\mathring{\varPhi} \big)(V ,\varPsi )
:=\begin{pmatrix}
\mathbb{L}_+'\big(\mathring{U}^+ ,\mathring{\varPhi}^+ \big)(V^+ ,\varPsi^+)\\[1mm]
\mathbb{L}_-'\big(\mathring{U}^-,\mathring{\varPhi}^-\big)(V^-,\varPsi^-)
\end{pmatrix},
 \
\mathbb{L}_{e}'\big(\mathring{U} ,\mathring{\varPhi} \big)\dot{V}
:=\begin{pmatrix}
 \mathbb{L}_{e+}'\big(\mathring{U}^+ ,\mathring{\varPhi}^+ \big)\dot{V}^+ \\[1mm]
 \mathbb{L}_{e-}'\big(\mathring{U}^-,\mathring{\varPhi}^-\big)\dot{V}^-
\end{pmatrix}.
\end{align*}
}%
Moreover, it follows from \eqref{Alinhac} that $e_{{n}}^{(4)}=(e_{{n}}^{(4)+},e_{{n}}^{(4)-})^{\top}$ for
\begin{align}\label{error.D}
	e_{{n}}^{(4)\pm} :=\frac{\delta \varPsi_{{n}}^{\pm}}{\p_1(\varPhi^{a\pm}+\varPsi_{{n}+1/2}^{\pm})}
	\p_1\mathbb{L}_{\pm}(U^{a\pm}+V_{{n}+1/2}^{\pm},\varPhi^{a\pm}+\varPsi_{{n}+1/2}^{\pm}).
\end{align}
Then the description of the iteration scheme is finished by setting
\begin{align} \label{e.e.tilde}
 e_{{n}}:=e_{{n}}'+e_{{n}}''+e_{{n}}'''+e_{{n}}^{(4)}\quad
\textrm{and} \quad
 \tilde{e}_{{n}}:=\tilde{e}_{{n}}'+\tilde{e}_{{n}}''+\tilde{e}_{{n}}'''.
\end{align}

\vspace*{2mm}

Next we formulate the inductive hypothesis.
Let $m\in \mathbb{N}$ with $m\geq {12}$ and $\widetilde{\alpha}:=m-{2}$.
Suppose that the initial data $(U_0^+,U_0^-,\varphi_0)$ satisfy all the conditions of Lemma \ref{lem:app}, yielding the estimates  ({\it cf.}~\eqref{app2b} and \eqref{f^a:est})
\begin{align} \label{small}
 \big\|\widetilde{U}^a\big\|_{H^{m+1}(\Omega_T)}
 +\big\|\varphi^a\big\|_{H^{m+{5/2}}(\Sigma_T)}
 \leq C(M_0),\
 \big\|f^a\big\|_{H^{m}(\Omega_T)}\leq \delta_0(T),
\end{align}
where $M_0$ is defined by \eqref{M0} and $\delta_0(T)$ tends to zero as $T\to 0$.
Suppose further that Assumptions {\bf (A-1)}--{\bf (A-2)} are satisfied.
Given an integer $ {\alpha }\in (4, \widetilde{\alpha})$ and a real number $\boldsymbol{\epsilon}>0$,
our inductive hypothesis reads
\begin{align*}
 (\mathbf{H}_{{n}-1})\ \left\{\begin{aligned}
  \textrm{(a)}\,\,  &\|(\delta V_k,\delta \varPsi_k)\|_{H^{s}(\Omega_T)}+\|(\delta\psi_k,\mathrm{D}_{x'}\delta\psi_k)\|_{H^{s}(\Sigma_T)}\leq \boldsymbol{\epsilon} \theta_k^{s-{\alpha }-1}\varDelta_k\\
  &\quad \textrm{for all integers $k\in [0,{n}-1]$  and $s\in [3,\widetilde{\alpha}]$};\\
  \textrm{(b)}\,\, &\|\mathcal{L}( V_k,  \varPsi_k)-f^a\|_{H^{s}(\Omega_T)}\leq 2 \boldsymbol{\epsilon} \theta_k^{s-{\alpha }-1}\\
  &\quad \textrm{for all integers $k\in [0,{n}-1]$  and $s\in [3,\widetilde{\alpha}-2]$};\\
  \textrm{(c)}\,\,  &\|\mathcal{B}( V_k,  \psi_k)\|_{H^{s}(\Sigma_T)}\leq  \boldsymbol{\epsilon} \theta_k^{s-{\alpha }-1}\\
  &\quad \textrm{for all integers $k\in [0,{n}-1]$  and $s\in [4,{\alpha}]$},
 \end{aligned}\right.
\end{align*}
where $\varDelta_{k}:=\theta_{k+1}-\theta_k$.
Since $\theta_0\geq 1$ and $\theta_n:= (\theta_0^2+n)^{1/2}$, we find
$\varDelta_{k} \sim \theta_k^{-1}$ for all $k\in\mathbb{N}$.

We are going to show that hypothesis ($\mathbf{H}_{{n}-1}$) implies ($\mathbf{H}_{{n}}$)
provided $T,\boldsymbol{\epsilon}>0$ are small enough and $\theta_0\geq 1$ is suitably large.
After that, we shall prove that ($\mathbf{H}_0$) holds for $T>0$ sufficiently small.

Let us first assume that hypothesis ($\mathbf{H}_{{n}-1}$) is satisfied,
which implies the following consequences as in \cite[Lemmas 6--7]{CS08MR2423311} (see also \cite[Lemma 4.6]{T09CPAMMR2560044}).

\begin{lemma} \label{lem:triangle}
 If $\theta_0$ is large enough, then
 \begin{align}
  \label{tri1}&\|( V_k, \varPsi_k)\|_{H^{s}(\Omega_T)}+\|\psi_k\|_{H^{s}(\Sigma_T)}
  \leq
  \left\{\begin{aligned}
   &\boldsymbol{\epsilon}  \theta_k^{(s-{\alpha })_+}   &&\textrm{if }s\neq {\alpha },\\
   &\boldsymbol{\epsilon}  \log \theta_k   &&\textrm{if }s= {\alpha },
  \end{aligned}\right.\\
  &\| (({I}-\mathcal{S}_{\theta_k})V_k,   ({I}-\mathcal{S}_{\theta_k})\varPsi_k)\|_{H^{s}(\Omega_T)}
  \label{tri2}  +\|({I}-\mathcal{S}_{\theta_k})\psi_k \|_{H^{s}(\Sigma_T)}
  \lesssim \boldsymbol{\epsilon}  \theta_k^{s-{\alpha }},
 \end{align}
 for all integers $k\in [0,{n}]$  and $s\in [3,\widetilde{\alpha}]$,
 and
 \begin{align}
  \label{tri3}&\|(  \mathcal{S}_{\theta_k}V_k, \mathcal{S}_{\theta_k}\varPsi_k)\|_{H^{s}(\Omega_T)}
  +\|\mathcal{S}_{\theta_k}  \psi_k\|_{H^{s}(\Sigma_T)}\lesssim
  \left\{\begin{aligned}
   &\boldsymbol{\epsilon} \theta_k^{(s-{\alpha })_+}  &&\textrm{if }s\neq {\alpha },\\
   &\boldsymbol{\epsilon}  \log \theta_k  &&\textrm{if }s= {\alpha },
  \end{aligned}\right.
 \end{align}
 for all integers $k\in [0,{n}]$  and $s\in [3,\widetilde{\alpha}+3]$.
\end{lemma}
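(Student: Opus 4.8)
The plan is to establish Lemma \ref{lem:triangle} by elementary telescoping-series estimates, exactly in the spirit of \cite[Lemmas 6--7]{CS08MR2423311} and \cite[Lemma 4.6]{T09CPAMMR2560044}. The only inputs are the inductive hypothesis $(\mathbf{H}_{n-1})$(a), the defining relations $\theta_k=(\theta_0^2+k)^{1/2}$, $\varDelta_k=\theta_{k+1}-\theta_k\sim\theta_k^{-1}$, and the three families of smoothing-operator estimates \eqref{smooth.p1a}--\eqref{smooth.p2}.

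First I would prove \eqref{tri1}. Since $(V_0,\psi_0)=0$, we have $V_k=\sum_{j=0}^{k-1}\delta V_j$ and likewise for $\varPsi_k$, $\psi_k$, so by the triangle inequality and $(\mathbf{H}_{n-1})$(a),
\begin{align*}
\|(V_k,\varPsi_k)\|_{H^s(\Omega_T)}+\|\psi_k\|_{H^s(\Sigma_T)}
\leq \boldsymbol{\epsilon}\sum_{j=0}^{k-1}\theta_j^{s-\alpha-1}\varDelta_j
\lesssim \boldsymbol{\epsilon}\sum_{j=0}^{k-1}\theta_j^{s-\alpha-2}.
\end{align*}
One then splits into cases according to the sign of $s-\alpha$: using $\varDelta_j\sim\theta_j^{-1}$ and comparing the sum with the integral $\int_1^{\theta_k}t^{2(s-\alpha)-3}\,\mathrm{d}t$, the sum is $\lesssim\theta_k^{(s-\alpha)_+}$ when $s\neq\alpha$ and $\lesssim\log\theta_k$ when $s=\alpha$; absorbing the implicit constant by taking $\theta_0$ large (so the prefactor in front of $\boldsymbol{\epsilon}$ becomes $\leq 1$, which is where the ``$\theta_0$ large enough'' hypothesis is used) gives \eqref{tri1} without an implicit constant.

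Next, \eqref{tri2}: writing $(I-\mathcal{S}_{\theta_k})V_k=\sum_{j=0}^{k-1}(I-\mathcal{S}_{\theta_k})\delta V_j$ and applying \eqref{smooth.p1b} with $\ell=s$ and a suitable $j'>s$ (taking $j'=\widetilde{\alpha}$, which is admissible since $\delta V_j\in H^{\widetilde\alpha}$ by $(\mathbf{H}_{n-1})$(a) and $s\leq\widetilde\alpha$), we get $\|(I-\mathcal{S}_{\theta_k})\delta V_j\|_{H^s}\lesssim\theta_k^{s-j'}\|\delta V_j\|_{H^{j'}}\lesssim\boldsymbol{\epsilon}\theta_k^{s-j'}\theta_j^{j'-\alpha-1}\varDelta_j$; summing in $j$ (now the sum $\sum_j\theta_j^{j'-\alpha-2}\lesssim\theta_k^{j'-\alpha}$ since $j'>\alpha$) and cancelling $\theta_k^{s-j'}\cdot\theta_k^{j'-\alpha}=\theta_k^{s-\alpha}$ yields \eqref{tri2}; the boundary piece $\|(I-\mathcal{S}_{\theta_k})\psi_k\|_{H^s(\Sigma_T)}$ is handled the same way with the boundary smoothing operators. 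Finally \eqref{tri3} follows by writing $\mathcal{S}_{\theta_k}V_k=V_k-(I-\mathcal{S}_{\theta_k})V_k$ and combining \eqref{tri1} with \eqref{tri2} when $s\in[3,\widetilde\alpha]$; for the extended range $s\in(\widetilde\alpha,\widetilde\alpha+3]$ one instead applies \eqref{smooth.p1a} directly, $\|\mathcal{S}_{\theta_k}V_k\|_{H^s}\lesssim\theta_k^{s-\widetilde\alpha}\|V_k\|_{H^{\widetilde\alpha}}\lesssim\boldsymbol{\epsilon}\theta_k^{s-\widetilde\alpha}\theta_k^{(\widetilde\alpha-\alpha)_+}=\boldsymbol{\epsilon}\theta_k^{s-\alpha}$ (using $\widetilde\alpha>\alpha$), which is of the stated form since $s>\alpha$ here. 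The only mild subtlety, and the step I would be most careful with, is bookkeeping the case distinctions $s\lessgtr\alpha$ and the borderline logarithmic case uniformly across the interior and boundary norms and across the two ranges of $s$ in \eqref{tri3}; there is no analytic difficulty, just the need to keep the exponents and the role of ``$\theta_0$ large'' straight, exactly as in the cited references.
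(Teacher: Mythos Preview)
Your proposal is correct and follows exactly the approach the paper invokes (it simply cites \cite[Lemmas 6--7]{CS08MR2423311} and \cite[Lemma 4.6]{T09CPAMMR2560044} without writing out the argument). One small slip: the comparison integral for $\sum_{j}\theta_j^{s-\alpha-1}\varDelta_j$ should be $\int^{\theta_k} t^{\,s-\alpha-1}\,\mathrm{d}t$, not $\int^{\theta_k} t^{\,2(s-\alpha)-3}\,\mathrm{d}t$ (your stated exponent would not produce the logarithm at $s=\alpha$), but your conclusions in each case are the correct ones.
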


\subsection{Estimate of the Error Terms}
In this subsection we estimate the error terms $e_{k}$ and $\tilde{e}_{k}$ defined by \eqref{e.e.tilde}.
First we rewrite the quadratic error terms $e'_{k}$ and $\tilde{e}_{k}'$ given in \eqref{decom1}--\eqref{decom2} as %
\begin{align}
\nonumber e_k' = \int_{0}^{1}
 \;& \mathbb{L}'' \big(U^a+V_{k}+\tau \delta  V_{k},
 \varPhi^a+\varPsi_{k} \\
 &+\tau \delta \varPsi_{k} \big)
 \big((\delta V_{k},\delta\varPsi_{k}),(\delta V_{k},\delta\varPsi_{k})\big)
 (1-\tau)\d\tau,
\label{e'}
 \\
\nonumber \tilde{e}_k'
 = \int_{0}^{1}
\;& \mathbb{B}'' \big(U^a+V_{k}+\tau \delta  V_{k},
 \varphi^a+\psi_{k}  \\
 & +\tau \delta \psi_{k} \big) \big((\delta V_{k},\delta \psi_{k}),(\delta V_{k},\delta \psi_{k}) \big)
 (1-\tau)\d\tau,
\label{e'.tilde}
\end{align}
through the second derivatives of the operators $\mathbb{L}$ and $\mathbb{B}$:
\begin{align*}
 \mathbb{L}''\big(\mathring{U},\mathring{\varPhi}\big)
 \big((V,\varPsi),(\widetilde{V},\widetilde{\varPsi})\big)
 &:=\left.\frac{\d}{\d \theta}
 \mathbb{L}'\big(\mathring{U}+\theta \widetilde{V},
 \mathring{\varPhi}+\theta \widetilde{\varPsi}\big)
 \big(V,\varPsi\big)\right|_{\theta=0},\\[0.5mm]
 \mathbb{B}''\big(\mathring{U},\mathring{\varphi}\big)
 \big((V,\psi),(\widetilde{V},\tilde{\psi})\big)
 &:=\left.\frac{\d}{\d \theta}
 \mathbb{B}'\big(\mathring{U}+\theta \widetilde{V},
 \mathring{\varphi}+\theta \tilde{\psi}\big) \big(V,\psi\big)\right|_{\theta=0},
\end{align*}
where $\mathbb{L}'$ and $\mathbb{B}'$ are the first-derivative operators defined by \eqref{L'bb:def}.
A lengthy but straightforward computation yields the following explicit form of $\mathbb{B}''$:
\begin{align}
 \nonumber
 &\mathbb{B}''\big(\mathring{U},\mathring{\varphi}\big)
 \big((V,\psi),(\widetilde{V},\tilde{\psi})\big)
 \\ & \quad =
 \begin{pmatrix}
  \mathfrak{s}\mathrm{D}_{x'}\cdot
  \left(
  \dfrac{\mathring{\zeta}\cdot\tilde{\zeta}}{|\mathring{N}|^3}\zeta
  -\dfrac{\tilde{\zeta}\cdot {\zeta}}{|\mathring{N}|^3}\mathring{\zeta}
  -\dfrac{\mathring{\zeta}\cdot {\zeta}}{|\mathring{N}|^3}\tilde{\zeta}
  +\dfrac{3(\mathring{\zeta}\cdot {\zeta})(\mathring{\zeta}\cdot\tilde{\zeta})}{|\mathring{N}|^5}\mathring{\zeta}
  \right)\\[1mm]
  0\\ 0 \\ 0 \\
  [H_1]\p_2\tilde{\psi}+[\widetilde{H}_1]\p_2\psi\\
  [H_1]\p_3\tilde{\psi}+[\widetilde{H}_1]\p_3\psi\\
  (\tilde{v}_2^+   \p_2+\tilde{v}_3^+  \p_3 )\psi + (v_2^+  \p_2 +v_3^+  \p_3)\tilde{\psi}
 \end{pmatrix},
 \label{B''.form}
\end{align}
where $\zeta:=\mathrm{D}_{x'}\psi$, $\mathring{\zeta}:=\mathrm{D}_{x'}\mathring{\varphi}$, and $\tilde{\zeta}:=\mathrm{D}_{x'}\tilde{\psi}$.
Omitting the detailed calculations, we apply the Moser-type calculus and embedding inequalities to derive the following result.

\begin{proposition}  \label{pro:tame2}
 Let $T>0$ and $s\in\mathbb{N}$ with $s\geq 3$.
Suppose that $(\widetilde{V},\widetilde\varPsi)\in H^{s+1}(\Omega_T)$ and $\tilde{\varphi}\in H^{s+2}(\Sigma_T)$ satisfy
 $\|(\widetilde{V},\widetilde{\varPsi} )\|_{H^{4}(\Omega_T)} +\|\tilde{\varphi}\|_{H^{3}(\Sigma_T)}\leq \widetilde{K}$
 for some constant $\widetilde{K}>0$.
If $(V_i,\varPsi_i)\in H^{s+1}(\Omega_T)$ for $i=1,2$, then
 \begin{multline}
\big\|\mathbb{L}''\big(\widebar{U}+\widetilde{V},\widebar{\varPhi}+\widetilde{\varPsi} \big) \big((V_1,\varPsi_1),(V_2,\varPsi_2) \big)\big\|_{H^s(\Omega_{T})} \nonumber \\[1mm]
\mbox{\ } \lesssim_{\widetilde{K}}
  \sum_{i\neq j} \big\|(V_i,\varPsi_i)\big\|_{H^{4}(\Omega_{T})}\big\|(V_j,\varPsi_j)\big\|_{H^{s+1}(\Omega_{T})}
  \\
 + \big\|(V_1,\varPsi_1)\big\|_{H^{4}(\Omega_{T})}\big\|(V_2,\varPsi_2)\big\|_{H^{4}(\Omega_{T})} \big\| ({\widetilde{V}},\widetilde{\varPsi} )\big\|_{H^{s+1}(\Omega_{T})},
 \nonumber
 \end{multline}
where $\widebar{U}:=(\widebar{U}^+,\widebar{U}^-)^{\top}$ and $\widebar{\varPhi}:=(x_1,-x_1)^{\top}.$
%
If  $W_i \in H^{s}(\Sigma_T) $ and $ \psi_i\in  H^{s+2}(\Sigma_T)$ for $i=1,2$, then
 \begin{multline}
 \big\|\mathbb{B}''\big(\widebar{U}+\widetilde{V},\widetilde{\varphi} \big)
  \big((W_1,\psi_1),(W_2,\psi_2) \big)\big\|_{H^{s}(\Sigma_T)} \nonumber\\[1mm]
 \lesssim_{\widetilde{K}} \sum_{i\neq j} \bigg\{
 \big\|W_i\big\|_{H^{s}(\Sigma_T)}\big\|\psi_j\big\|_{H^{3}(\Sigma_T)}
 + \big\|W_i\big\|_{H^{2}(\Sigma_T)} \big\| \psi_j\big\|_{H^{s+1}(\Sigma_T)}
\mbox{\qquad\qquad}
 \\
 + \big\|\psi_i\big\|_{H^{3}(\Sigma_T)} \big\| \psi_j\big\|_{H^{s+2}(\Sigma_T)}
  +\big\|\psi_1\big\|_{H^{3}(\Sigma_T)}\big\|\psi_2\big\|_{H^{3}(\Sigma_T)}
  \big\| \tilde{\varphi}\big\|_{H^{s+2}(\Sigma_T)}
  \bigg\}.
  \nonumber
 \end{multline}
\end{proposition}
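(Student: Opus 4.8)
The plan is to prove Proposition \ref{pro:tame2} by writing out the second-derivative operators $\mathbb{L}''$ and $\mathbb{B}''$ explicitly and then applying the Moser-type calculus inequalities \eqref{Moser1}--\eqref{Moser4} together with the Sobolev embeddings $H^3(\Omega_T)\hookrightarrow L^\infty(\Omega_T)$ and $H^2(\Sigma_T)\hookrightarrow L^\infty(\Sigma_T)$, which are available since $s\ge 3$. For the boundary operator $\mathbb{B}''$, the explicit formula \eqref{B''.form} is already provided, so the task reduces to estimating each of its seven components separately. The surface-tension component is a divergence of a rational function of $\mathring\zeta,\zeta,\tilde\zeta$ with denominator a power of $|\mathring N|\ge 1$; after applying $\mathrm{D}_{x'}\cdot$ it becomes a sum of terms each of the form (smooth function of $\mathrm{D}_{x'}\tilde\varphi$) times a product of one factor from $\{\zeta,\mathrm{D}_{x'}\zeta\}$ and one from $\{\tilde\zeta,\mathrm{D}_{x'}\tilde\zeta\}$, possibly also multiplied by $\mathring\zeta=\mathrm{D}_{x'}\mathring\varphi$; using \eqref{Moser2}--\eqref{Moser3} one distributes the $H^s$-norm onto exactly one factor while the others sit in $L^\infty$, and the factor $\tilde\varphi$ only produces the last term on the right-hand side via \eqref{Moser1}. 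The remaining components of $\mathbb{B}''$ are bilinear in $(W,\psi)$ of the schematic form $[H_1]\mathrm{D}_{x'}\tilde\psi+[\widetilde H_1]\mathrm{D}_{x'}\psi$ and $(\tilde v^+\cdot\mathrm{D}_{x'})\psi+(v^+\cdot\mathrm{D}_{x'})\tilde\psi$, which are handled directly by \eqref{Moser2}, noting $H_1,v_2^+,v_3^+$ are components of $W$ (or of $\widebar U+\widetilde V$, whose perturbation part is controlled by $\widetilde K$).

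For the interior operator $\mathbb{L}''$, I would first record that $\mathbb{L}''(\mathring U,\mathring\varPhi)((V,\varPsi),(\widetilde V,\widetilde\varPsi))$ is, by \eqref{L.prime} and the definition of $\mathbb{L}'$, a finite sum of terms each of which is a smooth matrix-valued function of $(\mathring U,\mathrm{D}\mathring\varPhi)$ contracted against products of the form $V\otimes\mathrm{D}(\widetilde V,\widetilde\varPsi)$, $\widetilde V\otimes\mathrm{D}(V,\varPsi)$, $\varPsi\otimes\mathrm{D}^2$-type terms coming from the $\widetilde A_1$ dependence on $\varPhi$, and analogous mixed terms; the point is that it is bilinear in the two increments with coefficients depending smoothly on the background through at most one derivative of $(\widetilde V,\widetilde\varPsi)$. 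Then \eqref{Moser2}--\eqref{Moser3} give the claimed splitting: one of $(V_1,\varPsi_1)$, $(V_2,\varPsi_2)$ is placed in $H^{s+1}$ (absorbing the one extra derivative hitting it) and the other in $H^4\hookrightarrow W^{1,\infty}$, while the coefficient's dependence on $(\widetilde V,\widetilde\varPsi)$ is handled by \eqref{Moser1}/\eqref{Moser3} and produces the last term $\|(V_1,\varPsi_1)\|_{H^4}\|(V_2,\varPsi_2)\|_{H^4}\|(\widetilde V,\widetilde\varPsi)\|_{H^{s+1}}$; the bound $\|(\widetilde V,\widetilde\varPsi)\|_{H^4}+\|\tilde\varphi\|_{H^3}\le\widetilde K$ keeps all the $W^{1,\infty}$-norms of the coefficients under $C(\widetilde K)$.

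I expect the main obstacle to be purely bookkeeping rather than conceptual: organizing the many terms in $\mathbb{L}''$ (arising from differentiating the nonlinear coefficients $A_0^\pm, \widetilde A_1^\pm, A_2^\pm, A_3^\pm$ and the Alinhac remainder structure twice) into the schematic bilinear form so that the Moser inequalities apply cleanly, and in particular tracking exactly where the ``extra'' derivative lands so that the asymmetric right-hand side (with $H^4$ on one factor and $H^{s+1}$ on the other, summed over $i\ne j$) comes out correctly. A secondary care point is the $\widetilde A_1$ term, whose dependence on $\varPhi$ involves $1/\partial_1\varPhi$ and hence requires the lower bound $|\partial_1(\widebar\varPhi^\pm+\widetilde\varPsi^\pm)|\ge c>0$, valid for $\widetilde K$ (equivalently $\|\widetilde\varPsi\|_{W^{1,\infty}}$) small — but on the relevant background states this is guaranteed as in \eqref{app2c}, so the composite function is smooth and \eqref{Moser1} applies. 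Once the schematic form is fixed, each resulting term is dispatched by a single application of \eqref{Moser2}, \eqref{Moser3}, or \eqref{Moser4}, and I would state this and omit the routine term-by-term verification.
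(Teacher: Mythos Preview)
Your proposal is correct and matches the paper's approach: the paper itself omits the detailed proof, stating only ``Omitting the detailed calculations, we apply the Moser-type calculus and embedding inequalities to derive the following result.'' Your plan---writing out $\mathbb{L}''$ and $\mathbb{B}''$ explicitly (using \eqref{B''.form} for the latter), then applying \eqref{Moser1}--\eqref{Moser4} together with the Sobolev embeddings $H^3(\Omega_T)\hookrightarrow L^\infty(\Omega_T)$ and $H^2(\Sigma_T)\hookrightarrow L^\infty(\Sigma_T)$---is exactly this, and your identification of the bookkeeping (tracking where the extra derivative lands, handling the $1/\partial_1\varPhi$ dependence) as the only real work is accurate.
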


Apply Proposition \ref{pro:tame2} to obtain the estimate for $e'_{k}$ and $\tilde{e}_{k}'$ as follows.
\begin{lemma}\label{lem:quad}
Let ${\alpha }\geq 5$.
If $\boldsymbol{\epsilon}>0$ is small enough and $\theta_0\geq 1$ is sufficiently large, then
\begin{align}\label{quad.est}
 \|e_k'\|_{H^s(\Omega_{T})}+\|\tilde{e}_k'\|_{H^{s}(\Sigma_T)}
 \lesssim \boldsymbol{\epsilon}^2 \theta_k^{\varsigma_1(s)-1}\varDelta_k,
\end{align}
 for all integers $k\in [0,{n}-1]$  and $s\in [3,\widetilde{\alpha}-2]$,
where  $$\varsigma_1(s):=\max\{s+3-2\alpha,\,(s+1-\alpha)_++6-2\alpha\}.$$
\end{lemma}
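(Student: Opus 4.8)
The plan is to estimate the quadratic error terms $e_k'$ and $\tilde e_k'$ using the explicit integral representations \eqref{e'}--\eqref{e'.tilde} together with the Moser-type bilinear bounds in Proposition \ref{pro:tame2}, then to insert the inductive bounds $(\mathbf{H}_{n-1})$ and the estimates on the approximate solution. First I would apply Proposition \ref{pro:tame2} with $\widetilde V=\widetilde U^a+V_k+\tau\delta V_k$, $\widetilde\varPsi=\varPsi^a+\varPsi_k+\tau\delta\varPsi_k$, $\tilde\varphi=\varphi^a+\psi_k+\tau\delta\psi_k$ and with $V_1=V_2=\delta V_k$, $\varPsi_1=\varPsi_2=\delta\varPsi_k$, $W_1=W_2=\delta\dot V_k|_{\Sigma}$ (or the appropriate boundary trace), $\psi_1=\psi_2=\delta\psi_k$, integrating in $\tau\in[0,1]$; the constant $\widetilde K$ is controlled by \eqref{small} and \eqref{tri1} with $s=4$ (here one uses $\alpha>4$ so that the $H^4$-norms of $(V_k,\varPsi_k)$ stay bounded, hence $\widetilde K\lesssim C(M_0)+\boldsymbol\epsilon$). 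This reduces matters to bounding products of the form
\begin{align*}
\|(\delta V_k,\delta\varPsi_k)\|_{H^4(\Omega_T)}\|(\delta V_k,\delta\varPsi_k)\|_{H^{s+1}(\Omega_T)}
+\|(\delta V_k,\delta\varPsi_k)\|_{H^4(\Omega_T)}^2\|(\widetilde V,\widetilde\varPsi)\|_{H^{s+1}(\Omega_T)},
\end{align*}
and similarly for the boundary term with $H^3$, $H^{s+1}$, $H^{s+2}(\Sigma_T)$-norms of $\delta\psi_k$ and $\tilde\varphi$.

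Next I would plug in the inductive hypothesis $(\mathbf{H}_{n-1})(\textrm{a})$, namely $\|(\delta V_k,\delta\varPsi_k)\|_{H^s(\Omega_T)}+\|(\delta\psi_k,\mathrm{D}_{x'}\delta\psi_k)\|_{H^s(\Sigma_T)}\le\boldsymbol\epsilon\theta_k^{s-\alpha-1}\varDelta_k$ for $s\in[3,\widetilde\alpha]$, which gives the first product a bound $\lesssim\boldsymbol\epsilon^2\theta_k^{(4-\alpha-1)+(s+1-\alpha-1)}\varDelta_k^2=\boldsymbol\epsilon^2\theta_k^{s+3-2\alpha-2}\varDelta_k^2$; using $\varDelta_k\sim\theta_k^{-1}$ this is $\lesssim\boldsymbol\epsilon^2\theta_k^{s+3-2\alpha-1}\varDelta_k$, matching the first argument $s+3-2\alpha$ of $\varsigma_1$. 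For the second product I would use $(\mathbf{H}_{n-1})(\textrm{a})$ with $s=4$ to get $\|(\delta V_k,\delta\varPsi_k)\|_{H^4}^2\lesssim\boldsymbol\epsilon^2\theta_k^{2(4-\alpha-1)}\varDelta_k^2=\boldsymbol\epsilon^2\theta_k^{6-2\alpha-2}\varDelta_k^2$, together with the bound $\|(\widetilde V,\widetilde\varPsi)\|_{H^{s+1}(\Omega_T)}\lesssim C(M_0)+\boldsymbol\epsilon\,\theta_k^{(s+1-\alpha)_+}$ coming from \eqref{small} and \eqref{tri1} (here one needs $s+1\le\widetilde\alpha+? $, which holds in the stated range $s\le\widetilde\alpha-2$ since then $s+1\le\widetilde\alpha-1$, well inside the validity of Lemma \ref{lem:triangle}; when $s+1=\alpha$ one picks up an extra $\log\theta_k$ which is absorbed by the remaining power of $\theta_k$). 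This yields the bound $\lesssim\boldsymbol\epsilon^2\theta_k^{6-2\alpha-2+(s+1-\alpha)_+}\varDelta_k^2\sim\boldsymbol\epsilon^2\theta_k^{(s+1-\alpha)_++6-2\alpha-1}\varDelta_k$, matching the second argument of $\varsigma_1$. Taking the maximum of the two contributions produces exactly $\boldsymbol\epsilon^2\theta_k^{\varsigma_1(s)-1}\varDelta_k$, and the boundary estimate is handled identically, using the $H^3,H^{s+2}(\Sigma_T)$ structure of the second inequality in Proposition \ref{pro:tame2} and the bounds on $\psi_k$ and $\tilde\varphi=\varphi^a+\psi_k+\tau\delta\psi_k$.

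I expect the main technical nuisance to be keeping track of which Sobolev indices are admissible so that Lemma \ref{lem:triangle} and \eqref{small} apply: the factor $(\widetilde V,\widetilde\varPsi)$ carries one more derivative than $(V_k,\varPsi_k)$, and the bilinear estimates demand $s+1$ derivatives on it, so one must verify $s+1$ stays within the range $[3,\widetilde\alpha]$ (resp. $[3,\widetilde\alpha+3]$ after smoothing), which is why the claim is restricted to $s\in[3,\widetilde\alpha-2]$ and why $\widetilde\alpha=m-2$ with $m\ge12$ leaves enough room. A secondary subtlety is the borderline index $s=\alpha$, where \eqref{tri1}--\eqref{tri3} produce $\log\theta_k$ instead of a pure power; this is harmless because it is multiplied by a strictly negative power of $\theta_k$ (one uses $\theta_k^{-\eta}\log\theta_k\lesssim\theta_k^{-\eta/2}$ and then relabels constants), but it must be mentioned. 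Once these index bookkeeping points are settled, the estimate \eqref{quad.est} follows by the smallness of $\boldsymbol\epsilon$ and the largeness of $\theta_0$, exactly as in \cite[Lemmas 6--7]{CS08MR2423311}.
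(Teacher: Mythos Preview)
Your proposal is correct and follows essentially the same approach as the paper: apply Proposition~\ref{pro:tame2} to the integral representations \eqref{e'}--\eqref{e'.tilde} after verifying the $\widetilde K$ bound via \eqref{small} and \eqref{tri1}, then insert point~(a) of $(\mathbf{H}_{n-1})$ for the $(\delta V_k,\delta\varPsi_k,\delta\psi_k)$ factors and \eqref{tri1} for the $H^{s+1}$/$H^{s+2}$ norms of $(V_k,\varPsi_k)$ and $\psi_k$, converting one factor $\varDelta_k\sim\theta_k^{-1}$. The paper's proof is more terse but identical in structure; your remarks on the borderline $s+1=\alpha$ logarithm and on the index range $s+1\le\widetilde\alpha$ are the right checks (the paper handles the log implicitly via the assumption $\alpha\ge 5$). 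One small slip: in the boundary term the arguments of $\mathbb{B}''$ are $(\delta V_k,\delta\psi_k)$, not the good unknown $\delta\dot V_k$, but this is inconsequential since the trace theorem controls both equally well.
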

\begin{proof}
In view of \eqref{small}, hypothesis ($\mathbf{H}_{{n}-1}$), and Lemma \ref{lem:triangle}, we get
\begin{align*}
\|(\widetilde{ U}^a,V_{k},\delta  V_{k},\varPsi^a,\varPsi_{k}, \delta \varPsi_{k})\|_{H^{4}(\Omega_T)}
+\|(\varphi^a,\psi_{k}, \delta \psi_{k})\|_{H^{3}(\Sigma_T)}
\lesssim 1,
\end{align*}
which allows us to apply Proposition \ref{pro:tame2} for estimating $e'_{k}$ and $\tilde{e}_{k}'$ through the identities \eqref{e'}--\eqref{e'.tilde}.
More precisely, we use \eqref{small}, hypothesis ($\mathbf{H}_{{n}-1}$), and the trace theorem to deduce
\begin{align*}
&\|e_k'\|_{H^s(\Omega_{T})}\lesssim
\boldsymbol{\epsilon}^2\varDelta_k^2 \left(\theta_{k}^{s+3-2{\alpha }} + \theta_{k}^{6-2{\alpha }} \|(V_k,\varPsi_k)\|_{H^{s+1}(\Omega_{T})}\right),\\
&\|\tilde{e}_k'\|_{H^{s}(\Sigma_T)}\lesssim
\boldsymbol{\epsilon}^2\varDelta_k^2\left(\theta_{k}^{s+3-2{\alpha }} + \theta_{k}^{4-2{\alpha }} \|\psi_k\|_{H^{s+2}(\Sigma_{T})}\right),
\end{align*}
 for all integers $k\in [0,{n}-1]$  and $s\in [3,\widetilde{\alpha}-2]$.
Then we obtain the estimate \eqref{quad.est} by utilizing the inequalities \eqref{tri1}, $\alpha\geq 5$,  and $(s+2-\alpha)_+\leq (s+1-\alpha)_++1$.
\end{proof}

The next lemma provides the estimate of  the first substitution error terms $e_{k}''$ and $\tilde{e}_{k}''$ defined in \eqref{decom1}--\eqref{decom2}.
\begin{lemma} \label{lem:1st}
 Let ${\alpha }\geq 5$.
 If $\boldsymbol{\epsilon}>0$ is small enough and $\theta_0\geq 1$ is sufficiently large, then
 \begin{align}\label{1st.sub}
  \|e_k'' \|_{H^s(\Omega_{T})} +\|\tilde{e}_k''\|_{H^{s}(\Sigma_T)}
  \lesssim \boldsymbol{\epsilon}^2 \theta_k^{\varsigma_2(s)-1}\varDelta_k,
 \end{align}
 for all integers $k\in [0,{n}-1]$  and $s\in [3,\widetilde{\alpha}-2]$,
 where
 \begin{align}\label{varsigma2.def}
  \varsigma_2(s):=\max\{s+5-2\alpha,\,(s+1-\alpha)_++8-2\alpha  \}.
 \end{align}
\end{lemma}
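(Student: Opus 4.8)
The plan is to run the argument in parallel with the proof of Lemma~\ref{lem:quad}, the only structural change being that one of the two increments inserted into the second-order operators is now a mollification remainder rather than a Newton increment. Recalling the decompositions \eqref{decom1}--\eqref{decom2}, the first substitution errors are
\[
e_k''=\big[\mathbb{L}'(U^a+V_k,\varPhi^a+\varPsi_k)-\mathbb{L}'(U^a+\mathcal{S}_{\theta_k}V_k,\varPhi^a+\mathcal{S}_{\theta_k}\varPsi_k)\big](\delta V_k,\delta\varPsi_k),
\]
and likewise $\tilde e_k''$ with $\mathbb{B}'$ in place of $\mathbb{L}'$ and the boundary smoothing operator. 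First I would rewrite each of them, by the fundamental theorem of calculus in the coefficient slot, as the integral over $\tau\in[0,1]$ of $\mathbb{L}''$ (respectively $\mathbb{B}''$) evaluated at the intermediate state $U^a+\mathcal{S}_{\theta_k}V_k+\tau(I-\mathcal{S}_{\theta_k})V_k$, $\varPhi^a+\mathcal{S}_{\theta_k}\varPsi_k+\tau(I-\mathcal{S}_{\theta_k})\varPsi_k$, with the two bilinear arguments taken to be $(\delta V_k,\delta\varPsi_k)$ and $((I-\mathcal{S}_{\theta_k})V_k,(I-\mathcal{S}_{\theta_k})\varPsi_k)$, and the analogous identity on $\Sigma_T$.

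Next I would verify the hypotheses of Proposition~\ref{pro:tame2}: by \eqref{small}, hypothesis $(\mathbf{H}_{n-1})$, and Lemma~\ref{lem:triangle} (in particular \eqref{tri2}--\eqref{tri3}), the $H^4(\Omega_T)$-norm of $\widetilde U^a+\mathcal{S}_{\theta_k}V_k+\tau(I-\mathcal{S}_{\theta_k})V_k$ and the $H^3(\Sigma_T)$-norm of the corresponding interface perturbation are bounded by an absolute constant, uniformly in $\tau\in[0,1]$ and in $k\le n-1$. Applying Proposition~\ref{pro:tame2} then controls $\|e_k''\|_{H^s(\Omega_T)}$ by the symmetric products $\|(\delta V_k,\delta\varPsi_k)\|_{H^4}\|((I-\mathcal{S}_{\theta_k})V_k,(I-\mathcal{S}_{\theta_k})\varPsi_k)\|_{H^{s+1}}$ and $\|((I-\mathcal{S}_{\theta_k})V_k,(I-\mathcal{S}_{\theta_k})\varPsi_k)\|_{H^4}\|(\delta V_k,\delta\varPsi_k)\|_{H^{s+1}}$, together with $\|(\delta V_k,\delta\varPsi_k)\|_{H^4}\|((I-\mathcal{S}_{\theta_k})V_k,(I-\mathcal{S}_{\theta_k})\varPsi_k)\|_{H^4}$ times the $H^{s+1}$-norm of the intermediate state. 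The bound for $\tilde e_k''$ on $\Sigma_T$ is obtained in the same manner from the boundary part of Proposition~\ref{pro:tame2}, the explicit form \eqref{B''.form} of $\mathbb{B}''$, the trace theorem, and the smoothing estimates \eqref{smooth.p1}--\eqref{smooth.p2}.

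Finally I would substitute the numerology. From $(\mathbf{H}_{n-1})$(a) one has $\|(\delta V_k,\delta\varPsi_k)\|_{H^\sigma}\le\boldsymbol{\epsilon}\theta_k^{\sigma-\alpha-1}\varDelta_k$ for $\sigma\in\{4,s+1\}$, admissible since $s\le\widetilde\alpha-2$; from \eqref{tri2}, $\|((I-\mathcal{S}_{\theta_k})V_k,(I-\mathcal{S}_{\theta_k})\varPsi_k)\|_{H^\sigma}\lesssim\boldsymbol{\epsilon}\theta_k^{\sigma-\alpha}$ for $\sigma\in\{4,s+1\}$; and from \eqref{tri1}, \eqref{tri3} and \eqref{small}, the $H^{s+1}$-norm of the intermediate state is $\lesssim\boldsymbol{\epsilon}\theta_k^{(s+1-\alpha)_+}+C(M_0)$. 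Using $\varDelta_k\sim\theta_k^{-1}$ and $\alpha\ge5$, the two symmetric products are $\lesssim\boldsymbol{\epsilon}^2\theta_k^{\,s+4-2\alpha}\varDelta_k$ while the remaining term is $\lesssim\boldsymbol{\epsilon}^2\theta_k^{\,(s+1-\alpha)_++7-2\alpha}\varDelta_k$, whose maximum is exactly $\boldsymbol{\epsilon}^2\theta_k^{\varsigma_2(s)-1}\varDelta_k$ with $\varsigma_2(s)$ given by \eqref{varsigma2.def}; the identical estimate holds for $\tilde e_k''$ on $\Sigma_T$. The main obstacle is the bookkeeping: one must keep every Sobolev exponent occurring in $(\mathbf{H}_{n-1})$, in the smoothing inequalities, and in Lemma~\ref{lem:triangle} within its admissible range — which the standing restriction $s\le\widetilde\alpha-2$ guarantees — and one must recognize that the shift from $\varsigma_1$ to $\varsigma_2=\varsigma_1+2$ arises precisely because a factor $\|(\delta V_k,\delta\varPsi_k)\|_{H^\sigma}\sim\boldsymbol{\epsilon}\theta_k^{\sigma-\alpha-1}\varDelta_k\sim\boldsymbol{\epsilon}\theta_k^{\sigma-\alpha-2}$ has been replaced by $\|((I-\mathcal{S}_{\theta_k})V_k,(I-\mathcal{S}_{\theta_k})\varPsi_k)\|_{H^\sigma}\sim\boldsymbol{\epsilon}\theta_k^{\sigma-\alpha}$, a gain of $\theta_k^{2}$.
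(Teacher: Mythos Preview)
Your proposal is correct and follows essentially the same route as the paper: rewrite $e_k''$ and $\tilde e_k''$ via the fundamental theorem of calculus as integrals of $\mathbb{L}''$ and $\mathbb{B}''$ with one bilinear argument $(\delta V_k,\delta\varPsi_k)$ and the other $((I-\mathcal{S}_{\theta_k})V_k,(I-\mathcal{S}_{\theta_k})\varPsi_k)$, apply Proposition~\ref{pro:tame2}, and feed in $(\mathbf{H}_{n-1})$(a) together with \eqref{tri2}--\eqref{tri3}. The paper records the intermediate bounds slightly differently (it isolates $\|\mathcal{S}_{\theta_k}V_k\|_{H^{s+1}}$ and $\|\mathcal{S}_{\theta_k}\psi_k\|_{H^{s+2}}$ before invoking \eqref{tri3} and the inequality $(s+2-\alpha)_+\le(s+1-\alpha)_++1$), but the arithmetic and the final exponent $\varsigma_2(s)-1$ coincide with yours.
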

\begin{proof}
We first rewrite the terms $e_k''$ and $\tilde{e}_k''$ as
\begin{align}
\nonumber {e}_k''=\int_{0}^{1}\;& \mathbb{L}''\Big(U^a+\mathcal{S}_{\theta_k}V_k+\tau({I}-\mathcal{S}_{\theta_k})V_k,\,
\varPhi^a+\mathcal{S}_{\theta_k}\varPsi_k \\
& +\tau({I}-\mathcal{S}_{\theta_k})\varPsi_k\Big)\Big(\big(\delta V_k ,\delta \varPsi_k\big),\, \big(({I}-\mathcal{S}_{\theta_k})V_k,({I}-\mathcal{S}_{\theta_k})\varPsi_k\big) \Big)\d\tau,\nonumber\\
 \nonumber \tilde{e}_k''=
 \int_{0}^{1}\;&\mathbb{B}''\Big(U^a+\mathcal{S}_{\theta_k}V_k+\tau({I}-\mathcal{S}_{\theta_k})V_k,\,
 \varphi^a+\mathcal{S}_{\theta_k}\psi_k \\
 & +\tau({I}-\mathcal{S}_{\theta_k})\psi_k\Big)
 \Big(\big(\delta V_k ,\delta \psi_k\big),\, \big(({I}-\mathcal{S}_{\theta_k})V_k,({I}-\mathcal{S}_{\theta_k})\psi_k\big) \Big)\d\tau.
 \nonumber
\end{align}
Then we utilize Proposition \ref{pro:tame2}, \eqref{small}, hypothesis ($\mathbf{H}_{{n}-1}$), Lemma \ref{lem:triangle}, and the trace theorem to derive
\begin{align*}
 &\|e_k''\|_{H^s(\Omega_{T})}\lesssim
 \boldsymbol{\epsilon}^2\varDelta_k \left(\theta_{k}^{s+4-2{\alpha }} + \theta_{k}^{7-2{\alpha }} \|(\mathcal{S}_{\theta_k}V_k,\mathcal{S}_{\theta_k}\varPsi_k)\|_{H^{s+1}(\Omega_{T})}\right),\\
 &\|\tilde{e}_k''\|_{H^{s}(\Sigma_T)}\lesssim
 \boldsymbol{\epsilon}^2\varDelta_k\left(\theta_{k}^{s+4-2{\alpha }} + \theta_{k}^{5-2{\alpha }} \|\mathcal{S}_{\theta_k}\psi_k\|_{H^{s+2}(\Sigma_{T})}\right),
\end{align*}
 for all integers $k\in [0,{n}-1]$  and $s\in [3,\widetilde{\alpha}-2]$.
The estimate \eqref{1st.sub} follows by means of the inequalities \eqref{tri3}, $\alpha\geq 5$,  and $(s+2-\alpha)_+\leq (s+1-\alpha)_++1$.
\end{proof}

For the solvability of the problem \eqref{effective.NM},
we shall require that the smooth modified state $(V_{{n}+1/2},\psi_{{n}+1/2})$ vanishes in the past
and $(U^a+V_{{n}+1/2},\varphi^a+\psi_{{n}+1/2})$ satisfies the constraints \eqref{bas1a}--\eqref{bas1d}.
Since $(V_{{n}+1/2},\psi_{{n}+1/2})$ should vanish in the past and $(U^a,\varphi^a)$ satisfies \eqref{app2},
the state $(U^a+V_{{n}+1/2},\varphi^a+\psi_{{n}+1/2})$ will satisfy \eqref{bas1a}--\eqref{bas1b} and \eqref{bas1d} provided $T>0$ is sufficiently small.
Therefore, we may focus only on the constraints \eqref{bas1c}.
\begin{proposition} \label{pro:modified}
Let ${\alpha }\geq 6$.
Then there are some functions $V_{n+1/2}$ and $\psi_{n+1/2}$ vanishing in the past,
such that if $\boldsymbol{\epsilon},T>0$ are small enough and $\theta_0\geq 1$ is suitably large,
then $(U^a+V_{n+1/2},\varphi^a+\psi_{n+1/2})$ satisfies \eqref{bas1a}--\eqref{bas1d}, and
\begin{alignat}{3}
&\psi_{n+1/2}=\mathcal{S}_{\theta_n}\psi_{{n}},\quad
 \varPsi_{n+1/2}^{\pm}:= \chi(\pm x_1)\psi_{n+1/2}, \label{MS.id1}\\
& \|\mathcal{S}_{\theta_n}\varPsi_n-\varPsi_{n+1/2}\|_{H^s(\Omega_{T})}\lesssim \boldsymbol{\epsilon} \theta_n^{s-{\alpha }}
\qquad   \textrm{for } s=3,\ldots,\widetilde{\alpha}+{{3}}, \label{MS.est1} \\
& \|\mathcal{S}_{\theta_n}V_n-V_{n+1/2}\|_{H^s(\Omega_{T})}\lesssim \boldsymbol{\epsilon} \theta_n^{s+1-{\alpha }}
\quad \textrm{for } s=3,\ldots,\widetilde{\alpha}+{{2}},  \label{MS.est2}
\end{alignat}
where $\varPsi_{n+1/2}:=(\varPsi_{n+1/2}^+,\varPsi_{n+1/2}^-)^{\top}$.
\end{proposition}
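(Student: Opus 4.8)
The strategy is to keep the front essentially unchanged and to repair only the bulk unknown in a neighbourhood of $\Sigma$. I would set $\psi_{n+1/2}:=\mathcal{S}_{\theta_n}\psi_n$ and $\varPsi_{n+1/2}^{\pm}:=\chi(\pm x_1)\psi_{n+1/2}$, which is exactly \eqref{MS.id1}; the bound \eqref{MS.est1} then amounts to estimating the commutator of $\mathcal{S}_{\theta_n}$ with multiplication by $\chi(\pm x_1)$ applied to $\psi_n$, which follows from \eqref{smooth.p1b}--\eqref{smooth.p1c}, the relation $\varDelta_k\sim\theta_k^{-1}$, and hypothesis $(\mathbf H_{n-1})$. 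Since $(U^a,\varphi^a)$ already satisfies \eqref{app2}, and since both $\psi_{n+1/2}$ and the forthcoming correction of $V_n$ are small in low norms by Lemma~\ref{lem:triangle} and \eqref{MS.est2}, the requirements \eqref{bas1a}, \eqref{bas1b} and \eqref{bas1d} hold for $(U^a+V_{n+1/2},\varphi^a+\psi_{n+1/2})$ once $T,\boldsymbol\epsilon>0$ are small and $K$ is chosen large (depending on $M_0$); so the only genuine content is to enforce \eqref{bas1c}.

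The conditions in \eqref{bas1c} concern traces on $\Sigma$ only. Using $[v^a]=[H^a]=0$ and $\partial_t\varphi^a=v^{a+}\cdot N(\varphi^a)$ from \eqref{app1a}, the constraint \eqref{bas1c} for the modified state is equivalent to the seven scalar identities $[(V_{n+1/2})_v]=0$, $[(V_{n+1/2})_H]=0$ and $(V_{n+1/2})_v^+\cdot N(\varphi^a+\psi_{n+1/2})=\partial_t\psi_{n+1/2}-v^{a+}\cdot(0,\partial_2\psi_{n+1/2},\partial_3\psi_{n+1/2})^{\top}$ on $\Sigma_T$. I would therefore fix a target trace $\mathfrak h$ on $\Sigma_T$, obtained from $(\mathcal{S}_{\theta_n}V_n)|_{\Sigma}$ by altering only the seven relevant combinations so that these identities hold exactly, and then define $V_{n+1/2}:=\mathcal{S}_{\theta_n}V_n+\chi(x_1)\,\mathfrak R_T\big(\mathfrak h-(\mathcal{S}_{\theta_n}V_n)|_{\Sigma}\big)$, where $\mathfrak R_T$ is a lifting continuous from $H^{s-1/2}(\Sigma_T)$ to $H^{s}(\Omega_T)$ and vanishing in the past (\emph{cf.}~\eqref{R.frak}); the factor $\chi(x_1)$ confines the correction near $\Sigma$, so nothing is perturbed away from the boundary. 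Since $\psi_{n+1/2}$ and $(\mathcal{S}_{\theta_n}V_n)|_{\Sigma}$ vanish for $t<0$, so does $\mathfrak h-(\mathcal{S}_{\theta_n}V_n)|_{\Sigma}$, hence $(V_{n+1/2},\psi_{n+1/2})$ vanishes in the past and $(U^a+V_{n+1/2},\varphi^a+\psi_{n+1/2})$ satisfies \eqref{bas1c} by construction.

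It remains to bound $\mathfrak h-(\mathcal{S}_{\theta_n}V_n)|_{\Sigma}$ in $H^{s-1/2}(\Sigma_T)$, i.e. the defect of $\mathcal{S}_{\theta_n}V_n$ in \eqref{bas1c}. I would write each relevant jump of $\mathcal{S}_{\theta_n}V_n$ as $\mathcal{S}_{\theta_n}$ of the corresponding jump of $V_n$ plus a commutator term, estimate the commutator by \eqref{smooth.p1b} and \eqref{smooth.p2}, and note that $[(V_n)_v]$, the tangential components $[(V_n)_H]\cdot\tau_1$, $[(V_n)_H]\cdot\tau_2$ and the residual of the $\varphi$-equation are precisely the components of $\mathcal{B}(V_n,\psi_n)$; via the error decomposition \eqref{decom2}, the source relation \eqref{source}, Lemmas~\ref{lem:quad}--\ref{lem:1st} and hypothesis $(\mathbf H_{n-1})$, the latter is $O(\boldsymbol\epsilon\theta_n^{s-\alpha-1})$. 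The one component of $[(V_n)_H]$ not controlled by $\mathcal B$, namely $[(V_n)_H]\cdot N$, is handled by the fact that $[H]\cdot N$ obeys a transport-type relation along $\Sigma$ (as do $\mathrm{div}\,H$ and the entropy; \emph{cf.}~\S\ref{subsec:Wc} and \cite[Appendix~A]{T09ARMAMR2481071}), so its accumulated defect is again $O(\boldsymbol\epsilon\theta_n^{s-\alpha-1})$. Combining these bounds with the trace theorem, the continuity of $\mathfrak R_T$, the Moser-type inequalities \eqref{Moser1}--\eqref{Moser4} and $\varDelta_k\sim\theta_k^{-1}$ yields $\|V_{n+1/2}-\mathcal{S}_{\theta_n}V_n\|_{H^{s}(\Omega_T)}\lesssim\boldsymbol\epsilon\theta_n^{s+1-\alpha}$ for $s\in[3,\widetilde{\alpha}+2]$, which is \eqref{MS.est2}; together with \eqref{tri3} this also confirms \eqref{bas1d}, and here the hypothesis $\alpha\geq6$ is used.

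The main obstacle is this last step. On the one hand one must secure the estimate for the ``invisible'' jump $[(V_n)_H]\cdot N$, which the boundary operators $\mathbb B$ and $\mathbb B'_e$ do not see and which has to be recovered from the constraint-propagation structure of the linearized interior equations. On the other hand, the bookkeeping of the powers of $\theta_n$ must be arranged so that the correction is no larger than $O(\theta_n^{s+1-\alpha})$; this relies on the one-derivative gain in \eqref{smooth.p2} and on the precise form of the source decomposition \eqref{source}. Everything else is routine application of the Moser-type calculus inequalities and of the smoothing-operator estimates of Proposition~\ref{pro:smooth}.
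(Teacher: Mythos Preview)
Your plan is on the right track and matches the paper's overall architecture: set $\psi_{n+1/2}=\mathcal S_{\theta_n}\psi_n$, leave $p$ and $S$ untouched, and correct only those components of $\mathcal S_{\theta_n}V_n$ whose boundary traces violate \eqref{bas1c}. The paper's explicit corrections (e.g.\ $v_{i,n+1/2}^{\pm}=(\mathcal S_{\theta_n}v_{i,n})^{\pm}\mp\tfrac12[\mathcal S_{\theta_n}v_n]|_{x_1=0}\chi(x_1)$) are just concrete instances of your lifting $\mathfrak R_T$, and you correctly single out the normal jump $[H_n]\cdot N$ as the component not seen by $\mathcal B$.

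Two points where your execution differs from the paper and should be adjusted. First, you propose to bound the boundary defect through $\mathcal B(V_n,\psi_n)$ via \eqref{decom2}, \eqref{source}, and Lemmas~\ref{lem:quad}--\ref{lem:1st}; but that decomposition also contains $\tilde e_{n-1}'''$, whose estimate (Lemma~\ref{lem:2nd}) is placed \emph{after} this proposition and relies on the modified state at the previous level. The paper avoids this detour entirely: for each relevant jump it writes, e.g., $\|[v_n]\|_{H^s}\le\|[v_{n-1}]\|_{H^s}+\|\delta v_{n-1}\|_{H^{s}}$, controls $[v_{n-1}]$ by point~(c) of $(\mathbf H_{n-1})$ (since $[v]$ is literally a component of $\mathcal B(V_{n-1},\psi_{n-1})$), and controls $\delta v_{n-1}$ by point~(a) together with the trace theorem. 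Then the jump-preserving property \eqref{smooth.p2}, applied at the fixed low level $j=4$, propagates this to $[\mathcal S_{\theta_n}v_n]$ at all $s\in[3,\widetilde\alpha+3]$. The same scheme handles $v_1$, where commutators of $\mathcal S_{\theta_n}$ with the coefficients of the kinematic condition cost one extra power of $\theta_n$, producing the exponent $s+1-\alpha$.

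Second, your claimed rate $O(\boldsymbol\epsilon\,\theta_n^{s-\alpha-1})$ for the normal magnetic jump is too optimistic. The transport-type argument along $\Sigma$ (the paper invokes \cite[Lemma~6]{MTT18MR3766987}) only yields $\|[H_{n-1}]\cdot\mathring N\|_{H^s}\lesssim\boldsymbol\epsilon\,\theta_n^{s-\alpha}$, one power worse than the tangential jumps; combined with \eqref{smooth.p2} at $j=4$ this gives $\|[\mathcal S_{\theta_n}H_n]\|_{H^s}\lesssim\boldsymbol\epsilon\,\theta_n^{s+1-\alpha}$. This is precisely why \eqref{MS.est2} carries $s+1-\alpha$ rather than $s-\alpha$, and it is at this step (and only here) that the hypothesis $\alpha\ge 6$ is actually used.
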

\begin{proof}
We divide the proof into four steps.

\vspace*{2mm}
\noindent {\it Step 1}.\quad
Let us define $\psi_{n+1/2}$ and $\varPsi_{n+1/2}^{\pm}$ by \eqref{MS.id1}.
When $3\leq s\leq \widetilde{\alpha}$, we obtain from the inequality \eqref{tri2} that
\begin{align*}
 &\|\mathcal{S}_{\theta_n}\varPsi_n-\varPsi_{n+1/2}\|_{H^s(\Omega_{T})}\\
&\quad \lesssim \|(\mathcal{S}_{\theta_n}-I)\varPsi_n\|_{H^s(\Omega_{T})}
+   \|\chi(\pm x_1)(I-\mathcal{S}_{\theta_n})\psi_n\|_{H^s(\Omega_{T})}\\
&\quad \lesssim \|(I-\mathcal{S}_{\theta_n})\varPsi_n\|_{H^s(\Omega_{T})} +  \|(I-\mathcal{S}_{\theta_n})\psi_n\|_{H^s(\Sigma_{T})}
\lesssim \boldsymbol{\epsilon} \theta_n^{s-{\alpha }}.
\end{align*}
Then the estimate \eqref{MS.est1} follows by using \eqref{tri3} for $\widetilde{\alpha}< s\leq \widetilde{\alpha}+3$.

\vspace*{2mm}
\noindent {\it Step 2}.\quad
For $i=2,3$, we define
\begin{align*}
v_{i,n+1/2}^{\pm}:=(\mathcal{S}_{\theta_n}v_{i,n})^{\pm} \mp \frac{1}{2}
[\mathcal{S}_{\theta_n}v_{n}]\big|_{x_1=0}\chi(x_1).
\end{align*}
Using \eqref{smooth.p2} gives
\begin{align}
	\| [\mathcal{S}_{\theta_n}v_{n}]\|_{H^s(\Sigma_{T})}
	\lesssim
\theta_n^{s-3} \| [v_{n}]\|_{H^{4}(\Sigma_{T})}   \quad \textrm{ if } 3\leq s\leq \widetilde{\alpha}+3.
	\label{MS:p0}
\end{align}
It follows from hypothesis ($\mathbf{H}_{n-1}$) that  for all integers $s\in [4,{\alpha}]$,
\begin{align*}
\| [v_{n}]\|_{H^{s}(\Sigma_{T})}
&\lesssim \| [v_{n-1}]\|_{H^{s}(\Sigma_{T})}
+\| \delta v_{n-1}\|_{H^{s}(\Sigma_{T})} \\
&\lesssim \| \mathcal{B}(V_{n-1},\psi_{n-1})\|_{H^{s}(\Sigma_{T})}
+\| \delta V_{n-1}\|_{H^{s+1}(\Omega_{T})}
 \lesssim \boldsymbol{\epsilon} \theta_n^{s-{\alpha }-1}.
\end{align*}
Here we recall the definition of the boundary operator $\mathcal{B}$ from \eqref{P.new} and \eqref{MCD0b}.
Plugging the last inequality to \eqref{MS:p0} implies
\begin{align}
\label{MS:p1}
  \| [\mathcal{S}_{\theta_n}v_{n}]\|_{H^s(\Sigma_{T})}
 \lesssim \boldsymbol{\epsilon} \theta_n^{s-{\alpha }}
\quad \textrm{for } s=3,\ldots, \widetilde{\alpha}+3.
\end{align}
Hence we infer
\begin{align}
\label{MS:p2}
 \|v_{i,n+1/2}-\mathcal{S}_{\theta_n}v_{i,n}\|_{H^s(\Omega_{T})}
 \lesssim \boldsymbol{\epsilon} \theta_n^{s-{\alpha }}
\quad \textrm{for }i=2,3  \textrm{ and } s=3,\ldots, \widetilde{\alpha}+3.
\end{align}

\vspace*{2mm}
\noindent {\it Step 3}.\quad
Let us set
\begin{gather*}
 v_{1,n+1/2}^{\pm}:=
 (\mathcal{S}_{\theta_{{n}}}v_{1,n} )^{\pm}+
 \chi(x_1)\left(\hat{w}_{n}-   (\mathcal{S}_{\theta_{{n}}}v_{1,n} )^{\pm}|_{x_1=0} \right),
\end{gather*}
where $\hat{w}_{n}$ is defined by
\begin{gather*}
 \hat{w}_{n}:=\p_t \psi_{n+1/2} +
 \sum_{i=2,3}  \Big( \big(v_i^{a+}+v_{i,n+1/2}^+\big) \p_i \psi_{n+1/2} +v_{i,n+1/2}^+
 \p_i \varphi^a \Big)\Big|_{x_1=0}.
\end{gather*}
It follows from \eqref{app1a} that $(v^a+v_{n+1/2},\varphi^a+\psi_{n+1/2})$ satisfies the first and third constraints in \eqref{bas1c}.
By virtue of \eqref{app1a} and \eqref{MS.id1}, we have
\begin{align*}
&\hat{w}_{n}-   (\mathcal{S}_{\theta_{{n}}}v_{1,n} )^{+}  |_{x_1=0}\\
&= \underbrace{\mathcal{B}(\mathcal{S}_{\theta_{{n}}}V_n, \mathcal{S}_{\theta_{{n}}}\psi_n )_7 |_{x_1=0}}_{\mathcal{T}_1}+
\sum_{i=2,3}\underbrace{\p_i(\varphi^a+\psi_{n+1/2})\big(v_{i,n+1/2}^+-(\mathcal{S}_{\theta_{{n}}}v_{i,n} )^{+}\big) |_{x_1=0}}_{\mathcal{T}_{2i}}.
\end{align*}
Utilizing the Moser-type calculus inequality \eqref{Moser2}, the trace theorem, \eqref{MS:p2}, and \eqref{tri3} yields
\begin{align*}
\|\mathcal{T}_{2i}\|_{H^s(\Sigma_{{T}})}\lesssim
\boldsymbol{\epsilon} \theta_n^{s+1-{\alpha }}
\quad \textrm{for }i=2,3  \textrm{ and } s=3,\ldots, \widetilde{\alpha}+2.
\end{align*}
To estimate $\mathcal{T}_1$ in $H^s(\Sigma_{{T}})$, we decompose
\begin{align*}
\mathcal{T}_1 =
\;&\mathcal{T}_{1a}+\underbrace{\mathcal{S}_{\theta_n}\big( \mathcal{B}\left(V_n ,\psi_n\right)_7  |_{x_1=0}
 -\mathcal{B}\left(V_{n-1},\psi_{n-1}\right)_7  |_{x_1=0}\big)}_{\mathcal{T}_{1b}}\\
&\!+\underbrace{\mathcal{B}\big( \mathcal{S}_{\theta_{{n}}}V_{n}  ,\mathcal{S}_{\theta_{{n}}}\psi_n\big) _7|_{x_1=0}
 -\mathcal{S}_{\theta_n}\big(\mathcal{B}\left(V_n ,\psi_n\right)_7  |_{x_1=0}\big)}_{\mathcal{T}_{1c}},
\end{align*}
where $\mathcal{T}_{1a}:=\mathcal{S}_{\theta_n}\big(\mathcal{B}\big(V_{n-1} ,\psi_{n-1}\big)_7 |_{x_1=0}\big)$.
It follows from \eqref{smooth.p1a} and point (c) of hypothesis $(\mathbf{H}_{n-1})$ that
\begin{align*}
\|\mathcal{T}_{1a}\|_{H^s(\Sigma_{{T}})}
\lesssim \theta_n^{s-3}\|\mathcal{B}\big(V_{n-1} ,\psi_{n-1}\big)\|_{H^4(\Sigma_{{T}})}
\lesssim \boldsymbol{\epsilon} \theta_n^{s-{\alpha }}
\quad \textrm{for } s=3,\ldots, \widetilde{\alpha}+2.
\end{align*}
In view of the identity
\begin{align*}
\mathcal{T}_{1b}=
\;&\mathcal{S}_{\theta_{{n}}} \big( \p_t \delta\psi_{n-1} \big)
-\mathcal{S}_{\theta_{{n}}} \big(\delta v_{1,n-1}^+ |_{x_1=0} \big)
\\
&\! +\sum_{i=2,3}\mathcal{S}_{\theta_{{n}}} \Big(
\big(v_{i}^{a+}+v_{i,n}^+   \big) |_{x_1=0}\p_i \delta\psi_{n-1}
+\delta v_{i,n-1}^+  |_{x_1=0}\p_i (\varphi^a +\psi_{n-1}  )\Big),
\end{align*}
we use Proposition \ref{pro:smooth}, hypothesis $(\mathbf{H}_{n-1})$, the trace and embedding theorems, and the Moser-type calculus inequality \eqref{Moser2} to deduce
\begin{align*}
 \|\mathcal{T}_{1b}\|_{H^s(\Sigma_{{T}})}
 \lesssim \boldsymbol{\epsilon} \theta_n^{s-{\alpha }}
 \quad \textrm{for } s=3,\ldots, \widetilde{\alpha}+2.
\end{align*}
For estimating the term $\mathcal{T}_{1c}$, we shall utilize the further decomposition
\begin{align}
\nonumber
 \mathcal{T}_{1c}=\,&
 \Big\{ \p_t(\mathcal{S}_{\theta_n}\psi_n)-\mathcal{S}_{\theta_n}\p_t\psi_n\Big\}
 -\Big\{  (\mathcal{S}_{\theta_n} v_{1,n})^+|_{x_1=0}
 -\mathcal{S}_{\theta_n} \big(v_{1,n}^+|_{x_1=0}\big)\Big\}\\[1.5mm]
\nonumber
 &\!+\sum_{i=2,3} \Big\{ \big(v_{i}^{a+}+(\mathcal{S}_{\theta_n} v_{i,n})^+  \big)|_{x_1=0}
 \p_i \mathcal{S}_{\theta_n} \psi_n
 -\mathcal{S}_{\theta_n}  \big( ( v_{i}^{a+} +v_{i,n}^+ )|_{x_1=0}\p_i  \psi_n\big)\Big\}\\
 &\!+\sum_{i=2,3} \Big\{ (\mathcal{S}_{\theta_n} v_{i,n})^+|_{x_1=0}\p_i   \varphi^a
 -\mathcal{S}_{\theta_n}  \big(  v_{i,n}^+  |_{x_1=0}\p_i  \varphi^a\big) \Big\}.
 \label{T.1c}
\end{align}
Let us make the estimate of the commutator
\begin{align*}
\mathcal{T}_{3}:= \underbrace{\big(v_{3}^{a+}+(\mathcal{S}_{\theta_n} v_{3,n})^+  \big)  |_{x_1=0}
\p_{3} \mathcal{S}_{\theta_n} \psi_n}_{\mathcal{T}_{3a}}
\underbrace{-\mathcal{S}_{\theta_n}  \big( ( v_{3}^{a+} +v_{3,n}^+ ) |_{x_1=0}\p_{3}  \psi_n\big)}_{\mathcal{T}_{3b}}.
\end{align*}
For $\alpha+1\leq s\leq \widetilde{\alpha}+2$,
recalling from \eqref{small} that $\widetilde{U}^a\in H^{\widetilde{\alpha}+3}(\Omega_{T})$,
we utilize the Moser-type calculus inequality \eqref{Moser2}, the trace and embedding theorems, Proposition \ref{pro:smooth}, and Lemma \ref{lem:triangle} to derive
\begin{align*}
\|\mathcal{T}_{3a}\|_{H^s(\Sigma_{{T}})}
\lesssim
\,&\boldsymbol{\epsilon}\|\tilde{v}_{3}^{a}+\mathcal{S}_{\theta_n} v_{3,n}\|_{H^{s+1}(\Omega_{{T}})}
+\|\mathcal{S}_{\theta_n} \psi_n\|_{H^{s+1}(\Sigma_{{T}})}
\lesssim \boldsymbol{\epsilon} \theta_n^{s-{\alpha }+1},
\\
\|\mathcal{T}_{3b}\|_{H^s(\Sigma_{{T}})}\lesssim
\,& \theta_n^{s-{\alpha }}\| ( v_{3}^{a+} +v_{3,n}^+ ) |_{x_1=0}\p_{3}  \psi_n\|_{H^{\alpha}(\Sigma_{{T}})}\\
\lesssim
\,&\theta_n^{s-{\alpha }}\big(
\boldsymbol{\epsilon}\|  \tilde{v}^{a} +v_{n} \|_{H^{\alpha+1}(\Omega_{{T}})}
+\|  \psi_n\|_{H^{\alpha+1}(\Sigma_{{T}})}
\big)
\lesssim \boldsymbol{\epsilon} \theta_n^{s-{\alpha }+1}.
\end{align*}
For $3\leq s \leq \alpha$,
thanks to the triangle inequality
\begin{align*}
\|\mathcal{T}_{3}\|_{H^s(\Sigma_{{T}})}
\leq
\,&\big\|\big( (\mathcal{S}_{\theta_n} v_{3,n})^+ -v_{3,n}^+ \big)  |_{x_1=0}
\p_{3} \mathcal{S}_{\theta_n} \psi_n \big\|_{H^s(\Sigma_{{T}})} \\
&\!+\big\|( v_{3}^{a+} +v_{3,n}^+ ) |_{x_1=0}\p_{3}   (\mathcal{S}_{\theta_n}-I) \psi_n\big\|_{H^s(\Sigma_{{T}})} \\
&\!+\big\|(I-\mathcal{S}_{\theta_n}) \big(( v_{3}^{a+} +v_{3,n}^+ ) |_{x_1=0}\p_3\psi_n\big)\big\|_{H^s(\Sigma_{{T}})},
\end{align*}
we can employ the Moser-type calculus inequality \eqref{Moser2} to infer
\begin{align*}
 \|\mathcal{T}_{3}\|_{H^s(\Sigma_{{T}})}
 \lesssim \boldsymbol{\epsilon} \theta_n^{s-{\alpha }+1}
\quad \textrm{for } s=3,\ldots, \alpha.
\end{align*}
The other commutators in the decomposition \eqref{T.1c} can be handled by following the same approach, so we can omit the details and write down the estimate
\begin{align*}
 \|\mathcal{T}_{1c}\|_{H^s(\Sigma_{{T}})}
 \lesssim \boldsymbol{\epsilon} \theta_n^{s-{\alpha }+1}
 \quad \textrm{for } s=3,\ldots, \widetilde{\alpha}+2.
\end{align*}
Combining the above estimates of $\mathcal{T}_{1a}$, $\mathcal{T}_{1b}$, and $\mathcal{T}_{1c}$ with \eqref{MS:p1} gives
\begin{align*}
 \|v_{1,n+1/2}-\mathcal{S}_{\theta_n}v_{1,n}\|_{H^s(\Omega_{{T}})}
 &\lesssim
  \|\hat{w}_{n}-   (\mathcal{S}_{\theta_{{n}}}v_{1,n} )^{+}   \|_{H^s(\Sigma_{{T}})}
+  \| [\mathcal{S}_{\theta_n}v_{n}]\|_{H^s(\Sigma_{T})}
 \\
 &\lesssim \boldsymbol{\epsilon} \theta_n^{s-{\alpha }+1}
 \qquad \textrm{for } s=3,\ldots, \widetilde{\alpha}+2.
\end{align*}

\vspace*{2mm}
\noindent {\it Step 4}.\quad
We define
\begin{align*}
H_{n+1/2}^{\pm}:=(\mathcal{S}_{\theta_n}H_{n})^{\pm} \mp \frac{1}{2}
 [\mathcal{S}_{\theta_n}H_{n}] \big|_{x_1=0}  \chi(x_1),
\end{align*}
so that $H^a+H_{n+1/2}$ satisfies the second constraint in \eqref{bas1c}, {\it i.e.},
$[H^a+H_{n+1/2}]=[H_{n+1/2}]=0$ on $\Sigma_{{T}}$.
In light of \eqref{smooth.p2}, we obtain
\begin{align}
\label{MS:p3}
 \| [\mathcal{S}_{\theta_n}H_{n}]\|_{H^s(\Sigma_{T})}
 \lesssim
  \theta_n^{s-3} \| [H_{n}]\|_{H^{4}(\Sigma_{T})}  \quad \textrm{ if } 3\leq s \leq \widetilde{\alpha}+2.
\end{align}
As in the proof of (169) in \cite[Lemma 6]{MTT18MR3766987}, when $\alpha\geq 6$, we can prove that
if $\boldsymbol{\epsilon},T>0$ are small enough and $\theta_0\geq 1$ is sufficiently large, then
\begin{align*}
\big\| \big(1,-\p_2(\varphi^a+\psi_{n-1}), -\p_3(\varphi^a+\psi_{n-1}) \big)[H_{n-1}]\big\|_{H^{s}(\Sigma_{T})}
\lesssim \boldsymbol{\epsilon}\theta_{n}^{s-\alpha}
\end{align*}
for $s=3,\ldots,\alpha-1$.
From point (c) of hypothesis ($\mathbf{H}_{n-1}$), we derive
\begin{align*}
\left\|
\begin{pmatrix}
\p_2(\varphi^a+\psi_{n-1}) &1 & 0\\
\p_3(\varphi^a+\psi_{n-1})  &0 &1
\end{pmatrix}
[H_{n-1}]
 \right\|_{H^{s}(\Sigma_{T})}
\lesssim \boldsymbol{\epsilon}\theta_{n}^{s-\alpha-1}
\quad \textrm{for }s=4,\ldots, \alpha.
\end{align*}
Combine the last two estimates with point (a) of hypothesis ($\mathbf{H}_{n-1}$) to get
\begin{align}
\label{MS:p4}
\big\| [H_{n}]\big\|_{H^{s}(\Sigma_{T})}
\lesssim  \big\| [H_{n-1}]\big\|_{H^{s}(\Sigma_{T})}
+\big\| \delta H_{n-1}\big\|_{H^{s}(\Sigma_{T})}
\lesssim \boldsymbol{\epsilon}\theta_{n}^{s-\alpha}
\end{align}
for $s=4,\ldots, \alpha-1.$
Then it follows from \eqref{MS:p3}--\eqref{MS:p4} that
\begin{align*}
 \|H_{n+1/2}-\mathcal{S}_{\theta_n}H_{n}\|_{H^s(\Omega_{{T}})}
\lesssim \| [\mathcal{S}_{\theta_n}H_{n}]\|_{H^s(\Sigma_{T})}
 \lesssim \boldsymbol{\epsilon} \theta_n^{s-{\alpha }+1}
\end{align*}
for $s=3,\ldots, \widetilde{\alpha}+2.$
Setting $p_{n+1/2}:=\mathcal{S}_{\theta_n} p_n$ (pressure) and $S_{n+1/2}:=\mathcal{S}_{\theta_n} S_n$ (entropy) completes the proof of the proposition.
\end{proof}

With Proposition \ref{pro:modified} in hand, we can obtain the following estimate of  the second substitution error terms $e_{k}'''$ and $\tilde{e}_{k}'''$ defined in \eqref{decom1}--\eqref{decom2}.

\begin{lemma} \label{lem:2nd}
 Let ${\alpha }\geq 6$.
 If $\boldsymbol{\epsilon},T>0$ are small enough and $\theta_0\geq 1$ is sufficiently large, then
 \begin{align}\label{2st.sub}
  \|  \tilde{e}_k'''\|_{H^s(\Sigma_{T})}\lesssim \boldsymbol{\epsilon}^2 \theta_k^{\varsigma_2(s)-1}\varDelta_k,
\quad  \| e_k'''\|_{H^s(\Omega_{T})}
  \lesssim \boldsymbol{\epsilon}^2 \theta_k^{\varsigma_3(s)-1}\varDelta_k,
 \end{align}
for all integers $k\in [0,{n}-1]$  and $s\in [3,\widetilde{\alpha}-2]$,
where
$\varsigma_2(s)$ is defined by \eqref{varsigma2.def} and
$$\varsigma_3(s):=\max\{s+6-2\alpha,\,(s+1-\alpha)_++9-2\alpha\}.$$
\end{lemma}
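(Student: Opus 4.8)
### Proof Plan for Lemma \ref{lem:2nd}

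The plan is to follow the same scheme used in Lemmas \ref{lem:quad} and \ref{lem:1st}, but now with the substitution replacing the smoothed state $(U^a+\mathcal{S}_{\theta_k}V_k,\varphi^a+\mathcal{S}_{\theta_k}\psi_k)$ by the modified state $(U^a+V_{k+1/2},\varphi^a+\psi_{k+1/2})$. First I would recall from \eqref{decom1}--\eqref{decom2} that
\begin{align*}
 e_k'''&=\mathbb{L}'(U^a+\mathcal{S}_{\theta_k}V_k,\varPhi^a+\mathcal{S}_{\theta_k}\varPsi_k)(\delta V_k,\delta\varPsi_k)
 -\mathbb{L}'(U^a+V_{k+1/2},\varPhi^a+\varPsi_{k+1/2})(\delta V_k,\delta\varPsi_k),\\
 \tilde{e}_k'''&=\mathbb{B}'(U^a+\mathcal{S}_{\theta_k}V_k,\varphi^a+\mathcal{S}_{\theta_k}\psi_k)(\delta V_k,\delta\psi_k)
 -\mathbb{B}'(U^a+V_{k+1/2},\varphi^a+\psi_{k+1/2})(\delta V_k,\delta\psi_k),
\end{align*}
and then write each of these as an integral over $\tau\in[0,1]$ of $\mathbb{L}''$ (resp.\ $\mathbb{B}''$) evaluated at the interpolated state
$(U^a+\mathcal{S}_{\theta_k}V_k+\tau(V_{k+1/2}-\mathcal{S}_{\theta_k}V_k),\ \varPhi^a+\mathcal{S}_{\theta_k}\varPsi_k+\tau(\varPsi_{k+1/2}-\mathcal{S}_{\theta_k}\varPsi_k))$, with the second slot of the bilinear operator being the difference $(V_{k+1/2}-\mathcal{S}_{\theta_k}V_k,\ \varPsi_{k+1/2}-\mathcal{S}_{\theta_k}\varPsi_k)$ (and likewise on the boundary with $\psi$). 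This is exactly the structure to which Proposition \ref{pro:tame2} applies.

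Next I would check that the interpolated state stays in the $H^4$-ball of radius $\widetilde K$ required by Proposition \ref{pro:tame2}: this follows from \eqref{small}, hypothesis $(\mathbf{H}_{n-1})$, Lemma \ref{lem:triangle}, and the estimates \eqref{MS.est1}--\eqref{MS.est2} of Proposition \ref{pro:modified}, all of which bound the relevant $H^4$ (resp.\ $H^3$) norms by a constant once $\boldsymbol{\epsilon},T$ are small and $\theta_0$ large. Then I apply the two inequalities of Proposition \ref{pro:tame2}. For $\tilde e_k'''$, the ``first slot'' is $(\delta V_k,\delta\psi_k)$ whose norms are controlled by hypothesis $(\mathbf{H}_{k-1})$(a) as $\boldsymbol\epsilon\theta_k^{s-\alpha-1}\varDelta_k$, while the ``second slot'' difference $(\,\cdot\,,\psi_{k+1/2}-\mathcal{S}_{\theta_k}\psi_k)$ and the last ``modified-state'' factor $\tilde\varphi=\varphi^a+\cdots$ are bounded using \eqref{MS.est1} together with \eqref{small}, \eqref{tri3}; collecting the exponents and using $\alpha\ge 6$, $(s+2-\alpha)_+\le(s+1-\alpha)_++1$ reproduces the bound $\boldsymbol\epsilon^2\theta_k^{\varsigma_2(s)-1}\varDelta_k$ — note it is the same $\varsigma_2$ as in Lemma \ref{lem:1st} because the $\psi$-difference in \eqref{MS.est1} gains $\theta_n^{s-\alpha}$, the same gain as $(I-\mathcal{S}_{\theta_k})\psi_k$. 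For $e_k'''$ the analysis is parallel but now the relevant difference is $V_{k+1/2}-\mathcal{S}_{\theta_k}V_k$, which by \eqref{MS.est2} only gains $\theta_n^{s+1-\alpha}$ (one power worse than the $\psi$-component), and this single extra power of $\theta_k$ is exactly what turns $\varsigma_2$ into $\varsigma_3$; again the $H^4$-factor of $\widetilde{V},\widetilde\varPsi$ in Proposition \ref{pro:tame2} combined with \eqref{tri3} supplies the terms with the $(s+1-\alpha)_+$ exponent.

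The main obstacle I anticipate is bookkeeping the exponents carefully in the interior estimate for $e_k'''$: one must track simultaneously (i) the two powers $\varDelta_k\sim\theta_k^{-1}$ coming from $\delta V_k$ and from the $\theta$-derivative structure, (ii) the gain $\theta_k^{s+1-\alpha}$ from \eqref{MS.est2}, and (iii) the mixed terms where the high Sobolev index falls on the modified state itself (bounded by $\|V_{k+1/2}\|_{H^{s+1}}\lesssim\|\mathcal{S}_{\theta_k}V_k\|_{H^{s+1}}+\boldsymbol\epsilon\theta_k^{s+2-\alpha}\lesssim\boldsymbol\epsilon\theta_k^{(s+1-\alpha)_+}+\boldsymbol\epsilon\theta_k^{s+2-\alpha}$), and then verify that the worst of all these contributions is precisely $\theta_k^{\varsigma_3(s)-1}\varDelta_k$. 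Since the index range $s\in[3,\widetilde\alpha-2]$ and $\alpha\ge 6$ guarantee that $\psi_{k+1/2}$ and $V_{k+1/2}$ lie in the Sobolev spaces demanded by Proposition \ref{pro:tame2} (this is why Proposition \ref{pro:modified} was stated up to $\widetilde\alpha+3$ and $\widetilde\alpha+2$), no regularity is lost, and the estimates \eqref{2st.sub} follow. I would then state that the detailed computations are entirely analogous to those in the proofs of Lemmas \ref{lem:quad}--\ref{lem:1st} and omit them.
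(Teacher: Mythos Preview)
Your treatment of $e_k'''$ is exactly what the paper does: rewrite it as an integral of $\mathbb{L}''$ at the interpolated state, apply Proposition~\ref{pro:tame2}, and feed in \eqref{MS.est1}--\eqref{MS.est2} and \eqref{tri3}. The bookkeeping you outline produces $\theta_k^{s+5-2\alpha}+\theta_k^{8-2\alpha}\|(\mathcal{S}_{\theta_k}V_k,\mathcal{S}_{\theta_k}\varPsi_k)\|_{H^{s+1}}$, which is $\varsigma_3(s)-1$.

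For $\tilde{e}_k'''$ your approach works, but the paper takes a shorter, more explicit route that you should be aware of. The key observation you do not exploit is the identity $\psi_{k+1/2}=\mathcal{S}_{\theta_k}\psi_k$ from \eqref{MS.id1}: the $\psi$-component of the second slot in your $\mathbb{B}''$ integral is \emph{identically zero}, not merely bounded by $\theta_k^{s-\alpha}$ as you suggest via \eqref{MS.est1} (that estimate concerns the interior lift $\varPsi$, not the boundary $\psi$). With $\tilde\psi=0$ in the explicit formula \eqref{B''.form}, the first (surface-tension) component of $\mathbb{B}''$ vanishes outright, and one is left with the elementary expression
\[
\tilde{e}_k'''=\Big(0,\;[\mathcal{S}_{\theta_k}H_{1,k}-H_{1,k+1/2}]\p_2\delta\psi_k,\;[\mathcal{S}_{\theta_k}H_{1,k}-H_{1,k+1/2}]\p_3\delta\psi_k,\;\textstyle\sum_{i=2,3}((\mathcal{S}_{\theta_k}v_{i,k})^+-v_{i,k+1/2}^+)\p_i\delta\psi_k\Big)^{\!\top},
\]
which the paper estimates directly by the Moser inequality \eqref{Moser2}, the trace theorem, \eqref{MS.est2}, and point~(a) of $(\mathbf{H}_{n-1})$, obtaining the clean bound $\boldsymbol{\epsilon}^2\theta_k^{s+4-2\alpha}\varDelta_k$. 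Your general route through Proposition~\ref{pro:tame2} reaches the same exponent (once you note $\psi_2=0$ all the $\|\psi_2\|$-terms drop), so the argument is correct; the paper's computation is simply more transparent and avoids invoking the full tame bound for $\mathbb{B}''$.
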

\begin{proof}
In view of \eqref{B''.form} and \eqref{MS.id1}, we can rewrite the term $\tilde{e}_k'''$ as
\begin{align*}
\tilde{e}_k'''
=\,&
\begin{pmatrix}
0\\[1mm]
[\mathcal{S}_{\theta_k}H_{1,k}-H_{1,k+1/2} ]\p_2\delta \psi_k\\[1mm]
[\mathcal{S}_{\theta_k}H_{1,k}-H_{1,k+1/2} ]\p_3\delta \psi_k\\[1mm]
\sum_{i=2,3} \big((\mathcal{S}_{\theta_k}v_{i,k})^+-v_{i,k+1/2}^+\big)\p_i\delta\psi_k
\end{pmatrix}.
\end{align*}
Then we utilize the Moser-type calculus inequality \eqref{Moser2}, the embedding and trace theorems, the estimate \eqref{MS.est2}, and point (a) of hypothesis ($\mathbf{H}_{n-1}$) to discover
\begin{align*}
\|  \tilde{e}_k'''\|_{H^s(\Sigma_{T})}
\lesssim\,& \| \mathcal{S}_{\theta_k} V_k-V_{k+1/2}\|_{H^{s+1}(\Omega_{T})}
\|\delta \psi_k\|_{H^3(\Sigma_{{T}})}\\
&\! +
 \| \mathcal{S}_{\theta_k} V_k-V_{k+1/2}\|_{H^{3}(\Omega_{T})}
\|\delta \psi_k\|_{H^{s+1}(\Sigma_{{T}})}
\lesssim \boldsymbol{\epsilon}^2 \theta_k^{s+4-2\alpha}\varDelta_k
\end{align*}
for $s=3,\ldots,\tilde{\alpha}-2$.
Applying  Propositions \ref{pro:tame2} and \ref{pro:modified} to the identity
\begin{align}
 \nonumber {e}_k'''=\int_{0}^{1}\,&
 \mathbb{L}''\Big(U^a+\tau(\mathcal{S}_{\theta_k} V_k-V_{k+1/2})+V_{k+1/2},\,
 \varPhi^a+\tau(\mathcal{S}_{\theta_k}\varPsi_k-\varPsi_{k+1/2})
 \\
 & +\varPsi_{k+1/2} \Big)
 \Big((\delta V_k ,\delta \varPsi_k),\, (\mathcal{S}_{\theta_k} V_k-V_{k+1/2},\mathcal{S}_{\theta_k}\varPsi_k-\varPsi_{k+1/2})\Big) \d\tau,
 \nonumber
\end{align}
we have
\begin{align*}
\| e_k'''\|_{H^s(\Omega_{T})}
\lesssim   \boldsymbol{\epsilon}^2 \varDelta_k
\big(\theta_k^{s+5-2\alpha} + \theta_k^{8-2\alpha}\| ( \mathcal{S}_{\theta_k} V_k,\mathcal{S}_{\theta_k}\varPsi_k)\|_{H^{s+1}(\Omega_{T})} \big),
\end{align*}
which combined with \eqref{tri3} implies the second estimate in \eqref{2st.sub} for $e_k'''$.
\end{proof}

The next lemma concerns the last error term $e_{{n}}^{(4)}=(e_{{n}}^{(4)+},e_{{n}}^{(4)-})^{\top}$
with $e_{{n}}^{(4)\pm}$ defined by \eqref{error.D}. Here we omit the detailed proof, which is similar to that for \cite[Lemma 4.12]{T09CPAMMR2560044} (see also \cite[Lemma 4.10]{TW21MR4201624}).
\begin{lemma}  \label{lem:last}
 Let ${\alpha }\geq 6$ and $\widetilde{\alpha}\geq {\alpha }+2$.
 If $\boldsymbol{\epsilon},T>0$ are small enough and $\theta_0\geq 1$ is sufficiently large, then
 \begin{align}
 \nonumber  
  \|e_{{n}}^{(4)}\|_{H^s(\Omega_{T})}\lesssim \boldsymbol{\epsilon}^2 \theta_k^{\varsigma_4 (s)-1}\varDelta_k,
 \end{align}
for all integers $k\in [0,{n}-1]$  and $s\in [3,\widetilde{\alpha}-2]$,
 where
 \begin{align} \label{varsigma4.def}
  \varsigma_4(s):=\max\{s+6-2\alpha,\,(s-\alpha)_++10-2\alpha\}.
 \end{align}
\end{lemma}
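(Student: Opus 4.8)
\textbf{Proof plan for Lemma \ref{lem:last}.}
The plan is to treat $e_n^{(4)\pm}$ exactly as the analogous last error term is handled in \cite[Lemma 4.12]{T09CPAMMR2560044} and \cite[Lemma 4.10]{TW21MR4201624}, adapted to the present operators. Recall from \eqref{error.D} that
\[
e_{n}^{(4)\pm}=\frac{\delta\varPsi_{n}^{\pm}}{\p_1(\varPhi^{a\pm}+\varPsi_{n+1/2}^{\pm})}\,\p_1\mathbb{L}_{\pm}(U^{a\pm}+V_{n+1/2}^{\pm},\varPhi^{a\pm}+\varPsi_{n+1/2}^{\pm}),
\]
so the first step is to rewrite the factor $\p_1\mathbb{L}_{\pm}(U^{a\pm}+V_{n+1/2}^{\pm},\varPhi^{a\pm}+\varPsi_{n+1/2}^{\pm})$ in a form amenable to the Moser-type calculus. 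Here I would use that $\mathbb{L}_{\pm}(U^{a\pm},\varPhi^{a\pm})=-f^{a\pm}$ together with the iteration identity \eqref{source}: summing the decompositions \eqref{decom1} over $k=0,\dots,n-1$ and using \eqref{E.E.tilde}--\eqref{source} shows that $\mathcal{L}(V_n,\varPsi_n)-f^a$ equals $\mathcal{S}_{\theta_n}f^a-f^a-E_n$ up to the smoothed accumulated error, hence $\mathbb{L}_{\pm}(U^{a\pm}+V_{n+1/2}^{\pm},\varPhi^{a\pm}+\varPsi_{n+1/2}^{\pm})$ can be expressed via $(\mathcal{S}_{\theta_n}f^a-f^a)$, $E_n$, the differences $V_{n+1/2}-V_n$, $\varPsi_{n+1/2}-\varPsi_n$ (controlled by Proposition \ref{pro:modified} and Lemma \ref{lem:triangle}), and already-estimated error terms. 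The key point is that this factor is $O(\boldsymbol{\epsilon})$ in low norms and grows like a fixed power of $\theta_n$ in high norms.

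The second step is to apply the product estimate \eqref{Moser2} to the product $\delta\varPsi_n^{\pm}\cdot\big[\p_1(\varPhi^{a\pm}+\varPsi_{n+1/2}^{\pm})\big]^{-1}\cdot\p_1\mathbb{L}_{\pm}(\cdots)$, distributing $H^s$ regularity between the three factors, and to invoke \eqref{Moser1} for the smooth reciprocal $[\p_1(\varPhi^{a\pm}+\varPsi_{n+1/2}^{\pm})]^{-1}$, which is well-defined since \eqref{app2c} and Proposition \ref{pro:modified} give $|\p_1(\varPhi^{a\pm}+\varPsi_{n+1/2}^{\pm})|\geq 1/2$ for $T,\boldsymbol{\epsilon}$ small. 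Using point (a) of $(\mathbf{H}_{n-1})$ for $\|\delta\varPsi_n\|_{H^s}$ (which gives a factor $\boldsymbol{\epsilon}\theta_n^{s-\alpha-1}\varDelta_n$, with an extra factor $\boldsymbol{\epsilon}$ coming from the size of the $\mathbb{L}$-factor), together with the smoothing estimates \eqref{smooth.p1}--\eqref{smooth.p2} for the high-norm contributions of $\mathcal{S}_{\theta_n}f^a$, $E_n$, and $\widetilde{U}^a$ (the latter lying in $H^{\widetilde\alpha+3}$ by \eqref{small}), the various contributions combine to a bound of the form $\boldsymbol{\epsilon}^2\theta_n^{\varsigma_4(s)-1}\varDelta_n$ with $\varsigma_4$ as in \eqref{varsigma4.def}; the two competing exponents in the maximum defining $\varsigma_4(s)$ come respectively from the ``balanced'' term (all factors at intermediate regularity, giving $s+6-2\alpha$) and the term where the $\mathbb{L}$-factor carries the top regularity via $\widetilde{U}^a$ (giving $(s-\alpha)_++10-2\alpha$).

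The main obstacle is bookkeeping rather than conceptual: one must carefully track which factor absorbs the highest-order derivative in each term of the expansion of $\p_1\mathbb{L}_{\pm}$, verify that the resulting exponents of $\theta_n$ never exceed those allowed by $\varsigma_4(s)$, and confirm that the constraint $\widetilde\alpha\geq\alpha+2$ is exactly what is needed so that the intermediate norms appearing (up to $H^{s+1}$ with $s\leq\widetilde\alpha-2$, hence up to $H^{\widetilde\alpha-1}$) stay within the range $[3,\widetilde\alpha]$ where $(\mathbf{H}_{n-1})$ and Lemma \ref{lem:triangle} apply, and that $\widetilde{U}^a\in H^{\widetilde\alpha+3}$ covers the top-norm terms. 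Since all of this is a routine adaptation of \cite[Lemma 4.12]{T09CPAMMR2560044}, I would state the estimate and refer to that source for the detailed computation, as the excerpt already indicates.
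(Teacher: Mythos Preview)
Your approach is correct and matches the paper's: the paper omits the proof entirely and refers to \cite[Lemma 4.12]{T09CPAMMR2560044} and \cite[Lemma 4.10]{TW21MR4201624}, exactly as you propose. One minor point: the statement (despite the typo $e_n^{(4)}$) is for $e_k^{(4)}$ with $k\in[0,n-1]$, so rather than deriving the identity for $\mathcal{L}(V_n,\varPsi_n)-f^a$ by summing \eqref{decom1}, you can more directly use point (b) of $(\mathbf{H}_{n-1})$ to bound $\mathcal{L}(V_k,\varPsi_k)-f^a$ and then estimate the correction $\mathcal{L}(V_{k+1/2},\varPsi_{k+1/2})-\mathcal{L}(V_k,\varPsi_k)$ via Proposition \ref{pro:modified} and Lemma \ref{lem:triangle}.
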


As a direct corollary to Lemmas \ref{lem:quad}--\ref{lem:last},
we have the estimate for $e_k$ and $\tilde{e}_k$ defined by \eqref{e.e.tilde} as follows.
\begin{corollary}  \label{cor:sum1}
 Let ${\alpha }\geq 6$ and $\widetilde{\alpha}\geq {\alpha }+2$.
 If $\boldsymbol{\epsilon}, T>0$ are sufficiently small and $\theta_0\geq 1$ is suitably large, then
 \begin{align} \label{es.sum1a}
  \|e_k\|_{H^s(\Omega_{T})}
  \lesssim \boldsymbol{\epsilon}^2 \theta_k^{\varsigma_4(s)-1}\varDelta_k,
\quad  \|\tilde{e}_k\|_{H^{s}(\Sigma_T)}
  \lesssim \boldsymbol{\epsilon}^2 \theta_k^{\varsigma_2(s)-1}\varDelta_k,
 \end{align}
for all integers $k\in [0,{n}-1]$  and $s\in [3,\widetilde{\alpha}-2]$,
 where $\varsigma_2(s)$ and $\varsigma_4(s)$ are defined by \eqref{varsigma2.def} and \eqref{varsigma4.def}, respectively.
\end{corollary}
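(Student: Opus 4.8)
The plan is to read off \eqref{es.sum1a} directly from the triangle inequality applied to the decompositions \eqref{e.e.tilde}, combined with the four preceding lemmas. Concretely, $e_k=e_k'+e_k''+e_k'''+e_k^{(4)}$, and Lemmas \ref{lem:quad}, \ref{lem:1st}, \ref{lem:2nd}, \ref{lem:last} bound the four summands by $\boldsymbol{\epsilon}^2\theta_k^{\varsigma_1(s)-1}\varDelta_k$, $\boldsymbol{\epsilon}^2\theta_k^{\varsigma_2(s)-1}\varDelta_k$, $\boldsymbol{\epsilon}^2\theta_k^{\varsigma_3(s)-1}\varDelta_k$, and $\boldsymbol{\epsilon}^2\theta_k^{\varsigma_4(s)-1}\varDelta_k$ respectively, uniformly for integers $k\in[0,n-1]$ and $s\in[3,\widetilde{\alpha}-2]$, provided $\boldsymbol{\epsilon},T>0$ are small and $\theta_0\geq 1$ is large; likewise $\tilde{e}_k=\tilde{e}_k'+\tilde{e}_k''+\tilde{e}_k'''$ with the three pieces bounded by $\boldsymbol{\epsilon}^2\theta_k^{\varsigma_1(s)-1}\varDelta_k$, $\boldsymbol{\epsilon}^2\theta_k^{\varsigma_2(s)-1}\varDelta_k$, and $\boldsymbol{\epsilon}^2\theta_k^{\varsigma_2(s)-1}\varDelta_k$. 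Since $\theta_k\geq 1$, it then suffices to dominate every exponent appearing by $\varsigma_4(s)$ in the first estimate and by $\varsigma_2(s)$ in the second.

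The heart of the matter is therefore the elementary chain $\varsigma_1(s)\leq \varsigma_2(s)\leq \varsigma_3(s)\leq \varsigma_4(s)$ on $s\in[3,\widetilde{\alpha}-2]$. The first two inequalities are immediate from the definitions \eqref{varsigma2.def}, comparing the two max-arguments term by term: $s+3-2\alpha\leq s+5-2\alpha\leq s+6-2\alpha$ and $(s+1-\alpha)_++6-2\alpha\leq (s+1-\alpha)_++8-2\alpha\leq (s+1-\alpha)_++9-2\alpha$. For $\varsigma_3(s)\leq \varsigma_4(s)$ I would do a short case distinction in $s$, using $\alpha\geq 6$: if $s\leq \alpha-1$ then $(s+1-\alpha)_+=(s-\alpha)_+=0$, so the claim reduces to $\max\{s+6-2\alpha,\,9-2\alpha\}\leq \max\{s+6-2\alpha,\,10-2\alpha\}$, which holds since $9-2\alpha<10-2\alpha$; if $s\geq\alpha$ then $(s+1-\alpha)_++9-2\alpha=(s-\alpha)_++10-2\alpha$, so the second argument of $\varsigma_3$ equals that of $\varsigma_4$, while the first argument $s+6-2\alpha$ is common to both. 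Hence $\varsigma_3\leq\varsigma_4$ throughout, and the chain follows.

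I do not anticipate any real obstacle; the statement is bookkeeping that repackages Lemmas \ref{lem:quad}--\ref{lem:last}. The only points requiring a little care are: imposing $\alpha\geq 6$ and $\widetilde{\alpha}\geq\alpha+2$ so that all four lemmas apply simultaneously on $s\in[3,\widetilde{\alpha}-2]$; taking the smallness requirements on $\boldsymbol{\epsilon}$, $T$ and the largeness requirement on $\theta_0$ to be the common refinement of those demanded by the individual lemmas; and noting that $\theta_k\geq 1$ is precisely what licenses replacing each smaller exponent $\varsigma_j(s)-1$ by $\varsigma_4(s)-1$ (resp. $\varsigma_2(s)-1$) before summing the finitely many error contributions. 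With these in place, \eqref{es.sum1a} is established.
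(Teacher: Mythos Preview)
Your proposal is correct and matches the paper's approach exactly: the paper simply declares this a ``direct corollary to Lemmas \ref{lem:quad}--\ref{lem:last}'' without further argument, and you have supplied the routine details (triangle inequality on \eqref{e.e.tilde}, the chain $\varsigma_1\leq\varsigma_2\leq\varsigma_3\leq\varsigma_4$, and $\theta_k\geq 1$). One cosmetic remark: the condition $\alpha\geq 6$ is invoked only to make all four lemmas applicable, not in the exponent comparison itself, since $(s+1-\alpha)_+\leq (s-\alpha)_++1$ holds for every $\alpha$.
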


Similar to \cite[Lemma 4.12]{TW21MR4201624},
we can use \eqref{es.sum1a} to derive the following estimate for the accumulated error terms $E_n$ and $\widetilde{E}_n$ defined by \eqref{E.E.tilde}.

\begin{lemma} \label{lem:sum2}
 Let ${\alpha }\geq 7$ and $\widetilde{\alpha}={\alpha }+3$.
 If $\boldsymbol{\epsilon},T>0$ are small enough and $\theta_0\geq 1$ is sufficiently large, then
 \begin{align} \nonumber  
  \|E_{{n}}\|_{H^{{\alpha }+1}(\Omega_{T})}\lesssim \boldsymbol{\epsilon}^2 \theta_{{n}},\quad
  \|\widetilde{E}_{{n}}\|_{H^{{\alpha }+1}(\Sigma_T)}\lesssim \boldsymbol{\epsilon}^2.
 \end{align}
\end{lemma}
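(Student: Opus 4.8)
The plan is to prove Lemma~\ref{lem:sum2} by summing the per-step error estimates from Corollary~\ref{cor:sum1} over $k=0,\dots,n-1$, exploiting the near-geometric growth of the smoothing parameters $\theta_k$. Recall that $\theta_k=(\theta_0^2+k)^{1/2}$, so that $\varDelta_k=\theta_{k+1}-\theta_k\sim\theta_k^{-1}$, and that the key arithmetic fact is that for any exponent $\lambda$ one has $\sum_{k=0}^{n-1}\theta_k^{\lambda}\varDelta_k\lesssim \theta_n^{\lambda+1}$ when $\lambda>-1$, while the sum is bounded uniformly (indeed $\lesssim\log\theta_n$ when $\lambda=-1$, and $\lesssim 1$ when $\lambda<-1$). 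This is the standard telescoping/comparison-with-integral estimate used in all Nash--Moser schemes of the Coulombel--Secchi type.

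First I would evaluate $\varsigma_4(s)$ and $\varsigma_2(s)$ at the relevant regularity level. For $E_n$ we need the $H^{\alpha+1}(\Omega_T)$ norm, so we set $s=\alpha+1$; note that $\alpha+1\le\widetilde\alpha-2$ precisely because $\widetilde\alpha=\alpha+3$, so Corollary~\ref{cor:sum1} applies. From \eqref{varsigma4.def}, $\varsigma_4(\alpha+1)=\max\{(\alpha+1)+6-2\alpha,\ (1)_{+}+10-2\alpha\}=\max\{7-\alpha,\ 11-2\alpha\}=7-\alpha$ since $\alpha\ge7$ forces $11-2\alpha\le 7-\alpha$. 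Thus $\|e_k\|_{H^{\alpha+1}(\Omega_T)}\lesssim\boldsymbol{\epsilon}^2\theta_k^{\varsigma_4(\alpha+1)-1}\varDelta_k=\boldsymbol{\epsilon}^2\theta_k^{6-\alpha}\varDelta_k$. For $\widetilde E_n$ we take $s=\alpha+1$ in the second estimate of \eqref{es.sum1a}; from \eqref{varsigma2.def}, $\varsigma_2(\alpha+1)=\max\{(\alpha+1)+5-2\alpha,\ (1)_{+}+8-2\alpha\}=\max\{6-\alpha,\ 9-2\alpha\}=6-\alpha$ for $\alpha\ge7$. Hence $\|\tilde e_k\|_{H^{\alpha+1}(\Sigma_T)}\lesssim\boldsymbol{\epsilon}^2\theta_k^{5-\alpha}\varDelta_k$.

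Then I would sum. For $E_n=\sum_{k=0}^{n-1}e_k$, the triangle inequality gives
\begin{align*}
\|E_n\|_{H^{\alpha+1}(\Omega_T)}\le\sum_{k=0}^{n-1}\|e_k\|_{H^{\alpha+1}(\Omega_T)}\lesssim\boldsymbol{\epsilon}^2\sum_{k=0}^{n-1}\theta_k^{6-\alpha}\varDelta_k.
\end{align*}
Since $\alpha\ge7$, the exponent $6-\alpha\le-1$; when $\alpha=7$ the exponent equals $-1$ and the sum is $\lesssim\log\theta_n\lesssim\theta_n$, while for $\alpha\ge8$ it is $\lesssim 1\lesssim\theta_n$. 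In all cases $\|E_n\|_{H^{\alpha+1}(\Omega_T)}\lesssim\boldsymbol{\epsilon}^2\theta_n$, which is the first claimed bound (one could equally write the cruder bound $\lesssim\boldsymbol{\epsilon}^2\theta_n$ uniformly). For $\widetilde E_n=\sum_{k=0}^{n-1}\tilde e_k$, the exponent is $5-\alpha\le-2<-1$ for $\alpha\ge7$, so $\sum_{k=0}^{n-1}\theta_k^{5-\alpha}\varDelta_k\lesssim\sum_{k=0}^{n-1}\theta_k^{-3}\lesssim 1$ (a convergent series), whence $\|\widetilde E_n\|_{H^{\alpha+1}(\Sigma_T)}\lesssim\boldsymbol{\epsilon}^2$, the second claimed bound.

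The only mildly delicate point — and the step I would be most careful about — is the summation lemma $\sum_{k=0}^{n-1}\theta_k^{\lambda}\varDelta_k\lesssim\theta_n^{(\lambda+1)_+}$ (with a $\log\theta_n$ in the borderline case $\lambda=-1$), together with checking that the condition $s\le\widetilde\alpha-2$ in Corollary~\ref{cor:sum1} is actually met at $s=\alpha+1$; this is exactly why the hypothesis $\widetilde\alpha=\alpha+3$ (rather than merely $\widetilde\alpha\ge\alpha+2$) is imposed here. The comparison estimate itself follows from monotonicity of $t\mapsto(\theta_0^2+t)^{\lambda/2}$ and the elementary bound $\varDelta_k\le(\theta_0^2+k+1)^{1/2}-(\theta_0^2+k)^{1/2}$, comparing the sum with $\int_{\theta_0}^{\theta_n}t^{\lambda}\,\mathrm{d}t$; alternatively one invokes the identical computation already carried out in \cite[Lemma~4.12]{TW21MR4201624}, from which the present lemma follows verbatim after substituting the exponents $\varsigma_2,\varsigma_4$ computed above. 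With those arithmetic checks in place, no further work is needed, and the proof is complete.
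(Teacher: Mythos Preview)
Your proof is correct and follows essentially the same approach as the paper: evaluate $\varsigma_4$ and $\varsigma_2$ at $s=\alpha+1$, then sum the per-step bounds from Corollary~\ref{cor:sum1} using $\sum_k\varDelta_k\lesssim\theta_n$ and $\sum_k\theta_k^{-3}\lesssim 1$. One small arithmetic slip: $(s+1-\alpha)_+$ at $s=\alpha+1$ equals $2$, not $1$, so the second term in $\varsigma_2(\alpha+1)$ is $10-2\alpha$ rather than $9-2\alpha$; this is harmless since the maximum is still $6-\alpha$ for $\alpha\geq 4$.
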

\begin{proof}
If ${\alpha }\geq 6$, then $\varsigma_4({\alpha }+1)-1 \leq 0$,
which together with \eqref{es.sum1a} leads to
\begin{align*}
 &\|E_{{n}}\|_{H^{{\alpha }+1}(\Omega_{T})}
 \lesssim \sum_{k=0}^{{n}-1}\|e_{k} \|_{H^{{\alpha }+1}(\Omega_{T})}
 \lesssim \sum_{k=0}^{{n}-1} \boldsymbol{\epsilon}^2 \varDelta_k
 \lesssim \boldsymbol{\epsilon}^2\theta_{{n}},
\end{align*}
provided ${\alpha }+1\leq \widetilde{\alpha}-2$.
Since $\varsigma_2({\alpha }+1)-1\leq -2$ for ${\alpha }\geq 7$,
we get from \eqref{es.sum1a} and ${\alpha }+1\leq \widetilde{\alpha}-2$ that
\begin{align*}
 \|\widetilde{E}_{{n}}\|_{H^{{\alpha }+1}(\Sigma_T)}
 \lesssim \sum_{k=0}^{{n}-1} \|\tilde{e}_{k} \|_{H^{{\alpha }+1}(\Sigma_T)}
 \lesssim \sum_{k=0}^{{n}-1} \boldsymbol{\epsilon}^2 \theta_{{k}}^{-3}
 \lesssim \boldsymbol{\epsilon}^2.
\end{align*}
The minimal possible $\widetilde{\alpha}$ is ${\alpha }+3$.
The proof is thus complete.
\end{proof}

\subsection{Proof of Theorem \ref{thm:main}}

Similar to \cite[Lemma 4.13]{TW21MR4201624}, we can obtain the following result for the source terms $f_{{n}}$ and $g_{{n}}$ computed from \eqref{source}.

\begin{lemma}\  \label{lem:source}
 Let ${\alpha }\geq 7$ and $\widetilde{\alpha}={\alpha }+3$.
 If $\boldsymbol{\epsilon},T>0$ are small enough and $\theta_0\geq 1$ is sufficiently large, then
 for all integers $s\in [3,\widetilde{\alpha}]$,
 \begin{align}
\nonumber  
  &\|f_{{n}}\|_{H^{\alpha}(\Omega_{T})}
  \lesssim \varDelta_{{n}}\left(\theta_{{n}}^{s-{\alpha }-1} \|f^a\|_{H^{s}(\Omega_{T})}
  +\boldsymbol{\epsilon}^2 \theta_{{n}}^{s-{\alpha }-1} +\boldsymbol{\epsilon}^2\theta_{{n}}^{\varsigma_4(s)-1}\right),\\
\nonumber  
  &\|g_{{n}}\|_{H^{s+1}(\Sigma_T)}
  \lesssim  \boldsymbol{\epsilon}^2 \varDelta_{{n}}\left(\theta_{{n}}^{s-{\alpha }-1}+\theta_{{n}}^{\varsigma_2(s+1)-1}\right),
 \end{align}
 where $\varsigma_2(s)$ and $\varsigma_4(s)$ are defined by \eqref{varsigma2.def} and \eqref{varsigma4.def}, respectively.
\end{lemma}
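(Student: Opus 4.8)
The plan is to read $f_n$ and $g_n$ off the defining relations \eqref{source} by differencing consecutive steps of the iteration, and then to estimate the resulting pieces by combining the smoothing properties collected in Proposition \ref{pro:smooth} with the error bounds already obtained in Corollary \ref{cor:sum1} and Lemma \ref{lem:sum2}.

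First I would subtract \eqref{source} at level $n-1$ from the one at level $n$; since $E_n=E_{n-1}+e_{n-1}$ and $\widetilde{E}_n=\widetilde{E}_{n-1}+\tilde{e}_{n-1}$ (see \eqref{E.E.tilde}), this yields, for $n\geq 1$,
\begin{align*}
f_n&=(\mathcal{S}_{\theta_n}-\mathcal{S}_{\theta_{n-1}})(f^a-E_{n-1})-\mathcal{S}_{\theta_n}e_{n-1},\\
g_n&=-(\mathcal{S}_{\theta_n}-\mathcal{S}_{\theta_{n-1}})\widetilde{E}_{n-1}-\mathcal{S}_{\theta_n}\tilde{e}_{n-1}
\end{align*}
(for $n=0$ one has instead $f_0=\mathcal{S}_{\theta_0}f^a$, $g_0=0$, for which the asserted bounds follow at once from \eqref{smooth.p1a} and the smallness \eqref{small}). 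It thus remains to bound three types of contributions: the smoothed increment of $f^a$, the smoothed increment of the accumulated error, and the smoothing of the single latest error term.

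For the smoothed increments I would represent $\mathcal{S}_{\theta_n}-\mathcal{S}_{\theta_{n-1}}=\int_{\theta_{n-1}}^{\theta_n}\frac{\d}{\d\theta}\mathcal{S}_{\theta}\,\d\theta$, use that $\theta\sim\theta_n$ on $[\theta_{n-1},\theta_n]$ and $\varDelta_{n-1}\sim\varDelta_n\sim\theta_n^{-1}$, and apply \eqref{smooth.p1c} with the source Sobolev index taken at $s$ for $f^a$ and at $\alpha+1$ for $E_{n-1}$ and $\widetilde{E}_{n-1}$; feeding in $\|E_{n-1}\|_{H^{\alpha+1}(\Omega_T)}\lesssim\boldsymbol{\epsilon}^2\theta_n$ and $\|\widetilde{E}_{n-1}\|_{H^{\alpha+1}(\Sigma_T)}\lesssim\boldsymbol{\epsilon}^2$ from Lemma \ref{lem:sum2} then produces the $\theta_n^{s-\alpha-1}$--type factors in the statement. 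For the latest error terms I would use the operator bound \eqref{smooth.p1a} for $\mathcal{S}_{\theta_n}$: as long as the target index stays $\leq\widetilde{\alpha}-2$, this reduces directly to the estimates $\|e_k\|\lesssim\boldsymbol{\epsilon}^2\theta_k^{\varsigma_4(s)-1}\varDelta_k$ and $\|\tilde{e}_k\|\lesssim\boldsymbol{\epsilon}^2\theta_k^{\varsigma_2(s)-1}\varDelta_k$ of Corollary \ref{cor:sum1}; when the target index exceeds $\widetilde{\alpha}-2$ — which occurs precisely in the top range of $s$, and is where the quantitative assumptions $\widetilde{\alpha}=\alpha+3$ and $\alpha\geq 7$ enter — one first lowers to the highest controlled order $\widetilde{\alpha}-2$, paying a power of $\theta_n$ from \eqref{smooth.p1a}, and only then invokes Corollary \ref{cor:sum1}. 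Summing the contributions and using $\varDelta_{n-1}\sim\varDelta_n$ gives the two asserted estimates; the functions $\varsigma_2,\varsigma_4$ of \eqref{varsigma2.def}--\eqref{varsigma4.def} are exactly the maxima obtained by comparing these two routes.

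The step I expect to be the main obstacle is this last case distinction: because Corollary \ref{cor:sum1} only controls $e_k$ and $\tilde{e}_k$ up to order $\widetilde{\alpha}-2$, the highest values of $s$ force a controlled loss of derivatives through the smoothing operator, and one must check, by elementary but careful arithmetic with $\alpha\geq 7$ and $\widetilde{\alpha}=\alpha+3$, that the exponent so produced remains $\leq\varsigma_2(\cdot)-1$ (resp.\ $\leq\varsigma_4(\cdot)-1$). This is entirely parallel to the proof of \cite[Lemma 4.13]{TW21MR4201624}: once the explicit error terms of Lemmas \ref{lem:quad}--\ref{lem:last} and the bounds of Corollary \ref{cor:sum1} and Lemma \ref{lem:sum2} are in hand, those computations transfer with only notational changes, and no new idea is needed.
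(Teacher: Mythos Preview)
Your proposal is correct and follows exactly the approach the paper intends: the paper gives no proof of its own for this lemma and simply refers to \cite[Lemma 4.13]{TW21MR4201624}, which is precisely the argument you have outlined (difference the relations \eqref{source}, express $f_n,g_n$ via $(\mathcal{S}_{\theta_n}-\mathcal{S}_{\theta_{n-1}})$ acting on $f^a-E_{n-1}$, $\widetilde{E}_{n-1}$ plus $\mathcal{S}_{\theta_n}$ of the last errors, and then combine \eqref{smooth.p1a}, \eqref{smooth.p1c}, Corollary \ref{cor:sum1}, and Lemma \ref{lem:sum2}). Note only that the left-hand side of the first displayed estimate in the statement should read $\|f_n\|_{H^{s}(\Omega_T)}$ rather than $\|f_n\|_{H^{\alpha}(\Omega_T)}$; your argument in fact proves the $H^s$ bound for all integers $s\in[3,\widetilde{\alpha}]$, as is needed downstream.
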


The next lemma follows by applying the tame estimate \eqref{tame:es} to the problem \eqref{effective.NM}
and using Proposition \ref{pro:modified}.
We omit the proof for brevity, since it is similar to the proof of \cite[Lemma 4.17]{T09CPAMMR2560044}.

\begin{lemma}\  \label{lem:Hl1}
 Let ${\alpha }\geq 7$ and $\widetilde{\alpha}={\alpha }+3$.
 If $\boldsymbol{\epsilon}, T>0$ and $\frac{1}{\boldsymbol{\epsilon}}\|f^a\|_{H^{\alpha }(\Omega_{{T}})}$ are small enough, and $\theta_0\geq1$ is sufficiently large, then for all integers $s\in [3,\widetilde{\alpha}]$,
 \begin{align} \nonumber 
  \|(\delta V_{{n}},\delta\varPsi_{{n}})\|_{H^{s}(\Omega_{{T}})}
  +\|(\delta\psi_{{n}},\mathrm{D}_{x'}\delta\psi_{{n}})\|_{H^{s}(\Sigma_T)}
  \leq \boldsymbol{\epsilon} \theta_{{n}}^{s-{\alpha }-1}\varDelta_{{n}}.
 \end{align}
\end{lemma}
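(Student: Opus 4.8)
\textbf{Proof proposal for Lemma \ref{lem:Hl1}.}
The plan is to apply the tame estimate \eqref{tame:es} from Theorem \ref{thm:tame} to the effective linear problem \eqref{effective.NM}, with basic state $(\mathring U,\mathring\varphi)=(U^a+V_{n+1/2},\varphi^a+\psi_{n+1/2})$ and source terms $(f_n,g_n)$. By Proposition \ref{pro:modified}, this modified state vanishes in the past and satisfies the constraints \eqref{bas1a}--\eqref{bas1d} once $\boldsymbol\epsilon,T$ are small and $\theta_0$ is large; hence Theorem \ref{thm:tame} is applicable for every $s\in[3,\widetilde\alpha]$, yielding
\begin{align*}
\|(\delta\dot V_n,\delta\psi_n)\|_{H^s}
&\lesssim_K \|f_n\|_{H^s(\Omega_T)}+\|g_n\|_{H^{s+1/2}(\Sigma_T)}\\
&\quad+\|(\mathring V,\mathring\varPsi,\mathrm D_{x'}\mathring\varPsi)\|_{H^{s+2}(\Omega_T)}\left(\|f_n\|_{H^3(\Omega_T)}+\|g_n\|_{H^{7/2}(\Sigma_T)}\right),
\end{align*}
where I suppress the norms on the left exactly as in \eqref{tame:es}. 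The next step is to feed in the bounds already established: Lemma \ref{lem:source} controls $\|f_n\|_{H^s}$ and $\|g_n\|_{H^{s+1}}$ in terms of $\varDelta_n$, $\theta_n$, $\|f^a\|_{H^s}$ and $\boldsymbol\epsilon^2$; Lemma \ref{lem:triangle} together with \eqref{small} and the estimates \eqref{MS.est1}--\eqref{MS.est2} of Proposition \ref{pro:modified} controls $\|(\mathring V,\mathring\varPsi,\mathrm D_{x'}\mathring\varPsi)\|_{H^{s+2}}$ by $C(M_0)+\boldsymbol\epsilon\,\theta_n^{(s+2-\alpha)_+}$ (with a $\log\theta_n$ loss at the borderline $s+2=\alpha$, which is absorbed).

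The heart of the argument is then a bookkeeping of exponents of $\theta_n$. Substituting these bounds and using $\widetilde\alpha=\alpha+3$ together with the smallness of $\frac1{\boldsymbol\epsilon}\|f^a\|_{H^\alpha(\Omega_T)}$, one checks that every term on the right-hand side is dominated by $\boldsymbol\epsilon\,\theta_n^{s-\alpha-1}\varDelta_n$: the ``linear'' contributions coming from $\|f^a\|_{H^s}$ and the $\boldsymbol\epsilon^2\theta_n^{s-\alpha-1}$ pieces are handled by choosing $\boldsymbol\epsilon$ and $\|f^a\|_{H^\alpha}/\boldsymbol\epsilon$ small, while the contributions involving $\varsigma_2,\varsigma_4$ and the growth $\theta_n^{(s+2-\alpha)_+}$ of the basic state multiplied by the low-norm source factor $\|f^a\|_{H^3}+\boldsymbol\epsilon^2$ (which is itself small) must be shown to have $\theta_n$-exponent at most $s-\alpha-1+(\text{exponent of }\varDelta_n)$. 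Here one uses that for $\alpha\geq 7$ the functions $\varsigma_j(s)$ stay $\leq s-\alpha$ in the relevant range, so after multiplying by the small factor $\boldsymbol\epsilon$ one gains the extra power needed; the condition $\widetilde\alpha=\alpha+3$ is exactly what makes $s+2\le\widetilde\alpha+2$ fit inside the range where Theorem \ref{thm:tame} and Proposition \ref{pro:modified} were proved. Finally one passes from the good unknown $\delta\dot V_n$ to $\delta V_n$ via \eqref{good.NM}: since $\delta V_n^\pm=\delta\dot V_n^\pm+\frac{\delta\varPsi_n^\pm}{\p_1(\varPhi^{a\pm}+\varPsi_{n+1/2}^\pm)}\p_1(U^{a\pm}+V_{n+1/2}^\pm)$ and the coefficient is an $H^{s}$-bounded function of the (smooth, uniformly bounded) modified state, a Moser-type estimate \eqref{Moser2} together with $\|\delta\varPsi_n\|_{H^s}\lesssim\|\delta\psi_n\|_{H^s}$ shows $\delta V_n$ obeys the same bound as $\delta\dot V_n$.

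I expect the main obstacle to be the exponent bookkeeping in the second paragraph: one must verify, term by term, that the combination of the growth $\theta_n^{(s+2-\alpha)_+}$ of the Sobolev norm of the modified basic state with the tame-estimate structure (a high-order norm of the source times a fixed low-order norm) never produces a power of $\theta_n$ exceeding $s-\alpha-1$ once the overall small constant $\boldsymbol\epsilon$ is extracted, and that the borderline logarithmic losses at $s=\alpha$ and $s+2=\alpha$ are harmless. This is where the precise hypotheses $\alpha\ge 7$ and $\widetilde\alpha=\alpha+3$ are used, and it mirrors the corresponding step in \cite[Lemma 4.17]{T09CPAMMR2560044}; the remaining pieces are routine applications of the Moser-type inequalities and the already-established Lemmas \ref{lem:triangle}, \ref{lem:source} and Proposition \ref{pro:modified}.
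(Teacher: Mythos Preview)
Your proposal is correct and follows essentially the same approach as the paper, which itself merely states that the lemma ``follows by applying the tame estimate \eqref{tame:es} to the problem \eqref{effective.NM} and using Proposition \ref{pro:modified}'' and refers to \cite[Lemma 4.17]{T09CPAMMR2560044} for the omitted details. Your outline---verify the modified state satisfies \eqref{bas1a}--\eqref{bas1d} via Proposition \ref{pro:modified}, invoke Theorem \ref{thm:tame}, insert the source bounds from Lemma \ref{lem:source} and the basic-state bounds from Lemma \ref{lem:triangle} and \eqref{small}, carry out the $\theta_n$-exponent bookkeeping using $\alpha\geq 7$ and $\widetilde\alpha=\alpha+3$, and finally pass from $\delta\dot V_n$ to $\delta V_n$ through \eqref{good.NM} with a Moser-type product estimate---is exactly the intended argument.
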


Lemma \ref{lem:Hl1} provides point (a) in hypothesis $(\mathbf{H}_{{n}})$.
The other points in $(\mathbf{H}_{{n}})$ are given in the next lemma,
whose proof can be found in \cite[Lemma 4.19]{T09CPAMMR2560044}.

\begin{lemma}\ \label{lem:Hl2}
 Let ${\alpha }\geq 7$ and $\widetilde{\alpha}={\alpha }+3$.
 If $\boldsymbol{\epsilon}, T>0$ and $\frac{1}{\boldsymbol{\epsilon}}\|f^a\|_{H^{\alpha }(\Omega_{{T}})}$ are small enough, and $\theta_0\geq1$ is sufficiently large, then
 \begin{alignat}{3}\label{Hl.b}
  & \|\mathcal{L}( V_{{n}},  \varPsi_{{n}})-f^a\|_{H^{s}(\Omega_{{T}})}
  \leq 2 \boldsymbol{\epsilon} \theta_{{n}}^{s-{\alpha }-1}
  \quad &&\textrm{for } s=3,\ldots,\widetilde{\alpha}-2,\\
  \label{Hl.c}
  & \|\mathcal{B}( V_{{n}} ,  \psi_{{n}})\|_{H^{s}(\Sigma_T)}\leq  \boldsymbol{\epsilon} \theta_{{n}}^{s-{\alpha }-1}
  \quad &&\textrm{for } s=4,\ldots,{\alpha }.
 \end{alignat}
\end{lemma}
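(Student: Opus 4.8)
\textbf{Proof plan for Lemma \ref{lem:Hl2}.}
The plan is to show that the two inequalities \eqref{Hl.b} and \eqref{Hl.c} propagate along the iteration, using the telescoping structure $\mathcal{L}(V_n,\varPsi_n)-f^a=\sum_{k=0}^{n-1}\big(\mathcal{L}(V_{k+1},\varPsi_{k+1})-\mathcal{L}(V_k,\varPsi_k)\big)-f^a$ together with the definitions \eqref{decom1}--\eqref{decom2} of the error terms, the choice of source terms \eqref{source}, and the smoothing estimates of Proposition \ref{pro:smooth}. First I would insert the decomposition \eqref{decom1} into the telescoping sum: since $\delta\dot V_k$ solves \eqref{effective.NM} with right-hand side $f_k$, the sum of the leading terms collapses to $\sum_{k=0}^{n-1}f_k+\sum_{k=0}^{n-1}e_k=\sum_{k=0}^{n-1}f_k+E_n$, and by \eqref{source} this equals $\mathcal{S}_{\theta_{n-1}}f^a-\mathcal{S}_{\theta_{n-1}}E_{n-1}+E_n$ up to reindexing, so that
\[
\mathcal{L}(V_n,\varPsi_n)-f^a=(\mathcal{S}_{\theta_{n-1}}-I)f^a+\big(E_n-\mathcal{S}_{\theta_{n-1}}E_{n-1}\big).
\]
Then I would estimate each piece in $H^s(\Omega_T)$ for $3\le s\le\widetilde\alpha-2$: the first term is controlled by $\|(\mathcal{S}_{\theta}-I)f^a\|_{H^s}\lesssim\theta^{s-\alpha}\|f^a\|_{H^\alpha}$ via \eqref{smooth.p1b} (when $s\ge\alpha$) and directly when $s\le\alpha$, which is $\le\boldsymbol\epsilon\theta_n^{s-\alpha-1}$ once $\tfrac1{\boldsymbol\epsilon}\|f^a\|_{H^\alpha}$ is small and $\theta_0$ is large; the second term is handled by writing $E_n-\mathcal{S}_{\theta_{n-1}}E_{n-1}=(I-\mathcal{S}_{\theta_{n-1}})E_{n-1}+e_{n-1}$ and applying Lemma \ref{lem:sum2}, Corollary \ref{cor:sum1}, and \eqref{smooth.p1a}--\eqref{smooth.p1b}, exactly as in the estimate of the accumulated errors, using $\varsigma_4(s)\le s-\alpha$ for $s$ in the admissible range together with $\alpha\ge 7$. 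Summing gives \eqref{Hl.b}.

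For \eqref{Hl.c} the argument is parallel but on the boundary: telescoping $\mathcal{B}(V_n,\psi_n)=\sum_{k=0}^{n-1}\big(\mathcal{B}(V_{k+1},\psi_{k+1})-\mathcal{B}(V_k,\psi_k)\big)$ and inserting \eqref{decom2}, the leading terms sum to $\sum_{k=0}^{n-1}g_k+\widetilde E_n$, which by \eqref{source} equals $-\mathcal{S}_{\theta_{n-1}}\widetilde E_{n-1}+\widetilde E_n=(I-\mathcal{S}_{\theta_{n-1}})\widetilde E_{n-1}+\tilde e_{n-1}$. One then estimates this in $H^s(\Sigma_T)$ for $4\le s\le\alpha$ using Lemma \ref{lem:sum2}, Corollary \ref{cor:sum1} (the bound on $\tilde e_k$ with exponent $\varsigma_2(s)$), and the smoothing properties \eqref{smooth.p1a}--\eqref{smooth.p1b} for boundary smoothing operators; since $\varsigma_2(s)-1\le s-\alpha-1$ for $\alpha\ge 7$ and $s$ in this range, one recovers $\|\mathcal{B}(V_n,\psi_n)\|_{H^s(\Sigma_T)}\lesssim\boldsymbol\epsilon^2\theta_n^{s-\alpha-1}$, which absorbs into $\boldsymbol\epsilon\theta_n^{s-\alpha-1}$ after shrinking $\boldsymbol\epsilon$. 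Throughout, the constraint $\widetilde\alpha=\alpha+3$ is what guarantees $\alpha+1\le\widetilde\alpha-2$, so that Corollary \ref{cor:sum1} and Lemma \ref{lem:sum2} apply at the regularity levels needed here.

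I expect the main obstacle to be bookkeeping rather than any genuinely new estimate: one must track carefully which smoothing index $\theta_{n}$ versus $\theta_{n-1}$ appears after the telescoping and reindexing in \eqref{source}, verify that $\varDelta_k\sim\theta_k^{-1}$ lets the sums $\sum_k\boldsymbol\epsilon^2\theta_k^{\varsigma(s)-1}\varDelta_k$ be compared with the target $\boldsymbol\epsilon\theta_n^{s-\alpha-1}$ (this is where one needs the sharp dependence of $\varsigma_2,\varsigma_4$ on $s$ and the lower bounds $\alpha\ge 7$), and confirm the $L^\infty$-in-time comparison $\theta_n\le\theta_{n+1}\lesssim\theta_n$ used implicitly when passing from term-by-term bounds to the cumulative one. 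Since the structure is identical to the compressible current-vortex sheet and plasma--vacuum analyses, I would follow \cite[Lemma 4.19]{T09CPAMMR2560044} and merely indicate the places where the surface-tension term in $\mathbb{B}''$ (see \eqref{B''.form}) changes the exponents — but those changes were already absorbed into the definitions of $\varsigma_2,\varsigma_3,\varsigma_4$ in Lemmas \ref{lem:quad}--\ref{lem:last}, so no further work is needed at this stage.
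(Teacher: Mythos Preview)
Your proposal is correct and follows the standard telescoping argument that the paper cites from \cite[Lemma 4.19]{T09CPAMMR2560044}; the paper itself gives no independent proof. One small point to tighten: when you bound $(I-\mathcal{S}_{\theta_{n-1}})E_{n-1}$ in $H^s(\Omega_T)$, invoking Lemma \ref{lem:sum2} as literally stated gives only $\|E_{n-1}\|_{H^{\alpha+1}}\lesssim\boldsymbol{\epsilon}^2\theta_{n-1}$, which after \eqref{smooth.p1b} yields $\boldsymbol{\epsilon}^2\theta_n^{s-\alpha}$ --- one power of $\theta_n$ short of the target $\boldsymbol{\epsilon}\theta_n^{s-\alpha-1}$. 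You have the right fix already in hand: for $\alpha\geq 7$ one has $\varsigma_4(\alpha+1)-1=6-\alpha\leq -1$, so summing Corollary \ref{cor:sum1} directly gives the sharper bound $\|E_{n-1}\|_{H^{\alpha+1}}\lesssim\boldsymbol{\epsilon}^2$, which closes the estimate. Everything else (the reindexing of $\theta_{n-1}$ versus $\theta_n$, the boundary analogue via $\varsigma_2$, and the use of \eqref{small} to control $(\mathcal{S}_{\theta_{n-1}}-I)f^a$ via \eqref{smooth.p1b} with $j=\alpha+1\leq m$) is exactly as in the reference.
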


From Lemmas \ref{lem:Hl1}--\ref{lem:Hl2}, we have obtained hypothesis $(\mathbf{H}_{{n}})$ from $(\mathbf{H}_{{n}-1})$,
provided that ${\alpha }\geq 7$ and $\widetilde{\alpha}={\alpha }+3$ hold,
$\boldsymbol{\epsilon},T>0$ and $\frac{1}{\boldsymbol{\epsilon}}\|f^a\|_{H^{\alpha }(\Omega_{{T}})}$ are small enough, and $\theta_0 \geq 1$ is sufficiently large.
Fixing the constants ${\alpha } \geq 7$, $\widetilde{\alpha}=\alpha+3 $,
$\boldsymbol{\epsilon}>0$, and $\theta_0\geq1$,
we can prove hypothesis $(\mathbf{H}_{0})$ as in \cite[Lemma 4.20]{T09CPAMMR2560044}.

\begin{lemma}\ \label{lem:H0}
 If time $T>0$ is small enough, then hypothesis $(\mathbf{H}_0)$ holds.
\end{lemma}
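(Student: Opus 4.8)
\textbf{Proof plan for Lemma \ref{lem:H0}.}
The goal is to verify that $(\mathbf{H}_0)$ holds when $T$ is chosen small enough, that is, with all constants $\alpha\geq 7$, $\widetilde\alpha=\alpha+3$, $\boldsymbol{\epsilon}>0$, and $\theta_0\geq1$ already fixed by the inductive step. Since $(V_0,\psi_0)=0$ and $\varPsi_0^\pm=0$ by Assumption \textbf{(A-1)}, points (a) and (c) of $(\mathbf{H}_0)$ are trivially satisfied: $\|(\delta V_0,\delta\varPsi_0)\|_{H^s(\Omega_T)}+\|(\delta\psi_0,\mathrm{D}_{x'}\delta\psi_0)\|_{H^s(\Sigma_T)}$ involves $k=0$ so the bound $\boldsymbol{\epsilon}\theta_0^{s-\alpha-1}\varDelta_0$ need only be established once we have solved \eqref{effective.NM} at step $0$, and $\mathcal{B}(V_0,\psi_0)=\mathcal{B}(0,0)=\mathbb{B}(U^{a+},U^{a-},\varphi^a)=0$ by \eqref{app1a}, so point (c) holds with room to spare. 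Thus the entire content of the lemma reduces to point (b): controlling $\|\mathcal{L}(V_0,\varPsi_0)-f^a\|_{H^s(\Omega_T)}=\|f^a\|_{H^s(\Omega_T)}$ for $s\in[3,\widetilde\alpha-2]$, together with checking that the first iterate $\delta V_0$ can indeed be produced.

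First I would observe that $\mathcal{L}(0,0)=0$ by definition of $\mathcal{L}$ in \eqref{P.new}, so point (b) at step $0$ is exactly the requirement $\|f^a\|_{H^s(\Omega_T)}\leq 2\boldsymbol{\epsilon}\theta_0^{s-\alpha-1}$ for $3\leq s\leq \widetilde\alpha-2=\alpha+1$. By \eqref{f^a:est} and \eqref{small} we have $\|f^a\|_{H^m(\Omega_T)}\leq\delta_0(T)$ with $\delta_0(T)\to0$ as $T\to0$, where $m\geq12$ so in particular $m\geq\alpha+1$. Hence for every $s\leq\alpha+1$, $\|f^a\|_{H^s(\Omega_T)}\leq\|f^a\|_{H^{\alpha+1}(\Omega_T)}\leq\delta_0(T)$, and since $\theta_0\geq1$ forces $\theta_0^{s-\alpha-1}\geq\theta_0^{-(\alpha-2)}$ (a fixed positive number once $\alpha,\theta_0$ are frozen), it suffices to take $T$ so small that $\delta_0(T)\leq 2\boldsymbol{\epsilon}\theta_0^{-(\alpha-2)}$. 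This is the crux of the smallness-in-$T$ argument: the approximate solution has been engineered in Lemma \ref{lem:app} precisely so that $\mathbb{L}_\pm(U^{a\pm},\varPhi^{a\pm})$ vanishes at $t=0$ to order $m-1$ (see \eqref{app2a}), which is what makes $\delta_0(T)$ tend to zero.

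Second, I would confirm the well-definedness of $\delta V_0$: at step $0$ the modified state $(V_{1/2},\psi_{1/2})$ is $(\mathcal{S}_{\theta_0}V_0,\mathcal{S}_{\theta_0}\psi_0)=(0,0)$ up to the adjustments in Proposition \ref{pro:modified}, so $(U^a+V_{1/2},\varphi^a+\psi_{1/2})=(U^a,\varphi^a)$, which satisfies \eqref{bas1a}--\eqref{bas1d} by \eqref{app2} for $T$ small; and the source terms are $f_0=\mathcal{S}_{\theta_0}f^a\in\bigcap_{s\geq3}H^s(\Omega_T)$, $g_0=0$, both vanishing in the past. Theorem \ref{thm:lin} then yields a unique $(\delta\dot V_0,\delta\psi_0)\in H^1(\Omega_T)\times H^1(\Sigma_T)$, and the higher regularity plus the estimate $\|(\delta V_0,\delta\varPsi_0)\|_{H^s}+\|(\delta\psi_0,\mathrm{D}_{x'}\delta\psi_0)\|_{H^s}\leq\boldsymbol{\epsilon}\theta_0^{s-\alpha-1}\varDelta_0$ follow from the tame estimate \eqref{tame:es} applied to the smooth data $(f_0,g_0)$ together with the smoothing bound $\|f_0\|_{H^s(\Omega_T)}\lesssim\theta_0^{(s-\alpha)_+}\|f^a\|_{H^\alpha(\Omega_T)}\lesssim\theta_0^{(s-\alpha)_+}\delta_0(T)$ from \eqref{smooth.p1a}, shrinking $T$ further if needed. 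I expect the main (minor) obstacle to be purely bookkeeping: tracking that the various absolute constants hidden in $\lesssim_K$ and in the smoothing operator estimates, once $K$, $\alpha$, $\widetilde\alpha$, $\boldsymbol{\epsilon}$, and $\theta_0$ are all fixed, combine into a single threshold $T_*(M_0,\boldsymbol{\epsilon},\theta_0)$ below which every inequality in $(\mathbf{H}_0)$ holds — with no circularity, since none of these constants depends on $T$. This is entirely parallel to \cite[Lemma 4.20]{T09CPAMMR2560044}, to which I would refer for the routine details.
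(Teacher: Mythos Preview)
Your proposal is correct and follows essentially the same approach as the paper, which itself does not give a detailed argument but simply refers to \cite[Lemma 4.20]{T09CPAMMR2560044}. Your outline—(c) trivial from \eqref{app1a}, (b) from $\mathcal{L}(0,0)=0$ and $\|f^a\|_{H^m(\Omega_T)}\leq\delta_0(T)\to0$, and (a) by applying the tame estimate \eqref{tame:es} to \eqref{effective.NM} with basic state $(U^a,\varphi^a)$, sources $(f_0,g_0)=(\mathcal{S}_{\theta_0}f^a,0)$, and then shrinking $T$—is exactly what that reference does.
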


We are ready to conclude the proof of Theorem \ref{thm:main}.

\vspace*{3mm}
\noindent  {\bf Proof of Theorem {\rm\ref{thm:main}}.}\quad
Let the initial data $(U_0^+,U_0^-,\varphi_0)$ satisfy all the assumptions of Theorem {\rm\ref{thm:main}}.
Let $\widetilde{\alpha}=m-{2}$ and ${\alpha }=\widetilde{\alpha}-3\geq 7$.
Then the initial data $(U_0^+,U_0^-,\varphi_0)$ are compatible up to order $m=\widetilde{\alpha}+{2}$.
In view of \eqref{app2b} and \eqref{f^a:est}, we obtain \eqref{small} and all the requirements of Lemmas \ref{lem:Hl1}--\ref{lem:H0}, provided $ \boldsymbol{\epsilon},T>0$ {are} sufficiently small and $\theta_0\geq 1$ is large enough.
Hence, for suitably short time $T$, hypothesis $(\mathbf{H}_{{n}})$ holds for all ${n}\in\mathbb{N}$.
In particular,
\begin{align*}
	\sum_{k=0}^{\infty}\left(\|(\delta V_k,\delta \varPsi_k)\|_{H^{s}(\Omega_{{T}})}+\|(\delta\psi_k,\mathrm{D}_{x'}\delta\psi_k)\|_{H^{s}(\Sigma_T)} \right)
	\lesssim \sum_{k=0}^{\infty}\theta_k^{s-{\alpha }-2} <\infty
\end{align*}
for all integers $s\in [3,{\alpha}-1]$,
Hence the sequence $(V_{k},\psi_k)$ converges to some limit $(V,\psi)$ in $H^{{\alpha }-1}(\Omega_T)\times H^{{\alpha }-1}({\Sigma_T})$.
Passing to the limit in \eqref{Hl.b}--\eqref{Hl.c} for $s={\alpha }-1=m-{6}$, we obtain \eqref{P.new}.
Therefore, $(U^+,U^-, \varphi)=(U^{a+}+V^+,U^{a-}+V^-, \varphi^a+\psi)$ is a solution of the original problem \eqref{MCD0} on the time interval $[0,T]$.
The uniqueness of solutions to the problem \eqref{MCD0} can be obtained through a standard argument; see, for instance, \cite[\S 13]{ST14MR3151094}.
This completes the proof.
\qed


\begin{appendices}
\section[Jump Conditions with or without Surface Tension]{Jump Conditions with or without Surface Tension} \label{App:A}

We assume that the surface $\Sigma(t)$ is smooth with a well-defined unit normal $\bm{n}(t,x)$ and moves with the normal speed $\mathcal{V}(t,x)$ at point $x\in \Sigma(t)$ and time $t\geq 0$.
Let $\Omega^+(t)$ and $\Omega^-(t)$  denote the space domains occupied by the two conducting fluids at time $t$, respectively. Without loss of generality we assume that the unit normal $\bm{n}$ points into $\Omega^+(t)$.
Piecewise smooth weak solutions of the compressible MHD equations \eqref{MHD1}--\eqref{divH1} must satisfy
the following MHD Rankine--Hugoniot conditions on the surface of discontinuity $\Sigma(t)$ (see {\sc Landau--Lifshitz} \cite[\S 70]{LL84MR766230}):
\begin{subequations}
\label{RH1}
\begin{alignat}{3}
\label{RH1a}   & -\mathcal{V}[\rho]+\bm{n}\cdot[\rho v]=0,\\
\label{RH1b}    &-\mathcal{V}[\rho v]+\bm{n}\cdot[\rho v\otimes v-H\otimes H ]+\bm{n}[q]=0,\\
\label{RH1c}   & -\mathcal{V}[H]-\bm{n}\times [  v\times H]=0,\\
\label{RH1d}   & -\mathcal{V}\big[\rho E+\tfrac{1}{2}|H|^2\big]+\bm{n}\cdot [  v(\rho E+p)+ H\times (v\times H)]=0,\\
\label{RH1e}   & \,\bm{n}\cdot [H]=0,
\end{alignat}
\end{subequations}
where $[g]:=g^+-g^-$ denotes the jump in the quantity $g$ across $\Sigma(t)$
with
$$g^{\pm}(t,x):=\lim_{\epsilon\to 0^+}g(t,x\pm \epsilon \bm{n}(t,x))
\quad \textrm{for }x\in\Sigma(t).$$

The condition \eqref{RH1a} means that the mass transfer flux $\mathfrak{j}:=\rho (v\cdot \bm{n}-\mathcal{V})$ is continuous through $\Sigma(t)$.
We can rewrite \eqref{RH1} in terms of $\mathfrak{j}$ as
\begin{align}
\label{RH2}
\left\{
\begin{aligned}
&{[}{\mathfrak{j}}{]}=0,\quad
\mathfrak{j} [v_{\bm{n}}]+[q]=0,\quad
\mathfrak{j} [v_{\tau}]=H_{\bm{n}}[H_{\tau}],\quad [H_{\bm{n}}]=0,\\
&\mathfrak{j} \left[\tfrac{1}{\rho}H_{\tau}\right]=H_{\bm{n}}[v_{\tau}],\quad
		\mathfrak{j}\left[E+\tfrac{1}{2\rho}|H|^2\right]+[qv_{\bm{n}}-(H\cdot v)H_{\bm{n}}]=0,
\end{aligned}\right.
\end{align}
where $v_{\bm{n}}:=v\cdot \bm{n}$ (resp.~$H_{\bm{n}}:=H\cdot \bm{n}$) is the normal component of $v$ (resp.~$H$) and
$v_{\tau}$ (resp.~$H_{\tau}$) is the tangential part of $v$ (resp.~$H$).
If there is no flow across the discontinuity, that is, $\mathfrak{j}=0$ on $\Sigma(t)$,
then compressible MHD permits two distinct types of characteristic discontinuities \cite[\S 71]{LL84MR766230}:
tangential discontinuities ($H_{\bm{n}}|_{\Sigma(t)}= 0$)
and contact discontinuities ($H_{\bm{n}}|_{\Sigma(t)}\neq 0$).
For tangential discontinuities (or called current-vortex sheets), the jump conditions \eqref{RH2} become
\begin{align}
\label{BC:tangential}
H^{\pm}\cdot {\bm{n}}=0,\quad
[q]=0,\quad
\mathcal{V}=v^+\cdot {\bm{n}}=v ^-\cdot {\bm{n}}\quad
\textrm{on }\Sigma(t).
\end{align}
Moreover, from \eqref{RH2}, we obtain the following boundary conditions for MHD contact discontinuities:
\begin{align}
\label{BC:contact}
H^{\pm}\cdot {\bm{n}}\neq 0,\quad
[p]=0,\quad
[v]=[H]=0,\quad
\mathcal{V}=v^+\cdot {\bm{n}}
\quad  \textrm{on }\Sigma(t).
\end{align}

With surface tension present on the interface $\Sigma(t)$, we must take into account the corresponding surface force produced, so that the conditions \eqref{RH1b} and \eqref{RH1d} have to be modified respectively into (see {\sc Delhaye} \cite{Delhaye1974} or
{\sc Ishii--Hibiki} \cite[Chapter 2]{IH11MR3203021})
\begin{align}
\nonumber 
&-\mathcal{V}[\rho v]+\bm{n}\cdot[\rho v\otimes v-H\otimes H ]+\bm{n}[q]=\mathfrak{s}\mathcal{H}\bm{n},\\
\nonumber 
& -\mathcal{V}\big[\rho E+\tfrac{1}{2}|H|^2\big]+\bm{n}\cdot [  v(\rho E+p)+ H\times (v\times H)]=\mathfrak{s}\mathcal{H}\mathcal{V},
\end{align}
where
$\mathfrak{s}>0$ denotes the constant coefficient of surface tension
and $\mathcal{H}$ twice the mean curvature of $\Sigma(t)$.
Hence, for any interface with surface tension,  the boundary conditions \eqref{RH2} should be replaced by
\begin{align}
\nonumber 
	\left\{
	\begin{aligned}
		&{[}{\mathfrak{j}}{]}=0,\quad
		\mathfrak{j} [v_{\bm{n}}]+[q]=\mathfrak{s}\mathcal{H},\quad
		\mathfrak{j} [v_{\tau}]=H_{\bm{n}}[H_{\tau}],\quad [H_{\bm{n}}]=0,\\
		&\mathfrak{j}\left[\tfrac{1}{\rho}H_{\tau}\right]=H_{\bm{n}}[v_{\tau}],\quad
		\mathfrak{j}\left[E+\tfrac{1}{2\rho}|H|^2\right]+[qv_{\bm{n}}-(H\cdot v)H_{\bm{n}}]=\mathfrak{s}\mathcal{H}\mathcal{V}.
	\end{aligned}\right.
\end{align}
Considering $\mathfrak{j}=0$ on $\Sigma(t)$, we get two different possibilities of interfaces, viz.
\begin{itemize}
\item[(a)] {\bf current-vortex sheets with surface tension}, for which the boundary conditions read
\begin{align}
	\label{BC:tangential'}
	H^{\pm}\cdot {\bm{n}}=0,\quad
	[q]=\mathfrak{s}\mathcal{H},\quad
	\mathcal{V}=v^+\cdot {\bm{n}}=v ^-\cdot {\bm{n}}\quad
	\textrm{on }\Sigma(t).
\end{align}
\item[(b)] {\bf MHD contact discontinuities with surface tension}, for which the boundary conditions read
\begin{align}
	\label{BC:contact'}
	H^{\pm}\cdot {\bm{n}}\neq 0,\quad
	[p]=\mathfrak{s}\mathcal{H},\quad
	[v]=[H]=0,\quad
	\mathcal{V}=v^+\cdot {\bm{n}}
	\quad  \textrm{on }\Sigma(t).
\end{align}
\end{itemize}

\end{appendices}

\bigskip

\section*{Acknowledgements}

The authors would like to thank the anonymous referees for comments and suggestions that improved the quality of the paper.

\section*{Conflict of interest}

The authors declare that they have no conflict of interest.


\bigskip




{\footnotesize
  }

\end{document}